\newtheorem{theorem}{Theorem}[section]
\newtheorem{lemma}[theorem]{Lemma}
\newtheorem{proposition}[theorem]{Proposition}
\newtheorem{conjecture}[theorem]{Conjecture}
\newtheorem{introtheorem}{Theorem}
\theoremstyle{definition}
\newtheorem{definition}[theorem]{Definition}
\theoremstyle{remark}
\newtheorem{remark}{Remark}[theorem]
\newcounter{mynote}% a new counter for use in margin notes
\title{Connectivity of Coxeter group Morse boundaries}
\author[M. Cordes]{Matthew Cordes}
    \address{Department of Mathematics, Heriot-Watt University and Maxwell Institute for Mathematical Sciences, Edinburgh, United Kingdom}
    \email{m.cordes@hw.ac.uk}
\author[I. Levcovitz]{Ivan Levcovitz}
    \address{}
    \email{ivan.levcovitz@gmail.com}
\begin{document}
\begin{abstract}
    We study the connectivity of Morse boundaries of Coxeter groups. 
    We define two conditions on the defining graph of a Coxeter group: wide-avoidant and wide-spherical-avoidant. 
    We show that wide-spherical-avoidant, one-ended, affine-free Coxeter groups have connected and locally connected Morse boundaries.
    On the other hand, one-ended Coxeter groups that are not wide-avoidant and not wide have disconnected Morse boundary. 
    For the right-angled case, we get a full characterization: 
    a one-ended right-angled Coxeter group has connected, non-empty Morse boundary if and only if it is wide-avoidant. 
    Along the way we characterize Morse geodesic rays in affine-free Coxeter groups as those that spend uniformly bounded time in cosets of wide special subgroups. 
\end{abstract}

	\maketitle

\section{Introduction}
Boundaries play a fundamental role in the study of (Gromov) hyperbolic spaces and groups. However, there are still many metric spaces and groups which are not hyperbolic 
that, nonetheless, display some hyperbolic behavior.  
The Morse boundary was introduced by Charney--Sultan \cite{Charney-Sultan} and the first author \cite{Cordes2017} with the goal of identifying and encoding this behavior in a useful way.
Roughly, it collects all the geodesics which have hyperbolic-like behavior into a boundary; these geodesics are called Morse geodesics. 
The key property of this boundary is that it is a quasi-isometry invariant, meaning the Morse boundary of a finitely generated group $G$ is well-defined. 

Despite the Morse boundary being a fruitful quasi-isometry invariant, the Morse boundaries of many groups are difficult to fully describe.
Even basic topological properties like connectedness or local connectedness are hard to show.  
Coxeter groups are a classical set of groups defined by abstracting properties of reflection groups.
In this article, we study the connectivity of Morse boundaries for Coxeter groups.

At odds to hyperbolic-like behavior in a group is ``flat-like" behavior. 
Introduced by Behrstock--Drutu--Mosher in \cite{Behrstock-Drutu-Mosher}, \emph{wide groups} are a set of groups that have no Morse geodesics, i.e., they have flat-like behavior. 
Identifying wide subgroups in general can be difficult, but Behrstock--Hagen--Sisto in \cite{Behrstock-Hagen-Sisto} characterize wide Coxeter groups.
In order to understand the Morse directions of Coxeter groups, we study the interplay between their wide special subgroups.

This interplay is captured in the notion of a \emph{wide-avoidant} Coxeter group (Definition~\ref{def:wide-avoidant}), which we introduce in this paper. 
Loosely, a Coxeter group is wide-avoidant if given any two vertices in its defining graph and a special wide subgroup, then there is a path between these two vertices that avoids the subgraph corresponding to the wide subgroup.
Failure of this condition is enough to ensure that the Morse boundary of a Coxeter group is either empty or disconnected:

\begin{introtheorem} \label{thm:intro_not_wide_avoidant}
    A Coxeter group that is not wide and is not wide-avoidant has disconnected Morse boundary. Moreover, it admits a splitting either over a finite special 
    subgroup or over a special subgroup that is contained in a wide special subgroup. 
\end{introtheorem}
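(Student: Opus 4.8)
The plan is to convert the failure of wide-avoidance into a splitting of $W$ over the offending wide special subgroup, and then, working in the Davis complex, to use convexity of the corresponding edge space together with the fact that wide groups admit no Morse rays to display $\partial_M W$ as a disjoint union of two nonempty open sets.

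First I would unpack the hypothesis. Since $W$ is not wide-avoidant there are a wide special subgroup $W_T$ and vertices $a\neq b$ of the defining graph $\Gamma$, neither in $T$, such that every path in $\Gamma$ from $a$ to $b$ meets $T$; that is, $T$ separates $a$ from $b$. (If $\Gamma$ is disconnected then $W$ is a nontrivial free product and the theorem is immediate, with a splitting over the trivial special subgroup and a nonempty, totally disconnected Morse boundary, so assume $\Gamma$ connected.) Let $C$ be the vertex set of the component of $\Gamma\setminus T$ containing $a$ and put $\Gamma_1=\Gamma[C\cup T]$, $\Gamma_2=\Gamma[V(\Gamma)\setminus C]$. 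Since $C$ is a component of $\Gamma\setminus T$ there are no edges between $C$ and $V(\Gamma)\setminus(C\cup T)$, so the usual graph-of-groups decomposition of the Coxeter presentation yields $W=W_{\Gamma_1}\ast_{W_T}W_{\Gamma_2}$; this amalgam is nontrivial because $a\in\Gamma_1\setminus T$ and $b\in\Gamma_2\setminus T$, and it is the required splitting over $W_T$ (if $W_T$ is finite it is a finite special subgroup, and otherwise it is an infinite wide special subgroup, so in every case the edge group is finite or contained in a wide special subgroup).

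Next I would pass to the Davis complex $\Sigma$ of $W$, which is $\mathrm{CAT}(0)$ with the Moussong metric and which the amalgam realizes as a tree of spaces over the Bass--Serre tree $\mathcal T$; the edge space over the central edge of $\mathcal T$ is a coset $E$ of $W_T$, which is convex in $\Sigma$, and $\Sigma\setminus E$ has exactly two components $A_1,A_2$. Fix the base point $p\in E$ and set $\Sigma^{(i)}=E\cup A_i$. Two geometric facts drive the argument: (a) by convexity of $E$, a geodesic ray from $p$ that is not contained in $E$ leaves $E$ on exactly one side and never returns, hence is eventually contained in precisely one of $A_1,A_2$; and (b) no Morse ray of $\Sigma$ is eventually contained in a bounded neighborhood of $E$, because if $W_T$ is finite then $E$ is bounded, while if $W_T$ is infinite and wide then the nearest-point projection to the convex set $E$ carries such a ray to a Morse quasi-geodesic ray in $E\simeq W_T$, contradicting that wide groups have no Morse rays. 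Together (a) and (b) show that every Morse ray based at $p$ is eventually in exactly one $A_i$, so $\partial_M W$ (computed with base point $p$) carries a map $\epsilon\colon\partial_M W\to\{1,2\}$; it is well defined because two asymptotic $p$-based Morse rays landing on opposite sides would both be eventually confined near $E$, against (b).

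Finally I would show $\epsilon$ is continuous onto the two-point discrete space and surjective, giving the disconnection $\partial_M W=\epsilon^{-1}(1)\sqcup\epsilon^{-1}(2)$ into nonempty open sets. Nonemptiness of $\partial_M W$ holds because $W$ is not wide. Surjectivity: if $\epsilon(\xi)=1$, represented by $\gamma$, then for the generator $b\in\Gamma_2\setminus T$ the ray $b\gamma$ is eventually contained in $b\Sigma^{(1)}$, and an inspection of $\mathcal T$ (the element $b$ fixes the vertex stabilized by $W_{\Gamma_2}$ and carries the $v_1$-side of the central edge into the $v_2$-side) shows $b\Sigma^{(1)}\subseteq A_2$, which with (b) forces $\epsilon(b\xi)=2$; symmetrically one uses $a\in\Gamma_1\setminus T$ if $\epsilon$ were to take only the value $2$. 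Continuity: since the topology on $\partial_M W$ is the direct limit of the metrizable $\kappa$-Morse strata, it suffices to check that each $\epsilon^{-1}(i)$ is open in each stratum, and for this one notes that a $p$-based representative $\gamma$ of a point with $\epsilon(\xi)=i$ attains a prescribed depth inside $A_i$ at some finite time $t_1$ (using (a) and (b)), so that every $p$-based ray sufficiently close to $\gamma$ up to time $t_1$ is also strictly inside $A_i$ at time $t_1$ and hence, by (a), eventually in $\Sigma^{(i)}$. I expect the main difficulty to be fact (b) in the infinite case --- checking that projection to the convex subcomplex $E$ sends a Morse ray confined near $E$ to a genuine Morse quasi-geodesic in $E$ --- together with the bookkeeping needed to run the continuity argument against the direct-limit topology; by contrast, producing the splitting and establishing fact (a) are routine applications of the standard convexity of special subgroups in Coxeter groups.
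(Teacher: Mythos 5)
There is a genuine gap at the very first step, in the reduction from ``not wide-avoidant'' to a separating wide subgraph. Failure of wide-avoidance gives a wide subgraph $\Delta$ and vertices $s,t$ such that every path from $s$ to $t$ meets $\Delta$ \emph{outside} $\{s,t\}$; it does \emph{not} follow that one can take $s,t\notin\Delta$, nor that $\Delta$ disconnects $\Gamma$. If both witnesses happen to lie outside $\Delta$ then indeed they lie in different components of $\Gamma\setminus\Delta$ and your amalgam $W_{\Gamma_1}\ast_{W_\Delta}W_{\Gamma_2}$ works. But the hypothesis is also satisfied when, say, $s\in\Delta$ (or $\operatorname{star}(s)\subset\Delta$) and $\Gamma\setminus\Delta$ is connected: then every path out of $s$ immediately enters $\Delta\setminus\{s,t\}$, yet your construction gives $C=\Gamma\setminus\Delta$, $\Gamma_1=\Gamma$, $\Gamma_2=\Delta$, and the amalgam is trivial (and your claim ``$b\in\Gamma_2\setminus T$'' has no justification, since nothing forces the witnesses to be separated by $T$). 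The paper's Lemma~\ref{lem:not_prod_conn} handles exactly this case: when $\Gamma\setminus\Pi$ has a single component one shows that $\operatorname{star}(s)\subset\Pi$ for one of the witnesses, and the splitting is then over $W_{\operatorname{link}(s)}$ --- a special subgroup \emph{contained in} the wide subgroup but generally not wide itself, and possibly finite. This is precisely why the theorem's conclusion offers the two alternatives (a finite edge group, or an edge group contained in a wide one); your argument only ever produces the wide group itself as edge group, so it cannot reach that conclusion in general. Note also that wide groups are always infinite, so your parenthetical ``if $W_T$ is finite'' is vacuous and the finite-splitting alternative never arises in your scheme.

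The second half of your argument --- disconnecting $\partial_* W$ directly in the Davis complex using convexity of the edge space $E$ and the fact that a Morse ray cannot stay in a bounded neighborhood of a coset of a wide special subgroup --- is a legitimately different route from the paper, which instead invokes the relative Morse boundary machinery of Fioravanti--Karrer (their Corollary~B and Remark~3.6) via Lemma~\ref{lem:bdry_of_splitting}. Your geometric argument would survive the repair above, since a Morse ray confined near a coset of $W_{\operatorname{link}(s)}\subset W_\Pi$ is confined near a coset of the wide group $W_\Pi$ and the same projection argument applies; but as written you use that the edge group is itself wide, so this too needs adjusting once the correct splitting is in place. The remaining issues (rebasing $b\gamma$ at $p$ in the surjectivity step, and checking continuity of $\epsilon$ stratum by stratum in the direct limit topology) are real but routine.
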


In fact, we conjecture the full characterization:
\begin{conjecture} \label{conj:wide-avoidant}
A Coxeter group has connected, non-empty Morse boundary if and only if it is one-ended and wide-avoidant.
\end{conjecture}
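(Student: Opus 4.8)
The plan is to prove the two implications separately: the forward direction is largely bookkeeping with results already in hand, whereas the reverse direction is the real content, obstructed (I expect) only by affine parabolic subgroups.

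For the ``only if'' direction, suppose $W$ has connected, non-empty Morse boundary $\partial_* W$. If $W$ were wide it would carry no Morse geodesic ray, so $\partial_* W = \varnothing$; hence $W$ is not wide. If moreover $W$ were not wide-avoidant, then Theorem~\ref{thm:intro_not_wide_avoidant} --- applicable since $W$ is not wide --- would make $\partial_* W$ disconnected; hence $W$ is wide-avoidant. Finally $W$ is one-ended: a finite Coxeter group has empty Morse boundary, a two-ended one has a two-point Morse boundary, and a Coxeter group with infinitely many ends splits over a finite special subgroup, so a Morse ray running into the associated Bass--Serre tree forces $\partial_* W$ to be empty or disconnected, by the standard behaviour of Morse boundaries under splittings over finite subgroups. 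This settles the easy direction.

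For the ``if'' direction, let $W$ be one-ended and wide-avoidant. Non-emptiness is immediate, since such a $W$ is not wide --- otherwise its whole defining graph would be the defining subgraph of a wide special subgroup and hence unavoidable --- so $W$ carries a Morse geodesic ray and $\partial_* W \neq \varnothing$. The crux is connectivity, and here I would build on the paper's connectivity theorem for one-ended, \emph{affine-free}, \emph{wide-spherical-avoidant} Coxeter groups, together with the characterisation of Morse geodesic rays as those spending uniformly bounded time in cosets of wide special subgroups. Two discrepancies must be removed. First, one must pass from ``wide-avoidant'' to ``wide-spherical-avoidant'': a wide-spherical special subgroup is in particular wide, so a path avoiding the defining subgraphs of all wide special subgroups should, after absorbing the bounded ``spherical'' contribution into an error term, also avoid the a priori more constrained wide-spherical ones --- the right-angled case, where the paper already obtains the full characterisation under the single hypothesis of being wide-avoidant, is the template. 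Second, and this is where I expect the essential difficulty to concentrate, one must drop the affine-free hypothesis. An affine special subgroup of rank at least three is wide, but it spans a genuine Euclidean flat in the Cayley graph that need not lie in any single special-subgroup coset, so the dictionary ``Morse $\Leftrightarrow$ bounded time in cosets of wide special subgroups'' has to be re-established while explicitly tracking such flats; this is exactly the point where relative-hyperbolicity-style arguments for Coxeter groups classically break down. I anticipate the bulk of the work is a careful analysis of how geodesics interact with affine parabolics, showing that a ray is Morse if and only if it leaves every neighbourhood of such a flat in uniformly bounded time, so that affine parabolic cosets can be adjoined to the list of ``wide'' regions that Morse rays avoid.

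Granting these two extensions, connectivity would follow as in the affine-free case: given Morse geodesic rays $\alpha,\beta$ based at the identity, one joins $[\alpha]$ to $[\beta]$ in $\partial_* W$ by a finite sequence of local surgeries, each supported in a region of the defining graph disjoint from the defining subgraphs of the relevant wide (and affine) special subgroups --- possible precisely because $W$ is wide-avoidant --- with every intermediate ray Morse by the characterisation and the family varying continuously in the Morse-boundary topology. The same argument, localised near a fixed Morse ray, also gives local connectivity, although that is not needed here.
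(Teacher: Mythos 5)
This statement is labelled a \emph{conjecture} in the paper, and the authors state explicitly that they were unable to prove it in general; they establish it only for right-angled Coxeter groups and for the affine-free, wide-spherical-avoidant class. Your ``only if'' direction is fine and coincides with what the paper actually proves (Theorem~\ref{thm:not_prod_conn} together with the standard facts about Morse boundaries of finite, two-ended, and infinitely-ended groups). But your ``if'' direction is not a proof: the two ``discrepancies'' you propose to remove are precisely the open problems, and your argument proceeds by \emph{granting} them rather than resolving them. Dropping the affine-free hypothesis from the Morse characterisation (Theorem~\ref{thm:morse_char}) is listed verbatim as Open Question (1) in the paper --- it is exactly where the authors need Caprace's wall-pattern recognition of affine subgroups, and no replacement argument is offered in your sketch. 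Likewise, the passage from wide-avoidant to wide-spherical-avoidant is not an ``error term'' to be absorbed: a special join $(P,Q,K)$ forces paths to avoid the spherical part $K$ as well, and $P\cup Q\cup K$ need not itself be a wide subgraph outside the right-angled setting (Remark~\ref{rmk:special_joins_in_racgs} is special to RACGs), so wide-avoidance gives you no path with the required avoidance property. The filter construction of Section~\ref{sec:filters} genuinely needs the stronger hypothesis in Lemma~\ref{lem:cox_fan_existence}.

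There is also a soft spot in your non-emptiness argument for the ``if'' direction: you claim that a wide group cannot be wide-avoidant because its whole defining graph is an unavoidable wide subgraph, but when $\Gamma$ is complete (e.g.\ the affine triangle group $\widetilde{A}_2$, whose defining graph is a triangle with all labels $3$) the single edge from $s$ to $t$ satisfies $\alpha\cap\Delta\subset\{s,t\}$ even for $\Delta=\Gamma$, so the definition of wide-avoidant is vacuously met. Such a group is one-ended and wide-avoidant yet has empty Morse boundary, so your deduction ``one-ended and wide-avoidant $\Rightarrow$ not wide'' does not follow from the definitions as written. In summary: the forward implication is correct and already in the paper; the reverse implication remains open, and your proposal restates the obstructions rather than overcoming them.
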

We prove this conjecture for right-angled Coxeter groups and certain classes of Coxeter groups. The right-angled Coxeter characterization being:
\begin{introtheorem}
Let $W_\Gamma$ be a right-angled Coxeter group. Then exactly one of the following holds:
\begin{enumerate}
    \item $W_\Gamma$ has empty Morse boundary and is finite or wide,
    \item $W_\Gamma$ has disconnected Morse boundary and has more than one end (either splits over a finite special subgroup or is virtually $\mathbb{Z}$)
    \item $W_\Gamma$ has connected and locally connected Morse boundary and is wide-avoidant
    \item $W_\Gamma$ has disconnected Morse boundary, is not wide-avoidant and admits a non-trivial splitting $W_\Gamma = W_{\Gamma_1} \star_{W_\Delta} W_{\Gamma_2}$ where $W_\Delta$ is an infinite subgroup  of a wide special subgroup.
\end{enumerate}
\end{introtheorem}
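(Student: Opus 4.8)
The plan is to run $W_\Gamma$ through three successive dichotomies and show each branch lands in exactly one of (1)--(4): first whether $\partial_M W_\Gamma$ is empty; then, if not, whether $W_\Gamma$ has more than one end; and finally, if it is one-ended, whether $W_\Gamma$ is wide-avoidant. Mutual exclusivity and exhaustiveness are then automatic (case (1) is the empty case, case (2) the more-than-one-ended case, and (3) versus (4) is decided by wide-avoidance), so the real task is to identify the branches and to check that the right-angled hypotheses correctly feed Theorem~\ref{thm:intro_not_wide_avoidant} (which supplies disconnectedness) and the general theorem that one-ended, affine-free, wide-spherical-avoidant Coxeter groups have connected and locally connected Morse boundary (which supplies connectedness). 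No new boundary analysis is needed beyond those two inputs.

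First I would dispose of (1): by the definition of a wide group (no Morse geodesics) together with the Behrstock--Hagen--Sisto characterization of wide Coxeter groups, $\partial_M W_\Gamma=\emptyset$ precisely when $W_\Gamma$ is finite or wide. So assume henceforth $W_\Gamma$ is infinite and not wide, hence $\partial_M W_\Gamma\neq\emptyset$. Next, if $W_\Gamma$ has more than one end, then $\Gamma$ is disconnected or has a separating clique, so there is a visual splitting $W_\Gamma=W_{\Gamma_1}\star_{W_\Delta}W_{\Gamma_2}$ with $\Delta$ a clique, $\Gamma_i\supsetneq\Delta$, and $W_\Delta$ finite. Picking $u\in\Gamma_1\setminus\Delta$ and $v\in\Gamma_2\setminus\Delta$, no path from $u$ to $v$ in $\Gamma$ avoids $\Delta$, so $W_\Gamma$ is not wide-avoidant, witnessed by the finite --- hence wide --- special subgroup $W_\Delta$; since $W_\Gamma$ is not wide either, Theorem~\ref{thm:intro_not_wide_avoidant} gives disconnectedness, and this branch is (2) (the two-ended subcase being exactly virtually $\mathbb Z$, whose Morse boundary is a two-point space). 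From now on $W_\Gamma$ is one-ended.

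If $W_\Gamma$ is one-ended but not wide-avoidant, Theorem~\ref{thm:intro_not_wide_avoidant} again gives disconnectedness plus a splitting over a finite special subgroup or over a special subgroup $W_\Delta$ contained in a wide special subgroup; one-endedness (via Stallings) excludes the first alternative and forces $W_\Delta$ to be infinite, and realizing the splitting visually (Mihalik--Tschantz) puts it in the amalgam form of (4). If instead $W_\Gamma$ is one-ended and wide-avoidant, I would observe that $W_\Gamma$ is affine-free in the relevant sense --- the only infinite irreducible affine right-angled Coxeter group is the rank-two group $D_\infty$, whereas affine-freeness only forbids irreducible affine special subgroups of rank $\ge 3$ --- and that for right-angled Coxeter groups every spherical special subgroup is finite, hence wide, so that wide-spherical-avoidance reduces to wide-avoidance. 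The general theorem then yields that $\partial_M W_\Gamma$ is connected and locally connected, and nonempty since wide-avoidance forces $W_\Gamma$ to be non-wide (the whole group is a wide special subgroup that no path can avoid once $\Gamma$ has a vertex), which is (3).

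The hard part is not any single calculation --- the connectivity statements are exactly the content of the two quoted theorems --- but rather making sure the right-angled hypotheses line up with them, i.e.\ recognizing wide-avoidance as the exact borderline: above it the affine-free and wide-spherical-avoidant hypotheses of the general theorem are automatic, and below it the dichotomous conclusion of Theorem~\ref{thm:intro_not_wide_avoidant} collapses, under one-endedness, to a visual amalgamated splitting over an infinite subgroup of a wide special subgroup. The step I expect to need the most care is this last reduction --- passing from the abstract splitting produced by Theorem~\ref{thm:intro_not_wide_avoidant} to a visual one of the precise form stated in (4) --- since the ``empty $\Leftrightarrow$ finite or wide'' equivalence and the ends/visual-splitting dictionary for right-angled Coxeter groups are standard.
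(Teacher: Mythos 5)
Your overall architecture is the right one and matches how the paper assembles this theorem (it is never proved in one place: it is Lemma~\ref{lem:non_empty_bdry}, Lemma~\ref{lem:bdry_of_splitting}, Lemma~\ref{lem:not_prod_conn}/Theorem~\ref{thm:not_prod_conn}, Remark~\ref{rmk:special_joins_in_racgs}, and Theorems~\ref{thm:MB is connected} and~\ref{thm:MB is locally connected} glued along exactly the trichotomy you describe). But there is one concretely wrong step, and you use it twice: \emph{finite special subgroups are not wide}. Under Definition~\ref{def:wide_subgroups} a wide subgraph needs a join decomposition $(P,Q)$ with both $W_P$ and $W_Q$ infinite (the affine alternative being unavailable in the right-angled case), and the paper explicitly relies on finite Coxeter groups being wide-avoidant. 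In your multi-ended branch this error is load-bearing: you deduce ``not wide-avoidant'' from the separating finite clique being a wide subgraph, and then invoke Theorem~\ref{thm:intro_not_wide_avoidant} to get disconnectedness. That inference fails --- $D_\infty$, or a free product of finite special subgroups, is multi-ended, contains no wide subgraph at all, and is therefore vacuously wide-avoidant. The correct route for case (2) is the one the paper takes inside the proof of Theorem~\ref{thm:not_prod_conn}: a two-ended group has two-point Morse boundary, and a splitting over a finite subgroup has empty relative Morse boundary, so Lemma~\ref{lem:bdry_of_splitting} (Fioravanti--Karrer) gives disconnectedness directly, with no appeal to wide-avoidance.

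The same confusion resurfaces in case (3) as ``every spherical special subgroup is finite, hence wide.'' The reduction you want there is true but for a different reason, namely Remark~\ref{rmk:special_joins_in_racgs}: in a special join $(P,Q,K)$ of a right-angled graph, $K$ is joined to $P$, so $W_{P\cup Q\cup K}=W_P\times W_{Q\cup K}$ is itself a wide special subgroup, and a path avoiding this wide subgraph is exactly a path avoiding $K\cup P\cup Q$; hence wide-avoidant implies wide-spherical-avoidant for RACGs. With that substitution, and with the observation (which you make correctly) that RACGs are automatically affine-free, your case (3) goes through via Theorems~\ref{thm:MB is connected} and~\ref{thm:MB is locally connected}. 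Your case (4) is fine, and is in fact easier than you anticipate: Lemma~\ref{lem:not_prod_conn} already produces the splitting in visual form $W_{\Gamma_1}\ast_{W_\Delta}W_{\Gamma_2}$ with $W_\Delta$ infinite and contained in a wide special subgroup, so no appeal to Mihalik--Tschantz realization is needed.
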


We were not able to prove Conjecture~\ref{conj:wide-avoidant} for all Coxeter groups. 
However, we give the following sufficient conditions for the connectivity of the Morse boundary. 
\begin{introtheorem} \label{thm:intro_connected}
    An affine-free, one-ended, wide-spherical-avoidant Coxeter group has connected and locally connected Morse boundary. 
\end{introtheorem}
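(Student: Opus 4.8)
The plan is to work inside the Cayley graph $X=\mathrm{Cay}(W_\Gamma,S)$ (with $S$ the standard Coxeter generators and basepoint $e$) and to organize the Morse boundary by the characterization of Morse rays recalled above. Since $W_\Gamma$ is affine-free, a geodesic ray $\gamma$ from $e$ is Morse if and only if there is a constant $R$ so that $\gamma$ spends time at most $R$ in every coset of every wide special subgroup, and the optimal such $R$ and the Morse gauge of $\gamma$ bound each other through constants depending only on $\Gamma$. Write $\partial^{R}_{*}W_\Gamma$ for the set of asymptote classes of such ``$R$-controlled'' rays based at $e$. Each $\partial^{R}_{*}W_\Gamma$ is a closed, hence compact Hausdorff, subset of the compact space of geodesic rays from $e$ in the locally finite graph $X$ (after quotienting by the finite-Hausdorff-distance relation); the inclusions $\partial^{R}_{*}W_\Gamma\hookrightarrow\partial^{R'}_{*}W_\Gamma$ for $R\le R'$ are topological embeddings; and $\partial_{*}W_\Gamma=\varinjlim_{R}\partial^{R}_{*}W_\Gamma$ carries the direct-limit topology. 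Wide-spherical-avoidant groups are non-wide (a wide group cannot even be wide-avoidant, as its full vertex set is a wide special subgroup), so $W_\Gamma$ admits a Morse geodesic and $\partial_{*}W_\Gamma\neq\emptyset$. Everything then reduces to a single \emph{interpolation statement}: there are a function $\rho=\rho_\Gamma$ and a constant $C=C_\Gamma$ so that whenever $\alpha,\beta$ are $R$-controlled geodesic rays from $e$ that fellow-travel one another up to distance $n$ (stay uniformly close over the ball $B_e(n)$), there is a path $p\colon[0,1]\to\partial^{\rho(R)}_{*}W_\Gamma$ from $[\alpha]$ to $[\beta]$ all of whose rays fellow-travel $\alpha$ up to distance $n-C$.

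Granting this, connectedness is immediate: taking $n=0$ (vacuously satisfied, as all rays begin at $e$), any two Morse points lie in a common stratum $\partial^{R}_{*}W_\Gamma$ and are joined by a path in $\partial^{\rho(R)}_{*}W_\Gamma\subseteq\partial_{*}W_\Gamma$, so $\partial_{*}W_\Gamma$ is path-connected. Local connectedness comes from taking $n$ large. Fix $\xi=[\gamma]$ with $\gamma$ an $R$-controlled ray and let $U_n(\xi)$ be the standard basic (open) neighborhood of $\xi$, consisting of Morse points with a representative fellow-travelling $\gamma$ up to distance $n$. For each $[\beta]\in U_n(\xi)$ apply the interpolation statement to the pair $(\gamma,\beta)$ (both controlled by some common constant $R'$) to obtain a path $p_\beta$ from $\xi$ to $[\beta]$ whose rays fellow-travel $\gamma$ up to distance $n-C$; then $V_n:=\bigcup_{[\beta]\in U_n(\xi)}\mathrm{im}(p_\beta)$ is connected (all the $p_\beta$ pass through $\xi$), contains $U_n(\xi)$, and is contained in $U_{n-C}(\xi)$. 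Hence $V_n$ is a connected neighborhood of $\xi$, and as $n\to\infty$ the sets $V_{m+C}\subseteq U_m(\xi)$ show that $\partial_{*}W_\Gamma$ is locally connected at $\xi$.

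To prove the interpolation statement one deforms $\alpha$ to $\beta$ through a continuously varying family of $\rho(R)$-controlled rays obtained by \emph{re-routing}: informally, one drags the point at which the ray departs from $\alpha$ inward from infinity, and past that point reroutes the tail so as to end up following $\beta$. The admissible re-routings are governed by the defining graph, since passing between sets of ``active'' letters of a reduced word amounts to traversing a path of generators in $\Gamma$; commuting letters may be reordered, and the deletion condition together with convexity of special-subgroup cosets keeps the rerouted words geodesic and the bookkeeping finite. There are two ways rerouting can fail, each neutralized by a hypothesis. First, a rerouted path might be forced to backtrack — this cannot happen when $W_\Gamma$ is \emph{one-ended}, so $\Gamma$ has no separating clique; the extreme failure is a group with more than one end, whose Morse boundary really is disconnected. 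Second, a rerouted path might be forced through a long subword lying in a coset of a wide special subgroup $W_J$ — the configuration that destroys Morseness — and here the \emph{wide-spherical-avoidant} hypothesis supplies a path in $\Gamma$ between the relevant vertices avoiding $J$ together with its spherical ``collar'', i.e.\ avoiding exactly the subgraphs the ray characterization flags as dangerous. Equivalently, the construction hinges on $\Gamma$ remaining suitably connected after the wide (and wide-spherical) subgraphs are deleted, which is precisely the content of the hypothesis; when this fails the boundary disconnects, consistent with Theorem~\ref{thm:intro_not_wide_avoidant}.

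The main obstacle is to make this \textbf{uniform}: the constant $\rho(R)$ — equivalently, the complexity of the rerouted portions — must be bounded independently of $\alpha,\beta$, of where along the ray the rerouting is performed, and of how fast $\alpha$ and $\beta$ diverge. The difficulty is that in a generic (hyperbolic-like) direction two Morse rays from $e$ diverge linearly, so a rerouting of $\alpha$ that is to land on $\beta$ cannot be a bounded detour; one must instead remerge onto $\beta$ at a point chosen as a controlled function of the rerouting location, using gates onto geodesics toward $[\beta]$ together with the $R$-control of both rays to keep all constants uniform. The other half of uniformity is the assertion that away from cosets of wide special subgroups geodesics obey Morse bounds depending only on $\Gamma$ — the ``$R$-controlled $\Rightarrow$ Morse gauge $N_\Gamma(R)$'' half of the ray characterization — and it is precisely here that \emph{affine-freeness} is needed, since an irreducible affine special subgroup would contribute Euclidean flats along which geodesics fail to be Morse without ever lingering in a \emph{proper} wide special coset. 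Once these uniform bounds are in place, $\rho$ depends only on $R$ and $\Gamma$, the interpolation statement follows, and hence so does the theorem.
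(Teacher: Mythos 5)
Your high-level architecture matches the paper's: use the affine-free characterization of Morse rays (bounded time in wide special cosets, with the Morse gauge controlled by the bound), then join any two Morse rays by a connected family of uniformly Morse rays, with enough quantitative control near the basepoint to get local connectedness via the ``connected im kleinen'' criterion. The problem is that everything is riding on your ``interpolation statement,'' and you do not prove it — you describe what a proof would have to accomplish and correctly identify the obstacles (backtracking, forced passage through wide cosets, uniformity of constants), but the re-routing procedure is never specified. In particular, you never exhibit even a single intermediate Morse ray between $\alpha$ and $\beta$, nor explain why the family of rerouted rays varies continuously in the boundary topology (you are in fact claiming path-connectedness, which is stronger than what is needed and stronger than what the paper establishes: the paper gets connectedness of the boundary of a stable, one-ended subset via its Gromov boundary, not an explicit path). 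The two places where your sketch would have to become a real argument are exactly where the paper does its work. First, producing the interpolating rays: the paper builds a planar van Kampen diagram (a \emph{filter}) spanning $\alpha$ and $\beta$ out of \emph{fans}, and the existence of a fan at each step (Lemma~\ref{lem:cox_fan_existence}) is where one-endedness and wide-spherical-avoidance enter — crucially in combination with the fact that the set of letters a geodesic word can end in generates a \emph{finite} special subgroup (Lemma~\ref{lem:ending_letters}), which is why the definition of wide-spherical-avoidant involves a spherical collar $K$ at all. Your sketch mentions ``the deletion condition together with convexity of special-subgroup cosets,'' but does not explain how to keep the rerouted words geodesic, which is the point of the fan edges being chosen outside the ending-letter set. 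Second, uniformity: the paper proves that any long directed path in the filter's spanning tree must leave every wide subgraph (Proposition~\ref{prop:cox_not_in_product_regions}) via a combinatorial itinerary analysis ($I$-edges, $LR$-subpaths, runs of $L$- and $R$-edges), and this is what makes the interpolating rays uniformly Morse. Asserting that ``once these uniform bounds are in place'' the theorem follows is accurate but leaves the entire proof outstanding.

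Two smaller points. Your justification that wide-spherical-avoidant implies non-wide is not correct as stated: if $\Gamma$ is a complete graph that is wide, every pair of vertices is joined by an edge, which trivially satisfies the avoidance condition with the edge itself as the path; this corner case is harmless only because an empty Morse boundary is vacuously connected and locally connected, so non-emptiness should not be folded into the argument. And for local connectedness, the paper does not interpolate within a single filter but glues finitely many filters into a \emph{multi-tailed filter} along tails dropped from the basepoint to a geodesic between $\alpha(n)$ and $\beta(n)$; the stability constant of this object depends on $d(\alpha(n),\beta(n))$, and handling that dependence against the direct-limit topology (choosing the stratum $\partial_*^N$ \emph{after} fixing $n$) is a genuine step your sketch elides.
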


Recall that a Coxeter group is \emph{affine-free} if it has no irreducible, special, affine subgroup of rank $3$ or larger. 
We define a Coxeter group to be \emph{wide-spherical-avoidant} 
if, loosely, given a certain type of subgraph corresponding to a wide subgroup and a restricted type of spherical subgroup, then there is a path between any two vertices that avoids this subgraph. 
See Definition~\ref{def:wide-spherical-avoidant} for a precise definition.

Theorem~\ref{thm:intro_connected} provides what we believe to be the first examples of non-relatively hyperbolic groups with locally-connected Morse boundary (e.g. Figure \ref{fig:non-rel-conn-mb}).
In fact, the only other known class of non-hyperbolic groups with locally-connected Morse boundary are those quasi-isometric to 
fundamental groups of finite-volume, cusped hyperbolic $n$-manifolds \cite{Charney-Cordes-Sisto}\cite{FKSZ} (which are relatively hyperbolic). 
Note that the right-angled Coxeter defined on the 1-skeleton of a 3-cube is virtually a finite-volume, 
cusped hyperbolic $3$-manifold group \cite{Haulmark-Nguyen-Tran}. 
\begin{figure}\label{fig:non-rel-conn-mb}
    \begin{overpic}[scale=.5]{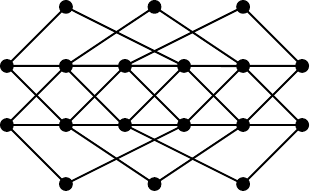}
  \end{overpic}  
  \caption{Defining graph of a non-relatively hyperbolic RACG with locally-connected Morse boundary}
  \end{figure}

In proving Theorem~\ref{thm:intro_not_wide_avoidant} and Theorem~\ref{thm:intro_connected}, we drew much of our inspiration from the work of Mihalik-Ruane-Tschantz.
In the context of visual boundaries of the Davis--Moussang complex of right-angled Coxeter groups, 
Mihalik-Ruane-Tschantz give sufficient condition for their local connectivity \cite{Mihalik-Ruane-Tschantz}. 
In fact, their conditions on the defining graph for local connectivity of the visual boundary 
(no product separators and no virtual factor separators) is equivalent to wide-avoidant for right-angled Coxeter groups. 
Theorem~\ref{thm:intro_connected} generalizes their result to Morse boundaries of Coxeter groups. 

Mihalik-Ruane-Tschantz define the concept of a \emph{filter}, a certain type of planar van-Kampen diagram with boundary two geodesic rays in the Davis complex. 
We generalize their definition to general Coxeter groups and show that a filter is a stable subset connecting two Morse geodesics. We show that filters between two geodesic rays 
always exist given the hypothesis of Theorem~\ref{thm:intro_connected}. 

In order to prove Theorem~\ref{thm:intro_connected}, we prove a characterization of Morse geodesics in affine-free Coxeter group which is of independent interest:
\begin{introtheorem}\label{thm:intro_thm_morse_char}
Given a geodesic ray $\gamma$ in the Davis complex of an affine-free Coxeter group, the following are equivalent:
\begin{enumerate}
    \item The ray $\gamma$ is Morse,
    \item there exists a constant $K$ such that any subpath of length larger than $K$ does not have its label contained in a wide special subgraph, and
    \item the ray $\gamma$ spends uniformly bounded time in cosets of wide special subgroups.
\end{enumerate} 
\end{introtheorem}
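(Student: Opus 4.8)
The plan is to establish the cycle of implications $(1) \Rightarrow (3) \Rightarrow (2) \Rightarrow (1)$, with the bulk of the work going into the first and last implications; the middle one should be essentially a reformulation. First, $(3) \Rightarrow (2)$: if $\gamma$ spends uniformly bounded time (say bounded by $K_0$) in cosets of wide special subgroups, then any subpath whose label lies in a wide special \emph{subgraph} $\Theta$ is contained in a single coset $g W_\Theta$ of the corresponding wide special subgroup (since consecutive vertices differ by a generator from $\Theta$, so the whole subpath stays in one coset), hence has length at most $K_0 =: K$. The converse direction $(2) \Rightarrow (3)$ is the same observation read backwards, once one checks that a segment of $\gamma$ lying in a coset $gW_\Theta$ of a wide special subgroup can, up to bounded error, be taken to have its label in the subgraph $\Theta$; this uses that $\gamma$ is a geodesic together with the deletion/exchange condition for Coxeter groups (a geodesic word staying in a coset $gW_\Theta$ is, after the prefix $g$, spelled by letters of $\Theta$). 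So conditions $(2)$ and $(3)$ are equivalent almost by unwinding definitions.

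The implication $(1) \Rightarrow (2)$ is the ``easy half'' of the Morse characterization and I would argue by contrapositive: if $\gamma$ has arbitrarily long subpaths with labels in wide special subgraphs $\Theta_n$ (finitely many possible $\Theta$'s, so we may pass to a subsequence and assume $\Theta_n = \Theta$ fixed), then $\gamma$ fellow-travels long segments inside cosets of the wide group $W_\Theta$. Since $W_\Theta$ is wide, by Behrstock--Hagen--Sisto's characterization it contains no Morse rays, and more concretely one can find, inside $g_n W_\Theta$, a detour (a path of comparable length avoiding a bounded neighborhood of the midpoint of the segment) witnessing that $\gamma$ is not Morse: a Morse gauge would be violated by these detours as $n \to \infty$. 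The technical content here is producing quantitative ``non-Morse witnesses'' from wideness — wideness gives that every asymptotic cone has no cut points, and one translates this into the existence of the detours at all scales; this is standard but needs care.

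The real obstacle is $(2) \Rightarrow (1)$: assuming every subpath longer than $K$ escapes every wide special subgraph, show $\gamma$ is Morse. This is where affine-freeness must enter. The strategy I would follow is to use the structure theory of Coxeter groups and their walls/divergence: an affine-free Coxeter group has no affine (hence no ``thick of order zero'' flat) pieces of rank $\geq 3$, and by work on divergence in Coxeter groups the only obstructions to a geodesic being Morse come from it tracking flats, which in turn (for affine-free groups) are detected by the geodesic spending unbounded time in cosets of wide — equivalently, of reducible or affine-containing — special subgroups. Concretely, I would take a quasigeodesic $\sigma$ with endpoints on $\gamma$ and must bound its Hausdorff distance from $\gamma$; I would decompose $\gamma$ along its ``syllables'' relative to the poset of special parabolic subgroups (using the normal-form / reduced-word combinatorics and, crucially, the fact (Theorem C, which we may assume) that wideness of special subgroups is governed by the graph-theoretic wide condition), and on each piece where $\gamma$ is \emph{not} inside a wide coset one has a local divergence/contracting estimate. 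The affine-free hypothesis is exactly what rules out intermediate ``linearly divergent but not wide-coset-bounded'' behavior (affine Coxeter groups of rank $\geq 3$ being the borderline case with polynomial but superlinear divergence), so that ``avoids wide subgraphs on long subpaths'' genuinely upgrades to a uniform contracting/Morse estimate. Assembling these local estimates into a global Morse gauge — controlling how the pieces interact, in the spirit of the ``filter'' construction announced in the introduction — is the step I expect to require the most work.
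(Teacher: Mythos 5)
Your handling of the equivalence $(2)\Leftrightarrow(3)$ and of $(1)\Rightarrow(2)$ is fine and matches the paper: $(2)\Leftrightarrow(3)$ is indeed just the observation that a subpath of a geodesic lies in a coset $gW_\Theta$ if and only if its label lies in $\Theta$ (Lemma~\ref{lem:in_wide}), and $(1)\Rightarrow(2)$ is the easy contrapositive --- arbitrarily long subsegments inside cosets of wide special subgroups lie in convex subcomplexes that are either products with unbounded factors or virtually flats, which have linear divergence at those scales and so violate any Morse gauge.

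The direction $(2)\Rightarrow(1)$, however, is a genuine gap: what you have written is a statement of intent, not an argument. The assertion that ``the only obstructions to a geodesic being Morse come from tracking flats, which are detected by wide cosets'' is essentially the theorem itself, and the claim that affine-freeness ``rules out intermediate linearly-divergent behavior'' is a heuristic with no mechanism behind it. You never identify \emph{where} the affine-free hypothesis actually does work, and you misattribute the filter construction, which in the paper is used only for connectivity of the boundary, not for the Morse characterization. The paper's actual route is: reduce Morseness to a quadratic lower-divergence bound (Charney--Sultan); extract from $\gamma$ a long pencil of walls via Dilworth's theorem; show that the bridge between consecutive blocks of walls in the pencil is not large, via a van Kampen diagram argument (Proposition~\ref{prop:van_kampen}) in which dual curves emanating from distinct sides are forced to commute --- this is precisely where affine-freeness enters, through Caprace's theorem that a ``ladder'' of walls generates an affine reflection subgroup of rank at least $3$, combined with Deodhar's normalizer theorem to promote the resulting commuting infinite reflection subgroups to a wide special subgroup (Proposition~\ref{prop:contained_in_wide}); finally, small bridges plus convexity of the distance-to-a-wall function force any path avoiding a ball of radius $r$ around $\gamma(t)$ to pay a definite cost at each of linearly many walls, giving $ldiv_\gamma(r)\succeq r^2$. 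Without some substitute for the ladder/bridge analysis, your local-to-global assembly has no local estimate to assemble, so the proposal does not establish $(2)\Rightarrow(1)$.
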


Note that the equivalence between (2) and (3) is immediate. 
Although the above theorem is perhaps intuitive, its proof turned out to be quite technical. 
In its proof, we utilize a result of Caprace \cite{Caprace-conjugacy-2spherical} that allowed us to recognize affine subgroups from certain configuration of walls.
Our proof then required a careful analysis of CAT(0) bridges and disk diagrams in Coxeter groups.
We conjecture that Theorem~\ref{thm:intro_thm_morse_char} holds for all Coxeter groups, not just affine-free ones.

This article is organized as follows. Section~\ref{sec:background} gives necessary background on the Morse boundary and Coxeter groups, 
while Section~\ref{sec:wide-avoidant_def} gives formal definitions of wide-avoidant and wide-spherical-avoidant Coxeter groups.
Section~\ref{sec:disconnected} proves Theorem~\ref{thm:intro_not_wide_avoidant} regarding disconnected Morse boundaries. 
Next, in Section~\ref{sec:morse_geodesic_char} we prove Theorem~\ref{thm:intro_thm_morse_char} characterizing Morse geodesics in affine-free Coxeter groups. 
We then define filters in Section~\ref{sec:filters} and show that they can always be constructed under certain conditions. Finally, we prove 
Theorem~\ref{thm:intro_connected} in Section~\ref{sec:connected}.

\subsection*{Open questions}
\begin{enumerate}
    \item Can the affine-free condition in the statements of Theorem \ref{thm:intro_connected} and Theorem \ref{thm:intro_thm_morse_char} be dropped? \\
    The affine-free condition is used to prove Lemma \ref{lem:commuting_dual_curves}, where we use a theorem of Caprace that recognizes affine subgroups of Coxeter groups via a pattern of wall intersections. 
    This lemma in turn is used to prove a characterization of Morse geodesics by the length of time they spend in wide subgroups (Theorem \ref{thm:morse_char}), which we then use to show that the filters we construct in Section \ref{sec:filters} are stable.
    \item Can Theorem~\ref{thm:intro_connected} be extended to the visual boundary of the Davis complex? \label{question-visual-boundary}
    The Mihalik--Ruane--Tschantz proof of local connectivity for the visual boundary \cite{Mihalik-Ruane-Tschantz} uses combinatorial techniques for RACGs that do not generalize to all Coxeter groups, so a new idea will need to be found to transfer our results to the visual boundary.  
    \item Can one construct fans \`a la Champetier \cite{champetier} to study when the Morse boundaries of Coxeter groups are \emph{not} planar? 
    Interestingly, the existence of a non-planar graph is not a well-defined invariant of the visual boundary $\mathrm{CAT}(0)$ group \cite{schreve-stark}, but it is for the Morse boundary.
    \item As we now have a large class of Coxeter groups with connected and locally connected Morse boundaries, their Morse boundaries may have interesting \v{C}ech cohomology as developed in \cite{FKSZ}.
\end{enumerate}

\subsection*{Acknowledgements}
The authors would like to thank Pierre-Emmanuel Caprace, Annette Karrer, Kim Ruane, and Stefanie Zbinden for helpful conversations. 
The first author was supported in part by an SNSF Ambizione Fellowhsip. 

\section{Background} \label{sec:background}

In this section we give background on the Morse boundary and Coxeter groups.

\subsection{Morse boundary} \label{sec:background:morse boundary}

\begin{definition}
Let $N \colon [1,\infty) \times [0,\infty) \to [0,\infty)$ be a function which is non-decreasing and continuous in the second factor. The quasi-geodesic $\gamma \colon I \to X$ 
is an  \emph{$N$-Morse quasi-geodesic} if for any subinterval $[s,t] \subseteq I$, any $(k,c)$-quasi-geodesic with endpoints $\gamma(s)$ and $\gamma(t)$ is contained in the $N(k,c)$-neighborhood of the image of $\gamma\vert_{[s,t]}$.
The function $N$ is called a \emph{Morse gauge}. 
\end{definition}

As a set, the Morse boundary of a proper geodesic metric space $X$ is the set of all Morse geodesic rays in $X$ based at a point 
$o\in X$ and modulo asymptotic equivalence; given a geodesic ray $\gamma \colon [0,\infty)\to X$, 
we denote the equivalence class as $[\alpha]$. 
This collection is called the \emph{Morse boundary} of $X$ and is denoted by $\partial_{*} X$ or $\partial_{*} X_o$ 
if we wish to emphasize the basepoint $o$. 

To topologize the boundary, first fix a Morse gauge $N$ and consider the subset of the Morse boundary that consists of all rays in $X$ 
with Morse gauge at most $N$:  
\begin{equation*} \partial_{*}^N X_o= \{[\alpha] \mid \exists \beta \in [\alpha] 
    \text{ that is an $N$-Morse geodesic ray with base point }o\}. \end{equation*} 
One can topologize this set with the compact-open topology. 
This topology is equivalent to the one defined by a neighborhood basis: 
at each point $[\alpha] \in \partial_{*}^N X_o$, and every $n \in \mathbb{N}$, let $V_n^N( \alpha)$ 
denote the set of $N$-Morse geodesic rays $\gamma \colon [0,\infty) \to X$ with $\gamma(0)=o$ and $d(\alpha(t), \gamma(t))< \delta_N$ 
for all $t<n$, where $\delta_N$ is a constant that depends only on $N$. 
A neighborhood basis of $[\alpha]$ in $\partial_{*}^N X_o$
is the set $$\{V_n^N(\alpha) \mid n \in \mathbb{N} \}.$$
The constant $\delta_N$ is computed in \cite{Cordes2017, cordessistozbinden} and is chosen so that any ray in the 
equivalence class of $\alpha$ is included in $V_n^N(\alpha)$, i.e., so that the topology is well defined.

Let $\mathcal M$ be the set of all Morse gauges. 
We put a partial ordering on $\mathcal M$ so that for two Morse gauges $N, N' \in \mathcal M$, we have that $N \leq N'$ if and only if 
$N(k,c) \leq N'(k,c)$ for all $k \geq 1$ and $c \geq 0$. We define the Morse boundary of $X$ to be
 \begin{equation*} \partial_{*} X_o=\varinjlim_\mathcal{M} \partial^N_* X_o \end{equation*} with the induced direct limit topology, i.e., a set $U$ is open in $\partial_{*} X_o$ if and only if $U \cap \partial^N_* X_o$ is open for all $M \in \mathcal{M}$.  For more details on the Morse boundary see \cite{Cordes2017, cordessistozbinden}. 

The first salient feature of the Morse boundary is that it is invariant under quasi-isometry regardless of the geometry of the space. 

\begin{theorem}[\cite{Cordes2017}]\label{thm:Morse_boundar_QI_invariant}
Let $X$ and $Y$ be proper geodesic metric spaces. Every quasi-isometry $f\colon X \to Y$ induces a homeomorphism $\partial f \colon \partial_{*} X_o \to \partial_{*} Y_{f(o)}$.
\end{theorem}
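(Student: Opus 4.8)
The plan is to construct $\partial f$ first at the level of geodesic rays, using the stability property of Morse quasi-geodesics, then to check it is well defined and bijective by means of a quasi-inverse, and finally to verify continuity one Morse stratum at a time. The starting point is the stability (Morse) lemma: if $\gamma$ is an $N$-Morse quasi-geodesic in $X$ and $h\colon X\to Y$ is a $(\lambda,\epsilon)$-quasi-isometry, then $h\circ\gamma$ is an $N'$-Morse quasi-geodesic in $Y$, where $N'$ depends only on $N$, $\lambda$, $\epsilon$. Combined with the fact that in a proper geodesic space every $N$-Morse quasi-geodesic ray lies in a uniformly bounded neighborhood of a geodesic ray with the same initial point (the bound depending only on $N$ and the quasi-geodesic constants), and any two such geodesic rays are asymptotic, this lets us define $\partial f$: given $[\alpha]\in\partial_* X_o$ represented by an $N$-Morse geodesic ray $\alpha$ based at $o$, the ray $f\circ\alpha$ is an $N'$-Morse quasi-geodesic ray based at $f(o)$, and we set $\partial f([\alpha])$ to be the asymptoty class of a geodesic ray in $Y$ based at $f(o)$ that fellow-travels $f\circ\alpha$. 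In particular $\partial f$ carries $\partial_*^N X_o$ into $\partial_*^{N'} Y_{f(o)}$.

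For well-definedness, note that the chosen geodesic ray in $Y$ is unique up to asymptoty, and if $\alpha,\alpha'$ are asymptotic $N$-Morse rays in $X$ then $f\circ\alpha$ and $f\circ\alpha'$ remain within bounded Hausdorff distance of one another, so the geodesic rays in $Y$ representing their images are asymptotic. Hence $\partial f\colon\partial_* X_o\to\partial_* Y_{f(o)}$ is a well-defined map of sets (and, as noted, respects the filtration by Morse gauges with a controlled change $N\mapsto N'$).

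For bijectivity, let $\bar f\colon Y\to X$ be a quasi-inverse quasi-isometry, so that $\bar f\circ f$ and $f\circ\bar f$ lie within bounded distance of the identities; running the same construction produces $\partial\bar f$, composed with the canonical identification of Morse boundaries coming from the bounded change of basepoint $\bar f(f(o))\mapsto o$. Since $\bar f\circ f\circ\alpha$ stays within bounded distance of $\alpha$, it is asymptotic to $\alpha$, whence $\partial\bar f\circ\partial f=\mathrm{id}_{\partial_* X_o}$, and symmetrically $\partial f\circ\partial\bar f=\mathrm{id}$. Thus $\partial f$ is a bijection with continuous-to-be-shown inverse $\partial\bar f$.

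It remains to prove continuity, and by the universal property of the direct limit it suffices to show that for each Morse gauge $N$ the restriction $\partial f\colon\partial_*^N X_o\to\partial_*^{N'} Y_{f(o)}$ is continuous; applying this to $\partial\bar f$ as well then gives that both maps are continuous, hence that $\partial f$ is a homeomorphism. Using the neighborhood bases $\{V_n^N(\alpha)\}$, fix $[\alpha]\in\partial_*^N X_o$ and $m\in\mathbb{N}$; one must produce $n$ so that $\partial f\bigl(V_n^N(\alpha)\bigr)\subseteq V_m^{N'}(\partial f(\alpha))$. If $\beta\in V_n^N(\alpha)$, then $d(\alpha(t),\beta(t))<\delta_N$ for $t<n$, so $f\circ\alpha$ and $f\circ\beta$ stay uniformly close along a correspondingly long initial segment; since the geodesic rays in $Y$ representing $\partial f(\alpha)$ and $\partial f(\beta)$ each fellow-travel $f\circ\alpha$ and $f\circ\beta$ with constants depending only on $N,\lambda,\epsilon$, for $n$ large enough they are within $\delta_{N'}$ of each other for all parameters up to $m$, which is exactly the assertion $\partial f(\beta)\in V_m^{N'}(\partial f(\alpha))$. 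The delicate point — and the main obstacle — is precisely this estimate: one must transport the inequality $d(\alpha(t),\beta(t))<\delta_N$ through $f$ and then through the passage to asymptotic geodesic rays while simultaneously controlling the change of scale (factor $\lambda$), the additive error $\epsilon$, the Morse-gauge-dependent fellow-traveling constants, and the fact that $f\circ\alpha$ is not parametrised proportionally to arc length, checking that all of these remain uniform in $N,\lambda,\epsilon$. The careful bookkeeping of these constants is where the real work lies; everything else is formal.
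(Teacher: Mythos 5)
The paper does not prove this statement---it is quoted verbatim from \cite{Cordes2017}---and your argument is essentially the standard proof given there: push Morse quasi-geodesics through $f$, use properness to replace the image by a nearby geodesic ray from $f(o)$, invert via a quasi-inverse, and check continuity stratum by stratum using the universal property of the direct limit and the neighborhoods $V_n^N$. The outline and all the needed ingredients are correct; the only part you defer rather than execute is the quantitative continuity estimate in the last paragraph, which does go through exactly as you describe (it is the ``nearby Morse geodesics fellow-travel'' lemma of \cite{Cordes2017}).
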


As a corollary of this theorem, a change in basepoint induces a homeomorphism on the boundary. This justifies the omission of the basepoint from the notation. More generally, we will usually assume the basepoint is fixed and omit it from the notation $\partial^N_* X_o$ as well. The reader is warned, however, that unlike $\partial_* X$, the substrata, $\partial^N_* X$, \emph{do} depend on a choice of basepoint.

We prove a quick lemma about the concatenation of Morse geodesics we will use later.

\begin{lemma} \label{lem:concatenation of morse is morse}
    Let $X$ be a geodesic metric space. Let $[x, y]$ and $[y,z]$ be $N$-Morse geodesics. Further assume that the concatenation $\gamma =[x, y]\cup[y,z]$ 
    is also a geodesic. Then there is $M$, depending only on $N$, so that $\gamma$ is $M$-Morse. 
\end{lemma}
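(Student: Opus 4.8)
The plan is to verify the definition of $M$-Morse directly: take a $(k,c)$-quasi-geodesic $\sigma$ whose endpoints $\gamma(s),\gamma(t)$ lie on $\gamma$, and bound the distance from $\sigma$ to $\gamma|_{[s,t]}$. First I would dispose of the trivial configurations. If $\gamma(s)$ and $\gamma(t)$ both lie on $[x,y]$, then $\gamma|_{[s,t]}$ is a subsegment of $[x,y]$ and the bound $N(k,c)$ is immediate because $[x,y]$ is $N$-Morse; the case of two points on $[y,z]$ is symmetric. So we may assume $\gamma(s)=a\in[x,y]$, $\gamma(t)=b\in[y,z]$, and then $\gamma|_{[s,t]}=[a,y]\cup[y,b]$ with $[a,y]\subseteq[x,y]$ and $[y,b]\subseteq[y,z]$ both $N$-Morse. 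After replacing $\sigma$ by a tame (continuous, rectifiable, linearly length-controlled) quasi-geodesic at bounded Hausdorff distance, which changes $(k,c)$ by amounts depending only on $(k,c)$, we may assume $\sigma$ is a continuous rectifiable path.

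The key step is to show that $\sigma$ must pass uniformly close to the concatenation point $y$. Let $p$ be a point of $\sigma$ nearest to $y$ (it exists since $\sigma$ has compact image), and split $\sigma=\sigma_a\cup\sigma_b$ with $\sigma_a\colon a\to p$ and $\sigma_b\colon p\to b$. Choose a geodesic $[p,y]$ and set $P:=\sigma_a\cup[p,y]$, a path from $a$ to $y$. I claim $P$ is a $(4k,c_0)$-quasi-geodesic, where $c_0=c_0(k,c)$ is the constant of $\sigma_a$ after arc-length reparametrization. Parametrize $P$ by arc length on $[0,\ell]$ with $\sigma_a=P|_{[0,\ell_1]}$ and $[p,y]=P|_{[\ell_1,\ell]}$, so $\mu:=d(p,y)=\ell-\ell_1$; the upper inequality is automatic. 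For $0\le s'\le\ell_1\le t'\le\ell$, the crucial point is that since $p$ realizes the minimal distance from $\sigma$ to $y$ we have $d(P(s'),y)\ge\mu$, while $d(P(t'),y)=\ell-t'$ since $P|_{[\ell_1,\ell]}$ runs along $[p,y]$ toward $y$; hence $d(P(s'),P(t'))\ge\mu-(\ell-t')=t'-\ell_1$. Combining this with $d(P(s'),P(t'))\ge d(P(s'),p)-d(p,P(t'))\ge\bigl(\tfrac1k(\ell_1-s')-c_0\bigr)-(t'-\ell_1)$ and averaging yields $d(P(s'),P(t'))\ge\tfrac1{4k}(t'-s')-\tfrac{c_0}{4}$; the regimes where $s',t'$ lie on the same side are easier. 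Thus $P$ is a controlled quasi-geodesic from $a$ to $y$ with both endpoints on $[x,y]$, and the same argument applied to $[y,p]\cup\sigma_b$ (using that $p$ is also the point of $\sigma_b$ nearest $y$) gives a controlled quasi-geodesic from $y$ to $b$ with both endpoints on $[y,z]$.

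To finish I would feed these into the Morse hypotheses. Since $[x,y]$ is $N$-Morse, $P$ lies in the $N(4k,c_0)$-neighborhood of the subsegment $[a,y]$; in particular $\sigma_a$ lies in $\mathcal{N}_{N(4k,c_0)}([a,y])\subseteq\mathcal{N}_{N(4k,c_0)}(\gamma|_{[s,t]})$. Symmetrically $\sigma_b\subseteq\mathcal{N}_{N(4k,c_0)}([y,b])\subseteq\mathcal{N}_{N(4k,c_0)}(\gamma|_{[s,t]})$. Hence $\sigma\subseteq\mathcal{N}_{N(4k,c_0)}(\gamma|_{[s,t]})$, and setting $M(k,c):=\max\{N(k,c),\,N(4k',c_0')\}$, where $k',c_0'$ are the constants built from $k,c$ in the reduction and reparametrization above, shows $\gamma$ is $M$-Morse with $M$ depending only on $N$.

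The hard part is the lower-bound estimate for $P$ in the middle paragraph: appending the geodesic $[p,y]$ to the quasi-geodesic $\sigma_a$ can in general wreck the quasi-geodesic inequality through backtracking, and the precise reason for taking $p$ to be a \emph{closest} point of $\sigma$ to $y$ is that $\sigma$ then cannot backtrack ``towards $y$'', which is exactly what forces the estimate through with constants independent of the lengths of $[x,y]$ and $[y,z]$ (this length-independence is essential, since only a fixed gauge $N$ is assumed). The hypothesis that $\gamma=[x,y]\cup[y,z]$ is a geodesic enters only to guarantee that $\gamma|_{[s,t]}=[a,y]\cup[y,b]$ is itself a geodesic, so that the displayed neighborhood bound is literally the $M$-Morse condition for $\gamma$.
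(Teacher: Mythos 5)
Your argument is correct, but it takes a different route from the paper. The paper's proof is a one-line reduction: since $\gamma$ is assumed to be a geodesic from $x$ to $z$, the three segments $[x,y]$, $[y,z]$, $\gamma$ form a (degenerate) geodesic triangle with two $N$-Morse sides, and Lemma~2.2 of \cite{Cordes2017} then says the third side is $N'$-Morse. You instead verify the Morse condition for $\gamma$ from scratch: the only nontrivial configuration is a quasi-geodesic $\sigma$ from $a\in[x,y]$ to $b\in[y,z]$, and your device of splitting $\sigma$ at a point $p$ closest to $y$ and appending the geodesic $[p,y]$ to each half is exactly what makes the two halves into quasi-geodesics with both endpoints on a single $N$-Morse side, with constants controlled independently of the lengths involved. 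The quasi-geodesic estimate for $\sigma_a\cup[p,y]$ checks out: the minimality of $d(p,y)$ gives $d(P(s'),y)\ge\mu$ for every point of $\sigma_a$, which is precisely what rules out the backtracking that would otherwise destroy the lower bound, and your averaging of the two lower bounds does yield a $\bigl(4k,c_0\bigr)$-quasi-geodesic. What the paper's citation buys is brevity and a clean black box; what your proof buys is self-containedness (you are essentially reproving the content of the cited triangle lemma in the special case needed). The only points I would tighten are bookkeeping: the taming/arc-length reparametrization changes the multiplicative constant as well as the additive one (you keep writing $\tfrac1k$ where it should be the tamed constant), and at the very end one should note that the resulting gauge $M(k,c)=N(4k',c_0'(k,c))$ can be replaced by a gauge that is non-decreasing and continuous in $c$, as the definition requires; both are routine.
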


\begin{proof}
    First note that $[x,y]\cup[y,z] \cup \gamma$ is a geodesic triangle with two $N$-Morse sides $[x, y]$ and $[y,z]$. Now we apply Lemma 2.2 of \cite{Cordes2017} to conclude that $\gamma$ is $N'$-Morse where $N'$ depends on $N$.
\end{proof}

\subsection{Coxeter groups}

A \emph{Coxeter graph} is a simplicial graph $\Gamma$ whose edges are labeled by integers in $\{2,3, 4, \dots\}$. Given two vertices $s, t \in \Gamma$, we let $m_{st}$ denote the label between them. We let $V(\Gamma)$ denote the vertex set of $\Gamma$.
The \emph{Coxeter group $W_\Gamma$} associated to the Coxeter graph $\Gamma$ is the group with presentation
\[\left \langle V(\Gamma) ~|~ s^2 = 1 ~\text{ for all } s \in V(\Gamma),~(st)^{m_{st}} = 1 \text{ for each } (s,t) \in E(\Gamma) \right \rangle\] 
\begin{remark}
    Another common convention is that two vertices $s$ and $t$ in the Coxeter group commute in $W_\Gamma$ if they are non-adjacent. For our purposes, we do not use this description. Note that, in our convention, two non-adjacent vertices generate an infinite dihedral group. 
\end{remark}

Given a subgraph $\Delta \subset \Gamma$, we say that $\Delta$ is \emph{induced} if every edge in $\Gamma$ between vertices of $\Delta$ is also an edge of $\Delta$. 
The subgraph \emph{induced by $\Delta$} is the the induced subgraph of $\Gamma$ with vertex set $V(\Delta)$.
By the usual abuse of notation, we let $W_\Delta$ denote the subgroup of $W_\Gamma$ generated by the vertices of $\Delta$. 
It is well known that this subgroup is indeed isomorphic to $W_{\Delta'}$ where $\Delta'$ is the Coxeter graph induced by $\Delta$ (see \cite{Davis} for instance).

Let $W_\Gamma$ be a Coxeter group. 
We say that $W_\Gamma$ is \emph{irreducible} if it does not split as a product of groups and $W_\Gamma$ is \emph{affine} if it is irreducible and a Euclidean reflection group. 
The classification of affine Coxeter groups is well-known 
(see \cite{Davis}).
The \emph{rank} of a Coxeter group $W_\Gamma$ is the number of vertices in $\Gamma$.
We say that $W_\Gamma$ is \emph{affine-free} if for any subgraph $\Delta \subset \Gamma$, $W_\Delta$ is not an affine Coxeter group of rank $3$ or larger. 
Note, this is equivalent to saying that $W_\Gamma$ has no irreducible affine reflection subgroups of rank $3$ or larger \cite{Caprace-conjugacy-2spherical}.

Let $w = s_1 \dots s_n$, with each $s_i \in V(\Gamma)$. We call $w$ a \emph{word} $W_\Gamma$. 
The word $w'$ is an \emph{expression} for the word $w$ if these words are equal as elements of $W_\Gamma$. The word $w$ is \emph{geodesic} if it is a minimal length expression for the corresponding element of $W_\Gamma$, i.e., $n$ is minimal over all possible expressions for $w$. We let $\text{Label}(w)$ denote the set of vertices in $t \in V(\Gamma)$ such that $t = s_i$ for some $1 \le i \le n$.  An element $w \in W_\Gamma$ is called a \emph{reflection} if it is conjugate to an element of $V(\Gamma)$. 

The Davis complex $\Sigma$ associated to $W_\Gamma$ is a piecewise Euclidean $\mathrm{CAT}(0)$ cell complex whose $1$-skeleton is the Cayley graph of $W_\Gamma$ respect to the generating set $V(\Gamma)$. The action of $W_\Gamma$ on its Cayley graph induces a properly discontinuous, cocompact action on $\Sigma$. By definition, a \emph{wall} of $\Sigma$ is the fixed point set of a reflection of $W_\Gamma$. Given a wall $H$ associated to a reflection $gsg^{-1}$ with $s \in V(\Gamma)$ and $g \in W_\Gamma$, we say that $H$ has \emph{type} $s$. Note that in Coxeter groups, no two distinct elements of $S$ are conjugate in $W$, so this is a well-defined notion. For more on the Davis complex see \cite[Chapters 7 and 12]{Davis}.

%We  denote the Davis complex of $W_\Gamma$ by $\Sigma_\Gamma$ or simply by $\Sigma$ when $\Gamma$ is understood. Recall that there are two ways to describe the Davis complex: the cellular version which has the Cayley graph as its one skeleton, and the other which is seen as the geometric realization of the poset of spherical cosets. Unless otherwise stated, we will be using the cellular version. Recall that the latter construction can be seen as taking the barycentric subdivision of the Coxeter polytopes appearing in the cellular version.
 %Given a reflection $w$, the \emph{wall} corresponding to $w$ consists of all simplices of the Davis complex (as a geometric realization) which are fixed by $w$. This is codimension-1 subcomplex (for more background on walls, see \cite{niblo}). 
By a \emph{path} in $\Sigma$, we shall mean a path in the $1$-skeleton of $\Sigma$.
We let $\overline{\gamma}$ denote the word obtained by reading the labels of edges of $\gamma$ in order.
A path $\gamma$ in $\Sigma$ is \emph{geodesic} if it is a geodesic in the Cayley graph of $W_\Gamma$.
Equivalently, $\gamma$ is geodesic if and only if every wall in $\Sigma$ intersects $\gamma$ at most once if and only if $\bar{\gamma}$ is geodesic.

When we use the $\mathrm{CAT}(0)$ metric on $\Sigma$, we will call a geodesic path a \emph{CAT(0) geodesic path}. Note that CAT(0) geodesics are not necessarily contained in the $1$-skeleton of $\Sigma$.

\subsection{van-Kampen diagrams over Coxeter groups}
We refer to \cite[1.4]{Bahls} for additoinal background on van-Kampen diagrams over Coxeter groups.
Let ${D}$ be a van-Kampen diagram over a Coxeter group $W_\Gamma$. There is a label-preserving combinatorial map from ${D}$ to $\Sigma_\Gamma$. Under this map, edges and vertices of ${D}$ go to edges and vertices of the Cayley graph (thought of as the $1$-skeleton of $\Sigma$). We may assume that each $2$-cell in ${D}$ is an even length polygon, since every relation is of even length and any $2$-cell with boundary with boundary label of the form $s^2$ maps to an edge of $\Sigma$ and can therefore be collapsed to an edge in ${D}$. 

Recall that each $2n$-gon in $\Sigma$ is intersected by $n$-walls which all intersect in the center of $2n$-gon. Using the projection of ${D}$ to $\Sigma$, we can pull back these $n$ intersections to ${D}$ as $n$ line segments which we refer to as wall segments. For each cell of $D$, we can draw in its wall segments. Note that each edge of $D$ is either a boundary edge or is contained in two polygons. In the latter case, there are exactly two wall segments whose intersection as contained in this edge. We say that these two wall segments are adjacent. Note that, starting at any wall segment in a polygon of ${D}$, one can extend this segment through adjacent wall segments. This can be done in both directions. A maximal such collection of segments is called a \emph{dual curve of ${D}$}. Note that the projection of a dual curve to $\Sigma$ is contained in a unique wall. 
%Dual curves do not self intersect (see \cite[Lemma 1.7]{Bahls}) 

\section{Wide-avoidant and wide-spherical-avoidant Coxeter groups} \label{sec:wide-avoidant_def} 

In this section we set up some definitions which are crucial for the rest of the paper. The first such notion is that of a wide group which is a broad definition \cite{Behrstock-Drutu-Mosher}. We define it here in the setting of special subgroups of Coxeter groups (see \cite{Behrstock-Hagen-Sisto}).

\begin{definition} \label{def:wide_subgroups}
Let $W_\Gamma$ be a Coxeter group.
We call an induced subgraph $\Delta \subset \Gamma$ \emph{wide} if there is a decomposition $V(\Delta) = P \sqcup Q$ (with $Q$ possibly empty) such that there is an edge labeled by $2$ between  any vertex in $P$ and any vertex in $Q$, and additionally either
\begin{enumerate}
    \item  both $W_{P}$ and $W_{Q}$ are infinite, or
    \item $W_P$ is irreducible, affine of rank at least $3$. 
\end{enumerate}
 We call $(P,Q)$ a \emph{wide decomposition} of $\Delta$. Note that, when $Q$ is not empty, $W_\Gamma \cong W_P \times W_Q$. We also say that $W_\Gamma$ is wide if $\Gamma$ is wide.
\end{definition}

Wide subgraphs are of importance to us, as they exactly correspond to the special subgroups with empty Morse boundary:

\begin{lemma} \label{lem:non_empty_bdry}
 An infinite Coxeter group has empty Morse boundary if and only if it is wide.
\end{lemma}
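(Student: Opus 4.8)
The plan is to prove the two implications separately, with the ``wide $\Rightarrow$ empty'' direction done by hand and the converse done by reducing to the irreducible case and quoting structure theory.

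\emph{($\Leftarrow$) If $\Gamma$ is wide then $\partial_* W_\Gamma = \emptyset$.} Fix a wide decomposition $(P,Q)$ of $\Gamma$, so $W_\Gamma \cong W_P \times W_Q$; I would work in the CAT(0) product $\Sigma_P \times \Sigma_Q$, which is quasi-isometric to $W_\Gamma$ and so harmless by Theorem~\ref{thm:Morse_boundar_QI_invariant}. In case (1) of Definition~\ref{def:wide_subgroups} both factors are unbounded, and I would show that no geodesic ray $\gamma$ is Morse via the classical ``detour'' trick: given the sub-ray of $\gamma$ from $\gamma(0)$ to $\gamma(t)$, replace it by an $L$-shaped path that moves first in the $\Sigma_P$-direction and then in the $\Sigma_Q$-direction, pushing the corner off to infinity along a geodesic ray of the unbounded factor in the degenerate case where $\gamma$ has a constant coordinate. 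The Pythagorean identity for the product metric bounds the quasi-geodesic constants of these paths independently of $t$, while the distance from the corner to $\gamma|_{[0,t]}$ grows linearly in $t$, so no Morse gauge can control them. In case (2), $W_P$ is affine of rank $\ge 3$ and hence acts geometrically on $\mathbb{E}^n$ with $n \ge 2$; then $W_\Gamma$ is quasi-isometric to $\mathbb{E}^n$ (when $W_Q$ is finite) or to a product of two unbounded spaces (when $W_Q$ is infinite), and either way $\partial_* W_\Gamma = \emptyset$ by the previous argument and Theorem~\ref{thm:Morse_boundar_QI_invariant}. Alternatively, one could simply check that $W_\Gamma$ is a wide group in the sense of \cite{Behrstock-Drutu-Mosher} and quote that wide groups admit no Morse quasi-geodesics.

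\emph{($\Rightarrow$) If $W_\Gamma$ is infinite and not wide then $\partial_* W_\Gamma \neq \emptyset$.} I would argue the contrapositive by producing a Morse geodesic ray. First reduce to the irreducible case: write $W_\Gamma = W_{\Gamma_1}\times\cdots\times W_{\Gamma_k}$ with each $W_{\Gamma_i}$ irreducible (the $\Gamma_i$ being the components of the subgraph of $\Gamma$ recording which generators fail to commute). If two of the factors were infinite, grouping one of them against the union of the others would give a wide decomposition of type (1), contradicting the hypothesis; so exactly one factor, say $W_{\Gamma_1}$, is infinite. Then $W_\Gamma$ is quasi-isometric to $W_{\Gamma_1}$, so by Theorem~\ref{thm:Morse_boundar_QI_invariant} it suffices to find a Morse ray in $W_{\Gamma_1}$. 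Also $W_{\Gamma_1}$ cannot be affine of rank $\ge 3$, since otherwise $(\Gamma_1,\Gamma\setminus\Gamma_1)$ would be a wide decomposition of type (2). Thus $W_{\Gamma_1}$ is infinite, irreducible and not affine of rank $\ge 3$. If it has rank at most $2$ it must be the infinite dihedral group, which is quasi-isometric to $\mathbb{R}$ and hence has a non-empty (in fact two-point) Morse boundary. Otherwise $W_{\Gamma_1}$ is infinite, irreducible and non-affine, and I would invoke the known fact that such a Coxeter group admits a rank-one isometry of its Davis complex, i.e.\ a Morse element (this underlies the classification of wide Coxeter groups in \cite{Behrstock-Hagen-Sisto}); an axis of such an element is a Morse geodesic line, so its ends lie in $\partial_* W_{\Gamma_1}$, which is therefore non-empty.

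The routine parts are the product detour argument and the reduction to the irreducible factor; the only care needed there is to keep the quasi-geodesic constants fixed while letting the straying distance grow with $t$, and to confirm that the low-rank and degenerate affine cases are correctly absorbed into the ``affine of rank $\ge 3$'' exclusion. The main obstacle is the last step: establishing that an infinite, irreducible, non-affine Coxeter group contains a Morse element. This is not elementary and rests on deeper structure theory of Coxeter groups and the geometry of the Davis complex (recognition of Euclidean flats and of affine reflection subgroups, and criteria for rank-one isometries).
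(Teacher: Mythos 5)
Your proof is correct and follows essentially the same route as the paper: the wide direction is the standard product/Euclidean observation (the paper cites Drutu--Mozes--Sapir but also notes exactly this geometric argument), and the non-wide direction ultimately rests on the existence of a rank-one, hence Morse, element. The only cosmetic difference is that the paper quotes Caprace--Fujiwara's Proposition~4.5 directly in the form ``not wide implies a rank-one element exists,'' whereas you first reduce by hand to the single infinite irreducible factor before invoking the same underlying structure theory.
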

\begin{proof}
    If $W_\Gamma$ is not wide, then $W_\Gamma$ contains a rank 1 element by \cite[Proposition 4.5]{Caprace-Fujiwara}. As rank-1 elements in $\mathrm{CAT}(0)$ spaces are Morse (see \cite[Proposition~3.3]{Kapovich-Leeb} and \cite{Charney-Sultan}), the Morse boundary of $W_\Gamma$ is non-empty.
    On the other hand, if $W_\Gamma$ is wide, then it has no Morse geodesics by \cite{Drutu-Mozes-Sapir}. This can also readily be seen as then $W_\Gamma$ either splits as a product of infinite groups or is virtually affine of rank at least $3$ (and so, up to a finite-index subgroup, acts geometrically on $\mathbb{E}^n$
    for some $n \ge 2$).
\end{proof}

Next, we define the notion of wide-avoidant graphs/groups. We later show in Theorem~\ref{thm:not_prod_conn} that Coxeter groups that are not wide-avoidant have either empty or disconnected Morse boundary.
\begin{definition}[wide-avoidant] \label{def:wide-avoidant}
	We say that the Coxeter graph $\Gamma$ and the Coxeter group $W_\Gamma$ are \emph{wide-avoidant} if given any wide subgraph $\Delta$ and any pair of vertices $s, t \in \Gamma$, then there exists a path $\alpha \subset \Gamma$ from $s$ to $t$ such that $\alpha \cap \Delta \subset \{s, t\}$.
\end{definition}

We next need the notion of wide-spherical-avoidant graphs/groups. Essentially, the definition is similar to wide-avoidant, but paths needs to avoid both a wide and spherical subgraph. Moreover, the actual definition of wide-spherical-avoidant is stronger than this (See Definition~\ref{def:wide-spherical-avoidant}(3)), allowing us to eventually prove Morse boundary connectivity properties of a larger set of Coxeter groups (see Theorem~\ref{thm:MB is connected}).

\begin{definition}[Special Join]
    Given a Coxeter graph $\Gamma$, a \emph{special join} is a triplet $(P, Q, K)$ where $P$, $Q$ and $K$ are induced, disjoint subgraphs of $\Gamma$ such that:
    \begin{enumerate}
        \item $(P,Q)$ is a wide decomposition of the induced subgraph spanned by $P \cup Q$.
        \item $K$ is either empty or $W_K$ is finite
        \item For all $k \in K$ and $s \in P$, there is an edge (with any label) in $\Gamma$ from $k$ to $s$.
    \end{enumerate}
\end{definition}

\begin{remark} \label{rmk:special_joins_in_racgs}
    If $W_\Gamma$ is a \emph{right-angled} Coxeter group, then special joins correspond to wide subgroups and vice versa. 
    For let $(P, Q, K)$ be a special join in $\Gamma$. Then $W_{P \cup Q \cup K} = W_P \times W_{Q \cup K}$
    is a wide subgroup. Conversely, suppose that $W_{\Delta}$ is a wide subgroup of a right-angled Coxeter group.
    Then $\Delta$ has a wide decomposition $(P, Q)$, and $(P, Q, \emptyset)$ is a special join. 
\end{remark}

\begin{definition}[wide-spherical-avoidant] \label{def:wide-spherical-avoidant}
	We say that the Coxeter graph $\Gamma$ and the Coxeter group $W_\Gamma$ are \emph{wide-spherical-avoidant} if given any special join $(P, Q, K)$ and any pair of vertices $s, t \in \Gamma \setminus K$, then there exists a path $\alpha$ in $\Gamma$ from $s$ to $t$ such that $\alpha \cap (K \cup P \cup Q) \subset \{s, t\}$.
\end{definition}

Throughout the remainder of the paper, we will need the following important constants associated to a Coxeter group.

\begin{definition} \label{def:constants}
    Let $W_\Gamma$ be a Coxeter group. 
    \begin{itemize}
        \item Define $V_\Gamma$ be the number of vertices of $\Gamma$.
        \item Define $M_\Gamma$ be the smallest constant such that $W_{\text{Label}(w)}$ is infinite
        for any geodesic word $w$ with $|w| > M_\Gamma$.
        \item Define $R_{\Gamma} = \max\{ m_{st}~|~(s,t) \in E(\Gamma)\}$ (where $m_{st}$ is the label of the edge $(s,t)$).
    \end{itemize}
\end{definition}

    Note that $M_\Gamma$ is simply the maximal length of a word contained in a finite special subgroup of $W_\Gamma$. 

\section{Disconnected Morse boundaries} \label{sec:disconnected}

The main result of this section, Theorem~\ref{thm:not_prod_conn}, 
shows that if a Coxeter groups is not wide-avoidant, then it either has empty or disconnected Morse boundary.  

The next lemma gives necessary conditions for the Morse boundary of a Coxeter group to be empty or disconnected. 
To prove the lemma we will use the relative Morse boundary introduced by Fioravanti and Karrer in \cite{Fioravanti-Karrer}. 
In short,  the \emph{relative Morse boundary} of a subgroup $H$ in $G$, denoted by $(\partial_*H, G)$ is the subspace of the Morse boundary $\partial_* G$ of $G$ that consists of all equivalence classes of Morse geodesic rays in $G$ which are at bounded distance from~$H$.

\begin{lemma} \label{lem:bdry_of_splitting}
    Let $G = G_1 \star_H G_2$ be a non-trivial splitting 
    such that $H$ is finitely generated, undistorted and has empty relative Morse boundary. 
    Then, the Morse boundary of $G$ is empty or disconnected. 
\end{lemma}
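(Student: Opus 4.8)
The plan is to use the Bass--Serre tree $T$ associated to the splitting $G = G_1 \star_H G_2$ together with the hypothesis that $H$ has empty relative Morse boundary to show that any Morse geodesic ray $\gamma$ in $G$ must eventually leave every coset of $H$ and, more importantly, that the induced ``itinerary'' of $\gamma$ in $T$ is an infinite ray. First I would recall that since $H$ is finitely generated and undistorted, the action of $G$ on $T$ is such that for any geodesic ray $\gamma$ in (a Cayley graph of) $G$, the coarse intersections of $\gamma$ with the vertex-group cosets $g G_i$ are controlled, and $\gamma$ projects to a (possibly constant, possibly finite, possibly infinite) path in $T$. The key observation is that $\gamma$ cannot spend an unbounded amount of time in a single coset $gHg^{-1}$ of the edge group: such a coset has empty relative Morse boundary, so an $N$-Morse ray fellow-travelling it for arbitrarily long stretches would force a non-Morse ``sub-behaviour'', contradicting that $\gamma$ is $N$-Morse (one makes this precise by extracting a Morse ray in the coset, or by a limiting argument on the relative Morse boundary of $H$). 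Hence $\gamma$ crosses infinitely many edges of $T$, so its itinerary is a genuine ray $\xi(\gamma) \in \partial T$, and asymptotic rays have the same endpoint in $\partial T$.

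Next I would use this to build a continuous, non-constant map $\partial_* G \to \partial T$, $[\gamma] \mapsto \xi(\gamma)$. Continuity on each stratum $\partial_*^N G$ follows because the neighbourhood basis $V_n^N(\alpha)$ consists of rays fellow-travelling $\alpha$ up to time $n$ at distance $\delta_N$, and crossing the first $k$ edges of $T$ is a condition visible within a bounded initial segment (bounded in terms of $N$, using undistortedness of $H$ so that edge-crossings are ``spread out'' at a definite rate); so nearby rays have itineraries agreeing on a long prefix. Compatibility across strata then gives continuity on the direct limit $\partial_* G$. The map is non-constant: since the splitting is non-trivial, $T$ is unbounded, $\partial T$ has at least two points, and because $G_1, G_2$ act on $G$ one can find Morse rays heading into different ends of $T$ — either $\partial_* G$ is empty (and we are done), or it is non-empty and contains a Morse ray $\gamma$; translating $\gamma$ by a hyperbolic element of $G$ (if $\partial T$ is infinite) or using a second Morse ray in a different vertex group produces rays with distinct $T$-endpoints. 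Alternatively, and more robustly, one argues that $\xi$ is locally constant: the set of $[\gamma]$ with $\xi(\gamma) = \eta$ is open for each $\eta \in \partial T$, since once a ray has crossed a given edge $e$ of $T$ it is committed to the half-tree beyond $e$, and ``having crossed $e$'' is an open condition on $\partial_*^N G$ for each $N$. A continuous map from $\partial_* G$ to the totally disconnected space $\partial T$ that is locally constant and non-constant exhibits $\partial_* G$ as disconnected.

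The main obstacle I anticipate is the technical heart of the first paragraph: showing rigorously that a Morse ray cannot linger in an edge coset $gHg^{-1}$ for arbitrarily long, and extracting from this that the $T$-itinerary is infinite with well-defined limit, uniformly enough that ``crossing the $k$-th edge'' is an open condition on each Morse stratum. This requires combining the empty-relative-Morse-boundary hypothesis with undistortedness to get a bound, depending only on the Morse gauge $N$, on how long an $N$-Morse ray can stay $C$-close to any $H$-coset; a limiting/compactness argument (take longer and longer sub-rays in the coset, pass to a limit geodesic ray which is $N$-Morse and lies in the relative Morse boundary of $H$, contradiction) should deliver this, but the quantifiers must be handled carefully so that the openness needed for continuity of $\xi$ on $\partial_*^N G$ genuinely holds. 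Once that uniform bound is in hand, the rest — continuity, local constancy, non-constancy, hence disconnectedness — is comparatively routine.
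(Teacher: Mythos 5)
There is a genuine gap at the heart of your first paragraph. From the claim that a Morse ray $\gamma$ cannot linger in a coset of the \emph{edge} group $H$ you conclude that $\gamma$ ``crosses infinitely many edges of $T$,'' so that its itinerary defines a point $\xi(\gamma)\in\partial T$. This does not follow: the hypothesis of the lemma controls only $H$, not the vertex groups $G_1,G_2$, and a Morse ray may well stay inside a single vertex coset $gG_i$ forever (its itinerary in the Bass--Serre tree is then constant, crossing no edges at all). This happens exactly when the relative Morse boundary $(\partial_* G_i, G)$ is non-empty, which is permitted --- and is in fact the generic situation in this paper, where the vertex groups carry most of the Morse directions. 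For such rays your map $\xi$ is undefined, so the continuous locally constant map $\partial_*G\to\partial T$ does not exist as described, and the argument collapses precisely in the case that needs the most care. (Extending the target to $\partial T\sqcup V(T)$ would force you to re-prove continuity at the vertex points, which is where the real work lies.)

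The correct argument requires a dichotomy on the vertex groups, and this is what the paper does by citing Fioravanti--Karrer: if both $(\partial_* G_1,G)$ and $(\partial_* G_2,G)$ are empty, then every Morse ray does have an infinite itinerary and $\partial_* G$ is totally disconnected (their Corollary~B) --- this is the one branch your proposal roughly captures, though the infinitude of the itinerary there comes from the vertex-group hypothesis, not from the hypothesis on $H$; if instead some $(\partial_* G_i,G)$ is non-empty, one uses that distinct cosets $gG_i$ and $g'G_i$ contribute disjoint closed subsets of $\partial_*G$ (their Remark~3.6), the role of the hypothesis on $H$ being to ensure these pieces cannot be joined through the edge group. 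Your uniform ``no lingering in $H$-cosets'' limiting argument is a reasonable ingredient for the latter separation, but on its own it does not produce the tree-boundary map you build the proof around.
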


\begin{proof}
    Assume that the Morse boundary of $G$ is non-empty. 
    If the relative Morse boundaries $(\partial_* G_1, G)$ and $(\partial_* G_2,G)$ are both empty, then $\partial_* G$ is totally disconnected by Corollary~B of \cite{Fioravanti-Karrer} and thus disconnected. 
    Thus, without loss of generality, we assume that $(\partial_* G_1, G)$ is non-empty. Then we know any coset $gG_1$ of $G_1$ has a Morse geodesic. By Remark~3.6 of \cite{Fioravanti-Karrer}, we are in two disjoint closed subsets of $\partial_* G$ and thus $\partial_* G$ is disconnected. 
\end{proof}

The proof of the following lemma is similar to that of \cite[Lemma 5.3]{Mihalik-Ruane-Tschantz}.
\begin{lemma}\label{lem:not_prod_conn}
    Let $W_\Gamma$ be a Coxeter group that is not wide-avoidant. Then one of the following holds:
    \begin{enumerate}
        \item $W_\Gamma$ is wide,
        \item $W_\Gamma$ splits over a finite group or 
        \item There is a non-trivial decomposition $\Gamma = \Gamma_1 \cup \Gamma_2$ such that $W_\Gamma = W_{\Gamma_1} *_{W_\Delta} W_{\Gamma_2}$ where $\Delta = \Gamma_1 \cap \Gamma_2$, $W_{\Delta}$ infinite and $\Delta$ is contained in a wide subgraph.
    \end{enumerate}
\end{lemma}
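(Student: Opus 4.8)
The plan is to unwind the failure of wide-avoidance into a combinatorial separation statement about $\Gamma$, and then feed that separation into a Bass--Serre / graph-of-groups decomposition of $W_\Gamma$. By Definition~\ref{def:wide-avoidant}, failure of wide-avoidance means there exist a wide subgraph $\Delta$ and vertices $s, t \in \Gamma$ such that \emph{every} path from $s$ to $t$ in $\Gamma$ meets $V(\Delta) \setminus \{s,t\}$; in other words, $\Delta' := V(\Delta) \setminus \{s,t\}$ separates $s$ from $t$ in $\Gamma$ (and in particular $s,t$ lie in distinct components of $\Gamma \setminus \Delta'$, or one of $s,t$ lies in $\Delta'$). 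The first step is to reduce to a genuine separating subgraph: if $s \in V(\Delta)$ or $t \in V(\Delta)$ we can still extract a subset $\Lambda \subseteq V(\Delta)$ that is a minimal vertex-separator of $\Gamma$ between two vertices not in $\Lambda$ (if this fails, one should chase down the degenerate cases — e.g. $\Gamma$ itself wide, giving case (1) — which is exactly what the statement allows). So after this reduction we may assume there is an induced subgraph $\Lambda \subseteq \Delta$ with $\Lambda$ separating $\Gamma$ into $\Gamma = \Gamma_1 \cup \Gamma_2$, $\Gamma_1 \cap \Gamma_2 = \Lambda$, both $\Gamma_i \setminus \Lambda$ nonempty, and $\Lambda$ contained in the wide subgraph $\Delta$.

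The second step is the standard special-subgroup visual splitting: whenever $\Gamma = \Gamma_1 \cup \Gamma_2$ with $\Gamma_1 \cap \Gamma_2 = \Lambda$ an induced separating subgraph, one has $W_\Gamma = W_{\Gamma_1} *_{W_\Lambda} W_{\Gamma_2}$ (see \cite{Davis}; this uses that there are no edges between $\Gamma_1 \setminus \Lambda$ and $\Gamma_2 \setminus \Lambda$). Now split on whether $W_\Lambda$ is finite or infinite. If $W_\Lambda$ is finite, then $W_\Gamma$ splits over a finite group and we are in case (2). If $W_\Lambda$ is infinite, then since $\Lambda \subseteq \Delta$ and $\Delta$ is wide, we land in case (3) — provided we are careful that the splitting is \emph{non-trivial}, i.e. neither $\Gamma_i$ equals $\Gamma$, which is guaranteed because $\Gamma_i \setminus \Lambda \neq \emptyset$ for $i = 1, 2$.

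The main obstacle I expect is the first step: converting "$\Delta' = V(\Delta)\setminus\{s,t\}$ blocks all $s$--$t$ paths" into a clean statement "$\Lambda \subseteq \Delta$ is an induced subgraph that genuinely separates $\Gamma$ and is disjoint from at least one vertex on each side." The subtleties are (a) $s$ or $t$ could belong to $V(\Delta)$, so $\Delta$ might not literally separate anything after removing $\{s,t\}$ from it — one must argue that some subset of $V(\Delta)$ still does, or else $\Gamma$ is forced to be wide (case 1); (b) one must ensure both complementary pieces are non-empty so the amalgam is non-degenerate; (c) the induced-subgraph bookkeeping, so that $W_\Lambda$ is genuinely the vertex-subgroup. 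I would handle (a)–(b) by taking $\Lambda$ to be a minimal subset of $V(\Delta)$ whose removal disconnects some pair of vertices of $\Gamma$, and noting minimality forces every vertex of $\Lambda$ to have neighbours in at least two complementary components; the degenerate case where no such $\Lambda$ exists must terminate in $\Gamma$ being wide. This mirrors the structure of \cite[Lemma~5.3]{Mihalik-Ruane-Tschantz}, which the paper cites as the model, so the bookkeeping should go through with the wide-subgraph hypothesis replacing their product/virtual-factor separator hypotheses.
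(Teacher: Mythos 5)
Your strategy is sound and, once the first step is carried out, is actually a bit more streamlined than the paper's argument. The key observation --- which you state correctly in your opening sentence and then unnecessarily back away from --- is that $\Lambda_0 := V(\Delta)\setminus\{s,t\}$ \emph{already} separates $s$ from $t$: failure of wide-avoidance says no $s$--$t$ path avoids $\Lambda_0$, and $s,t\notin\Lambda_0$ by construction, so $s$ and $t$ lie in distinct components of $\Gamma\setminus\Lambda_0$. Your worry (a) is therefore a non-issue and no minimality argument is needed: let $C$ be the component of $\Gamma\setminus\Lambda_0$ containing $s$, let $\Gamma_1$ and $\Gamma_2$ be the subgraphs induced by $C\cup\Lambda_0$ and $\Gamma\setminus C$, and the visual splitting $W_\Gamma = W_{\Gamma_1}\ast_{W_{\Lambda_0}}W_{\Gamma_2}$ is non-trivial because $t\notin\Gamma_1$ and $s\notin\Gamma_2$. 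The only genuine degenerate case is $\Lambda_0=\emptyset$, which forces $\Gamma$ to be disconnected and lands in case (2) (a splitting over the trivial group) --- not, as you speculate, in ``$\Gamma$ wide''; that speculation is wrong, but harmlessly so, since the case it is meant to cover never produces wideness. The finite/infinite dichotomy on $W_{\Lambda_0}$ then yields (2) or (3) exactly as you say, with $\Lambda_0\subseteq V(\Delta)$ guaranteeing that the amalgamating subgraph sits inside a wide subgraph.

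The paper takes a different middle route: it cases on the number of components of $\Gamma\setminus\Pi$, where $\Pi$ is the wide subgraph witnessing the failure. When there are at least two components it amalgamates over all of $W_\Pi$, which is automatically infinite (so it lands directly in (3) with no finiteness check); when $\Gamma\setminus\Pi$ is connected it shows that $\mathrm{star}(s)\subset\Pi$ or $\mathrm{star}(t)\subset\Pi$ and amalgamates over the corresponding link, which lies in $\Pi$ and must be infinite once (2) is excluded. Your version avoids that case analysis entirely at the cost of having to test $W_{\Lambda_0}$ for finiteness in all cases; both arguments are correct and reach the same conclusion.
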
 
\begin{proof}
    First note that $W_\Gamma$ is infinite, as, by definition, finite Coxeter groups are wide-avoidant. 
    We suppose that (1) and (2) do not hold and show that (3) holds.
    As $\Gamma$ is not wide-avoidant, there exists a wide subgraph $\Pi$, 
    and vertices $s, t \in \Gamma$, such that if $\gamma \subset \Gamma$ 
    is a path from $s$ to $t$, then 
    $\gamma \cap \Pi \not\subset \{s, t\}$.
    
    There are a few cases to check, depending on the number of components of $\Gamma \setminus \Pi$.
    First note that $\Gamma \setminus \Pi$ must have at least one component because are assuming that $W_\Gamma$ is not wide.
    
    Suppose first that $\Gamma \setminus \Pi$ contains more than one component. Let $C$ be one such component.
    We set $\Gamma_1$ to be the graph induced by $C \cup \Pi$, and we set $\Gamma_2$ to be the graph induced by $\Gamma \setminus C$.
    Consequently, $\Gamma_1 \cap \Gamma_2 = \Pi$, and $W_\Gamma = W_{\Gamma_1} *_{W_\Pi} W_{\Gamma_2}$.
    Thus, (3) follows by setting $\Delta = \Pi$ and noting that wide groups are infinite.
    
    Suppose now that $\Gamma \setminus \Pi$ contains exactly one component. 
    We first show that either $\text{star}(s) \subset \Pi$ or $\text{star}(t) \subset \Pi$.
    Suppose, for a contradiction, that exists some $s' \in \text{star}(s) \setminus \Pi$ and $t' \in \text{star}(s) \setminus \Pi$.
    As $\Gamma \setminus \Pi$ contains exactly one component, there is a path $\gamma'$ in $\Gamma \setminus \Pi$ connecting $s'$ to $t'$.
    However, we now get a path $\gamma$ connecting $s$ to $t$ by concatenating the edge $(s, s')$, $\gamma'$ and the edge $(t, t')$.
    Moreover, $\gamma \cap \Pi = \{s, t\}$. 
    This is a contradiction.
	
	Thus, up to relabelling $s$ and $t$, we have that $\text{star}(s) \subset \Pi$.
	Let $\Delta$, $\Gamma_1$ and $\Gamma_2$ be the subgraphs induced respectively by $\text{link}(s)$, $\text{star}(s)$ and $\Gamma \setminus \{s\}$.
	It is now readily checked that (3) holds with these choices.
\end{proof}

\begin{theorem} \label{thm:not_prod_conn}
    Let $W_\Gamma$ be a Coxeter group that is not wide-avoidant. Then one of the following holds 
	\begin{enumerate}
		\item $W_\Gamma$ has empty Morse boundary and is wide, 
        \item $W_\Gamma$ has disconnected Morse boundary and either:
            \begin{enumerate}
                \item splits over a finite group, or 
                \item has a non-trivial decomposition $\Gamma = \Gamma_1 \cup \Gamma_2$ such that 
                $W_\Gamma = W_{\Gamma_1} *_{W_\Delta} W_{\Gamma_2}$ where $\Delta = \Gamma_1 \cap \Gamma_2$, and $\Delta$ is contained in a wide subgraph.
            \end{enumerate} 
	\end{enumerate}
\end{theorem}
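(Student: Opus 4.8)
The plan is to combine the structural dichotomy of Lemma~\ref{lem:not_prod_conn} with the boundary criterion of Lemma~\ref{lem:bdry_of_splitting}, treating the wide case separately via Lemma~\ref{lem:non_empty_bdry}. First I would dispense with the case that $W_\Gamma$ is wide: a wide Coxeter group is infinite (its defining graph contains an infinite special subgroup), so Lemma~\ref{lem:non_empty_bdry} immediately gives that its Morse boundary is empty, which is conclusion (1).

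So assume from now on that $W_\Gamma$ is not wide. Since it is also not wide-avoidant it is infinite (finite Coxeter groups have no wide subgraphs and so are vacuously wide-avoidant), and Lemma~\ref{lem:non_empty_bdry} tells us its Morse boundary is non-empty. Now invoke Lemma~\ref{lem:not_prod_conn}: case (1) of that lemma is excluded, so either $W_\Gamma$ splits over a finite group, or there is a non-trivial decomposition $W_\Gamma = W_{\Gamma_1} *_{W_\Delta} W_{\Gamma_2}$ with $W_\Delta$ infinite and $\Delta$ contained in a wide subgraph $\Pi$. In both cases the strategy is to verify the hypotheses of Lemma~\ref{lem:bdry_of_splitting} for the amalgamated subgroup $H$ --- finitely generated, undistorted, and with empty relative Morse boundary --- and then conclude that $\partial_* W_\Gamma$ is empty or disconnected; since we have already shown it is non-empty, it is disconnected, giving conclusion (2a) or (2b) respectively.

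The remaining content is checking those three properties of $H$. Finite generation is immediate: $H$ is either finite or a special subgroup of the finitely generated group $W_\Gamma$. Undistortedness is likewise routine: a finite subgroup is trivially undistorted, and special subgroups of Coxeter groups are quasi-isometrically embedded, their Davis complexes sitting as convex subcomplexes (see \cite{Davis}). Emptiness of the relative Morse boundary is where the hypothesis on $\Delta$ enters. When $H$ is finite it is clear, since a geodesic ray at bounded distance from a bounded set would itself be bounded. When $H = W_\Delta$, note that $W_\Delta \subseteq W_\Pi$, so any Morse geodesic ray of $W_\Gamma$ lying at bounded distance from $W_\Delta$ also lies at bounded distance from $W_\Pi$; hence it suffices to show $(\partial_* W_\Pi, W_\Gamma) = \emptyset$. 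Since $W_\Pi$ is undistorted, nearest-point projection sends such a ray to a Morse quasi-geodesic ray contained in $W_\Pi$ --- passing to bounded Hausdorff distance preserves the Morse property, and Morseness pulls back along a quasi-isometric embedding --- but $W_\Pi$ is wide and so has no Morse quasi-geodesics, a contradiction.

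I expect the last point --- producing the Morse quasi-geodesic inside $W_\Pi$ and thereby contradicting wideness --- to be the only step requiring care; everything else is bookkeeping on top of the three lemmas above. Alternatively, one can simply quote that an undistorted subgroup with empty absolute Morse boundary has empty relative Morse boundary, which follows from the framework of \cite{Fioravanti-Karrer} together with $\partial_* W_\Pi = \emptyset$ from Lemma~\ref{lem:non_empty_bdry}, and then apply the inclusion $(\partial_* W_\Delta, W_\Gamma) \subseteq (\partial_* W_\Pi, W_\Gamma)$.
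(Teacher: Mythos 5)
Your proposal is correct and follows essentially the same route as the paper: dispose of the wide case via Lemma~\ref{lem:non_empty_bdry}, invoke Lemma~\ref{lem:not_prod_conn} to obtain a splitting over either a finite group or an infinite $W_\Delta$ inside a wide subgraph, and apply Lemma~\ref{lem:bdry_of_splitting}. The only difference is that you spell out the verification of that lemma's hypotheses (finite generation, undistortion, empty relative Morse boundary via projection to the wide subgroup $W_\Pi$), which the paper leaves implicit.
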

\begin{proof}
        Note that $W_\Gamma$ must be infinite as otherwise it is wide-avoidant.
	If $W_\Gamma$ is wide, then the Morse boundary of $W_\Gamma$ is clearly empty.
	Thus, we assume that $W_\Gamma$ is not wide and show that (2) holds. 
	
	If $W_\Gamma$ has a non-trivial splitting over a finite group, then its Morse boundary is non-empty (any periodic geodesic that goes through infinitely many cosets is Morse) and is not connected by Lemma~\ref{lem:bdry_of_splitting}.
	Thus, (2) holds in this case.
	We can then further assume that $W_\Gamma$ does not split over a finite group.
	
	Note that the Morse boundary of $W_\Gamma$ is non-empty by Lemma~\ref{lem:non_empty_bdry}.
	By Lemma~\ref{lem:not_prod_conn} there is a decomposition $\Gamma = \Gamma_1 \cup \Gamma_2$ such that 
    $W_\Gamma = W_{\Gamma_1} *_{W_\Delta} W_{\Gamma_2}$ where $\Delta = \Gamma_1 \cap \Gamma_2$, $W_{\Delta}$ infinite and $\Delta$ 
    contained in a wide subgraph of $\Gamma$.
	Consequently, the Morse boundary of $W_\Gamma$ is not connected by Lemma~\ref{lem:bdry_of_splitting}.
\end{proof}

\section{Characterization of Morse geodesics \\in affine-free Coxeter groups} \label{sec:morse_geodesic_char}

The main result of this section is the following characterization of Morse geodesics.

\begin{theorem} \label{thm:morse_char}
	Let $W_\Gamma$ be an affine-free Coxeter group. A geodesic ray $\gamma$ in the Davis complex $\Sigma = \Sigma_\Gamma$ is Morse if and only if there exists a constant $K$ such that any subpath $\gamma' \subset \gamma$ of length larger than $K$ does not have $\text{Label}(\gamma')$ contained in a wide subgraph of $\Gamma$.
\end{theorem}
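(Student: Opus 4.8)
The plan is to prove the two implications separately, with the main work in the forward direction (Morse $\Rightarrow$ bounded time in wide subgraphs), since the reverse direction is where we must build quasi-geodesics and exploit Caprace's recognition of affine subgroups.

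For the easier direction, suppose there is no such constant $K$. Then for every $n$ there is a subpath $\gamma_n' \subset \gamma$ of length $> n$ whose label lies in a wide subgraph $\Delta_n$. Since $\Gamma$ has finitely many subgraphs, some wide $\Delta$ occurs infinitely often, giving arbitrarily long subsegments of $\gamma$ labeled in $W_\Delta$. The idea is that $W_\Delta$, being wide (Lemma~\ref{lem:non_empty_bdry}, \cite{Behrstock-Hagen-Sisto}), is not Morse: inside each translate $g W_\Delta$ a long geodesic subsegment can be ``flattened'' — either by a product structure ($W_P \times W_Q$ with both infinite) or by an affine $\mathbb{E}^k$ factor — to produce a quasi-geodesic with the same endpoints that strays arbitrarily far from $\gamma$. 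More carefully, one first passes to a subsegment of $\gamma$ that is entirely contained in a single coset $g\Sigma_\Delta$ by using that consecutive edges labeled in $\Delta$ trace a path in $g\Sigma_\Delta$, and $\Sigma_\Delta$ is convex in $\Sigma$; then one uses the non-Morseness of $W_\Delta$ relative to its own Cayley graph (which is undistorted in $W_\Gamma$ since special subgroups are convex) to get the detour. This contradicts $N$-Morseness of $\gamma$ for any fixed gauge $N$. Here I would want to be careful that the detour constructed inside $g\Sigma_\Delta$ has controlled quasi-geodesic constants independent of $n$, which is true because $W_\Delta$ is one fixed wide group.

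For the reverse direction, assume the constant $K$ exists; we must produce a Morse gauge. Suppose $\gamma$ is \emph{not} Morse. By the standard characterization of Morse geodesics (e.g.\ via \cite{Cordes2017} or the flat/divergence criteria), non-Morseness should manifest as a sequence of ``fat'' configurations along $\gamma$ of unbounded size: quasi-geodesics (or geodesic bigons after rechoosing constants) with endpoints on $\gamma$ that escape every neighborhood. The strategy, following the spirit of the open-questions remark, is to translate such a configuration of escaping geodesics into a pattern of walls crossing $\gamma$: many walls crossing a bounded-length subsegment of $\gamma$ that pairwise behave like the walls of a flat, i.e.\ they intersect each other in a grid-like fashion. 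At this point I invoke the affine-free hypothesis through Caprace's result \cite{Caprace-conjugacy-2spherical}, which recognizes irreducible affine reflection subgroups from exactly such wall-intersection patterns, together with the product case: a large grid of mutually crossing (or mutually parallel) walls forces either a large irreducible affine reflection subgroup or a product of two infinite reflection subgroups acting on the subsegment — in either case a wide special subgroup whose graph contains $\text{Label}$ of a long subpath of $\gamma$, contradicting the constant $K$. The passage from ``escaping geodesics near $\gamma$'' to ``grid of walls on a short subsegment'' is where I would use disk diagrams / van Kampen diagrams over $W_\Gamma$ and the CAT(0) bridge between $\gamma$ and the straying quasi-geodesic: the dual curves of the disk diagram filling the bigon give the walls, and the width of the diagram bounds below the size of a commuting (or anti-commuting) family of dual curves, via a pigeonhole/Ramsey-type argument on how pairs of dual curves interact (cross vs.\ nest).

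The main obstacle, and the technically heaviest part, is precisely this last passage: showing that unbounded straying of a quasi-geodesic from $\gamma$ forces, on a \emph{uniformly bounded} subsegment of $\gamma$, a large family of walls exhibiting the affine/product wall pattern recognized by Caprace. Controlling the disk diagram (bounding its width in terms of the straying distance, ruling out the walls from ``escaping'' through long thin fingers that don't correspond to a genuine flat, and extracting the commutation relations among the reflections dual to the relevant curves) is where the careful CAT(0)-bridge and disk-diagram analysis alluded to in the introduction is needed; the affine-free hypothesis enters exactly to guarantee that the recognized subgroup, if affine, has bounded rank and hence lies in a genuine wide subgraph of the fixed graph $\Gamma$ rather than in some uncontrolled higher-rank affine piece. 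Once that structural statement is in hand, both implications close up, and the equivalence with statement (3) of Theorem~\ref{thm:intro_thm_morse_char} — bounded time in cosets of wide special subgroups — is immediate from the definition of $M_\Gamma$-type bounds and convexity of special cosets.
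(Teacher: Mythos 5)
Your treatment of the easy direction (no such $K$ $\Rightarrow$ not Morse) is essentially the paper's: arbitrarily long subpaths labeled in a wide subgraph lie in convex cosets of wide special subgroups, where the product or Euclidean structure supplies detouring quasi-geodesics with uniform constants, contradicting any fixed Morse gauge. That part is fine.

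The hard direction ($K$ exists $\Rightarrow$ Morse) is where the proposal has a genuine gap, and it sits exactly at the step you yourself flag as the ``main obstacle.'' You argue by contradiction: non-Morseness should produce escaping quasi-geodesics, which should yield a grid-like pattern of walls on a \emph{uniformly bounded} subsegment of $\gamma$, to which Caprace's theorem applies. But nothing in the definition of (non-)Morseness forces escaping quasi-geodesics to organize into a flat-like wall pattern concentrated on a short piece of $\gamma$; the passage from ``a $(k,c)$-quasi-geodesic strays distance $n$'' to ``$\Lambda_\Gamma$ walls in ladder position crossing a bounded subsegment'' is precisely the content that needs a proof, and the pigeonhole/Ramsey sketch you give does not supply one (in particular, it does not rule out the straying being spread thinly along an ever-longer portion of $\gamma$). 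The paper avoids this extraction problem entirely by running the argument in the positive direction: it shows directly that a geodesic spending at most $K$ in wide subgraphs has \emph{quadratic lower divergence}, and then invokes the Charney--Sultan criterion that quadratic lower divergence implies Morse. Concretely, Proposition~\ref{prop:best_pencil} produces a pencil of $\sim |\gamma|/C$ pairs of walls $H_i,K_i$ crossing $\gamma$ whose bridges are \emph{not large}; largeness is excluded via the van Kampen diagram analysis of Proposition~\ref{prop:van_kampen} (a large bridge would force a long subpath of $\gamma$ into a wide subgraph, using Caprace's ladder theorem and Proposition~\ref{prop:contained_in_wide}); then Lemmas~\ref{lem:bridge_diam_bound} and~\ref{lem:wall_divergence_bound} show that any path avoiding the $r$-ball around $\gamma(t)$ must cross each of these $\sim r$ walls at distance $\sim r$ from its bridge, costing $\sim r$ per wall and hence $\sim r^2$ in total (Proposition~\ref{prop:lower_div}). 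So Caprace's theorem and the disk-diagram work are used to \emph{exclude} large bridges along $\gamma$, not to recognize a flat produced by a hypothetical failure of Morseness. To complete your proof you would either need to fill the extraction step (which is at least as hard as the paper's entire Section~5) or restructure along these divergence lines.
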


The following lemma gives equivalent conditions for $\text{Label}(\gamma)$ of a path $\gamma$ to be contained in a wide subgraph. 
\begin{lemma} \label{lem:in_wide}
    Let $\gamma$ be a path in the Davis complex of a Coxeter group $W_\Gamma$. Then the following are equivalent:
    \begin{enumerate}
     \item $\text{Label}(\gamma)$ is contained in a wide subgraph,
     \item $\gamma$ is contained in the coset of a wide subgroup, and
     \item The reflections associated to the walls dual to $\gamma$ are contained in a conjugate of a wide subgroup. 
    \end{enumerate}
\end{lemma}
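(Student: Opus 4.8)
The plan is to establish the cycle of implications $(1) \Rightarrow (2) \Rightarrow (3) \Rightarrow (1)$, since each arrow amounts to translating between combinatorial data attached to a path in $\Sigma$: its edge-labels, the coset it lies in, and the walls it crosses. Throughout, I would use that a path $\gamma$ is geodesic iff no wall meets it twice, and that $\text{Label}(\gamma)$ records exactly the types appearing among edges of $\gamma$; I would also use the standard fact that for $g \in W_\Gamma$ and $\Delta \subseteq \Gamma$, the coset $gW_\Delta$ is precisely the set of vertices reachable from $g$ by edges whose types lie in $V(\Delta)$.

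For $(1) \Rightarrow (2)$: suppose $\text{Label}(\gamma)$ is contained in a wide subgraph $\Delta$, i.e.\ every edge of $\gamma$ has type in $V(\Delta)$. Let $g$ be the initial vertex of $\gamma$. Then every vertex of $\gamma$ is obtained from $g$ by multiplying on the right by generators in $V(\Delta)$, so every vertex of $\gamma$ lies in $gW_\Delta$; since $W_\Delta$ is wide this gives (2). For $(2) \Rightarrow (3)$: suppose $\gamma \subset gW_\Delta$ with $W_\Delta$ wide. A wall $H$ dual to $\gamma$ crosses an edge $\{h, hs\}$ of $\gamma$ with $h, hs \in gW_\Delta$, hence $s \in V(\Delta)$, and the reflection fixing $H$ is $hsh^{-1} = g(g^{-1}h) s (g^{-1}h)^{-1}$; writing $w = g^{-1}h \in W_\Delta$, this reflection equals $g(wsw^{-1})g^{-1} \in gW_\Delta g^{-1}$, which is a conjugate of the wide subgroup $W_\Delta$, giving (3). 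For $(3) \Rightarrow (1)$: suppose all reflections dual to $\gamma$ lie in a conjugate $g W_\Delta g^{-1}$ of a wide subgroup $W_\Delta$. Each edge $e$ of $\gamma$ of type $s$ carries a dual wall whose reflection $r_e$ is conjugate (in $W_\Gamma$) to $s$; by hypothesis $r_e \in gW_\Delta g^{-1}$, so $g^{-1} r_e g$ is a reflection of $W_\Gamma$ lying in the special subgroup $W_\Delta$. By the standard fact that a reflection of $W_\Gamma$ contained in a special subgroup $W_\Delta$ is conjugate \emph{within} $W_\Delta$ to an element of $V(\Delta)$ (e.g.\ \cite{Davis}), and since no two distinct elements of $V(\Gamma)$ are conjugate in $W_\Gamma$, the type $s$ of $r_e$ must already belong to $V(\Delta)$. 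As this holds for every edge of $\gamma$, we get $\text{Label}(\gamma) \subseteq V(\Delta)$ with $\Delta$ wide, which is (1).

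The step I expect to require the most care is $(3) \Rightarrow (1)$: the subtlety is that a reflection of $W_\Gamma$ may a priori lie in a conjugate of $W_\Delta$ without ``obviously'' having type in $V(\Delta)$, so one genuinely needs the result that reflections in a special subgroup are, up to conjugacy inside that subgroup, standard generators of it — together with the fact that conjugacy of standard generators in a Coxeter group is detected by the types (distinct generators are never conjugate). I would make sure to cite \cite{Davis} (or \cite{Caprace-conjugacy-2spherical}) for the former and note the latter is already recorded in the discussion of wall types in Section~\ref{sec:background}. The implications $(1)\Rightarrow(2)$ and $(2)\Rightarrow(3)$ are straightforward bookkeeping once one fixes a basepoint on $\gamma$ and tracks cosets; the only mild point is to remember that "wide" is a property of the isomorphism type of the special subgroup and hence is preserved under conjugation, so $gW_\Delta g^{-1}$ is indeed wide whenever $W_\Delta$ is.
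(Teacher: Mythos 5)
Your implications $(1)\Rightarrow(2)$ and $(2)\Rightarrow(3)$ are correct and amount to the same bookkeeping the paper dismisses as immediate. Where you genuinely diverge is $(3)\Rightarrow(1)$. The paper does not pass through the classification of reflections lying in a parabolic subgroup at all: it bases $\gamma$ at the identity, asserts that the conjugate $hW_\Delta h^{-1}$ may then be taken to be $W_\Delta$ itself, and then inverts the formula $r_i = s_1\cdots s_{i-1}s_is_{i-1}\cdots s_1$ to get $s_i = s_{i-1}\cdots s_1 r_i s_1\cdots s_{i-1}\in W_\Delta$ by induction on $i$ (the base case being $s_1=r_1$), finishing with $W_\Delta\cap V(\Gamma)=V(\Delta)$. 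That computation never needs to compare conjugacy classes of generators.

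Your route instead hinges on the claim that no two distinct elements of $V(\Gamma)$ are conjugate in $W_\Gamma$. You are right that Section~2 of the paper asserts this, but the assertion is false for general Coxeter groups: whenever $m_{st}$ is odd, $s$ and $t$ are conjugate already in the dihedral subgroup they generate (for $m_{st}=3$ one has $s=(sts)\,t\,(sts)^{-1}$). So your final step --- from ``$g^{-1}r_eg$ is $W_\Delta$-conjugate to some $s'\in V(\Delta)$'' to ``$s=s'$, hence $s\in V(\Delta)$'' --- breaks exactly when the type $s$ is conjugate to a different generator, which is the generic situation outside the right-angled/even case. That is a genuine gap in your argument as written, even though the fact you lean on for it is one the paper records. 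In fairness, the corresponding pressure point in the paper's own proof is the sentence ``we may assume that this conjugate is $W_\Delta$ itself,'' which is where the same phenomenon must be ruled out; but once that normalization is granted, the paper's inductive identity gives $s_i\in W_\Delta$ directly and shows you how to finish without ever invoking non-conjugacy of distinct generators. I would restructure your $(3)\Rightarrow(1)$ along those lines rather than through the ``reflections of a parabolic are its own reflections'' fact.
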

\begin{proof}    
    Without loss of generality, we may assume that $\gamma$ is based at the identity vertex. 
    Let $e_1, e_2, \dots e_n$ be the edges of $\gamma$ in the order that they appear, and let $s_i$ be the label of $e_i$.
    Note that $r_i = s_1 \dots s_{i-1} s_i s_{i-1} \dots s_1$ is the reflection associated to the wall dual to $e_i$. 

    The equivalence between (1) and (2) is immediate. We now show the equivalence of (1) and (3).
    We first assume (1): $\text{Label}(\gamma)$ is contained in a wide subgraph $\Delta$. Consequently, $s_i \in \Delta$ for each $i$.
    Thus, these reflections are all contained in $W_\Delta$ and (3) follows.
    
    Conversely, suppose that the reflections associated to the walls dual to $\gamma$ are contained in a conjugate of a wide subgroup $W_\Delta$.
    As $\gamma$ is based at the identity, we may assume that this conjugate is $W_\Delta$ itself.
    Now, note that $s_i = s_{i-1} \dots s_1 r_i s_1 \dots s_{i-1}$. Thus, $s_i \in W_\Delta$ for each $i$ and (1) follows. 
\end{proof}

The remainder of this section will build towards a proof of Theorem~\ref{thm:morse_char}.

\subsection{Recognizing affine and wide subgroups}

The proof of Theorem~\ref{thm:morse_char}, requires us to recognize affine subgroups by analyzing patterns of walls in the Davis complex. 
Caprace's theorem, stated below, allows us to do just this. We also prove Lemma~\ref{lem:wall_intersection_containment} which facilitates applications of Caprace's theorem.

\begin{definition}
A set of walls $H_0, \dots, H_m$ in a Davis complex forms a \emph{pencil} if they are pairwise non-intersecting and $H_j$ separates $H_i$ and $H_k$ for all $0 \le i < j < k \le m$.
\end{definition}

\begin{theorem}[\cite{Caprace-conjugacy-2spherical}, Theorem 8] \label{thm:recognizing_triangle_group}
	Let $W_\Gamma$ be a Coxeter group. There exists an integer $C$, depending only on $\Gamma$, such that the following holds. 
	Let $K$, $K'$, $L_0, \dots, L_C$ be distinct walls in the Davis complex of $W_\Gamma$ that satisfy:
	\begin{enumerate}
		\item $\emptyset \neq K \cap K' \subset L_0$,
		\item $L_0, \dots, L_C$ forms a pencil, and
		\item for each $1 \le i \le C$, $L_i$ intersects both $K$ and $K'$.
	\end{enumerate}
	Then, the subgroup generated by reflections corresponding to the walls $K$, $K'$, $L_0, \dots, L_C$ is isomorphic to a Euclidean triangle group. Moreover, it is contained in a conjugate of an affine, irreducible, special subgroup of rank at least~$3$.
\end{theorem}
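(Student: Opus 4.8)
This is Theorem~8 of \cite{Caprace-conjugacy-2spherical}; here is how I would approach its proof. The first step is to pass to the reflection subgroup $W' = \langle r_K, r_{K'}, r_{L_0}, \dots, r_{L_C}\rangle$, where $r_H$ denotes the reflection across a wall $H$. By the theorem of Deodhar and Dyer, $W'$ is again a Coxeter group, and every reflection of $W'$ is a reflection of $W_\Gamma$, so the walls $K, K', L_0, \dots, L_C$ are walls of $W'$ and the whole question becomes internal to $W'$. There one has the familiar dictionary: two walls of $W'$ cross in $\Sigma$ exactly when the corresponding reflections generate a finite dihedral subgroup, and are disjoint exactly when they generate an infinite dihedral subgroup. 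So the hypotheses say that $r_{L_0}, \dots, r_{L_C}$ pairwise have Coxeter label $\infty$ (the pencil is pairwise parallel), that $r_K$ and $r_{K'}$ have a finite label with each other and with each $r_{L_i}$ for $i \geq 1$, and --- since $\emptyset \neq K \cap K' \subset L_0$ --- that $r_K$, $r_{K'}$, $r_{L_0}$ share a common fixed point in $\Sigma$ and hence generate a finite (spherical) reflection group.

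Geometrically this is the picture of a finite ``rosette'' $K \cup K' \cup L_0$ based at a point, together with a pencil $L_0, L_1, \dots, L_C$ marching away from that point, every $L_i$ still crossing both $K$ and $K'$. The heart of the proof is the quantitative assertion that such a configuration cannot be long in a group without an irreducible affine reflection subgroup of rank $\geq 3$: there is a constant $C = C(\Gamma)$ --- finite because the relevant local wall patterns are controlled by the finitely many possible $2$-residues, hence by $R_\Gamma$ and $V_\Gamma$ --- beyond which a length-$(C{+}1)$ pencil crossing a fixed crossing pair forces the parabolic closure of $W'$ to contain an irreducible Euclidean reflection group of rank $\geq 3$. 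I would prove this using Brink--Howlett's Parallel Walls Theorem together with the convexity of walls and residues in the Davis complex: a wall crossing both $K$ and $K'$ must meet a bounded region around $K \cap K'$, so a long pencil of such walls creates a combinatorial strip of unbounded length but bounded transverse complexity; in a Coxeter complex with no irreducible affine rank-$\geq 3$ reflection subgroup such strips must close up --- the two crossing walls are simultaneously crossed by only boundedly many walls, a divergence phenomenon that fails precisely for $\tilde A_2$, $\tilde C_2$, $\tilde G_2$. Exceeding $C$ therefore exhibits three pairwise-crossing walls whose pairwise dihedral angles sum to $\pi$; these generate a Euclidean triangle group, and since the entire pencil lies inside its flat tiling, $W'$ is exactly this triangle group.

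Finally, with $W'$ identified as a Euclidean triangle group, I would invoke the classification of affine Coxeter groups together with Caprace's description of the parabolic closure of a reflection subgroup to place this triangle group inside a conjugate of an irreducible, affine, special subgroup of rank $\geq 3$, which is the remaining assertion.

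The genuine obstacle is exactly the quantitative middle step: producing the \emph{uniform} constant $C(\Gamma)$ and showing that exceeding it forces the flat triangle-group pattern. Everything else --- the reduction to the reflection subgroup, the crossing/parallel versus finite/infinite-dihedral dictionary, the finite-rosette observation, and the final placement in a parabolic closure --- is routine once that lemma is in hand; this is why the present paper cites it rather than reproving it, and (via the lemma that applies it) why Theorems~\ref{thm:morse_char}, \ref{thm:intro_thm_morse_char} and \ref{thm:intro_connected} carry the affine-free hypothesis.
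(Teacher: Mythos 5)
This statement is imported verbatim from \cite{Caprace-conjugacy-2spherical} (Theorem~8); the paper gives no proof of it, so there is no internal argument to compare yours against. Judged on its own terms, your outline does track the real ingredients: the reduction to the reflection subgroup $W'$ via Deodhar--Dyer, the dictionary between crossing/parallel walls and finite/infinite dihedral subgroups, the observation that $r_K$, $r_{K'}$, $r_{L_0}$ fix a point of $K\cap K'$ and hence generate a finite group, the role of parallel-walls machinery in producing the uniform constant, and the final placement inside a conjugate of an irreducible affine special subgroup of rank at least $3$ via parabolic closures (which is exactly how the present paper consumes the result, in Lemma~\ref{lem:wall_intersection_containment}, Lemma~\ref{lem:commuting_dual_curves} and Proposition~\ref{prop:large_bridges}).

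As a proof, however, the proposal has a genuine gap, and you have named it yourself: the entire content of the theorem is the existence of a constant $C$ depending only on $\Gamma$ together with the implication that a pencil of length $C+1$ crossing both $K$ and $K'$ forces the Euclidean triangle configuration. The middle paragraph asserts this (``such strips must close up,'' ``exceeding $C$ therefore exhibits three pairwise-crossing walls whose pairwise dihedral angles sum to $\pi$'') without an argument; nothing in the sketch explains why the bound is uniform over all wall configurations in $\Sigma$ rather than depending on the particular walls, nor why the only obstruction to boundedness is an irreducible affine configuration of rank at least $3$ --- Caprace's actual argument is carried out in the root system, where nestedness and ``angle'' are controlled by the bilinear form, and that quantitative control is precisely what is missing here. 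One smaller imprecision: the three pairwise-crossing walls you extract cannot include $L_0$, since $\langle r_K, r_{K'}, r_{L_0}\rangle$ is finite by your own rosette observation; the Euclidean triangle must be bounded by $K$, $K'$ and some $L_i$ with $i\ge 1$, and one must still argue that the remaining reflections, including $r_{L_0}$, land in the resulting triangle group. Since the paper treats this theorem as a black box, deferring the core step to the citation is legitimate, but the proposal as written is an annotated reference rather than a proof.
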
 

Given a Coxeter group $W_\Gamma$, we call the smallest integer $\Lambda_{\Gamma}$ given by Theorem~\ref{thm:recognizing_triangle_group}, its \emph{ladder constant}.
We call a set of walls $K$, $K'$, $L_0, \dots, L_C$ satisfying the condition of Theorem~\ref{thm:recognizing_triangle_group} a \emph{ladder} in $W_\Gamma$.

The next lemma, guarantees that condition (1) of Theorem~\ref{thm:recognizing_triangle_group} is automatically satisfied whenever $K$ intersects $L_0$, and $K' = r(K)$ where $r$ is the reflection corresponding to $L_0$.
\begin{lemma} \label{lem:wall_intersection_containment}
     Let $K$ and $L$ be intersecting walls in a Davis complex $\Sigma$ whose corresponding reflections do not commute.
     Then $K \cap r(K) \subset L$ where $r$ is the reflection associated to $L$.
\end{lemma}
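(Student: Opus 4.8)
The statement to prove is Lemma~\ref{lem:wall_intersection_containment}: if $K$ and $L$ are intersecting walls in $\Sigma$ whose reflections $k$ and $r$ do not commute, then $K \cap r(K) \subset L$. The strategy is to work with the combinatorial structure of walls in the Davis complex, where walls are separating hypersurfaces and the "half-spaces" they determine give a wall space / pocset structure. I would first translate the hypotheses into statements about half-spaces: write $K^\pm$ for the two half-spaces of $K$ and $L^\pm$ for those of $L$, and note that $K$ and $L$ cross means all four intersections $K^\pm \cap L^\pm$ are nonempty. The reflection $r$ preserves $L$ and swaps $L^+ \leftrightarrow L^-$, and sends $K$ to the wall $r(K)$. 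The key observation is that $r(K) \neq K$ precisely because $k$ and $r$ do not commute: if $r(K) = K$ then $r$ would normalize the cyclic group generated by $k$, and in a Coxeter group (where reflections have a rigid structure — see the uniqueness of types) this forces $rk = kr$. So $K$ and $r(K)$ are distinct walls.

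Next I would determine how $K$ and $r(K)$ sit relative to $L$ and to each other. Since $r$ is an involution fixing $L$, the pair $\{K, r(K)\}$ is symmetric across $L$; I expect $K$ and $r(K)$ to either be parallel (both crossing $L$) or to cross each other, and the configuration is controlled by the rank-2 Coxeter group $\langle k, r\rangle$. Because $k, r$ don't commute, $\langle k, r \rangle$ is a dihedral group of order $\geq 6$ (possibly infinite), and in that dihedral subgroup the walls $K$, $r(K)$, $L$ correspond to three distinct reflections/walls of a dihedral Coxeter system. In the infinite dihedral case the picture is a line with equally spaced reflection points, and the "product of $K$ and $r(K)$ across $L$" geometry gives exactly $K \cap r(K) = \emptyset$, which lies vacuously in $L$; in the finite dihedral case the walls all pass through a common point (the fixed point of the dihedral group acting on a disk), and again one checks $K \cap r(K) \subset L$. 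So the proof splits according to whether $\langle k, r \rangle$ is finite or infinite, and in each case I reduce to a concrete picture in $\mathbb{E}^1$ or $\mathbb{E}^2$.

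More carefully, here is the cleaner argument I would write. Suppose for contradiction that $x \in K \cap r(K)$ but $x \notin L$, so $x$ lies in one open half-space of $L$, say $x \in L^+$. Then $r(x) \in L^-$, and since $x \in r(K)$ we get $r(x) \in K$; also $r(x) \in r(r(K)) = K$ — wait, more usefully: $x \in r(K)$ means $r(x) \in K$. So both $x$ and $r(x)$ lie on $K$, with $x \in L^+$ and $r(x) \in L^-$. A wall is convex (it is a totally geodesic subspace, or combinatorially an intersection of half-spaces), so the $\mathrm{CAT}(0)$ geodesic segment $[x, r(x)]$ lies in $K$; this geodesic must cross $L$, so $K \cap L \neq \emptyset$ at a point $y$ on this segment. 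That much we already knew. The extra leverage: the segment $[x,r(x)]$ is $r$-invariant (as $r$ is an isometry swapping its endpoints), so its midpoint is fixed by $r$, hence lies on $L$; thus $y$ is the midpoint and $y \in K \cap L$. Now I want to derive that $x$ itself must have been on $L$: the point is that the three walls $K$, $r(K)$, $L$ all pass through $y$, and near $y$ the local geometry is that of the link of $y$, which (since $y$ lies on walls $K, L$ whose reflections generate a dihedral group) is a spherical Coxeter picture; in that picture $K \cap r(K)$ near $y$ is forced to be $\{y\}$ unless $K = r(K)$. Pushing this global — using that walls are determined by their germ at any point together with $\mathrm{CAT}(0)$ convexity — gives $K \cap r(K) \subseteq L$.

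**Main obstacle.** The delicate point is the last step: upgrading the local statement "$K$ and $r(K)$ meet $L$ only along a common subset near each intersection point" to the global inclusion $K \cap r(K) \subset L$. Walls can be complicated non-manifold subcomplexes of $\Sigma$, and $K \cap r(K)$ could a priori have several components; I need to rule out a component disjoint from $L$. I expect the right tool is that $r(K \cap r(K)) = r(K) \cap K = K \cap r(K)$, so $K \cap r(K)$ is $r$-invariant, combined with the fact that the only $r$-fixed points in $\Sigma$ form exactly the wall $L$ — hence any $r$-invariant connected set either lies in $L$ or is swapped nontrivially, and a connected subset of the wall $K$ that is swapped by $r$ to itself but not pointwise fixed would have to meet $L$ and, by convexity of $K$ and the midpoint argument above, actually be contained in... this is where care is needed. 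I would handle it by showing directly: if $x \in K \cap r(K) \setminus L$ then $r(x) \in K \cap r(K) \setminus L$ is a distinct point, $[x, r(x)] \subset K$ (convexity), $[x, r(x)] \subset r(K)$ (since $r(K)$ is also convex and contains both $x = r(r(x))$... wait $r(x) \in r(K)$? $r(x) \in r(K)$ iff $x \in K$, yes). So $[x,r(x)] \subset K \cap r(K)$, and its midpoint $y \in L \cap K \cap r(K)$. Then look at the germ of $K$ and of $r(K)$ at $y$: both are walls through $y$, symmetric under $r$, and $r$ fixes $y$. In the link $\mathrm{Lk}(y, \Sigma)$, which is a spherical Coxeter-type complex, $K$ and $r(K)$ give antipodally-symmetric-under-$r$ subspheres through the "equator" corresponding to $L$; the segment $[x, r(x)]$ gives a diameter of this link sphere lying in both $K$ and $r(K)$ and crossing the $L$-equator transversally at $y$ — but the reflection $r$ of the link fixes the $L$-equator and must preserve this diameter while swapping its two ends, forcing the diameter to be perpendicular to the $L$-equator, i.e. the reflections $k$ (germ of $K$) and $r$ are... and here one gets $k$ and $r$ generate a dihedral group in which the analysis forces either $K = r(K)$ or $kr = rk$, contradicting the hypothesis. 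I anticipate spending most of the write-up making this germ/link argument rigorous, and the honest alternative is to invoke the classification of rank-2 parabolic subgroups of Coxeter groups to reduce the whole lemma to the two explicit dihedral models ($D_\infty$ on $\mathbb{E}^1$, $D_{2m}$ on a Euclidean cone) where the inclusion is visibly true.
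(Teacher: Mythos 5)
Your opening moves are correct and genuinely different from the paper's: the observation that $r(K)\neq K$, the convexity of walls as fixed sets in a CAT(0) space, and the midpoint argument showing that for any $x\in K\cap r(K)\setminus L$ the midpoint $y$ of $[x,r(x)]$ lies in $K\cap r(K)\cap L$ with the segment meeting $L$ ``perpendicularly'' at $y$, are all sound. The problem is the step you yourself flag: converting this local perpendicularity at an arbitrary point $y\in K\cap L$ into the group relation $kr=rk$. That step needs real input about the Davis complex that you do not supply: you must know that $y$ lies in the interior of a Euclidean Coxeter cell, that inside that cell $K$ and $L$ are genuine hyperplane slices, that the stabilizer of $y$ is a finite parabolic acting faithfully and linearly near $y$, and that two reflections in a finite linear reflection group commute exactly when their hyperplanes coincide or are orthogonal. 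Without these facts ``the germ of $K$ at $y$'' has no precise meaning, and the link of $y$ is in general a spherical join rather than a round sphere, so ``$d(u,r(u))=\pi$ forces $u$ normal to $L$'' is not automatic. Your proposed fallback --- reducing to the dihedral models on $\mathbb{E}^1$ or a Euclidean cone --- does not close this gap, because the lemma is a statement about the global subsets $K$, $r(K)$, $L$ of all of $\Sigma$ (indeed $K\cap r(K)=\mathrm{Fix}\langle k,rkr\rangle$ can be a positive-dimensional subcomplex reaching far from the crossing point), and nothing in the rank-$2$ picture alone controls where those walls go. (Also, since $K\cap L\neq\emptyset$ the group $\langle k,r\rangle$ fixes a point and is automatically finite, so your infinite dihedral case never occurs.)

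The paper closes exactly this gap with a different and much shorter globalization device, Lemma~\ref{lem:intersecting_walls}: for a $2$-cell $c$ crossed by a wall $H$, a point of $\Sigma$ lies on $H$ if and only if its CAT(0) nearest-point projection to $c$ lies on $H\cap c$. Taking $c$ to be a $2$-cell meeting $K\cap L$, the three lines $K\cap c$, $L\cap c$, $r(K)\cap c$ all pass through the center $p$ of $c$, and the non-commuting hypothesis forces $K\cap c\neq r(K)\cap c$, so these two lines meet only at $p\in L$; the projection lemma then transports the conclusion $K\cap r(K)\subset L$ from $c$ to all of $\Sigma$ in one line. To salvage your link-based argument you would essentially have to prove a statement of the same strength as that projection lemma, so I would either adopt the paper's route or supply the missing local structure theory explicitly.
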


Before proving this lemma, we will need the following:

\begin{lemma} \label{lem:intersecting_walls}
     Let $c$ be a $2$-cell of a Davis complex $\Sigma$, let $H$ be a wall intersecting $c$, and let $\pi: \Sigma \to c$ be the CAT(0) closest point projection. Then for any $x \in \Sigma$, $x \in H$ if and only if $\pi(x) \in H \cap c$. 
\end{lemma}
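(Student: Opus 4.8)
The plan is as follows.

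\emph{Setup and the forward implication.} After translating by an element of $W_\Gamma$ I may assume $c$ is the $2$-cell at the coset $W_T$ with $T=\{s,t\}$ and $m=m_{st}$, so that $c$ is a regular Euclidean $2m$-gon in the Moussong metric. The walls meeting $c$ are exactly the sets $\mathrm{Fix}(r)$ with $r$ a reflection of the dihedral group $W_T$; such an $r$ preserves the coset $W_T$, hence preserves $c$, and acts on it as the Euclidean reflection whose mirror is the chord $H\cap c$. Since $r$ is an isometry of $\Sigma$ stabilising the closed convex set $c$, it commutes with the closest-point projection: $\pi\circ r=r\circ\pi$. Thus if $x\in H$ then $r(\pi(x))=\pi(r(x))=\pi(x)$, so $\pi(x)\in\mathrm{Fix}(r)=H$; as $\pi(x)\in c$ always, $\pi(x)\in H\cap c$. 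This is the easy direction.

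\emph{The reverse implication: reductions.} Assume $p:=\pi(x)\in H\cap c$; by the argument just given $\pi(r(x))=r(p)=p$ too. I will use two facts. First, $\pi^{-1}(p)$ is star-shaped at $p$: if $\pi(z)=p$ and $y\in[z,p]$, then $d(y,c)\le d(y,p)$, while the equality $d(z,p)=d(z,y)+d(y,p)$ along the geodesic gives $d(y,c)\ge d(z,c)-d(z,y)=d(y,p)$, so $d(y,c)=d(y,p)$ and $\pi(y)=p$. Second, near $p$ one has $\pi^{-1}(p)\subseteq H$. Indeed $p$ lies in the relative interior of some cell $c_0$ of positive dimension (no vertex of $\Sigma$ lies on a wall), and $H$ meets $\mathrm{int}(c_0)$. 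Along $\mathrm{int}(c_0)$ the Davis complex is locally isometric to a metric product $c_0\times\mathrm{Cone}(\mathrm{Lk}(c_0))$; since the reflection $r$ stabilises $c_0$, acts there as a Euclidean reflection, and since the $\mathrm{Lk}(c_0)$-factor is the orthogonal complement of $c_0$, the map $r$ is $(r|_{c_0})\times\mathrm{id}$ in this splitting, so that $H$ corresponds to $(H\cap c_0)\times\mathrm{Cone}(\mathrm{Lk}(c_0))$ while $c$ corresponds to $c_0\times R$ for a convex subset $R$ of the cone (a point if $c_0=c$, a ray if $c_0$ is an edge of $c$). A point near $p$ projecting to $p$ also projects (locally) to $p$ on $c_0\times R$, which in these coordinates forces its $c_0$-coordinate to be $p$; hence it lies in $\{p\}\times\mathrm{Cone}(\mathrm{Lk}(c_0))\subseteq H$.

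\emph{The reverse implication: propagation.} Let $\gamma=[p,x]$. By star-shapedness $\gamma\subseteq\pi^{-1}(p)$, so by the second fact an initial sub-arc of $\gamma$ lies in $H$. As $H$ is convex, $\gamma^{-1}(H)=[0,T]$ for some $T>0$; put $w=\gamma(T)$. If $T$ equals the length of $\gamma$ we are done. Otherwise $w\ne x$ and I derive a contradiction: $w$ lies in the relative interior of a cell $c_0$ meeting $H$, and near $w$ the same product picture $\Sigma\cong c_0\times\mathrm{Cone}(\mathrm{Lk}(c_0))$, $H\cong(H\cap c_0)\times\mathrm{Cone}(\mathrm{Lk}(c_0))$ holds. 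Write $\gamma$ locally as a product geodesic $(\gamma_1,\gamma_2)$. Since $[p,w]\subseteq H$, just before $w$ the Euclidean geodesic $\gamma_1$ runs inside the affine flat $H\cap c_0$, so its incoming direction at $w$ is tangent to $H\cap c_0$; a straight segment in the Euclidean polytope $c_0$ through a point of $H\cap c_0$ and tangent to $H\cap c_0$ stays in $H\cap c_0$ as long as it stays in $c_0$. Hence $\gamma$ stays in $H$ for parameters slightly beyond $T$, contradicting the maximality of $T$. Therefore $w=x$, i.e. $x\in H$.

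\emph{Main obstacle.} The forward direction and the star-shapedness are soft $\mathrm{CAT}(0)$ facts; the real content is the reverse direction, and inside it the structural input that \emph{walls appear as coordinate subspaces in the local metric-product decomposition of the Davis complex along $\mathrm{int}(c_0)$} --- equivalently, that the reflection fixing $H$ acts trivially on the $\mathrm{Lk}(c_0)$-factor. Verifying this cleanly requires unwinding the Moussong metric and the spherical-join structure of links; once it is available, the product-geodesic propagation finishes the argument at once.
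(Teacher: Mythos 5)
Your proof is correct, and while the easy direction is essentially the paper's argument (both hinge on the reflection $r$ stabilising $c$ and the uniqueness of the closest-point projection — your phrasing via $\pi\circ r=r\circ\pi$ is a clean repackaging of the paper's ``two closest points'' contradiction), your reverse direction takes a genuinely different route. The paper works with the minimal geodesic from $x$ to the \emph{wall} $H$ and uses the global product neighbourhood $H\times[-\epsilon,\epsilon]$ of $H$, asserting that inside this neighbourhood a point projects into $c\cap H$ if and only if it lies on $H$; you instead work with the geodesic from $p=\pi(x)$ to $x$, show it lies in the fibre $\pi^{-1}(p)$ (your star-shapedness observation), and propagate membership in $H$ along this fibre via an open–closed argument using convexity of $H$ and local product decompositions $c_0\times\mathrm{Cone}(\mathrm{Lk}(c_0))$ at cells. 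Your version is longer but isolates the single structural input more honestly: everything reduces to the claim that a reflection $r$ acts trivially on $\mathrm{Lk}(c_0,\Sigma)$ for any cell $c_0$ it stabilises, i.e.\ that walls are locally coordinate subspaces. You flag this as the main obstacle and only sketch it; for the record it does hold, and the missing sentence is this: since no cell of $\Sigma$ is contained in a wall (walls contain no vertices), $r$ acts nontrivially on $c_0$, and since $r$ acts on each Coxeter cell $c''\supseteq c_0$ as an orthogonal reflection preserving $\mathrm{aff}(c_0)$, its unique $(-1)$-eigendirection must be either tangent or perpendicular to $\mathrm{aff}(c_0)$ — perpendicularity would force $r$ to fix $c_0$ pointwise — so it is tangent, and $r$ fixes the normal directions to $c_0$ in $c''$ pointwise. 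Note also that in your propagation step at $w=\gamma(T)$ you invoke this same claim for the cell whose relative interior contains $w$, which need not be a face of $c$; the argument just given covers that case as well, since $r$ stabilises any cell whose relative interior meets $\mathrm{Fix}(r)$. With that supplied, your continuation argument (the affine factor $\gamma_1$ of the product geodesic has a nondegenerate subsegment in the mirror hyperplane of $\mathrm{aff}(c_0)$, hence stays in it) is complete and contradicts the maximality of $T$ as you say.
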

\begin{proof}
    To see one direction of the claim let $x \in H$ and suppose, for a contradiction, that $\pi(x) \notin H \cap c$. 
    Let $z$ be the reflection associated to $H$. The isometry $z$ acts on $c$ as a reflection about the line segment $H \cap c$. Thus, $z(\pi(y)) \neq \pi(z)$. As $x \in H$, $z(x) = x$. 
    Thus, there are two minimal length geodesics, $[x, \pi(x)]$ and $[x, z(\pi(x))]$, from $x$ to $c$. However, this is impossible as $c$ is a covex subset of a CAT(0) space.
    Hence, it must be indeed the case that $\pi(x) \in H \cap c$. 
    
    To see the other direction of the claim, let $x \in \Sigma$ such that $\pi(x) \in H \cap c$. 
    Suppose, for a contradiction, that $x \notin H$.
    Consider the minimal length geodesic $\gamma$ from $x$ to $H$. 
    As the endpoints of $\gamma$ project to $H \cap c$, we have that $\pi(\gamma) \subset H \cap c$.
    By the structure of the Davis complex there is an $\epsilon > 0$ and a neighborhood $P$ of $H$ that is isomorphic to $H \times [-\epsilon, \epsilon ]$ with $H$ identified with $H \times \{0\}$.
    As $x \notin H$,  there is a point $y \in (\gamma \cap P) \setminus H$.
    However, for all $p \in P$, $\pi(p) \in c \cap H$ if and only if $p \in H$.
    But, $y \notin H$ and $\pi(y) \in c \cap H$.
    This is a contradiction, and the lemma follows.
\end{proof}

\begin{proof}[Proof of \ref{lem:wall_intersection_containment}]
    As $L \cap K \neq \emptyset$, by the structure of $\Sigma$, there is a Euclidean $2$-cell $c$ such that $L \cap K \cap c \neq \emptyset$. 
    As the reflections corresponding to $L$ and $K$ do not commute, $L \cap K \cap r(K)$ is a single point $p$ (i.e., the center of the polygon $c$).
    Define $Y := K \cap r(K)$. 
    By Lemma~\ref{lem:intersecting_walls}, $\pi(K) = K \cap c$ and $\pi(r(K)) = r(K) \cap c$ where $\pi$ is the CAT(0) closest point projection to $c$. Thus, $\pi(Y) = p$.
    As $p \in L$, we conclude again from Lemma~\ref{lem:intersecting_walls} that $y \in L$ for all $y \in Y$.
\end{proof}

\subsection{Recognizing wide subgroups}
In this subsection, we prove Proposition~\ref{prop:contained_in_wide} showing that certain reflection subgroups must be contained in wide subgroups. A main tool in the proof of Proposition~\ref{prop:contained_in_wide} is an important result of Deodhar completely describes the normalizer of an infinite, irreducible special subgroup (see Theorem~\ref{thm:deodhar_normalizer_of_parabolic_subgrp} below). We also refer the reader to \cite[Section 2.2.1]{Bahls} for a description of Deodhar's result.

Given a subset $J$ of the vertices of a Coxeter graph $\Gamma$, we define $J^{\perp}$ to be the set of all vertices of $V(\Gamma) \setminus J$ that commute with all vertices of $J$.
Given a subgroup $G < W_\Gamma$, we denote its normalizer and centralizer (in $W_\Gamma$) by $\mathcal{N}(G)$ and $\mathcal{Z}(G)$ respectively.

\begin{theorem}[\cite{Deodhar82}] \label{thm:deodhar_normalizer_of_parabolic_subgrp}
Let $W_J<W_\Gamma$ be an infinite, irreducible, special subgroup of a Coxeter group $W_\Gamma$. Then 
\[\mathcal{N}(W_J) = W_{J \cup J^{\perp}} = W_J \times \mathcal{Z}(W_J) \]     
\end{theorem}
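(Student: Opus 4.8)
The plan is to prove both equalities, with essentially all the work going into the inclusion $\mathcal{N}(W_J) \subseteq W_{J\cup J^\perp}$. The bookkeeping first: every vertex of $J^\perp$ commutes with every vertex of $J$ and $J\cap J^\perp=\emptyset$, so $W_{J\cup J^\perp}=W_J\times W_{J^\perp}$ with $W_{J^\perp}$ centralizing $W_J$; in particular $W_{J\cup J^\perp}\subseteq\mathcal{N}(W_J)$. Since $W_J$ is infinite and irreducible it has trivial center, so $W_J\cap\mathcal{Z}(W_J)=\{1\}$; hence, once we know $\mathcal{N}(W_J)=W_{J\cup J^\perp}$, writing $z\in\mathcal{Z}(W_J)$ as $z=uv$ with $u\in W_J$, $v\in W_{J^\perp}$ forces $u$ to centralize $W_J$, so $u=1$ and $\mathcal{Z}(W_J)=W_{J^\perp}$, which gives the second equality for free. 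So the whole content is the inclusion $\mathcal{N}(W_J)\subseteq W_{J\cup J^\perp}$.

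Next I would normalize. Given $w\in\mathcal{N}(W_J)$, the left coset $wW_J$ equals the right coset $W_Jw$, so it has a unique shortest element $w_0$, and $w_0=wu$ for some $u\in W_J$ still normalizes $W_J$; it suffices to prove $w_0\in W_{J^\perp}$. Being shortest in its coset from both sides gives $\ell(uw_0)=\ell(u)+\ell(w_0)=\ell(w_0u)$ for all $u\in W_J$. As a first consequence, conjugation by $w_0$ maps $J$ onto $J$: for $s\in J$ the element $w_0sw_0^{-1}\in W_J$ satisfies $\ell(w_0sw_0^{-1})+\ell(w_0)=\ell(w_0s)=\ell(w_0)+1$, so it has length $1$ and lies in $J$; applying this to $w_0^{-1}$ as well, $w_0Jw_0^{-1}=J$, and passing to the geometric representation, $w_0\alpha_s=\alpha_{\sigma(s)}$ for all $s\in J$, where $\sigma$ is the induced diagram automorphism on $J$.

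The crux is to show $\sigma$ is trivial and, in fact, $w_0\in W_{J^\perp}$; this is exactly where the hypotheses on $W_J$ are used. I would argue in the Davis complex $\Sigma$. Let $\Sigma_J$ be the Davis subcomplex spanned by $W_J$; the walls crossing $\Sigma_J$ are precisely those whose reflection lies in $W_J$, so (as $w_0$ normalizes $W_J$) $w_0\Sigma_J$ is crossed by the same set of walls and is parallel to $\Sigma_J$. Moreover, $w_0$ being shortest in $w_0W_J$ says precisely that the identity vertex is a nearest point of $\Sigma_J$ to $w_0$, so $[1,w_0]$ is a bridge realizing the distance between $\Sigma_J$ and $w_0\Sigma_J$; a reflection-decreases-distance argument shows no wall of $W_J$ crosses this bridge, so every wall $H$ crossing $[1,w_0]$ has $\Sigma_J$ on one side and $w_0\Sigma_J$ on the other, and therefore $H$ crosses every wall crossing $\Sigma_J$. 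Now one invokes that the $W_J$-action on $\Sigma_J$ is essential/minimal — there is no proper $W_J$-invariant convex subcomplex — which is exactly what irreducibility and infiniteness of $W_J$ buy; from this one deduces that a wall crossing every wall of $\Sigma_J$ has its reflection centralizing $W_J$, hence lying in $W_{J^\perp}$. Since every wall separating $1$ from $w_0$ is of this form, $w_0\in W_{J^\perp}$. (This last step can alternatively be carried out combinatorially via Deodhar's root-system bookkeeping, tracking the inversion set of $w_0$.)

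The main obstacle is this last paragraph: ruling out a nontrivial induced diagram automorphism, equivalently showing that a wall crossing all the walls of $\Sigma_J$ must centralize $W_J$ — everything else is coset-and-length bookkeeping. It is worth recording why the hypotheses are indispensable: if $W_J$ were reducible with two isomorphic infinite factors, conjugation could swap them, and if $W_J$ were finite then $\mathcal{N}(W_J)/W_J$ is in general strictly larger than $\mathcal{Z}(W_J)$. So any proof must use essentiality of the $W_J$-action on its Davis complex, which is precisely the geometric content of ``infinite and irreducible.''
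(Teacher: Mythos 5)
First, a framing remark: the paper does not prove this statement; it is imported verbatim from Deodhar's 1982 paper (with Bahls' book cited for exposition), so there is no in-house argument to compare yours against. Judged on its own terms, your proposal gets the standard reductions right: $W_{J\cup J^\perp}=W_J\times W_{J^\perp}\subseteq\mathcal N(W_J)$; the passage to the minimal-length element $w_0$ of $wW_J=W_Jw$; the length computation giving $w_0Jw_0^{-1}=J$ and $w_0\alpha_s=\alpha_{\sigma(s)}$; the deduction of the second equality from the first via triviality of the center of an infinite irreducible Coxeter group; and the observation that every wall separating $1$ from $w_0$ is not a wall of $W_J$, hence separates $\Sigma_J$ from $w_0\Sigma_J$ and so crosses every wall of $W_J$.

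The gap is precisely the step you flag as the crux: the claim that a wall crossing every wall of $W_J$ has its reflection centralizing $W_J$, which you assert follows from ``essentiality'' of the $W_J$-action. That implication is not a consequence of essentiality, and as stated it carries essentially the full weight of the theorem. Concretely, in the reflection representation two walls cross if and only if $|B(\beta,\gamma)|<1$ for the corresponding roots; for $W_J$ the infinite dihedral group with simple roots $\alpha_a,\alpha_b$ and positive roots $(n{+}1)\alpha_a+n\alpha_b$ and $n\alpha_a+(n{+}1)\alpha_b$, any vector $\beta$ with $B(\beta,\alpha_a)=\tfrac12=-B(\beta,\alpha_b)$ pairs with every root of $\Phi_J$ to $\pm\tfrac12$, so it ``crosses every wall'' numerically without being orthogonal to $\Phi_J$. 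Showing that no actual root of the ambient group can behave this way --- equivalently, ruling out a nontrivial induced diagram automorphism $\sigma$ and showing that a centralizing $w_0$ lands in the \emph{standard} parabolic $W_{J^\perp}$ rather than merely in $\mathcal Z(W_J)$ --- is exactly the content of Deodhar's root-system bookkeeping, which your parenthetical defers to rather than reproduces. If you want to stay geometric, a repairable route is: $W_J$ permutes the finite set of walls separating $\Sigma_J$ from $w_0\Sigma_J$, so a finite-index subgroup of $W_J$ fixes each such wall and hence centralizes $w_0$; combined with the fact that an infinite irreducible Coxeter group has no nontrivial finite normal subgroup, this kills $\sigma$. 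But you would still need a genuine induction on $\ell(w_0)$ (or the Brink--Howlett analysis) to conclude $w_0\in W_{J^\perp}$, and none of that is in the present write-up.
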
 \label{lem:deodhar_normalizer_of_parabolic_subgrp}

Recall, a reflection in a Coxeter group $W_{\Gamma}$ is a conjugate of a standard generator (i.e. a vertex of $V(\Gamma)$).
A \emph{reflection subgroup} of $W_\Gamma$, is a subgroup $G < W_\Gamma$ that is generated by reflections. 
A \emph{parabolic subgroup} of $W_\Gamma$ is one which is conjugate to a special subgroup. 
Given a subgroup $G < W_\Gamma$, the \emph{parabolic closure} $\text{Pc}(G)$ of $G$ is defined as the intersection of all parabolic subgroups containing $G$. Note that $\text{Pc}(G)$ is itself a parabolic subgroup, as the intersection of two parabolic subgroups is a parabolic subgroup (see \cite[Lemma 5.3.6]{Davis} for instance).

\begin{proposition} \label{prop:contained_in_wide}
    Let $G_1$ and $G_2$ be finitely generated, infinite reflection subgroups of a Coxeter group $W_\Gamma$ such that $[G_1,G_2] = \{1\}$. Then $G = \langle G_1 \cup G_2 \rangle$ is contained in a wide subgroup of $W_\Gamma$.
\end{proposition}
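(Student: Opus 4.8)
The plan is to reduce to the structure of the parabolic closure $P = \text{Pc}(G)$ and then apply Deodhar's normalizer theorem (Theorem~\ref{thm:deodhar_normalizer_of_parabolic_subgrp}) componentwise. Since $G = \langle G_1 \cup G_2\rangle$ is a finitely generated reflection subgroup, $P$ is a parabolic subgroup, and up to conjugation we may assume $P = W_J$ is a \emph{special} subgroup; it suffices to show $W_J$ is wide (a wide subgroup of $W_\Gamma$), since $G \subset W_J$. Write $W_J = W_{J_1} \times \cdots \times W_{J_r}$ as the product of its irreducible factors, where $J = J_1 \sqcup \cdots \sqcup J_r$ and there is a $2$-labeled edge between any two distinct factors. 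I would discard the finite factors (they do not affect wideness) and henceforth assume each $W_{J_i}$ is infinite. The goal becomes: exhibit a wide decomposition $(P,Q)$ of (the induced subgraph on) $J$.

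The key step is to locate $G_1$ and $G_2$ inside this product. First I would argue that $P$ is the parabolic closure of $G_1$ together with the parabolic closure of $G_2$, and more usefully that each $G_i$, being infinite, must have nontrivial image (in fact infinite image, by minimality of the parabolic closure) in at least one irreducible factor $W_{J_k}$; by minimality of $P$, every factor $W_{J_k}$ receives an infinite contribution from $G_1$ or from $G_2$. Let $A$ be the set of indices $k$ such that $G_1$ has infinite image in $W_{J_k}$, and $B$ the analogous set for $G_2$. The hypothesis $[G_1, G_2] = \{1\}$ forces $A \cap B = \emptyset$: if some factor $W_{J_k}$ received infinite (hence, after passing to the right sub-closure, ``parabolic-closure-full'' in that coordinate) contributions from both commuting subgroups, Deodhar's theorem applied to the irreducible infinite subgroup $W_{J_k}$ would say $\mathcal{N}(W_{J_k}) = W_{J_k} \times \mathcal{Z}(W_{J_k})$, and a careful bookkeeping (the projection of $G_1$ to the $J_k$-coordinate is an infinite reflection subgroup whose parabolic closure is all of $W_{J_k}$ by minimality, and $G_2$ centralizes it, hence $G_2$ projects trivially to $W_{J_k}$) yields $k \notin B$ — a contradiction. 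So $A$ and $B$ are disjoint, and $A \cup B = \{1, \dots, r\}$ by the minimality remark above. Setting $P := \bigcup_{k \in A} J_k$ and $Q := \bigcup_{k \in B} J_k$ gives $V(W_J) = P \sqcup Q$ with a $2$-labeled edge between every vertex of $P$ and every vertex of $Q$ (since these come from distinct irreducible factors). Both $W_P$ and $W_Q$ are infinite (each is a product of infinite factors), so $(P,Q)$ is a wide decomposition of $W_J$ in the sense of Definition~\ref{def:wide_subgroups}(1), and $W_J$ is wide.

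The main obstacle I anticipate is the bookkeeping that makes ``$G_i$ has infinite image in $W_{J_k}$'' interact correctly with parabolic closures and with Deodhar's theorem: one must be careful that the projection of a reflection subgroup to an irreducible factor is again a reflection subgroup, that minimality of $\text{Pc}(G)$ really forces each factor to be ``hit,'' and that ``commuting with something whose parabolic closure is the whole irreducible factor'' forces trivial projection — this last point is exactly where Theorem~\ref{thm:deodhar_normalizer_of_parabolic_subgrp} is used, since the centralizer of $W_{J_k}$ inside $W_J$ is precisely $\mathcal{Z}(W_{J_k}) \times \prod_{l \neq k} W_{J_l}$, which has trivial $J_k$-coordinate. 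One subtlety: $G_1$ might fail to have infinite image in a given factor yet still have nontrivial finite image; handling the finite factors separately (they can be absorbed into $Q$ or dropped, since adding a $2$-joined finite special subgroup to $Q$ keeps $(P,Q)$ a wide decomposition) cleans this up. Everything else is standard structure theory of Coxeter groups and should go through routinely.
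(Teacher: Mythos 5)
Your argument has a genuine gap, and it occurs exactly at the point the paper's proof is designed to handle: the affine case. Your claim that $A \cap B = \emptyset$ rests on the step ``the projection of $G_1$ to the $J_k$-coordinate has parabolic closure all of $W_{J_k}$, and $G_2$ centralizes it, hence $G_2$ projects trivially to $W_{J_k}$.'' This is false. Centralizing a subgroup $H$ with $\text{Pc}(H)=W_{J_k}$ only places an element in $\mathcal N(W_{J_k})=W_{J_k}\times \mathcal Z(W_{J_k})$, and the $W_{J_k}$-component of that normalizer is all of $W_{J_k}$, not the identity; Deodhar's theorem gives you no control beyond that. A concrete counterexample: take $W_\Gamma=\widetilde C_2$, the $(4,4,2)$ Euclidean triangle group acting on the square tiling, let $G_1$ be the infinite dihedral group generated by reflections in two parallel vertical walls and $G_2$ the infinite dihedral group generated by reflections in two parallel horizontal walls. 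Then $[G_1,G_2]=\{1\}$, both are infinite reflection subgroups, and $\text{Pc}(G)=\widetilde C_2$ is irreducible (all proper parabolics are finite), so your decomposition has a single factor with $A=B=\{1\}$. There is no contradiction to be derived here --- the conclusion of the proposition holds only because $\widetilde C_2$ is wide via clause~(2) of Definition~\ref{def:wide_subgroups} (irreducible affine of rank at least $3$ with $Q=\emptyset$), a clause your argument never produces. Indeed, if your argument were correct it would show that $\text{Pc}(G)$ always splits as a product of two infinite commuting pieces, i.e.\ that clause~(2) is never needed, which the example refutes.

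The paper's proof avoids this by not working with $\text{Pc}(G)$ directly: it extracts a maximal irreducible infinite special factor $H_i$ from each $(G_i,R_i)$ and invokes the dichotomy of \cite[Lemma~2.1]{Caprace-relhyp}, which says that for commuting infinite irreducible $H_1,H_2$ either $\text{Pc}(H_1)=\text{Pc}(H_2)$ is irreducible affine of rank at least $3$, or $\text{Pc}(\langle H_1\cup H_2\rangle)\cong \text{Pc}(H_1)\times\text{Pc}(H_2)$; it then enlarges to $gW_{J\cup J^\perp}g^{-1}$ via Deodhar to absorb the remaining (possibly finite) factors $H_1', H_2'$. To repair your proof you would need to import this dichotomy (or an equivalent fact about commuting infinite reflection subgroups of an irreducible infinite Coxeter group forcing affineness) to handle the indices in $A\cap B$; without it, the disjointness of $A$ and $B$ is simply not available.
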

\begin{proof}
    By \cite{Deodhar-reflection_subgroups}, there is a finite set of reflection $R_1 \subset G_1$ (resp. $R_2 \subset G_2$) such that $(G_1, R_1)$ (resp. $(G_2, R_2)$) is a Coxeter system.
    As each $G_i$ is infinite, we may write
    \[G_1 = H_1 \times H_1' \text{ and } G_2 = H_2 \times H_2'\]
    where $H_1$ (resp. $H_2$) is a maximal special subgroup that is irreducible and infinite in the Coxeter system $(G_1, R_1)$ (resp. $(G_2, R_2))$, and $H_1'$ (resp. $H_2'$) is the subgroup generated by $G_1 \setminus H_1$ (resp. $G_2 \setminus H_2$) or the empty set if $G_1 \setminus H_1 = \emptyset$ (resp. $G_2 \setminus H_2 = \emptyset$). 
    
    Let $gW_Jg^{-1} := \text{Pc}(H_1)$ where $g \in W_\Gamma$ and $W_J$ is a special subgroup.
    As $H_1$ is infinite, so is $gW_Jg^{-1}$. Additionally, $gW_Jg^{-1}$ is irreducible by \cite[Lemma 2.1]{Caprace-relhyp}.
    Set $Z := \mathcal{Z}(gW_Jg^{-1})$.
    By Theorem~\ref{thm:deodhar_normalizer_of_parabolic_subgrp} and as $\mathcal{N}(gW_Jg^{-1}) = g \mathcal{N}(W_J) g^{-1}$, we have that $Z = g (W_{J^{\perp}}) g^{-1}$. 
    
    By \cite[Lemma 2.1]{Caprace-relhyp}, either 
    \begin{enumerate}
        \item $\text{Pc}(H_1) = \text{Pc}(H_2)= \text{Pc}( \langle H_1 \cup H_2 \rangle)$ is irreducible, affine of rank at least $3$, or
        \item $\text{Pc}( \langle H_1 \cup H_2 \rangle) \cong \text{Pc}(H_1) \times \text{Pc}(H_2)$
    \end{enumerate}
    In either case, $Y := gW_{J \cup J^\perp}g^{-1}$  is a wide subgroup containing $\text{Pc}( \langle H_1 \cup H_2 \rangle)$. In the first case, this is clear. In the second, $\text{Pc}(H_2) \subset gW_{J^{\perp}} g^{-1}$ and so $g W_{J^{\perp}} g^{-1}$ is infinite and consequently $Y$ is wide.
    
    Let $h \in H_1' \cup H_2'$. We now show that $h \in Y$, completing the proof of the lemma.  
    As $h$ commutes with $H_1$, $h (gW_Jg^{-1}) h^{-1}$ contains $H_1$. 
    By the definition of parabolic closure, $gW_Jg^{-1}$ is the smallest parabolic subgroup containing $H_1$.
    Moreover, the intersection of two parabolic subgroups is a parabolic subgroup.
    Consequently, $h (gW_Jg^{-1}) h^{-1} = gW_Jg^{-1}$ and so $h \in \mathcal{N}(gW_Jg^{-1})$. 
    By Theorem~\ref{thm:deodhar_normalizer_of_parabolic_subgrp}, $h \in gW_{J^\perp}g^{-1} \subset Y$.   
\end{proof}

\subsection{Bridges}
Given two convex subspaces of a CAT(0) space, their \emph{bridge} is the set of all minimal length geodesics between them. A bridge is convex and splits as a product $Y \times [0,I]$ where $I$ is the distance between the two convex subspaces (see \cite[II.2]{BH}).

\begin{definition}[Large bridge]
 We say that a bridge $B \cong Y \times [0,I]$ between a pair of walls $H$ and $K$ in the Davis complex $\Sigma_{\Gamma}$ is \emph{large} if $Y$ is unbounded and there exist $\Lambda_{\Gamma}$ distinct, pairwise non-intersecting walls that separate $H$ and $K$, where $\Lambda_{\Gamma}$ is the ladder constant.
 \end{definition}
 
 The main result of this subsection is Proposition~\ref{prop:large_bridges} stating that there is a natural wide subgroup associated to large bridges between walls in an affine-free Coxeter group.

\begin{proposition}\label{prop:large_bridges}
	Let $W_\Gamma$ be a
        Coxeter group.
	Let $H$ and $K$ be walls in the Davis complex $\Sigma = \Sigma_\Gamma$ such that the bridge between them is large.
	Let $\beta$ be a minimal length geodesic from $H$ to $K$. Then the reflections associated with the walls that intersect $\beta$ transversely are contained in a common wide subgroup.
\end{proposition}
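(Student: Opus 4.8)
The plan is to produce two infinite, commuting reflection subgroups out of the data of a large bridge, and then invoke Proposition~\ref{prop:contained_in_wide}. Let $B \cong Y \times [0,I]$ be the large bridge between $H$ and $K$, with $\beta = \{y_0\} \times [0,I]$ a minimal length geodesic from $H$ to $K$. Let $G_1$ be the subgroup generated by the reflections whose walls intersect $\beta$ transversely; this is the subgroup we must control. The first step is to locate a second subgroup $G_2$ commuting with $G_1$. Since $B$ splits as $Y \times [0,I]$, every wall meeting $\beta$ transversely is of the form $W' \times [0,I]$ for a wall $W'$ of $Y$ passing through $y_0$, and its reflection acts on $B$ as (reflection of $Y$) $\times \mathrm{id}_{[0,I]}$. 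In particular, the reflections in $G_1$ all fix $\{y_0\}\times[0,I] = \beta$ pointwise in the relevant factor... more precisely they preserve the product decomposition and act trivially on the $[0,I]$-factor. Dually, the reflections associated to walls that cross $B$ ``the other way'' — i.e. separate $H$ from $K$ and restrict to $Y \times \{\text{pt}\}$ — act trivially on the $Y$-factor. Since $B$ is large there are $\Lambda_\Gamma$ such separating walls $L_0,\dots,L_{\Lambda_\Gamma - 1}$ forming a pencil; let $G_2$ be generated by their reflections. Because the two families act on complementary factors of the product $Y\times[0,I]$, one checks $[G_1, G_2] = \{1\}$ on $B$, hence (reflections being determined by their walls, and walls by their action) $[G_1,G_2]=\{1\}$ in $W_\Gamma$.

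The second step is to verify the hypotheses of Proposition~\ref{prop:contained_in_wide}: both $G_1$ and $G_2$ must be finitely generated and infinite. Finite generation of $G_1$ follows since $\beta$ is a finite-length segment crossing finitely many walls; $G_2$ is finitely generated by construction. Infiniteness of $G_2$: a pencil of $\Lambda_\Gamma \geq 3$ pairwise non-intersecting walls generates an infinite (indeed infinite dihedral or larger) reflection subgroup, since consecutive walls in a pencil are disjoint and their reflections thus generate infinite dihedral groups, and $L_0$ is separated from $L_{\Lambda_\Gamma-1}$. For $G_1$ we must argue it is infinite: this is where I expect to actually use that $Y$ is unbounded. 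The walls of $Y$ through $y_0$ whose reflections lie in $G_1$ — if there were only finitely many and they generated a finite group, then... here one wants to say that $Y$, being an unbounded convex subcomplex stabilized by $G_1$ with $G_1$ acting by its reflection walls, cannot be unbounded if $G_1$ is finite. I would make this precise by noting that $\mathrm{Fix}(G_1)$ inside $Y$ would be a single point if $G_1$ were ``essential'' on $Y$, contradicting unboundedness, or alternatively by passing to the parabolic closure and using that a finite reflection group fixes a point.

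Once both subgroups are finitely generated, infinite, and commute, Proposition~\ref{prop:contained_in_wide} immediately yields a wide subgroup containing $G = \langle G_1 \cup G_2\rangle \supseteq G_1$, which is exactly the claimed conclusion.

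The main obstacle I anticipate is the rigorous bookkeeping in the first two steps: precisely identifying which walls of $\Sigma$ restrict to ``horizontal'' versus ``vertical'' walls of the product $B = Y\times[0,I]$, and proving that a wall of $\Sigma$ meeting $\beta$ transversely really does respect the product structure and act trivially on the $[0,I]$-factor (rather than, say, permuting the two ends of $[0,I]$ — which it cannot, since it would then move $H$ and $K$, but this needs the minimality of $\beta$ and the characterization of geodesics via wall-crossings). Establishing $[G_1,G_2]=\{1\}$ from the product action is then formal, but getting the product-compatibility of the walls straight — and in particular confirming that $G_1$ is genuinely infinite, which is the one place the ``$Y$ unbounded'' hypothesis is indispensable — is the technical heart of the argument. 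Notably, the affine-free hypothesis plays no role here, which is consistent with the statement of the proposition (it is stated for an arbitrary Coxeter group); affine-freeness will only enter when this proposition is combined with the ladder constant machinery elsewhere.
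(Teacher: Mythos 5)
There is a genuine gap, and it sits at the very first step. You claim that every wall meeting $\beta$ transversely is of the form $W' \times [0,I]$ inside $B \cong Y \times [0,I]$ and hence acts trivially on the $[0,I]$-factor. This is backwards: a wall of the form $W' \times [0,I]$ with $y_0 \in W'$ would \emph{contain} $\beta = \{y_0\}\times[0,I]$, not cross it transversely. The walls crossing $\beta$ transversely are exactly those separating $\beta(0)$ from $\beta(I)$, and they include all $\Lambda_\Gamma$ pencil walls separating $H$ from $K$. So your $G_2$ is contained in your $G_1$, and the claimed relation $[G_1,G_2]=\{1\}$ would force the infinite dihedral group $G_2$ to be abelian --- it cannot hold. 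The correct commuting partner for the walls crossing $\beta$ is the family of walls crossing the \emph{unbounded} direction, i.e.\ the walls meeting $\lambda = \gamma \times \{0\} \subset H$ where $\gamma$ is a geodesic ray in $Y$; unboundedness of $Y$ is what makes \emph{that} family generate an infinite group (two disjoint walls crossing $\lambda$ give an infinite dihedral group), while the pencil makes the $\beta$-family infinite. With the roles assigned this way, your Step 2 and the appeal to Proposition~\ref{prop:contained_in_wide} do go through --- but only in the case where no wall crosses both $\beta$ and $\lambda$ (or $\rho$), since only then do the two families pairwise commute (they meet the flat strip $F = \mathrm{hull}(\lambda\cup\rho)$ at right angles).

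The remaining case --- some wall crosses both $\beta$ and $\lambda$ (or $\rho$), at a necessarily non-right angle in $F$ --- is not addressable by any commuting-subgroups argument, and this is precisely where the $\Lambda_\Gamma$ walls in the definition of ``large'' earn their keep: one reflects such a wall by $h_0$ and $h_{m+1}$ to build a ladder and invokes Caprace's theorem (Theorem~\ref{thm:recognizing_triangle_group}) to conclude that all the relevant reflections lie in a conjugate of an irreducible affine special subgroup of rank at least $3$, which is wide by clause (2) of Definition~\ref{def:wide_subgroups}. So your closing remark that the affine machinery plays no role in this proposition is also mistaken: the statement holds for arbitrary Coxeter groups exactly because the non-commuting case is absorbed into an affine (hence wide) subgroup via the ladder, not because the product argument alone suffices.
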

\begin{proof}
Let $B \cong Y \times [0,I]$ be the bridge between $H$ and $K$.
    Let $(y, 0) \in Y \times [0, I]$ be the startpoint of $\beta$ and $(y,I)$ its endpoint.
    As $Y$ is convex and locally compact, by a compactness argument $Y$ contains an infinite CAT(0) geodesic ray $\gamma$ based at~$y$.

	Let $\lambda  = \gamma \times \{0\} \subset H$ and $\rho = \gamma \times \{I\} \subset K$. 
	The convex hull of $\lambda \cup \rho$ is a convex subset $F \subset \Sigma$ isometric to $[0,I] \times [0, \infty)$ (see \cite[II.2]{BH}). 
	In particular, $F$ is isometric to a convex subset of a $2$-dimensional flat.

	As $B$ is large, there is a pencil $H = H_0, \dots, H_{m+1} = K$ of walls with $m = \Lambda_{\Gamma}$ (the ladder constant).
	Let $\ell_i = F \cap H_i$, and note that $\ell_i$ is a geodesic in $F$. 
	As $H_0, \dots, H_{m+1}$ forms a pencil, the lines $\ell_0, \dots, \ell_{m+1}$ are parallel in $F$. Consequently, each $\ell_i$ forms a right angle with $\beta \subset F$.
	
    Let $\mathcal L$, $\mathcal R$ and $\mathcal Y$ be the set of walls in $\Sigma$ that intersect respectively $\lambda$, $\rho$ and $\beta$ transversely. Let $h_0, \dots, h_{m+1}$ be the reflections associated with $H_0, \dots, H_{m+1}$.

    We first suppose that $(\mathcal L \cup \mathcal R) \cap \mathcal Y \neq \emptyset$ and show the proposition follows. Let $X$ be in $(\mathcal L \cup \mathcal R) \cap \mathcal Y$, and let $r$ be its associated reflection. By symmetry, we can assume without loss of generality that $X \in \mathcal L \cap \mathcal Y$. Let $q = X \cap F$. Note that $h_0$ and $x$ do not commute as $q$ and $\lambda = H_0 \cap F$ do not form a right angle.
    Thus, the wall $h_0X$ is not equal to $X$.
    Moreover, $h_0X \in \mathcal L$ since $X \cap H_0 \neq \emptyset$. 
    Let $q' := h_0X \cap F$. Note that $q'$ intersects both $\lambda$ and $\rho$. Consequently, $h_0X$ intersects both $H_0$ and $H_{m+1}$.
    Consider now the wall $h_{m+1}h_0X$ and note that, by a similar argument, $h_{m+1}h_0X$ also intersects both $H_0$ and $H_{m+1}$.
    
    We have that $h_{m+1}h_0X$, $h_0X$, $H_0, \dots, H_{m+1}$ forms a ladder. By Theorem~\ref{thm:recognizing_triangle_group} and Lemma~\ref{lem:wall_intersection_containment}, $h_{m+1}h_0x$, $h_0x$, $h_0, \dots, h_{m+1}$ are all in common Euclidean triangle group that is a subgroup of a conjugate $G$ of an irreducible, affine, special reflection group of rank at least $3$. As $x = h_0(h_0x)$, $x \in G$ as well.

    Given any wall in $(\mathcal L \cup \mathcal R) \cap \mathcal Y$, its reflection is contained in an Euclidean triangle group which contains $h_0, \dots, h_{m+1}$. By \cite[Lemma~3.3(3)]{Caprace-Haglund}, however, this Euclidean triangle group is also a subgroup of $G$. This shows that the reflection associated to any wall in $(\mathcal L \cup \mathcal R) \cap \mathcal Y$ is in $G$. 

    We are left to show that reflections associated to walls in $\mathcal Y \setminus (\mathcal L \cup \mathcal R)$ are in $G$. Let $Z$ be such a wall, and let $z$ be its associated reflection. We have that $Z$ and $h_0X$ intersect at a non-right angle. Consider the wall $h_0xZ$. This wall intersects either $H_0$ or $H_{m+1}$ at a non-right angle.  By an argument similar to that of before, $gh_0xZ \in (\mathcal L \cup \mathcal R) \cap \mathcal Y$  where $g$ is in the group $\langle h_0, h_{m+1} \rangle$. Thus, $z \in G$. The proposition now follows for the case when $(\mathcal L \cup \mathcal R) \cap \mathcal Y \neq \emptyset$.

    We now assume that $(\mathcal L \cup \mathcal R) \cap \mathcal Y = \emptyset$. 
    Given any $X \in \mathcal L$ and any $Y \in \mathcal Y$, then $X \cap F$ intersects $Y \cap F$ at a right angle.
    Thus, the reflections $L$ associated to the walls in $\mathcal L$ all commute with the reflections $R$ associated to the walls in $\mathcal Y$.
    As $H_0$ and $H_{m+1}$ do not intersect, the subgroup generated by the reflections associated to these walls is an infinite dihedral group. In particular, $R$ is infinite. 
    As  $\lambda$ is unbounded, there exist a pair of non-intersecting walls which each intersect $\lambda$ transversely. In particular, $L$ generates an infinite group as well.
    By Proposition~\ref{prop:contained_in_wide}, $\langle R \cup L \rangle$ is contained in a wide subgroup.
    The proposition now follows.
\end{proof}

\subsection{Finding small bridges}
This subsection is dedicated to the proof the following proposition:
\begin{proposition} \label{prop:bridges_not_large}
Let $W_\Gamma$ be an affine-free Coxeter group, $\Sigma$ its Davis complex and $K$ a non-negative integer.
Then there exists a number $M = M(\Gamma, K)$, depending only on $\Gamma$ and $K$ such that the following holds:

 Given a geodesic ray $\gamma$ in $\Sigma$ with the property that any subpath of $\gamma$ of length at least $K$ does not have its label in a wide subgraph 
 and any pencil $H_1, \dots, H_{N}$ intersecting $\gamma$ with $N \ge M$, then the bridge between $H_1$ and $H_{N}$ is not large.
\end{proposition}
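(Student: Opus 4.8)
The plan is to argue by contradiction: suppose no such $M$ exists, so for every $m$ there is a geodesic ray $\gamma_m$ whose subpaths of length $\ge K$ never have label in a wide subgraph, together with a pencil $H_1, \dots, H_{N_m}$ intersecting $\gamma_m$ with $N_m \to \infty$, yet the bridge between $H_1$ and $H_{N_m}$ is large. The key tension to exploit is Proposition~\ref{prop:large_bridges}: largeness of the bridge forces all reflections of walls meeting a minimal-length geodesic $\beta$ between $H_1$ and $H_{N_m}$ into a common wide subgroup. The goal is to convert this into a long subpath of $\gamma_m$ whose label lies in a wide subgraph, contradicting the hypothesis. First I would record what it means for a pencil of $N$ walls to intersect $\gamma$: each $H_i$ meets $\gamma$ in exactly one edge (as $\gamma$ is geodesic), and since the $H_i$ are pairwise disjoint and $H_j$ separates $H_i$ from $H_k$ for $i<j<k$, the edges $e_i = \gamma \cap H_i$ occur in order along $\gamma$. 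Thus $\gamma$ contains a subpath $\gamma'$ of combinatorial length at least $N-1$ running from (an endpoint of) $e_1$ to (an endpoint of) $e_N$.

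Next I would relate the walls crossing $\gamma'$ to the walls crossing a minimal-length geodesic $\beta$ between $H_1$ and $H_N$. The walls of the pencil $H_1,\dots,H_N$ all separate $H_1$ from $H_N$, hence cross $\beta$ transversely (a wall separating the two endpoint-walls of a bridge must meet every geodesic realizing the bridge — this follows from the product structure $B \cong Y \times [0,I]$, where the separating walls restrict to the lines parallel to $Y$ in the flat strip, exactly as in the proof of Proposition~\ref{prop:large_bridges}). Since the bridge is large, Proposition~\ref{prop:large_bridges} applies and gives a wide subgroup $W$ (a conjugate of a wide special subgroup) containing the reflections $h_1,\dots,h_N$ associated to $H_1,\dots,H_N$. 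Now I would like to bound the label of $\gamma'$: write $\gamma'$ (based at its initial vertex, conjugated to the identity) as a word $s_1 \cdots s_L$ with $L \ge N-1$; the reflections dual to its edges are precisely the walls crossing $\gamma'$. The issue is that $\gamma'$ may cross walls that are not in the pencil, so I cannot directly conclude its dual reflections all lie in $W$. To fix this I would instead work with the pencil walls alone: by Lemma~\ref{lem:in_wide} (equivalence of (1) and (3)), it suffices to find a subpath of $\gamma$ of length $> K$ all of whose dual walls have reflections in a single conjugate of a wide subgroup. I would take $M$ large relative to $K$, $V_\Gamma$, $R_\Gamma$, $M_\Gamma$ and the ladder constant $\Lambda_\Gamma$, and use a pigeonhole/interval argument: among the $N \ge M$ consecutive edges $e_1,\dots,e_N$ of the pencil on $\gamma$, consecutive ones $e_i, e_{i+1}$ either bound a subpath of $\gamma$ of length $\le K$ (so the walls crossing $\gamma$ strictly between $e_i$ and $e_{i+1}$, together with $e_i$'s wall, all lie within a ball of controlled combinatorial size and, crucially, their reflections lie in $W$ by a projection/convexity argument onto the flat $F$ in the bridge), or the gap is long — but if too many gaps were long, $N$ large still forces a long subpath whose dual walls are all among $\{H_i\}$-adjacent ones sitting inside $W$. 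Concretely, I expect the cleanest route is: show that for $i$ in a long run, the entire subpath of $\gamma$ from $e_i$ to $e_{i+1}$ has all its dual walls' reflections in $W$ (because those walls must also separate $H_1$ from $H_N$ or are trapped by the flat-strip geometry), and hence a subpath of $\gamma$ of length $\ge (\text{number of such } i) \cdot 1 > K$ has label in a wide subgraph, the desired contradiction.

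The main obstacle I anticipate is precisely this last point: controlling the walls that cross $\gamma$ \emph{between} consecutive pencil walls and showing their reflections also land in the wide subgroup $W$ produced by Proposition~\ref{prop:large_bridges}. A wall $X$ crossing the subpath of $\gamma$ between $e_i$ and $e_{i+1}$ need not separate $H_1$ from $H_N$, so it is not a priori in the pencil nor obviously in $W$. The resolution should use that $\gamma$ together with $\beta$ (and the strip $F$) bounds a region one can fill with a disk diagram, and a dual-curve-tracking argument (as set up in the van Kampen diagram subsection) shows that any wall crossing this portion of $\gamma$ either crosses $\beta$ transversely — hence is among the walls controlled by Proposition~\ref{prop:large_bridges} and lies in $W$ — or it crosses two of the pencil walls $H_i, H_{i+1}$, in which case combining it with the pencil via Theorem~\ref{thm:recognizing_triangle_group} and the affine-free hypothesis again forces its reflection into $W$ (affine-freeness rules out the genuinely affine alternative, leaving the wide-product conclusion). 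Once every wall crossing a length-$(>K)$ subpath of $\gamma$ has its reflection in a single conjugate of a wide subgroup, Lemma~\ref{lem:in_wide} gives that this subpath's label lies in a wide subgraph, contradicting the standing hypothesis on $\gamma$, and the proposition follows with $M$ an explicit function of $\Gamma$ and $K$.
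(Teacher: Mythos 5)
Your top-level strategy matches the paper's: argue by contradiction, replace the bridge geodesic by a combinatorial geodesic $\beta$ whose label lies in a wide subgraph (via Proposition~\ref{prop:large_bridges} and Lemma~\ref{lem:in_wide}), fill the quadrilateral $\lambda\alpha\rho^{-1}\beta^{-1}$ with a disk diagram, and extract a long subpath of $\gamma$ whose label lies in a wide subgraph. You also correctly isolate the hard point: controlling the walls that cross $\gamma$ but do not cross $\beta$. However, your proposed resolution of that point has a genuine gap. You claim that such a wall $X$ either crosses $\beta$ transversely (hence lies in the wide subgroup $W$ coming from the bridge) or crosses two pencil walls, in which case Theorem~\ref{thm:recognizing_triangle_group} together with affine-freeness ``forces its reflection into $W$.'' That is not what the theorem gives: a single wall crossing several pairwise-disjoint pencil walls does not form a ladder (a ladder requires two \emph{intersecting} walls $K$, $K'$ with $K\cap K'\subset L_0$, each crossing the pencil), so no affine subgroup is produced and no membership in $W$ follows. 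In an affine-free group the correct use of the theorem is its contrapositive, as in Lemma~\ref{lem:commuting_dual_curves}: $X$ must meet the relevant transversal walls at right angles, i.e.\ the corresponding reflections \emph{commute}. Commuting with the $\beta$-walls does not place $X$'s reflection in $W$, and in general it is not in $W$.

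This is precisely why the paper devotes Proposition~\ref{prop:van_kampen} and its surrounding lemmas to the step you sketch in one sentence: one needs the Dilworth decomposition of dual curves into boundedly many non-crossing families, the middle/peripheral distinction so that Lemma~\ref{lem:commuting_dual_curves} applies, and then a counting argument with the constant $M_\Gamma$. The outcome is a dichotomy: either the ``side'' letters $S_L\cup S_R$ appearing on the chosen subpath generate an infinite group, in which case the subpath's label lies in the \emph{new} wide subgraph corresponding to $W_{S_L\cup S_R}\times W_{S_B}$ (not in $W$, nor in any conjugate of it); or there are few side letters and a further subpath is crossed only by $\beta$-walls, landing in $W$. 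Your closing claim --- that every wall crossing a long subpath of $\gamma$ has its reflection in a single conjugate of the wide subgroup attached to the bridge --- is therefore both stronger than what is needed and false in general; the argument must allow the wide subgraph to be a product assembled from the commutation relations rather than the one produced by Proposition~\ref{prop:large_bridges}.
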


Proposition~\ref{prop:bridges_not_large} follows from the next technical proposition.
\begin{proposition} \label{prop:van_kampen}
    Let $W_\Gamma$ be an affine-free Coxeter group and $\Sigma$ be its Davis complex.
    There exist constants $\epsilon, \kappa > 0$, depending only on $\Gamma$, such that the following holds.
    Let $D$ be a Van-Kampen diagram over $\Sigma$ with boundary path $\lambda \alpha \rho^{-1} \beta^{-1}$ such that:
    \begin{enumerate}
	    \item $\lambda$, $\alpha$, $\rho$ and $\beta$ are geodesic. 
	    \item $\text{Label}(\beta)$ is contained in a wide subgraph.
	    \item $R > \kappa$ dual curves intersect both $\beta$ and $\alpha$. 
    \end{enumerate}
    Then, $\alpha$ has a subpath of length $\lfloor \epsilon R \rfloor$ whose label is contained in a wide subgraph.
\end{proposition}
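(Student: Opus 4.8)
The plan is to analyze the disk diagram $D$ via its dual curves, using the hypothesis that $\text{Label}(\beta)$ lies in a wide subgraph together with the affine-free hypothesis. Let me set up notation: the $R > \kappa$ dual curves crossing both $\alpha$ and $\beta$ form, when restricted to $D$, a family of arcs; call them $c_1, \dots, c_R$ in the order their endpoints appear along $\beta$. Since $\beta$ is geodesic, each dual curve meets $\beta$ exactly once, so these arcs are disjoint except possibly where two of them cross in the interior of $D$. The first thing I would do is pass to a large sub-family that is pairwise non-crossing: a standard Ramsey/planarity argument (or the fact that in a disk an arc system crossing a fixed boundary arc $\beta$ in a fixed cyclic order has a linearly large non-crossing subsystem — indeed in a disk diagram two dual curves cross at most once, and non-crossing-ness can be extracted greedily) yields a sub-family of size $\gtrsim R$ that is pairwise non-crossing and still crosses both $\alpha$ and $\beta$. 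After relabeling, assume $c_1, \dots, c_{R'}$ with $R' \ge \epsilon_0 R$ are pairwise non-crossing and ordered consistently along both $\beta$ and $\alpha$.

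Next I would locate the walls. Each $c_i$ maps into a wall $H_i$ of $\Sigma$; because the $c_i$ are pairwise non-crossing in $D$, the images $H_i$ are pairwise non-intersecting in $\Sigma$, and the fact that they cross $\beta$ in order (together with the planarity of $D$) should force $H_1, \dots, H_{R'}$ to form a pencil — each $H_j$ separates the portion of $\beta$ (hence the portion of $\Sigma$) before it from the portion after. So after this step I have a pencil of $R' \gtrsim R$ walls all crossing $\beta$ and all crossing $\alpha$. Now the wide hypothesis on $\beta$ enters: by Lemma~\ref{lem:in_wide}, the reflections dual to $\beta$ lie in a conjugate $W$ of a wide special subgroup, and in particular each $H_i$ (being dual to an edge of $\beta$... — more precisely, each $c_i$ enters $\beta$ through an edge, so $H_i$ is the wall through that edge, hence its reflection is one of the reflections dual to $\beta$) has reflection in $W$. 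The point of collecting a pencil is to feed a consistent portion of it into the structure theory.

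The heart of the argument: I want to show $\alpha$ has a long subpath whose label lies in a wide subgraph. Consider the subword of $\alpha$ cut out between the crossing point of $c_1$ and the crossing point of $c_{R'}$ — call it $\alpha'$; it has length $\ge R' - 1 \gtrsim R$ (each $c_i$ crosses a distinct edge of $\alpha$). I claim $\text{Label}(\alpha')$ is contained in a wide subgraph, which by Lemma~\ref{lem:in_wide} is equivalent to showing the reflections dual to $\alpha'$ — which include $H_1, \dots, H_{R'}$ but also the walls dual to edges of $\alpha'$ lying strictly between consecutive $c_i$'s — lie in a conjugate of a wide subgroup. For the $H_i$ themselves this is the previous paragraph. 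For a wall $E$ dual to an intermediate edge of $\alpha'$: $E$ separates some $H_i$ from $H_{i+1}$ along $\alpha$, and I would trace the dual curve of $E$ inside $D$ — it starts at $\alpha'$, and since it cannot cross $\alpha$ again (geodesic), it must exit through $\lambda$, $\rho$, or $\beta$. If it exits through $\beta$, its reflection is already in $W$ and we can run the pencil argument including it. The delicate walls are those whose dual curves go from $\alpha'$ to $\lambda$ or to $\rho$; for these I would use the pencil $H_1,\dots,H_{R'}$: such a wall $E$ crosses many of the $H_i$ (those whose arcs separate $E$'s endpoints on $\alpha$ from the $\lambda$/$\rho$ side), so if $\kappa$ is taken larger than the ladder constant $\Lambda_\Gamma$ plus slack, we can apply Theorem~\ref{thm:recognizing_triangle_group} (with condition (1) supplied by Lemma~\ref{lem:wall_intersection_containment}, after the usual trick of replacing one wall by its reflection) to conclude that $E$ together with a chunk of the $H_i$ generates a Euclidean triangle group — contradicting affine-freeness, unless that configuration is degenerate, which forces $E$'s reflection to commute with, and hence by Proposition~\ref{prop:contained_in_wide} or Proposition~\ref{prop:large_bridges} lie inside, a wide subgroup containing the $H_i$. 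In other words: affine-freeness rules out the "ladder" alternative of Caprace's dichotomy, pushing us into the "product/commuting" alternative, which is exactly what makes the relevant subgroup wide. Quantitatively, one bad intermediate wall can only spoil a bounded stretch of $\alpha'$, so after discarding the bounded exceptional stretches (there are at most $\sim R'/\Lambda_\Gamma$ of them, each of bounded length, or alternatively a single application handles a whole interval) a definite fraction $\lfloor \epsilon R\rfloor$ of $\alpha'$ survives with all dual reflections in one wide subgroup; Lemma~\ref{lem:in_wide} then gives that this subpath has its label in a wide subgraph, as desired.

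The main obstacle I anticipate is the bookkeeping in the last paragraph: one must carefully control \emph{which} wide subgroup each intermediate wall lands in, and argue they can all be taken to be the \emph{same} wide subgroup over a long interval of $\alpha'$, rather than a wide subgroup that drifts as one moves along $\alpha$. The mechanism for this is Caprace's theorem plus affine-freeness: wherever a pencil of $\gtrsim \Lambda_\Gamma$ of the $H_i$ coexists with a transverse wall that is \emph{not} perpendicular to them, we would get a forbidden affine rank-$\ge 3$ subgroup; so every transverse wall hitting a long stretch of the pencil must be perpendicular to that stretch, and perpendicularity is what lets Proposition~\ref{prop:contained_in_wide} manufacture a fixed wide subgroup. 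Getting the constants $\epsilon$ and $\kappa$ to depend only on $\Gamma$ (through $\Lambda_\Gamma$, $V_\Gamma$, $R_\Gamma$) and not on $D$ or $R$ is then a matter of tracking the linear loss at each extraction step.
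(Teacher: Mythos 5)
Your high-level strategy --- extract a large non-crossing family of $\alpha$-to-$\beta$ dual curves, use Caprace's theorem plus affine-freeness to force the transverse walls to be perpendicular to them, and then invoke Proposition~\ref{prop:contained_in_wide} and Lemma~\ref{lem:in_wide} --- is the same as the paper's, but the execution has genuine gaps at the two places where the real work happens.

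First, the way you propose to apply Theorem~\ref{thm:recognizing_triangle_group} does not work as stated. A single wall $E$ transverse to a pencil is not a ladder: the theorem needs two \emph{intersecting} walls $K$, $K'$, each crossing the pencil, with $K\cap K'$ contained in the first wall of the pencil. The ``replace one wall by its reflection'' trick you cite is the one used in Proposition~\ref{prop:large_bridges}, but there it relies on the Euclidean geometry of the flat strip $F$ spanned by a \emph{large} bridge to see that the reflected wall still crosses both ends of the pencil; inside an arbitrary disk diagram there is no such flat, and reflecting $E$ across one $H_i$ gives no control over whether the image crosses the other $H_j$. The paper instead finds the ladder combinatorially: it applies Dilworth's theorem to the $\lambda$- and $\rho$-curves as well (not only to the curves reaching $\beta$), discards a bounded number of ``peripheral'' curves in each part, and shows (Lemma~\ref{lem:commuting_dual_curves}) that if two \emph{middle} dual curves from distinct sides cross inside a non-square polygon, then a third dual curve of that polygon together with one Dilworth part yields a forbidden ladder; the type-recurrence conditions of Lemma~\ref{lem:subpath1} are then needed to handle transverse curves that do \emph{not} cross each other, by replacing them with same-type curves that do. Your sketch has no substitute for this step.

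Second, the endgame has two problems. (i) Discarding ``$\sim R'/\Lambda_\Gamma$ exceptional stretches, each of bounded length'' can shatter $\alpha'$ into linearly many pieces each of bounded length, so it does not by itself produce a subpath of length $\lfloor \epsilon R\rfloor$; the paper avoids this because after Lemma~\ref{lem:subpath2} \emph{every} transverse type on the chosen subpath commutes with every $\beta$-type, with nothing left to discard. (ii) You implicitly take the final wide subgroup to be one containing the conjugate attached to $\beta$, but Proposition~\ref{prop:contained_in_wide} only applies when the transverse reflections generate an \emph{infinite} group; when they generate a finite group (the paper's case $|w_1|<M_\Gamma$) the commuting structure does not produce a wide product, and one must instead locate a long stretch of $\alpha'$ dual only to $\beta$-curves and apply Lemma~\ref{lem:in_wide} there. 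Your proposal does not address this dichotomy, and in the infinite case the resulting wide subgroup is generally \emph{not} the one containing $\mathrm{Label}(\beta)$, so the bookkeeping worry you flag at the end is resolved differently than you anticipate.
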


We first prove Proposition~\ref{prop:bridges_not_large} from Proposition~\ref{prop:van_kampen}. Then, we turn to the proof of Proposition~\ref{prop:van_kampen}.

\begin{proof}[Proof of Proposition~\ref{prop:bridges_not_large}]
Set $M := \lfloor \frac{K}{\epsilon} +1 \rfloor$, where $K$ is as in the statement of the proposition and $\epsilon$ is as in Proposition~\ref{prop:van_kampen}.
Let $\gamma$ be a geodesic ray with the property that any subpath of length at least $K$ does not have its label in a wide subgraph.
Let $H_1, \dots, H_N$ be a pencil of walls intersecting $\gamma$ with $N \ge M$.
Suppose for a contradiction that the bridge $B$ between $H_1$ and $H_N$ is large. 
Let $\beta'$ be a minimal length CAT(0) geodesic between $H_1$ and $H_N$.
Let $u$ (resp. $v$) be a closest vertex to the startpoint (resp. endpoint) of $\beta'$.
Moreover, we can assume that $u$ (resp. $v$) is not in the same component of $\Sigma \setminus H_1$ (resp. $\Sigma \setminus H_N$) as $H_N$ (resp. $H_1$).
Let $\beta$ be a (Cayley graph) geodesic from $u$ to $v$. 
By Proposition~\ref{prop:contained_in_wide}, the set of reflections associated to the walls that intersect $\beta'$ transversely, is contained in a wide subgroup. As the walls that intersect $\beta$ are the same as the walls that intersect $\beta'$ transversely, $\text{Label}(\beta)$ is contained in a wide subgroup by Lemma~\ref{lem:in_wide}.

Let $\alpha$ be the subpath of $\gamma$ from the edge containing $H_1 \cap \gamma$ to the edge containing $H_N \cap \gamma$ which contains these edges.
Let $\lambda$ (resp. $\rho$) be a geodesic from the startpoint (resp. endpoint) of $\beta$ to the startpoint (resp. endpoint) of $\alpha$. 
Note that $M$ dual curves (for instance: $H_1, \dots, H_M$) intersect both $\alpha$ and $\beta$.
By Proposition~\ref{prop:van_kampen}, there is a subpath of $\alpha$ of length $\lfloor \epsilon M \rfloor = \lfloor \epsilon \lfloor \frac{K}{\epsilon} +1 \rfloor \rfloor \ge K$ whose label is contained in a wide subgraph. 
However, this contradicts our hypothesis.
\end{proof}

The remainder of this subsection is devoted to proving Proposition~\ref{prop:van_kampen}, and 
\emph{we fix the notation of Proposition~\ref{prop:van_kampen} for the remainder of this subsection.}

The following lemma follows from Theorem 5.2 and Proposition 5.3 in \cite{Mihalik-Tschantz} and gives us a decomposition of dual curves intersecting $\alpha$:

\begin{lemma}[Dilworth's theorem]
     Let $\mathcal L$ (resp. $\mathcal B$, $\mathcal R$) be the set of dual curves in $D$ which intersect both $\alpha$ and $\lambda$ (resp. $\beta$, $\rho$). Then there is a number $F_{\Gamma}$, depending only on $\Gamma$, such that $\mathcal L = \mathcal L_1 \sqcup \dots \sqcup \mathcal L_{l}$,  $\mathcal B = \mathcal B_1 \sqcup \dots \sqcup \mathcal B_b$ and $\mathcal R = \mathcal R_1 \sqcup \dots \mathcal R_{r}$ with $l, b, r \le F_\Gamma$ and such that, for all $1 \le i \le k$ (resp. $1 \le i \le b$, $1 \le i \le r$), any two walls in $\Sigma$ corresponding to dual curves in $\mathcal L_i$ (resp. $\mathcal B_i$, $\mathcal R_i$) do not intersect. 
\end{lemma}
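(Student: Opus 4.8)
The plan is to realize each of the three families $\mathcal L$, $\mathcal B$, $\mathcal R$ as a partially ordered set in which antichains correspond exactly to families of pairwise non-intersecting walls, and then bound the height of this poset by a constant depending only on $\Gamma$; Dilworth's theorem (or rather the easier Mirsky/dual form, partitioning a poset of height $h$ into $h$ antichains) then yields the decomposition. Concretely, for $\mathcal L$: every dual curve in $\mathcal L$ meets $\alpha$ exactly once (since $\alpha$ is geodesic, so no wall crosses it twice) and meets $\lambda$ exactly once; order $\mathcal L$ by the position of the intersection point along $\alpha$ (equivalently along $\lambda$). The key combinatorial input is that a \emph{chain} in this order — a sequence of dual curves crossing $\alpha$ in order and each separating its predecessors from its successors — projects under $D \to \Sigma$ to a pencil of walls all crossing the geodesic $\alpha$, and Mihalik--Tschantz (Theorem 5.2 and Proposition 5.3 of \cite{Mihalik-Tschantz}) bound the length of such a pencil in terms of $\Gamma$ alone. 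Call this bound $F_\Gamma$. First I would set up the poset precisely and verify that two dual curves are incomparable in it if and only if the corresponding walls in $\Sigma$ do not intersect (two walls crossing $\alpha$ are either nested or crossing, and they are nested along $\alpha$ precisely when one separates the other from a third reference point, which is the comparability relation); the subtle point is that "dual curves intersect in $D$" must be translated to "walls intersect in $\Sigma$," using that distinct dual curves project to distinct walls and that two wall segments sharing a cell of $D$ really do cross in $\Sigma$.

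Next I would invoke the height bound. By the Mihalik--Tschantz results, any chain in the $\mathcal L$-order has length at most $F_\Gamma$, because such a chain gives $F_\Gamma{+}1$ walls forming a pencil crossing a single geodesic, and affine-freeness (or just the structure of $\Sigma$) caps the size of such a configuration; $F_\Gamma$ is the uniform constant they produce. The dual form of Dilworth's theorem then partitions $\mathcal L$ into at most $F_\Gamma$ antichains $\mathcal L_1, \dots, \mathcal L_l$ with $l \le F_\Gamma$, and by construction each $\mathcal L_i$ consists of dual curves whose walls in $\Sigma$ are pairwise non-intersecting. The identical argument applied verbatim to $\mathcal B$ (dual curves crossing both $\alpha$ and $\beta$) and to $\mathcal R$ (dual curves crossing both $\alpha$ and $\rho$) gives the decompositions $\mathcal B = \mathcal B_1 \sqcup \dots \sqcup \mathcal B_b$ and $\mathcal R = \mathcal R_1 \sqcup \dots \sqcup \mathcal R_r$ with $b, r \le F_\Gamma$. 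Taking $F_\Gamma$ to be the maximum of the three constants (they can all be taken equal since the bound only depends on $\Gamma$) finishes the proof.

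The main obstacle, and the reason the lemma is cited from \cite{Mihalik-Tschantz} rather than proved from scratch, is establishing the uniform height bound $F_\Gamma$ on chains — i.e.\ that a pencil of walls all crossing a fixed geodesic in $\Sigma$ has uniformly bounded cardinality. This is exactly where the geometry of the Davis complex enters: a long such pencil would force a long "ladder" of the type appearing in Caprace's Theorem~\ref{thm:recognizing_triangle_group}, or more elementarily it forces a word of bounded type, and one extracts a bound from the finite combinatorics of $\Gamma$. Since the excerpt attributes this to Theorem 5.2 and Proposition 5.3 of \cite{Mihalik-Tschantz}, I would simply quote those statements and spend the remaining effort on the (routine but fiddly) dictionary between dual curves in $D$ and walls in $\Sigma$, and on the clean application of the dual Dilworth/Mirsky decomposition.
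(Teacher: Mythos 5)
Your overall strategy (realize each family as a poset and apply a Dilworth-type decomposition, with the uniform bound coming from the combinatorics of $\Gamma$) is the right one, and the paper itself gives no proof beyond citing Mihalik--Tschantz, so the burden is entirely on getting this argument right. Unfortunately the proposal has the duality exactly backwards, and the step where you import the uniform bound is false as stated. You claim that a chain in your order projects to a pencil of walls crossing $\alpha$, and that such pencils have length at most $F_\Gamma$. A pencil is a family of pairwise \emph{non}-intersecting walls, so under your own dictionary (``incomparable iff the walls do not intersect'') a pencil is an \emph{antichain}, not a chain --- that is an internal contradiction. Worse, the claimed bound is simply wrong: pencils of walls crossing a single geodesic are not uniformly bounded; the paper's Lemma~\ref{lem:large_pencils} asserts that a geodesic of length $|\gamma|$ crosses a pencil of $\lfloor |\gamma|/C_1\rfloor$ walls, which is unbounded. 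There is also a problem with the poset itself: if ``comparable'' is to mean ``the walls intersect'' (ordered by position along $\alpha$), this relation is not transitive --- $H_1$ can cross $H_2$ and $H_2$ cross $H_3$ while $H_1$ and $H_3$ are disjoint --- so you do not have a partial order and Mirsky's theorem does not apply.

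The correct argument runs the other way. Order the dual curves of $\mathcal L$ by their intersection with $\alpha$ and declare $H\preceq K$ iff $H=K$, or $H$ meets $\alpha$ before $K$ and the corresponding walls are \emph{disjoint} in $\Sigma$. This \emph{is} transitive: if $H_i, H_j, H_k$ cross $\alpha$ in that order and $H_j$ is disjoint from both $H_i$ and $H_k$, then $H_j$ separates them, so $H_i\cap H_k=\emptyset$. Chains are now exactly pencils, i.e.\ pairwise non-intersecting families --- which is what each part $\mathcal L_i$ is required to be --- so you want Dilworth's theorem proper (partition into chains), not its Mirsky dual. The minimum number of chains equals the maximum antichain, i.e.\ the maximum number of pairwise \emph{intersecting} walls, and it is \emph{this} quantity that is uniformly bounded in terms of $\Gamma$ (finite dimensionality of the wall structure of a Coxeter system; this is the content of the cited Mihalik--Tschantz results, and it needs no affine-freeness, which you invoke gratuitously). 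Your remaining remarks --- that distinct dual curves crossing in $D$ project to intersecting walls, and that each curve meets the geodesic $\alpha$ exactly once --- are fine and are the right ``dictionary'' points to check, but as written the proof does not go through without swapping chains and antichains and replacing the false pencil bound by the bound on pairwise-crossing walls.
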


We call the constant $F_\Gamma$ \emph{Dilworth's constant} and call each $\mathcal L_i$ (resp. $\mathcal B_i$, $\mathcal R_i$) a \emph{part} of $\mathcal L$ (resp. $\mathcal B$, $\mathcal R$).
As $\alpha$ is geodesic, no dual curve intersects it twice. 
Thus, every dual curve that intersects $\alpha$ is in exactly one of the sets $\mathcal L$, $\mathcal R$ or $\mathcal B$.

For $1 \le i \le l$, let $L^i_1, \dots, L^i_{l_i}$ denote the dual curves in the part $\mathcal L_i$.
We further assume that these dual curves are ordered with respect to the orientation of $\alpha$, i.e. for all $1 \le j < j' \le l_i$, $L^i_j \cap \alpha$ occurs before $L^i_{j'} \cap \alpha$, with respect to the orientation of $\alpha$.
Let $\Lambda_{\Gamma}$ be the ladder constant. 
We say that $L^i_j$ is a \emph{middle} dual curve if $\Lambda_{\Gamma} < j < l_i - \Lambda_{\Gamma}$,
and we say that it is a \emph{peripheral} dual curve otherwise.
We similarly define middle and peripheral dual curves for each part in $\mathcal R_i$ and in $\mathcal B_i$. 

Let $H$ be a dual curve in $D$ that intersects $\alpha$.
We say that $H$ is a \emph{middle} (resp. \emph{peripheral}) dual curve, if it is a middle (resp. peripheral) dual curve for some part in $\mathcal L$, $\mathcal R$ or $\mathcal B$.
This is well defined as each such dual curve is in a distinct set in $\{\mathcal L, \mathcal R, \mathcal B\}$ and in a distinct part of that set.
The next lemma is immediate:
\begin{lemma} \label{lem:peripheral_bound}
    The number of dual curves that intersect $\alpha$ and are peripheral is at most $6 F_\Gamma \Lambda_{\Gamma}$, where $\Lambda_{\Gamma}$ is the ladder constant and $F_\Gamma$ is the Dilsworth constant. 
\end{lemma}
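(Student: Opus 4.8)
The plan is a direct counting argument built entirely on the Dilworth decomposition and the notion of ``peripheral'' introduced just above. First I would record the partitioning of dual curves. Since $\alpha$ is geodesic, no dual curve of $D$ crosses $\alpha$ twice; and a dual curve that does cross $\alpha$ has, apart from this crossing, exactly one further endpoint on $\partial D = \lambda\alpha\rho^{-1}\beta^{-1}$, which cannot lie on $\alpha$ again and therefore lies on $\lambda$, $\rho$, or $\beta$. Hence the set of dual curves meeting $\alpha$ is the \emph{disjoint} union $\mathcal L \sqcup \mathcal B \sqcup \mathcal R$; this is exactly the observation that makes the middle/peripheral classification well defined, since each such dual curve lies in precisely one of these three sets and, within it, in precisely one Dilworth part.

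Next I would invoke the Dilworth decomposition: $\mathcal L$ breaks into $l \le F_\Gamma$ parts $\mathcal L_1,\dots,\mathcal L_l$, and likewise $\mathcal B$ into $b \le F_\Gamma$ parts and $\mathcal R$ into $r \le F_\Gamma$ parts. Thus the dual curves meeting $\alpha$ are distributed among at most $3F_\Gamma$ parts in total. Within any one part, order its dual curves $L^i_1,\dots,L^i_{l_i}$ along $\alpha$; by definition the peripheral ones are exactly those whose index satisfies $j \le \Lambda_\Gamma$ or $j \ge l_i - \Lambda_\Gamma$ (and if $l_i$ is small then every dual curve of the part is peripheral), so in all cases there are at most $2\Lambda_\Gamma$ peripheral dual curves in the part. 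Summing over the at most $3F_\Gamma$ parts gives at most $3F_\Gamma \cdot 2\Lambda_\Gamma = 6F_\Gamma\Lambda_\Gamma$ peripheral dual curves meeting $\alpha$, as claimed.

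There is essentially no obstacle here — this is the bookkeeping lemma the text flags as immediate. The only point that deserves a sentence of justification is the claim that $\mathcal L$, $\mathcal B$, and $\mathcal R$ jointly exhaust, with no overlap, the dual curves crossing $\alpha$: this is precisely where the geodesicity of $\alpha$ (hence of the boundary path) enters, ensuring that each such dual curve is counted in exactly one part and is labelled middle or peripheral in exactly one way. Everything else is the elementary estimate above.
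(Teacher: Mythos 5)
Your counting argument is exactly the ``immediate'' argument the paper has in mind (the paper supplies no proof of this lemma beyond flagging it as immediate), and it is correct: the disjointness of $\mathcal L$, $\mathcal B$, $\mathcal R$ on dual curves meeting $\alpha$ follows from geodesicity of $\alpha$ as you say, and then one just sums over the at most $3F_\Gamma$ Dilworth parts. The only quibble is an off-by-one: the peripheral indices in a part are $j\le\Lambda_\Gamma$ or $j\ge l_i-\Lambda_\Gamma$, and the latter set has $\Lambda_\Gamma+1$ elements, so each part contributes up to $2\Lambda_\Gamma+1$ peripheral curves and the honest bound is $3F_\Gamma(2\Lambda_\Gamma+1)$ rather than $6F_\Gamma\Lambda_\Gamma$ --- a slip already implicit in the paper's own statement and immaterial downstream, where only some constant depending on $\Gamma$ is needed.
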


Two dual curves $H$ and $K$ in $D$ are said to be in \emph{distinct sides} if they are both dual to $\alpha$, are 
not both in $\mathcal L$, are not both in $\mathcal R$ and are not both in $\mathcal B$.

The following lemma uses the affine-free hypothesis.
\begin{lemma} \label{lem:commuting_dual_curves}
	Let $H$ and $K$ be middle dual curves in $D$ that are in distinct sides. If $H \cap K \neq \emptyset$, then $H \cap K$ is contained in a square of $D$. In particular, the reflections corresponding to $H$ and $K$ commute.
\end{lemma}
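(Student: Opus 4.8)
The statement to prove is Lemma~\ref{lem:commuting_dual_curves}: if $H$ and $K$ are middle dual curves in $D$ lying in distinct sides and $H \cap K \neq \emptyset$, then $H \cap K$ is contained in a square of $D$, so the corresponding reflections commute. The strategy is to argue by contradiction: assume $H$ and $K$ intersect in a $2$-cell $c$ of $D$ that is a $2n$-gon with $n \ge 3$, equivalently that the reflections $r_H$ and $r_K$ do not commute. I would then build a ladder configuration in the Davis complex $\Sigma$ out of the projected walls, and invoke Theorem~\ref{thm:recognizing_triangle_group} (via its ladder constant $\Lambda_\Gamma$) together with the affine-free hypothesis to derive a contradiction.

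First I would set up the projection $D \to \Sigma$ and let $\hat H$, $\hat K$ be the walls of $\Sigma$ carrying the images of the dual curves $H$, $K$; since $H$ and $K$ meet in a non-square cell, $\hat H \cap \hat K \neq \emptyset$ and $r_{\hat H}$, $r_{\hat K}$ do not commute. By Lemma~\ref{lem:wall_intersection_containment}, taking $r$ to be the reflection about a suitable separating wall $L_0$, we get the containment $\hat K \cap r(\hat K) \subset L_0$ needed for hypothesis (1) of Theorem~\ref{thm:recognizing_triangle_group}. The walls $L_0, \dots, L_C$ will come from the ``middle'' status of $H$ and $K$: because $H$ is a middle dual curve in its part, there are at least $\Lambda_\Gamma$ dual curves in the same part on either side of it along $\alpha$, and these project to pairwise non-intersecting walls (parts are defined exactly so that walls within a part do not intersect); the same holds for $K$. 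The key geometric claim is that, since $H$ and $K$ are in distinct sides and $H$ intersects $K$ transversely, enough of the non-intersecting walls flanking $H$ (or $K$) must intersect both $\hat H$ and $\hat K$, producing a pencil $L_0, \dots, L_C$ with each $L_i$ crossing both $\hat H$ and $\hat K$. Applying Theorem~\ref{thm:recognizing_triangle_group} then forces the reflections $r_{\hat H}$, $r_{\hat K}$, $r_{L_0}, \dots, r_{L_C}$ to lie in a conjugate of an irreducible affine special subgroup of rank $\ge 3$, contradicting that $W_\Gamma$ is affine-free. Hence $r_H$ and $r_K$ commute, which (by the structure of $2$-cells in the Davis complex and of $D$) means $H \cap K$ lies in a square.

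The main obstacle I anticipate is the combinatorial bookkeeping needed to actually extract the pencil of $C+1 = \Lambda_\Gamma + 1$ walls each intersecting \emph{both} $\hat H$ and $\hat K$. Having $\Lambda_\Gamma$ non-intersecting walls flanking $H$ on each side is easy from the ``middle'' hypothesis, but one must control how these flanking walls interact with $\hat K$: a priori a flanking wall of $H$ need not cross $\hat K$. Here the ``distinct sides'' hypothesis should be doing the work — because $H \in \mathcal{L}$ (say) and $K \in \mathcal{B}$ or $\mathcal{R}$, the dual curves flanking $H$ all also cross $\alpha$, and the relative position of $\alpha$, $\beta$, $\lambda$, $\rho$ in $D$ forces these flanking curves to separate $K \cap \alpha$ from the endpoints appropriately, hence to cross $K$. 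Making this precise likely requires a careful planarity/separation argument in the disk diagram $D$, tracking which region of $D \setminus (H \cup K)$ each flanking dual curve's endpoints fall into, and possibly passing between the $D$-picture and the $\Sigma$-picture to apply Lemma~\ref{lem:wall_intersection_containment}. Once the pencil is in hand, the contradiction with affine-freeness via Theorem~\ref{thm:recognizing_triangle_group} is immediate, and the final sentence (reflections commute $\Rightarrow$ intersection in a square) follows from the standard fact that two walls of $\Sigma$ whose reflections commute meet, if at all, in the center of a square $2$-cell, pulled back through the label-preserving map $D \to \Sigma$.
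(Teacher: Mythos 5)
Your high-level strategy --- argue by contradiction, build a ladder, and invoke Theorem~\ref{thm:recognizing_triangle_group} against the affine-free hypothesis --- is the same as the paper's, but the specific ladder you propose cannot be built, and you are missing the one idea that makes the argument work. You ask for a pencil of walls flanking $H$ (drawn from the same Dilworth part as $H$) \emph{each of which crosses both $\hat H$ and $\hat K$}. This is internally inconsistent: walls in the same part as $H$ are by definition pairwise non-intersecting, so none of them crosses $\hat H$; symmetrically, the walls flanking $K$ in its part never cross $\hat K$. Hence with $(\hat H,\hat K)$ as the transverse pair of the ladder, hypothesis (3) of Theorem~\ref{thm:recognizing_triangle_group} can never be satisfied by either flanking family, and there is no other identified supply of $\Lambda_\Gamma$ walls crossing both. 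Note also that two intersecting walls plus a pencil is not by itself close to a contradiction: a square crossing is perfectly allowed, so any correct argument must use the non-squareness of the cell in an essential, structural way, which yours does not.

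The missing ingredient is the \emph{third} wall of the non-square cell. If $H\cap K$ lies in a $2m$-gon $c$ with $m\ge 3$, there is a dual curve $Z$ of $c$ distinct from $H$ and $K$, and all three wall segments pass through the center of $c$. Since $Z$ cannot cross the geodesic $\alpha$ twice, planarity forces it to cross either every wall in $A:=\{L^i_{l_i-\Lambda_\Gamma},\dots,L^i_{l_i}\}$ (the tail of $H$'s part beyond $K\cap\alpha$) or every wall in the corresponding initial family $B$ of $K$'s part; meanwhile $H\cap K\neq\emptyset$ already forces $K$ to cross every wall of $A$ and $H$ to cross every wall of $B$. The ladder is then $(K,Z)$ as the transverse pair with pencil $\{H\}\cup A$ (or $(H,Z)$ with pencil $\{K\}\cup B$): the pencil walls lie in one part, so they are pairwise disjoint and ordered along $\alpha$; each is crossed by both $K$ and $Z$; and $K\cap Z\subset H$ because all three walls meet at the center of $c$ (this is where Lemma~\ref{lem:wall_intersection_containment} actually enters, not where you placed it). Only this configuration lets Theorem~\ref{thm:recognizing_triangle_group} produce the rank~$\ge 3$ affine subgroup and the contradiction; without extracting $Z$ from the non-square polygon, the contradiction never materializes.
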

\begin{proof}
	We prove the claim for when $H \in \mathcal L$ and $K \in \mathcal R$. The proof for the other cases is analogous.
	Let $i$ and $j$ be such that $H \in \mathcal L_i = \{L^i_1, \dots, L^i_{l_i}\}$ and $K \in \mathcal R_j = \{R^j_1, \dots, R^j_{r_j}\}$ where these sets are indexed with respect to the orientation of $\alpha$. 
	As $H$ and $K$ are middle dual curves, $H = L^i_{i'}$ for some $\Lambda_{\Gamma} < i' < l_i - \Lambda_{\Gamma}$ and $K = R^j_{j'}$ for some $\Lambda_{\Gamma} < j' < r_j - \Lambda_{\Gamma}$ where $\Lambda_{\Gamma}$ is the ladder constant.
	
	As $H \cap K \neq \emptyset$ and as dual curves in $\mathcal L_i$ (resp. $\mathcal R_j$) are pairwise non-intersecting, every dual curve in $A := \{L^i_{l_i - \Lambda_{\Gamma}}, \dots, L^i_{l_i}\}$ intersects every dual curve in $B := \{R^j_{1}, \dots, R^j_{\Lambda_{\Gamma}}\}$.
	Furthermore, $H \cap K$ is contained in the region of $D$ bounded by $\mathcal L^i_{l_i}$, $\mathcal R^j_1$ and $\alpha$.
	In particular, $H$ intersects every dual curve in $B$ and $K$ intersects every dual curve in $A$.

 \begin{figure}
  \begin{overpic}[scale=.8]{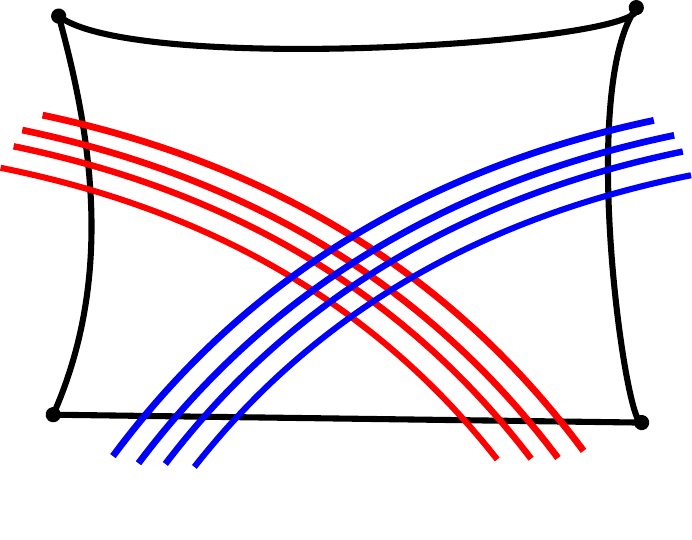}
  \put(34, 12){$K$}
  \put(14, 7){$R_1^j \enspace \ldots \enspace  R^j_{\Lambda_\Gamma}$}
  \put(65, 12){$H$}
  \put(71, 7){$L^i_{li-\Lambda_\Gamma} \> \ldots \> L^i_{l_i}$}
  \put(7, 45){$\lambda$}
  \put(50, 73){$\beta$}
  \put(95, 45){$\rho$}
  \put(50, 14){$\alpha$}
\end{overpic}  
\caption{Lemma \ref{lem:commuting_dual_curves}}
\end{figure}
	
	Suppose now for a contradiction that $H \cap K$ is contained in a polygon $c$ that is not a square. Let $Z$ be a dual curve dual to $c$ that is not equal to $H$ nor equal to $K$. As $Z$ cannot intersect $\alpha$ twice (as $\alpha$ is geodesic) it follows that either $Z$ intersects every dual curve in $A$ or it intersects every dual curve in $B$. 
	In the first case, a ladder is formed by the walls corresponding to $H$, $K$, $Z$ and the dual curves in $A$, and in the second case a ladder is formed by the walls corresponding to $H$, $K$, $Z$ and the dual curves in $B$.
	This is a contradicts Theorem~\ref{thm:recognizing_triangle_group} as the Davis complex of an affine-free Coxeter group cannot contain a ladder.
\end{proof}

\begin{lemma} \label{lem:intersection_implies_commuting1}
	Let $H$ and $K$ be middle dual curves in $D$ that intersect and are in distinct sides. Additionally, suppose that the edge $e$ containing $H \cap \alpha$ is adjacent to the edge $f$ containing $K \cap \alpha$. Then the labels of $e$ and $f$ commute.
\end{lemma}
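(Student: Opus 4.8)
The plan is to reduce everything, via Lemma~\ref{lem:commuting_dual_curves}, to a one-line computation: that lemma already gives that the reflections associated to $H$ and $K$ commute (its hypotheses hold, since $H$ and $K$ are middle dual curves in distinct sides that intersect), so it only remains to identify those two reflections explicitly and read off that the labels of $e$ and $f$ commute. Since $\alpha$ is geodesic it is an embedded path, so two of its edges are adjacent exactly when they are consecutive along $\alpha$; after possibly swapping the roles of $e$ and $f$ we may assume $e$ is immediately followed by $f$, and we let $v$ be their common vertex and $s$, $t$ the labels of $e$, $f$. (As $H$ and $K$ lie in distinct sides they are distinct dual curves, and each edge of $D$ meets a unique dual curve, so $e\neq f$; since a geodesic word has no repeated consecutive letter, $s\neq t$, and hence the desired conclusion $st=ts$ is the non-vacuous statement $m_{st}=2$.)

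Next I would push everything forward along the label-preserving map $D\to\Sigma$ recalled in Section~\ref{sec:background}. The images $\bar e$, $\bar f$ of $e$, $f$ are edges of the Cayley graph incident to the image $\bar v$ of $v$, with labels $s$ and $t$; thus $\bar e$ joins $\bar v$ to $\bar v s$ and $\bar f$ joins $\bar v$ to $\bar v t$. Because a dual curve of $D$ projects into a unique wall of $\Sigma$, the dual curve $H$ projects into the wall through $\bar e$, which is the fixed set of the reflection $\bar v s\bar v^{-1}$, and similarly $K$ projects into the wall through $\bar f$, fixed by $\bar v t\bar v^{-1}$. So the reflections associated to $H$ and $K$ are $\bar v s\bar v^{-1}$ and $\bar v t\bar v^{-1}$, and Lemma~\ref{lem:commuting_dual_curves} says these commute; cancelling $\bar v$ and $\bar v^{-1}$ then gives $st=ts$, i.e.\ the labels of $e$ and $f$ commute.

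I do not expect a genuine obstacle here: the only slightly delicate points are the standard facts that the wall separating $g$ from $gs$ in the Davis complex is the fixed set of $gsg^{-1}$, and that the dual curve crossing an edge $e$ projects into the wall dual to $\bar e$ — both immediate from the construction of $\Sigma$ and of dual curves. The real work (and the only use of the affine-free hypothesis) lies already in Lemma~\ref{lem:commuting_dual_curves}; the present lemma is purely the translation from commuting reflections to commuting vertex labels, and the adjacency hypothesis on $e$ and $f$ is exactly what guarantees that the two reflections differ only by conjugating a pair of standard generators, so that commutativity passes between them.
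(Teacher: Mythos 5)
Your proposal is correct and follows essentially the same route as the paper: both reduce to Lemma~\ref{lem:commuting_dual_curves} to get that the reflections associated to $H$ and $K$ commute, and then translate this back to commutativity of the labels $s$ and $t$ via the identification of those reflections as $\bar v s\bar v^{-1}$ and $\bar v t\bar v^{-1}$. The only cosmetic difference is that the paper separately dispatches the degenerate case where the images of $e$ and $f$ coincide, which you correctly rule out by noting $s\neq t$ for consecutive edges of a geodesic.
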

\begin{proof}
	Let $e', f' \in \Sigma$ be the images of $e$ and $f$ respectively. 
	If $e' = f'$, then the lemma clearly follows. 
	On the other hand, if $e' \neq f'$, then they are distinct, adjacent edges of $\Sigma$.
	By Lemma~\ref{lem:commuting_dual_curves}, the reflections associated to $H$ and $K$ commute.
	Now, the labels of adjacent edges in $\Sigma$ commute if and only if there is an edge labeled by $2$ between the corresponding vertices in the  Coxeter diagram if and only if the reflections corresponding to the two edges commute.
	The lemma then follows.
\end{proof}

\begin{lemma} \label{lem:intersection_implies_commuting2}
	Let $\alpha'$ be a subpath of $\alpha$ that is not dual to any peripheral dual curve. Let $H$ and $K$ be dual curves intersecting $\alpha'$ that intersect each other and are in distinct sides. Then the label of the edge containing $H \cap \alpha'$ commutes with the label of the edge containing $K \cap \alpha'$.
\end{lemma}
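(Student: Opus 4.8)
The plan is to derive this directly from Lemma~\ref{lem:commuting_dual_curves}, using that the label of an edge of $D$ crossed by a dual curve is exactly the type of the corresponding wall, together with the fact that the two walls through a $4$-gon $2$-cell of $\Sigma$ have commuting types. First, since no edge of $\alpha'$ is dual to a peripheral dual curve, every dual curve meeting $\alpha'$ is middle; in particular $H$ and $K$ are middle. As $H$ and $K$ lie in distinct sides and $H\cap K\neq\emptyset$, Lemma~\ref{lem:commuting_dual_curves} provides a square (i.e.\ $4$-gon) $2$-cell $c$ of $D$ containing $H\cap K$.

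Next I would transfer $c$ to the Davis complex. Under the label-preserving combinatorial map $D\to\Sigma$, the cell $c$ maps onto a $2$-cell $\widehat c$ of $\Sigma$; since $s^2$-cells have been collapsed, $\widehat c$ is a genuine $4$-gon $2$-cell of $\Sigma$, so its boundary word is the relator $(st)^2$ for some edge $(s,t)\in E(\Gamma)$, whence $s\neq t$ and $m_{st}=2$. The cell $\widehat c$ is crossed by exactly two walls: the one of type $s$, crossing its two $s$-labelled edges, and the one of type $t$. Both $H$ and $K$ pass through $c$ and meet transversely, so the walls of $\Sigma$ carrying them are precisely these two walls; hence $\{\,\mathrm{type}(H),\mathrm{type}(K)\,\}=\{s,t\}$.

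Finally, the type of the wall carrying $H$ equals the label of any edge of $D$ crossed by $H$, in particular the label of the edge of $\alpha'$ containing $H\cap\alpha'$; the same holds for $K$. Therefore the two labels in question are $s$ and $t$ in some order, and they commute because $m_{st}=2$. This completes the plan.

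Essentially all the work is in Lemma~\ref{lem:commuting_dual_curves}, where affine-freeness and the middle/distinct-sides hypotheses are used to rule out a ladder; the step above is bookkeeping. The only point I would treat with some care is the passage from ``$H\cap K$ lies in a square of $D$'' to ``$H$ and $K$ are the two dual curves through a $4$-gon $2$-cell $\widehat c$ of $\Sigma$ with commuting edge-labels'', which follows from the description of van-Kampen diagrams over Coxeter groups recalled in Section~\ref{sec:background}.
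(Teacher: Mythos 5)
Your reduction to Lemma~\ref{lem:commuting_dual_curves} is fine up to a point: since $\alpha'$ meets no peripheral dual curves, $H$ and $K$ are indeed middle, and that lemma does place $H\cap K$ in a square of $D$, so the reflections corresponding to $H$ and $K$ commute and the two edge-labels read off \emph{inside that square} commute. The gap is your final step, the assertion that ``the type of the wall carrying $H$ equals the label of any edge of $D$ crossed by $H$.'' This is false in a general Coxeter group: if $m_{st}$ is odd then $s$ and $t$ are conjugate, and a single wall can be dual to edges with different labels. Already in $\langle s,t \mid s^2=t^2=(st)^3=1\rangle$ the Davis complex is a hexagon and each wall crosses one $s$-labelled and one $t$-labelled edge; more generally, every time a dual curve passes through a $2m$-gon with $m$ odd, the label of the edges it crosses changes. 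So the labels $s,t$ you read off at the square $c$ need not be the labels of the edges of $\alpha'$ dual to $H$ and $K$, and ``the reflections of $H$ and $K$ commute'' does not by itself imply ``the labels of the $\alpha'$-edges dual to $H$ and $K$ commute'' once those edges are not adjacent. (Your argument is correct for right-angled, or more generally all-even-label, Coxeter groups, where dual curves do cross edges of a constant label, but the lemma is needed in full generality.)

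This is exactly the difficulty the paper's proof is built to handle: it takes the smallest subpath $\alpha''$ of $\alpha$ containing both crossing edges, decomposes it into blocks alternating between $\mathcal{L}$-dual and $(\mathcal{B}\cup\mathcal{R})$-dual edges, and inducts on the number of blocks. At each stage it applies the adjacent-edge case (Lemma~\ref{lem:intersection_implies_commuting1}, where ``reflections commute'' genuinely is equivalent to ``labels commute'') and then attaches squares to shuffle letters past each other, transporting the commutation relation along $\alpha''$ until the relevant edges become adjacent. To repair your proof you would need some such mechanism for propagating the commutation from the square $c$ back to $\alpha'$; reading labels off the square alone does not suffice.
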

\begin{proof}
	We prove the claim for when $H \in \mathcal L$. The proof for the other cases is analogous.
	Let $\alpha''$ be the smallest subpath of $\alpha$ containing both $H \cap \alpha$ and $K \cap \alpha$. 
	Let $\alpha'' = \alpha_1 \dots \alpha_n$ be a decomposition of $\alpha''$ such that for even $i$, $\alpha_i$ is only dual to dual curves in $\mathcal L$, and for odd $i$, $\alpha_i$ is only dual to dual curves in $\mathcal B \cup \mathcal R$. 
	Clearly, such a decomposition is possible and is unique.
	Furthermore, $n > 1$ as $H$ and $K$ are in distinct sides.

    Let $\alpha_1 = e_1 \dots e_x$ and $\alpha_2 = f_1 \dots f_y$ be the decomposition of the paths $\alpha_1$ and $\alpha_2$ into edges, and let $s_1, \dots, s_x$ and $t_1, \dots, t_y$ be the labels of the edges of $\alpha_1$ and $\alpha_2$ respectively.
 
	We prove the claim by induction on $n$. 
    Suppose first that $n = 2$. 
	By the structure of dual curves in $D$, any dual curve dual to $\alpha_2$ intersects a dual curve dual to $\alpha_1$.
	By Lemma~\ref{lem:intersection_implies_commuting1}, $s_x$ commutes with $t_1$. We can then form a new disk diagram by attaching a square to $s_x$ and $t_1$. 
	In the resulting diagram, the boundary path with label $s_1 \dots s_x t_1 \dots t_y$ has been replaced with the path with label $s_1 \dots s_{x-1}t_1 s_x t_2 \dots t_y$.
	Proceeding in this way by attaching squares, we can replace this boundary path with one with label $t_1 s_1 \dots s_x t_2 \dots t_y$ and we conclude that $t_1$ commutes with $s_i$ for all $1 \le i \le x$.
	Then, by iterating this process, we eventually replace $\alpha''$ in the original diagram with a path labeled by $t_1 \dots t_ys_1 \dots s_x$, and we conclude that $s_i$ commutes with $t_j$ for all $1 \le i \le x$ and $1 \le j \le y$. The claim now follows in this base case, as $H$ and $K$ are dual to $e_1$ and $f_y$ respectively, and the labels of these edges commute.
    \begin{figure}
        \begin{overpic}[scale=.3]{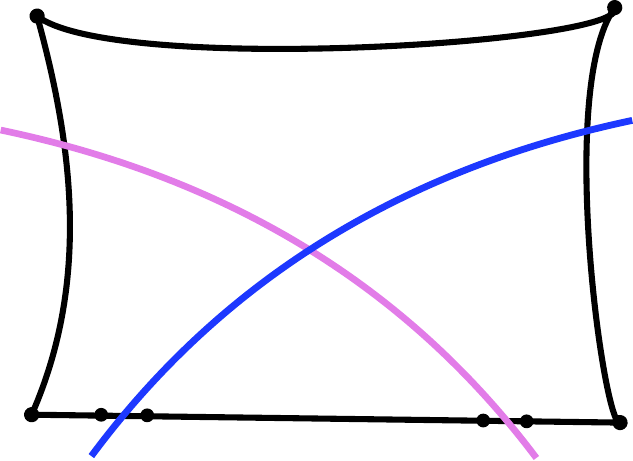}
            \put(19, -5){$e_1$}
            \put(78, -5){$f_y$}
            \put(-8, 47){$H$}
            \put(100, 48){$K$}
          \end{overpic}  
          \caption{Lemma \ref{lem:intersection_implies_commuting2}}
    \end{figure}
	
	Suppose now that $n > 2$ and the claim is true for $n-1$. By the same argument as the $n=2$ case above, we conclude that $s_i$ commutes with $t_j$ for all $1 \le i \le x$ and $1 \le j \le y$, and we can construct a new van-Kampen diagram where the initial subpath of $\alpha''$ with label $s_1 \dots s_x t_1 \dots t_y$ is replaced with a path with label $t_1 \dots t_y s_1 \dots s_x$.
	Let $\zeta$ be the subpath of this path with label $s_1 \dots s_x$. 
	Next, we apply the induction hypothesis to the path $\zeta \alpha_3 \dots \alpha_n$ (note that dual curves dual to $\zeta \alpha_3$ are all in $\mathcal R$) to conclude that $s_1$ commutes with the label of the edge containing $H \cap \alpha'$. The claim then follows.
\end{proof}

To simplify notation, we define the \emph{type} of a dual curve that intersects $\alpha$ to be the label of the edge of $\alpha$ dual to it.

\begin{lemma} \label{lem:subpath1}
    There exists a constant  $\delta >0$, depending only on $\Gamma$, and a subpath $\alpha' \subset \alpha$ such that
    \begin{enumerate}
    \item $\alpha'$ is not dual to any peripheral dual curves, 
    \item $\alpha'$ is dual to at least $\delta R$ dual curves in $\mathcal B$, 
     \item for any dual curve $H \in \mathcal L$ dual to $\alpha'$, there is a dual curve $H' \in \mathcal L$ dual to $\alpha$ such that $H$ and $H'$ have the same type and $H' \cap \alpha$ occurs after $\alpha'$ with respect to the orientation of $\alpha$, and
    \item for any dual curve $H \in \mathcal R$ dual to $\alpha'$, there is a dual curve $H' \in \mathcal R$ dual to $\alpha$ such that $H$ and $H'$ have the same type and $H' \cap \alpha$ occurs before $\alpha'$ with respect to the orientation of $\alpha$.
     \end{enumerate}
\end{lemma}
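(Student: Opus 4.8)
The plan is a trimming-and-pigeonhole argument on the edges of $\alpha$. Since $\alpha$ is geodesic, no dual curve of $D$ crosses it twice, so writing $e_1, \dots, e_n$ for the edges of $\alpha$ in order, each $e_k$ is dual to a unique dual curve $H_k$, and conversely every dual curve meeting $\alpha$ is $H_k$ for exactly one $k$; moreover each $H_k$ lies in exactly one of $\mathcal L$, $\mathcal R$, $\mathcal B$. I will single out a small set of \emph{cut edges} of $\alpha$ and take $\alpha'$ to be one of the subpaths into which deleting the cut edges breaks $\alpha$.

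Declare $e_k$ a cut edge if either (a) $H_k$ is peripheral; or (b) $H_k \in \mathcal L$ and $H_k$ is the \emph{last} dual curve of its type in $\mathcal L$ (with respect to the orientation of $\alpha$); or (c) $H_k \in \mathcal R$ and $H_k$ is the \emph{first} dual curve of its type in $\mathcal R$. By Lemma~\ref{lem:peripheral_bound} there are at most $6F_\Gamma\Lambda_\Gamma$ cut edges of type (a), and since a type is a vertex of $\Gamma$ there are at most $V_\Gamma$ of type (b) and at most $V_\Gamma$ of type (c). Hence the number of cut edges is bounded by $c_\Gamma := 6F_\Gamma\Lambda_\Gamma + 2V_\Gamma$, and deleting them leaves at most $c_\Gamma + 1$ subpaths, each containing no cut edge.

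Let $\alpha'$ be one of these subpaths. Then (1) holds since $\alpha'$ has no edge dual to a peripheral curve. For (3): if $H_k \in \mathcal L$ is dual to $\alpha'$ and has type $s$, let $H_{k^*}$ be the last dual curve of type $s$ in $\mathcal L$; its edge $e_{k^*}$ is a cut edge, hence not in $\alpha'$, and $k^* > k$ by maximality, so $e_{k^*}$ lies beyond the right end of the run forming $\alpha'$ and therefore after $\alpha'$. Thus $H' := H_{k^*}$ is as required, and (4) follows symmetrically using the first dual curve of each type in $\mathcal R$. For (2), choose $\alpha'$ to be the subpath meeting the largest number of curves in $\mathcal B$: by hypothesis $|\mathcal B| \geq R$, at most $6F_\Gamma\Lambda_\Gamma$ of them are peripheral by Lemma~\ref{lem:peripheral_bound}, and no type-(b) or type-(c) cut edge lies in $\mathcal B$, so at least $R - 6F_\Gamma\Lambda_\Gamma$ curves of $\mathcal B$ are dual to non-cut edges and are thus spread over at most $c_\Gamma + 1$ subpaths. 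Enlarging $\kappa$ if necessary so that $\kappa \geq 12F_\Gamma\Lambda_\Gamma$, we get $R - 6F_\Gamma\Lambda_\Gamma \geq R/2$, so the chosen $\alpha'$ meets at least $\frac{R}{2(c_\Gamma + 1)}$ curves of $\mathcal B$; set $\delta := \frac{1}{2(c_\Gamma + 1)}$, which depends only on $\Gamma$.

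The argument is essentially bookkeeping, and I do not anticipate a genuine obstacle; the only point needing care is the simultaneous verification of (3) and (4), which hinges on the bijection between edges of $\alpha$ and dual curves meeting $\alpha$ (a consequence of $\alpha$ being geodesic) and on the extremality built into the definition of the type-(b) and type-(c) cut edges, together with checking that the pigeonhole still leaves $\alpha'$ dual to linearly many curves of $\mathcal B$ once the bounded peripheral collection is discarded.
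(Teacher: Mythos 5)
Your argument is correct, and it rests on the same underlying pigeonhole observation as the paper's proof (there are only $V_\Gamma$ possible types, so only boundedly many dual curves can be ``the last of their type in $\mathcal L$'' or ``the first of their type in $\mathcal R$''), but you organize it differently. The paper first trims the $\le 6F_\Gamma\Lambda_\Gamma$ peripheral edges, extracts one long piece, and then runs two successive partitions of that piece into $V_\Gamma+1$ subpaths, each time arguing by contradiction that some subpath satisfies (3), respectively (4). You instead declare all distinguished edges at once --- peripheral edges together with the edges dual to the last curve of each type in $\mathcal L$ and the first curve of each type in $\mathcal R$, a set of size at most $6F_\Gamma\Lambda_\Gamma+2V_\Gamma$ --- and take a maximal gap between them; every gap then satisfies (1), (3) and (4) automatically, and a single pigeonhole on the at most $c_\Gamma+1$ gaps yields (2) with an explicit $\delta = \tfrac{1}{2(c_\Gamma+1)}$. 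Your version is cleaner, verifies all four properties for one choice of $\alpha'$ in one pass, and gives a better constant; the mild enlargement of $\kappa$ to absorb the peripheral curves of $\mathcal B$ is legitimate, since $\kappa$ is exactly the $\Gamma$-dependent threshold being fixed in the proof of Proposition~\ref{prop:van_kampen} (the paper itself enlarges $\kappa$ there for the same kind of reason). The key facts you rely on --- the bijection between edges of $\alpha$ and dual curves meeting $\alpha$, and the trichotomy $\mathcal L\sqcup\mathcal R\sqcup\mathcal B$ --- are exactly the ones the paper records just before the lemma, so no gap remains.
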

\begin{proof}
    Let $E$ be the set of edges in $\alpha$ that are dual to a peripheral curve. 
    By Lemma~\ref{lem:peripheral_bound}, $|E| \le 6F_\Gamma\Lambda_{\Gamma}$ where $\Lambda_{\Gamma}$ is the ladder constant and $F_\Gamma$ is the Dilsworth constant. Recall that $R = |\mathcal B|$.
	It follows that there is a subpath $\alpha'' \subset \alpha$  contained in $\alpha \setminus E$ and dual to at least $\lfloor \frac{R}{6F_\Gamma\Lambda_{\Gamma}} \rfloor$ dual curves in $\mathcal B$.
	Let $V = |V(\Gamma)| + 1$. 
	We partition $\alpha'' = \alpha_1 \dots \alpha_{V+1}$ so that for all $1 \le i < V+1$, $\alpha_i$ is dual to at least $\lfloor \frac{1}{V} \lfloor \frac{R}{6F_\Gamma\Lambda_{\Gamma}} \rfloor \rfloor$ dual curves in $\mathcal B$. 
	
	Note that each $\alpha_i$ satisfies (1). We now show that there exists some $1 \le k < V+1$ so that $\alpha_k$ additionally satisfies (3). 
	To see this, suppose for a contradiction that such a $k$ does not exist. 
	In particular, for all $1 \le i \le V$,  there is 
	a dual curve $H \in \mathcal L$ intersecting $\alpha_i$ of type $s_i$ such that any $H' \in \mathcal L$ intersecting $\alpha_{i+1} \dots \alpha_{V+1}$ is not of type $s_i$. 
	In particular, $s_i \neq s_{j}$ for all $i \neq j$.
	However, this is a contradiction as there are at most $V-1$ possible values for $s_i$. Thus, such a $k$ exists. 
	
    We can further partition $\alpha_k$ into $V+1$ subpaths, each dual to at least $\lfloor \frac{1}{V} \lfloor \frac{1}{V} \lfloor \frac{R}{6F_\Gamma\Lambda_{\Gamma}} \rfloor \rfloor \rfloor$ dual curves in $\mathcal B$. Proceeding similarly as in the previous paragraph, one of these subpaths $\alpha'$ must satisfy (4). Consequently, $\alpha'$ satisfies (1)--(4) by picking $\delta(V, \Gamma)$ so that $\delta R < \lfloor \frac{1}{V} \lfloor \frac{1}{V} \lfloor \frac{R}{6F_\Gamma\Lambda_{\Gamma}} \rfloor \rfloor \rfloor$ 
\end{proof}

\begin{lemma} \label{lem:subpath2}
    Let $\alpha' \subset \alpha$ be a subpath as given by Lemma~\ref{lem:subpath1}. Then, 
     any two dual curves in distinct sides that intersect $\alpha'$ have commuting types.
\end{lemma}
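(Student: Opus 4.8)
The plan is to bootstrap from Lemma~\ref{lem:intersection_implies_commuting2} and Lemma~\ref{lem:commuting_dual_curves}, which already handle dual curves that meet each other. Let $H$ and $K$ be dual curves in distinct sides, both crossing $\alpha'$, and let $s$ and $t$ be their types, that is, the labels of the edges of $\alpha'$ dual to $H$ and $K$; I must show that $s$ and $t$ commute in $W_\Gamma$. Since $\alpha'$ is dual to no peripheral dual curve (Lemma~\ref{lem:subpath1}(1)), both $H$ and $K$ are middle dual curves. If $H\cap K\neq\emptyset$ the conclusion is immediate from Lemma~\ref{lem:intersection_implies_commuting2}, so the whole content of the lemma lies in the case $H\cap K=\emptyset$, which is the one that forces us to use conditions (3) and (4) of Lemma~\ref{lem:subpath1}.

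The first step is a planarity argument inside the disk $D$. Each of $H$ and $K$ is an embedded arc in $D$ with one endpoint on $\alpha$ (inside $\alpha'$) and the other on one of $\lambda$, $\rho$, $\beta$, and the boundary circle of $D$ is read cyclically as $\lambda,\alpha,\rho,\beta$. Going through the three possibilities $(\mathcal{L},\mathcal{R})$, $(\mathcal{L},\mathcal{B})$, $(\mathcal{B},\mathcal{R})$ for the pair of sides of $H$ and $K$, one checks in each case that the four endpoints appear in cyclic order so that if the crossing of $K$ with $\alpha'$ preceded that of $H$ the two arcs would be unavoidably linked, contradicting $H\cap K=\emptyset$; hence, after possibly interchanging $H$ and $K$, we may assume $H$ crosses $\alpha'$ strictly before $K$. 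In the cases $H\in\mathcal{L}$ I then invoke Lemma~\ref{lem:subpath1}(3) to obtain a dual curve $H'\in\mathcal{L}$ of the same type $s$ whose crossing with $\alpha$ lies after $\alpha'$; in the remaining case $H\in\mathcal{B}$, $K\in\mathcal{R}$ I invoke Lemma~\ref{lem:subpath1}(4) to obtain $K'\in\mathcal{R}$ of type $t$ whose crossing with $\alpha$ lies before $\alpha'$. The construction in the proof of Lemma~\ref{lem:subpath1} places this new crossing inside a subpath of $\alpha$ that is dual to no peripheral dual curve, so the new dual curve is again middle. Running the same cyclic-order check a second time, now with the replaced curve, shows that the replacement and the untouched one of $H,K$ have linked endpoints, hence must intersect.

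In all cases we are left with two middle dual curves in distinct sides that intersect each other, one of type $s$ and the other of type $t$ (the replacement in the previous step preserves type). By Lemma~\ref{lem:commuting_dual_curves} their intersection point lies in a square of $D$, which maps combinatorially to a $4$-gon of $\Sigma$; a $4$-gon corresponds to an edge of $\Gamma$ with label $2$, and the two dual curves passing through a $4$-gon have distinct types equal precisely to the two endpoints of that edge. Therefore $s$ and $t$ are joined in $\Gamma$ by an edge labelled $2$, i.e.\ they commute, completing the proof.

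The step I expect to be most delicate is the planarity bookkeeping: tracking the cyclic order of the four boundary points in each of the three ``distinct sides'' configurations, both before and after the replacement, and verifying that the dual curve produced by Lemma~\ref{lem:subpath1}(3) or (4) really does cross $\alpha$ on the correct side of $\alpha'$ and within the peripheral-free region, so that Lemma~\ref{lem:commuting_dual_curves} may be applied to it. This is elementary but fiddly, and it is precisely what prevents the statement from being subsumed into Lemma~\ref{lem:intersection_implies_commuting2}.
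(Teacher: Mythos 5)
Your overall strategy coincides with the paper's: split on whether $H\cap K=\emptyset$, and in the disjoint case use Lemma~\ref{lem:subpath1}(3) (or (4)) to replace one curve by a same-type curve crossing $\alpha$ on the far side of $\alpha'$, so that planarity of $D$ forces it to meet the other curve; your cyclic-order bookkeeping is correct and is exactly what the paper leaves implicit when it asserts that $H'$ intersects $K$. The genuine gap is in your final step. You conclude by applying Lemma~\ref{lem:commuting_dual_curves} to the pair consisting of the replacement curve $H'$ (resp.\ $K'$) and the untouched curve, and that lemma requires \emph{both} curves to be middle dual curves. Your justification that $H'$ is middle --- that ``the construction in the proof of Lemma~\ref{lem:subpath1} places this new crossing inside a subpath of $\alpha$ that is dual to no peripheral dual curve'' --- is not supported by the statement or the proof of Lemma~\ref{lem:subpath1}. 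Condition (3) only asserts the existence of \emph{some} $H'\in\mathcal L$ of the same type whose crossing with $\alpha$ occurs after $\alpha'$; the pigeonhole argument producing it gives no control on where that crossing lies, and $H'$ may perfectly well be one of the last $\Lambda_\Gamma$ curves of its part, i.e.\ peripheral. Since the proof of Lemma~\ref{lem:commuting_dual_curves} genuinely uses middle-ness of both curves (it needs $\Lambda_\Gamma$ curves of the same part on the appropriate side of each to assemble a ladder), you cannot invoke it for $H'$, and this step of your argument fails as written.

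The paper sidesteps exactly this issue by finishing with Lemma~\ref{lem:intersection_implies_commuting2} instead: its conclusion is already phrased in terms of the types (the labels of the dual edges on $\alpha$), and its mechanism --- propagating the commutation along the stretch of $\alpha$ between the two crossings by attaching squares, with Lemma~\ref{lem:intersection_implies_commuting1} supplying the elementary commutations --- never requires the replacement curve itself to be middle, which is precisely the property your route needs and cannot certify. So either replace your final invocation of Lemma~\ref{lem:commuting_dual_curves} by Lemma~\ref{lem:intersection_implies_commuting2} applied to the intersecting pair you produced (this recovers the paper's proof), or add a separate argument that the witness in Lemma~\ref{lem:subpath1}(3)--(4) can always be chosen to be a middle dual curve; the latter is not established anywhere in the paper and would need its own proof.
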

\begin{proof}
    Let $H$ and $K$ be dual curves in distinct sides that intersect $\alpha'$.
    Up to relabeling $H$ and $K$, we can assume that $H \in \mathcal L \cup \mathcal R$.
    We show the claim for when $H \in \mathcal L$. The proof is similar for when $H \in \mathcal R$. 
    
    As $H$ and $K$ are in distinct sides and as $H \in \mathcal L$, $K \in \mathcal R \cup \mathcal B$.  
    Let $s$ and $t$ be the types of $H$ and $K$ respectively.
	If $H \cap K \neq \emptyset$, then $s$ commutes with $t$ by Lemma~\ref{lem:intersection_implies_commuting2}.
	On the other hand, if $H \cap K = \emptyset$, then by (3) there exists a $H' \in \mathcal L$ of type $s$ dual $\alpha$ so that $H' \cap \alpha$ occurs after $\alpha'$.
	As $K \in \mathcal R \cup \mathcal B$ and as $H' \in \mathcal L$, $H'$ intersects $K$. Consequently, $s$ commutes with $t$ by Lemma~\ref{lem:intersection_implies_commuting2}.
\end{proof}

We are now ready to prove the main proposition of this subsection:

\begin{proof}[Proof of Proposition~\ref{prop:van_kampen}]
	Choose $\alpha' \subset \alpha$ as in Lemma~\ref{lem:subpath1}, and let $w$ be the label of $\alpha'$. 
	Let $S_L$ (resp. $S_R$, $S_B$) be the set of vertices in $\Gamma$ that are the type of some dual curve in $\mathcal L$ (resp. $\mathcal R$, $\mathcal B$) that intersects $\alpha'$. 
	By Lemma~\ref{lem:subpath2}, every vertex in $S_L \cup S_R$ commutes with every vertex in $S_B$. 
	Thus, there is an expression $w_1w_2$ for $w$ such that $\text{Label}(w_1) = S_L \cup S_R$ and $\text{Label}(w_2) = S_B$.
	As in Definition~\ref{def:constants}, let $M_\Gamma$ denote the smallest constant such that $W_{\text{Label}(h)}$ is infinite for any geodesic word $h$ with $|h| > M_\Gamma$. 
	By choosing $\kappa$ large enough, we can guarantee that $\delta R > M_\Gamma$ where $\delta$ is as in Lemma~\ref{lem:subpath1}. Note that this ``large enough'' only depends on $\Gamma$.  
	With this choice of $R$, we have that $|w_2| > M_\Gamma$ and $W_{\text{Label}(w_2)}$ is an infinite subgroup.

    There are now two cases. 
	If $|w_1| > M_\Gamma$ then $W_{S_L \cup S_R}$ is infinite and $W_{S_L \cup S_R} \times W_{S_B}$ is a wide subgroup.
	On the other hand, if $|w_1| < M_\Gamma$, then $\alpha'$ contains a subpath $\alpha''$ of length  
	$\frac{|\alpha'|}{M_\Gamma}$ such that any dual curve dual to it is in $\mathcal B$. 
	As $\text{Label}(\beta)$ is in a wide subgroup by hypothesis and as every dual curve intersecting $\alpha''$ intersects $\beta$, $\text{Label}(\alpha'')$ is in a wide subgroup by Lemma~\ref{lem:in_wide}.
    Thus the claim follows in either case by setting $\epsilon = \delta / M_\Gamma$.
\end{proof}

\subsection{Finding nice pencils}

In this section, we show how to find a ``nice'' pencil intersecting a geodesic ray that spends a bounded amount of time in wide subgraphs. The main result is the following proposition:
\begin{proposition} \label{prop:best_pencil}
Let $W_\Gamma$ be an affine-free Coxeter group and $\gamma$ a geodesic segment in the Davis complex $\Sigma = \Sigma_\Gamma$ with the property that any subpath of $\gamma$ of length at least $K$ does not have its label contained in a wide subgraph. Then, there is an integer $C = C(\Gamma, K)$ such that $\gamma$ intersects a pencil $H_1, K_1, H_2, K_2, \dots, H_{N}, K_N$ of walls satisfying:
\begin{enumerate}
    \item $N := \lfloor \frac{|\gamma|}{C} \rfloor$,
    \item The bridge between $H_i$ and $K_{i}$ is not large for each $1 \le i \le N$, and 
    \item $d(H_i, K_i) \le C$
\end{enumerate}
\end{proposition}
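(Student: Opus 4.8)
The plan is to extract the desired pencil greedily, walking along $\gamma$ and repeatedly building a short pencil piece whose endpoints are walls with small bridge, then concatenating these pieces into one long pencil. First I would fix $M = M(\Gamma, K)$ from Proposition~\ref{prop:bridges_not_large} and set the constant $C$ to be (say) $2M + 2$ or a similar explicit multiple of $M$ and $V_\Gamma$; the point is that any subsegment of $\gamma$ of length $C$ must be crossed by a pencil of at least $M$ walls, since a geodesic of length $C$ is crossed by exactly $C$ walls, and among those we can find a pencil of size $\ge M$ by the standard fact that walls crossing a common geodesic are linearly preordered by the separation relation (i.e.\ they already form a pencil in the order in which $\gamma$ meets them). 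That gives, on each length-$C$ block of $\gamma$, a pencil of $\ge M$ walls intersecting $\gamma$; by Proposition~\ref{prop:bridges_not_large} applied to this block, the bridge between the first and last wall of that sub-pencil is not large, and its $\mathrm{CAT}(0)$ distance is at most $C$ since both walls cross a common geodesic subsegment of length $\le C$.

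Next I would assemble the global pencil. Partition $\gamma$ into $N = \lfloor |\gamma|/C \rfloor$ consecutive blocks $\gamma_1, \dots, \gamma_N$ each of length $C$ (discarding the remainder). On block $\gamma_i$ pick a wall $H_i$ crossing $\gamma_i$ near its start and a wall $K_i$ crossing $\gamma_i$ near its end with at least $M$ walls of $\gamma_i$ between them, so that $H_1, K_1, H_2, K_2, \dots, H_N, K_N$ appear in this order along $\gamma$. Since all these walls cross the single geodesic $\gamma$, any three of them satisfy the ``middle one separates the outer two'' condition, so the whole list is automatically a pencil. Property (1) is the definition of $N$; property (3) holds because $H_i$ and $K_i$ both cross $\gamma_i$, a segment of length $C$, hence $d(H_i, K_i) \le C$ (crossing a common bounded geodesic bounds the $\mathrm{CAT}(0)$ distance between the walls); property (2) is exactly the output of Proposition~\ref{prop:bridges_not_large} on each block, using that $\gamma_i$ inherits the ``no long subpath labelled in a wide subgraph'' property from $\gamma$ and that the chosen sub-pencil between $H_i$ and $K_i$ has size $\ge M$.

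The main obstacle I anticipate is the bookkeeping needed to guarantee that the walls $H_i$ and $K_i$ can be chosen so that (i) there really are at least $M$ pencil walls strictly between $H_i$ and $K_i$ within block $\gamma_i$ and (ii) these walls still form a genuine pencil (not merely a sequence of walls crossing $\gamma$, but with the separation property in the sense of the definition). Point (ii) is essentially automatic from the fact that walls meeting a common geodesic are totally ordered by the "side of" relation, but one must be slightly careful that non-adjacent walls crossing $\gamma$ are non-intersecting is \emph{not} claimed — rather, the pencil definition requires pairwise non-intersecting walls, so I would instead use that among $C$ walls crossing $\gamma$ one can find $\ge C/V_\Gamma$ pairwise non-intersecting ones of the same type, or more robustly invoke that a geodesic of length $\ell$ is crossed by $\ell$ walls but the relevant sub-pencil for Proposition~\ref{prop:bridges_not_large} is guaranteed to exist once $\ell \ge M$; this is the precise point where the constant $C$ must be tuned (taking $C$ a suitable multiple of $M$ and $V_\Gamma$), and I would make that choice explicit at the start of the proof. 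Everything else is routine once these two points are pinned down.
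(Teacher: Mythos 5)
There is a genuine gap at the point you flag as ``essentially automatic.'' A pencil requires the walls to be \emph{pairwise non-intersecting}, and walls that cross a common geodesic are not pairwise non-intersecting in general: two walls dual to different edges of $\gamma$ can perfectly well cross each other away from $\gamma$ (the geodesic only guarantees each wall meets it once, not that distinct dual walls are disjoint). Your blockwise construction extracts a sub-pencil inside each block $\gamma_i$ separately, so even if each block's selection is internally a pencil, a wall chosen in block $\gamma_1$ may intersect a wall chosen in block $\gamma_2$, and the concatenated list $H_1, K_1, \dots, H_N, K_N$ need not be a pencil. This matters downstream: Proposition~\ref{prop:lower_div} uses the pencil structure to order the crossing points along the detour path $\alpha$ and sum the lengths $|\alpha_i|$. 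Your fallback of taking ``pairwise non-intersecting walls of the same type'' is also not sound in general Coxeter groups: two distinct walls of the same type can intersect (e.g.\ in a finite dihedral special subgroup with even edge label, half the walls through the center share a type).

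The repair is to do the pencil extraction \emph{once, globally}: Lemma~\ref{lem:large_pencils} (Dilworth) applied to all of $\gamma$ yields a single pencil $Q_1,\dots,Q_{N'}$ with $N' = \lfloor |\gamma|/C_1 \rfloor$, and every sublist of a pencil is again a pencil, so the pencil property is then free for any selection of pairs. But once you work inside this global pencil you lose your ``both walls cross a length-$C$ block'' argument for property (3), because consecutive walls of the global pencil are only $C_1$ apart \emph{on average}, not pointwise. This is exactly where the paper inserts a counting step: after thinning to every $M$-th wall so consecutive bridges are not large (Lemma~\ref{lem:finding_pencils}), fewer than a quarter of the consecutive pairs can be farther than $5C_2$ apart, since otherwise their separations would sum to more than $|\gamma|$. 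Selecting those close consecutive pairs gives $(H_i,K_i)$ satisfying (2) and (3) simultaneously while keeping $N$ a definite fraction of $|\gamma|$. Your overall strategy (Dilworth-type extraction plus Proposition~\ref{prop:bridges_not_large} on windows of $M$ walls) is the right one, but the order of operations --- global pencil first, then an averaging argument for the distance bound --- is what makes it close.
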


We first need to prove two lemmas which build up to the proposition. The first lemma follows from Dilworth's theorem.

\begin{lemma} \label{lem:large_pencils}
Let $W_\Gamma$ be a Coxeter group and $\Sigma$ be its Davis complex.
There exists a number $C_1 = C_1(\Gamma)$, such that any geodesic $\gamma$ in $\Sigma$ intersects a pencil of $\lfloor \frac{|\gamma|}{C_1} \rfloor$ walls.
\end{lemma}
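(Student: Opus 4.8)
The plan is to produce the pencil by extracting one large ``parallel class'' among the walls crossing $\gamma$ and then observing that any such class is automatically a pencil. First I would record the elementary count: since $\gamma$ is a geodesic, every wall of $\Sigma$ crosses $\gamma$ at most once, and each of the $|\gamma|$ edges of $\gamma$ is dual to exactly one wall. Hence the set $\mathcal{W}$ of walls crossing $\gamma$ has cardinality exactly $|\gamma|$, and the points where $\gamma$ crosses these walls put a linear order on $\mathcal{W}$.

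Next I would apply the Dilworth-type decomposition. The statement underlying the ``Dilworth's theorem'' lemma above --- Theorem~5.2 and Proposition~5.3 of \cite{Mihalik-Tschantz} --- is really a fact about the walls crossing a fixed geodesic in $W_\Gamma$: they partition into at most $F_\Gamma$ subsets, each consisting of pairwise non-intersecting walls, where $F_\Gamma$ is Dilworth's constant and depends only on $\Gamma$. Applying this to $\mathcal{W}$ and using the pigeonhole principle, one of these subsets $\mathcal{W}_0$ satisfies $|\mathcal{W}_0| \ge |\mathcal{W}|/F_\Gamma = |\gamma|/F_\Gamma \ge \lfloor |\gamma|/F_\Gamma \rfloor$. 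I would then choose $\lfloor |\gamma|/F_\Gamma \rfloor$ of its members, relabel them $H_0, H_1, \dots, H_m$ in the order in which $\gamma$ crosses them, and set $C_1 := F_\Gamma$.

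Finally I would check that $H_0, \dots, H_m$ is a pencil. They are pairwise non-intersecting by construction, so only the separation condition is left. Fixing $0 \le i < j < k \le m$, let $p_i, p_j, p_k$ be the respective crossing points, so that $p_j$ lies strictly between $p_i$ and $p_k$ along $\gamma$. Because $\gamma$ is geodesic it meets $H_j$ only at $p_j$, so $p_i$ and $p_k$ lie in distinct components of $\Sigma \setminus H_j$; since $H_i$ is connected, disjoint from $H_j$, and contains $p_i$, it lies entirely in the component of $p_i$, and likewise $H_k$ lies in the component of $p_k$. Hence $H_j$ separates $H_i$ and $H_k$, and $H_0, \dots, H_m$ is a pencil of $\lfloor |\gamma|/C_1 \rfloor$ walls.

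The only genuine subtlety --- the step to state with care, even though it is not hard once located --- is invoking the decomposition of \cite{Mihalik-Tschantz} in the correct form: the version cited above is phrased for dual curves in a van-Kampen diagram, whereas here it is needed directly for the walls crossing a geodesic in $\Sigma$. This is harmless, since both are instances of the same combinatorial statement about geodesic words in a Coxeter group, but it should be phrased so that $F_\Gamma$ is clearly a constant of $\Gamma$ and not of any ambient diagram. Everything else is bookkeeping.
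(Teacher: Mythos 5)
Your proof is correct and matches the paper's intended argument: the paper gives no details, stating only that the lemma ``follows from Dilworth's theorem,'' and your write-up is exactly the fleshed-out version of that remark --- Dilworth decomposition of the walls crossing $\gamma$ into at most $F_\Gamma$ classes of pairwise non-intersecting walls, pigeonhole, and the standard verification that pairwise disjoint walls crossed in order by a geodesic automatically satisfy the separation condition of a pencil. Your closing caveat about transferring the Mihalik--Tschantz statement from dual curves in a diagram to walls along a geodesic is well taken and correctly resolved.
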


The next lemma uses the main result from the previous subsection.

\begin{lemma} \label{lem:finding_pencils}
Let $W_\Gamma$ be an affine-free Coxeter group and $\gamma$ a geodesic segment in the Davis complex $\Sigma = \Sigma_\Gamma$ with the property that any subpath of $\gamma$ of length at least $K$ does not have its label contained in a wide subgraph. Then, there is an integer $C_2 = C_2(\Gamma, K)$ such that $\gamma$ intersects a pencil $H_1, \dots, H_{N}$ of $N := \lfloor \frac{|\gamma|}{C_2} \rfloor$ walls with the property that, for all $1 \le i < N$, the bridge between $H_i$ and $H_{i+1}$ is not large.  
\end{lemma}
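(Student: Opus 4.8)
The plan is a pigeonhole argument combining Lemma~\ref{lem:large_pencils} with Proposition~\ref{prop:bridges_not_large}; all the substantive work is already contained in those two statements (and hence, via Proposition~\ref{prop:van_kampen}, in the van-Kampen diagram analysis of the previous subsection), so what remains is bookkeeping.

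First I would apply Lemma~\ref{lem:large_pencils} to produce a constant $C_1 = C_1(\Gamma)$ and a pencil $G_1, \dots, G_L$ of $L = \lfloor |\gamma|/C_1 \rfloor$ walls intersecting $\gamma$. A pencil intersecting a geodesic is linearly ordered by where its walls meet $\gamma$ (each wall of the pencil separates the endpoints of the crossing edge of the next), so I may assume the intersection points $G_1 \cap \gamma, \dots, G_L \cap \gamma$ occur in this order along $\gamma$.

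Next, let $M = M(\Gamma, K)$ be the constant of Proposition~\ref{prop:bridges_not_large}. (That proposition is phrased for a geodesic ray, but its proof — and the van-Kampen input Proposition~\ref{prop:van_kampen} — only see a bounded subsegment of $\gamma$; equivalently one applies it to the subsegment of $\gamma$ between the extreme walls of the relevant sub-pencil. So it applies to our geodesic segment.) Set $N := \lfloor L/M \rfloor$ and take $H_i := G_{iM}$ for $1 \le i \le N$. A subsequence of a pencil is again a pencil (pairwise non-intersection is inherited, and for $i<j<k$ the wall $G_{jM}$ separates $G_{iM}$ and $G_{kM}$), so $H_1, \dots, H_N$ is a pencil intersecting $\gamma$. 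For each $1 \le i < N$, the walls $G_{iM}, G_{iM+1}, \dots, G_{(i+1)M}$ form a sub-pencil of $G_1,\dots,G_L$ intersecting $\gamma$ with $M+1 \ge M$ members, so Proposition~\ref{prop:bridges_not_large} gives that the bridge between its first and last wall — that is, between $H_i$ and $H_{i+1}$ — is not large. This is precisely condition of the lemma.

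Finally, since $N = \big\lfloor \lfloor |\gamma|/C_1\rfloor / M \big\rfloor$ grows linearly in $|\gamma|$, one can fix $C_2 = C_2(\Gamma, K)$ (for instance $C_2 = 2C_1 M$, after separately checking the finitely many small values of $|\gamma|$ for which the conclusion is vacuous) so that $\lfloor |\gamma|/C_2\rfloor \le N$ for all $|\gamma|$; discarding the excess walls from the tail of the pencil $H_1, \dots, H_N$ yields a pencil of exactly $\lfloor |\gamma|/C_2\rfloor$ walls with all consecutive bridges not large. I do not expect any real obstacle here: the only points needing a sentence of justification are that a subsequence of a pencil is a pencil, that the sub-pencils above correctly realize the pairs $(H_i, H_{i+1})$, and the elementary constant arithmetic.
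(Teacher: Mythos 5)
Your proposal is correct and follows essentially the same route as the paper: extract a pencil of linear size via Lemma~\ref{lem:large_pencils}, pass to the subsequence $G_{iM}$ where $M$ is the constant from Proposition~\ref{prop:bridges_not_large}, and note that each consecutive pair is bridged by a sub-pencil of at least $M$ walls. Your added remarks (that a subsequence of a pencil is a pencil, and that Proposition~\ref{prop:bridges_not_large} applies to segments even though stated for rays) are points the paper passes over silently, and are correctly handled.
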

\begin{proof}
 By Lemma~\ref{lem:large_pencils}, there there exists a number $C_1 = C_1(\Gamma)$ and a pencil $Q_1, \dots, Q_{N'}$ of $N' := \lfloor \frac{|\gamma|}{C_1} \rfloor$ walls intersecting $\gamma$. 
 By Proposition~\ref{prop:bridges_not_large}, there is a constant $M = M(\Gamma, K)$ such that for any pencil $P_1, \dots, P_M$ of $M$ walls intersecting $\gamma'$, the bridge between $P_1$ and $P_M$ is not large.
In particular, the bridge between $Q_{iM}$ and $Q_{(i+1)M}$ is not large for each integer $1 \le i < \lfloor \frac{N'}{M} \rfloor$.
Thus, the pencil $H_1 = K_M, H_2 = K_{2M}, \dots, H_N = K_{\lfloor \frac{N'}{M} \rfloor}$ has the property that the bridge between any two consecutive walls is not large. Note that $N = \lfloor \frac{N'}{M} \rfloor = \lfloor \frac{\lfloor \frac{|\gamma|}{C_1} \rfloor}{M} \rfloor$ where $M$ and $C_1$ only depend on $\Gamma$ and $K$. Thus, the constant $C_2$ exists. 
\end{proof}

We are now ready to prove the main result of this subsection:

\begin{proof}[Proof of Proposition~\ref{prop:best_pencil}]
By Lemma~\ref{lem:finding_pencils} there is a constant $C_2 = C_2(\Gamma, K)$ and a pencil $Q_1, \dots, Q_{N'}$ of walls intersecting $\gamma$ such that $N' = \lfloor \frac{|\gamma|}{C_2} \rfloor$ and the bridge between $Q_i$ and $Q_{i+1}$ is not large for each $1 \le i < N'$.
There are less than $\frac{N'}{4}$ values of $i \in \{1, \dots, \lfloor \frac{N'}{2} \rfloor \}$ such that $d(Q_{2i}, Q_{2i+1}) > 5C_2$.
For otherwise we get the contradiction $|\gamma| \ge \frac{N'}{4} 5C_2 = \lfloor \frac{|\gamma|}{C_2} \rfloor \frac{5 C_2}{4} > |\gamma|$.
Thus, there is a subsequence, $H_1 = Q_{i_1}, K_1 = Q_{i_1 + 1}, H_2 = Q_{i_2}, K_2 = Q_{i_2 + 1}, \dots, H_N = Q_{i_N}, K_N = Q_{i_N + 1}$ so that $d(H_i, H_{i+1}) \le 5C_2$ and $N = \lfloor \frac{N'}{4} \rfloor = \lfloor \frac{\lfloor \frac{|\gamma|}{C_2} \rfloor}{4} \rfloor$. The claim then follows by choosing $C$ large enough.
\end{proof}

\subsection{Divergence of walls and bridge diameter bound}
Given a pair of non-intersecting walls that are bounded distance apart, we bound the diameter of the bridge (if finite) between them in Lemma~\ref{lem:bridge_diam_bound} below, and we give a lower bound on the rate they can diverge in Lemma~\ref{lem:wall_divergence_bound}.

\begin{lemma} \label{lem:bridge_diam_bound}
Let $W_\Gamma$ be a Coxeter group, let $\Sigma$ be its Davis complex. Given any constant $C >0$, there exists a constant $D = D(C, \Gamma)$ such that the following holds. Let $H$ and $K$ be distinct, non-intersecting walls in $\Sigma$ such that $d(H, K) < C$, and let $B$ be the bridge between $H$ and $K$.
Then either $B$ is unbounded or it has diameter less than $D$. 
\end{lemma}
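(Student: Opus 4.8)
The plan is to use a compactness/finiteness argument on the local structure near a minimal-length geodesic between $H$ and $K$, exploiting that $d(H,K) < C$ bounds the combinatorial complexity of the ``waist'' of the bridge. Let $B \cong Y \times [0,I]$ be the bridge, where $I = d(H,K) < C$. The key point is that $Y$ is a convex subset of $\Sigma$ (it embeds isometrically as $Y \times \{0\}$, a convex subset of the wall $H$), and $B$ itself is convex in $\Sigma$. So it suffices to bound $\mathrm{diam}(Y)$ (equivalently $\mathrm{diam}(B)$, since $I < C$), under the assumption that $Y$ is bounded.

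**First I would** set up a combinatorial model for $Y$. Pick a vertex $u$ in $\Sigma$ realizing (up to bounded error) the projection onto the bridge, and let $\beta$ be a Cayley-graph geodesic from a vertex near the $H$-side to a vertex near the $K$-side realizing the minimal distance; then $|\beta| \le C + 2$ (a bounded constant). A wall $X$ of $\Sigma$ either separates $H$ from $K$ (these are the walls crossing $\beta$, of which there are at most $|\beta| \le C+2$) or it is ``parallel'' to the bridge direction, meaning its trace in $B$ is of the form $(X \cap Y) \times [0,I]$. The bounded subset $Y$ is chopped up by these parallel walls. The crucial finiteness input is: the number of \emph{types} of walls crossing $\beta$ transversely is at most $V_\Gamma$, and the walls that separate $H$ from $K$ number at most $C+2$; combined with the fact (Lemma~\ref{lem:intersecting_walls} and the local product structure of the Davis complex) that the geometry of $\Sigma$ is ``locally finite up to bounded data,'' the isometry type of $Y$ — as a bounded convex subcomplex whose combinatorial diameter is controlled by how many parallel walls can cross it — is one of finitely many possibilities depending only on $C$ and $\Gamma$. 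Taking $D$ to be one more than the maximum diameter over this finite list gives the claim.

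**The cleaner route**, which I would actually write up, avoids enumerating isometry types: argue by contradiction. Suppose for every $D$ there is a pair $H_D, K_D$ with $d(H_D,K_D) < C$, bounded bridge $B_D$, but $\mathrm{diam}(B_D) \ge D$. By cocompactness of the $W_\Gamma$-action on $\Sigma$, translate so that a fixed vertex $o$ lies within bounded distance of the $H_D$-side of $B_D$; then a minimal geodesic $\beta_D$ from $H_D$ to $K_D$ near $o$ has length $< C+2$, so there are only finitely many possibilities for $\beta_D$ and for the (at most $C+2$) walls separating $H_D$ from $K_D$, which in particular determine $H_D$ and $K_D$ themselves up to finitely many choices once we fix which wall of the bounded configuration is $H_D$ and which is $K_D$. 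Hence along the sequence, $(H_D, K_D)$ is eventually constant up to the $W_\Gamma$-action, so the bridges $B_D$ are all isometric — contradicting $\mathrm{diam}(B_D) \to \infty$ (and the hypothesis that each is bounded). This yields the constant $D = D(C,\Gamma)$.

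**The main obstacle** is justifying that a minimal-length geodesic $\beta$ between the walls, of bounded length, together with the bounded family of walls it crosses, actually pins down the pair $(H,K)$ up to finitely many choices after translating $o$ to be near the bridge. One has to be careful that ``the bridge has a well-defined $H$-side vertex within bounded distance of $o$'' really follows from cocompactness and convexity of $B$ — the bridge is convex, so its closest-point projection of $o$ lands in $B$, and after a group translation we can assume $d(o, B) \le \mathrm{diam}(\Sigma/W_\Gamma)$; combining with $I < C$ places $o$ within $C + \mathrm{diam}(\Sigma/W_\Gamma)$ of both walls, so the walls through a bounded ball around $o$ — finitely many up to the stabilizer of $o$, which is finite — determine $H$ and $K$. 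Making this last counting argument precise (using that point stabilizers in $\Sigma$ are finite and that walls through a bounded ball number boundedly many) is the only real content; everything else is the product decomposition of bridges from \cite[II.2]{BH} and convexity.
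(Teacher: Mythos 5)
Your ``cleaner route'' is essentially the paper's own proof: both arguments reduce to the observation that, by local finiteness of $\Sigma$ and the fact that there are only finitely many $W_\Gamma$-orbits of walls, there are finitely many orbit classes of pairs of non-intersecting walls at distance less than $C$, hence finitely many isometry types of such bridges, and one takes $D$ to exceed the diameters of the bounded ones. The paper states this directly rather than via your contradiction/sequence framing, but the content is the same and your argument is correct.
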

\begin{proof}
As the $1$-skeleton of $\Sigma$ is locally finite, given a wall $H$, there are only a finite number of walls $K$ such that $d(H,K) < C$. 
Consequently, as there are finitely many orbits of walls in $\Sigma$, there are finitely many orbit classes of bridges between pairs of walls that are distance less than $C$ apart. 
Let $\mathcal B$ be the set of all such bridges. We can then take $D = \max\{B \in \mathcal B ~|~ \text{diam}(B) < \infty \}$.
\end{proof}

The next lemma follows as walls are convex and as the distance function $d(~\cdot~,H)$ is convex where $H$ is a convex subcomplex in a CAT(0) space. 
The proof of  \cite[Lemma 2-10(4) and Remark 2.13]{Huang} can readily be adapted to prove this lemma.

\begin{lemma} \label{lem:wall_divergence_bound}
For any integer $C$, there is a constant $\epsilon$ depending only on $\Gamma$ and $C$, such that if given any two walls $H$ and $K$ in $\Sigma$ with $d(H, K) \le C$, and any two points $x \in H$ and $y \in K$, then
	\[d(x, y) \ge  \epsilon \min\{d(x, B), d(y,B)\} \]
	where $B$ is the bridge between $H$ and $K$.
\end{lemma}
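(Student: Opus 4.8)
The plan is to reduce the inequality to a one-sided divergence bound and then prove that bound by a normalization-and-limit argument in $\Sigma$. First I would recall the structure of the bridge $B$ between $H$ and $K$ from \cite[II.2]{BH}: setting $I := d(H,K) \le C$, there is a splitting $B \cong Y \times [0,I]$ in which $B \cap H = Y \times \{0\}$ is a closed convex subset of $H$, and a short CAT(0) argument shows $B \cap H$ is exactly the set of $x \in H$ with $d(x,K) = I$ (so if $I = 0$ then $B = B\cap H = H \cap K$). Projecting the $[0,I]$-fibre downward gives $d(x,B) \le d(x, B\cap H) \le d(x,B) + I$ for every $x \in H$. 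Since $d(x,y) \ge d(x,K)$ for $y \in K$ and $\min\{d(x,B),d(y,B)\} \le d(x,B)$, the lemma follows once one proves: there is $\epsilon' = \epsilon'(\Gamma, C) > 0$ such that $d(x,K) \ge \epsilon'\, d(x,B)$ for all walls $H,K$ with $d(H,K) \le C$ and all $x \in H$.

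To prove this, suppose it fails: choose walls $(H_n, K_n)$ with $d(H_n,K_n) \le C$ and points $x_n \in H_n$ with $d(x_n, B_n) > 0$ and $d(x_n,K_n)/d(x_n,B_n) \to 0$, where $B_n$ is the bridge and $p_n := \pi_{B_n}(x_n)$. Using cocompactness of $W_\Gamma$ on $\Sigma$ and local finiteness of $\Sigma$, pick $g_n \in W_\Gamma$ moving $p_n$ into a fixed compact set; after passing to a subsequence, $g_n H_n = H$ and $g_n K_n = K$ are constant (so $g_n B_n = B$ is the bridge of $H$ and $K$), $g_n p_n \to p_\infty \in B$, and $\ell_n := d(x_n,p_n) = d(g_n x_n, B)$. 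If the $\ell_n$ stay bounded, then after a further subsequence $g_n x_n \to x_\infty \in H$: if $d(x_\infty,B) > 0$ the ratio tends to $d(x_\infty,K)/d(x_\infty,B) > 0$, a contradiction; if $d(x_\infty,B)=0$ then $x_\infty \in B \cap H$, and for $I > 0$ the numerator $d(g_n x_n,K) \to d(x_\infty,K) = I > 0$ while the denominator $\to 0$, whereas for $I = 0$ one instead uses that near the cell of $\Sigma$ containing $x_\infty \in H \cap K$ the walls $H,K$ meet at an angle bounded below over the finitely many local types, so that $d(g_n x_n,K) \ge c\, d(g_n x_n, H\cap K)$ for large $n$ --- in every case contradicting $d(g_n x_n,K)/d(g_n x_n,B) \to 0$.

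It remains to treat the case $\ell_n \to \infty$, which is the heart of the matter. Put $q_n := \pi_{B\cap H}(g_n x_n)$; these stay bounded, so $q_n \to q_\infty \in B\cap H$ along a subsequence, and the geodesics $\sigma_n := [q_n, g_n x_n] \subset H$, whose lengths $d(g_n x_n, B\cap H) \ge d(g_n x_n, B) = \ell_n$ tend to infinity, converge uniformly on compacta to a unit-speed geodesic ray $\sigma$ based at $q_\infty$. Since $q_n$ realizes $d(g_n x_n, B\cap H)$, the nearest-point projection of the whole segment $\sigma_n$ to the convex set $B\cap H$ equals $\{q_n\}$; passing to the limit, $\pi_{B\cap H}(\sigma(t)) = q_\infty$, hence $d(\sigma(t), B\cap H) = d(\sigma(t), q_\infty) = t$ for all $t \ge 0$. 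On the other hand $t \mapsto d(\sigma_n(t),K)$ is convex with value $d(q_n,K) = I$ at $t=0$, so for each fixed $t$, once $t$ is at most the length of $\sigma_n$, convexity gives
\[
\frac{d(\sigma_n(t),K) - I}{t} \;\le\; \frac{d(g_n x_n, K) - I}{d(g_n x_n, B\cap H)} \;\le\; \frac{d(x_n, K_n)}{d(x_n, B_n)} \;\longrightarrow\; 0 .
\]
Letting $n \to \infty$ yields $d(\sigma(t),K) \le I$, hence $d(\sigma(t),K) = I$, so $\sigma(t) \in B\cap H$ and $d(\sigma(t), B\cap H) = 0$, contradicting $d(\sigma(t), B\cap H) = t$ for $t > 0$. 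This contradiction establishes the one-sided bound, and hence the lemma.

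I expect this last limiting step to be the only real obstacle: it encodes the fact that two convex subcomplexes of a proper, cocompact CAT(0) complex diverge at least linearly away from their bridge, at a rate depending only on the complex --- which is precisely the content of Huang's \cite[Lemma 2-10(4) and Remark 2.13]{Huang}, applicable here since walls of $\Sigma$ are convex subcomplexes and $W_\Gamma$ acts properly and cocompactly on $\Sigma$. The adaptation is the bookkeeping above (separating $H\cap K = \emptyset$ from $H \cap K \neq \emptyset$, and points near the bridge from points far from it) together with a handful of soft CAT(0) facts --- convexity of $d(\cdot,K)$, the product decomposition of bridges, and the invariance of the projection of a geodesic segment emanating from its foot-point on a convex set --- none of which is deep.
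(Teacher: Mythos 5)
Your proposal is correct and follows essentially the same route as the paper, which gives no details and simply observes that the result follows from convexity of walls and of the distance function to a convex set, by adapting Huang's \cite[Lemma 2-10(4) and Remark 2.13]{Huang}; your normalization-by-cocompactness plus convexity-and-limit argument is precisely that adaptation written out. The only spot that is slightly under-justified is the sub-case $x_\infty \in H\cap K$ with $I=0$, where the asserted local angle bound deserves a sentence (e.g.\ via the finite dihedral group generated by the two reflections, whose fixed set is $H\cap K$, and a circumcenter/displacement estimate), but this is a minor and fillable point.
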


\subsection{Proof of Theorem~\ref{thm:morse_char}}
In this subsection, we piece together the results from the previous subsections in order to prove Theorem~\ref{thm:morse_char}.
We first recall the following definition from \cite{Charney-Sultan}.

\begin{definition}
Let $\gamma$ be a quasi-geodesic in a metric space. Given $t > r > 0$, we let $\rho_\gamma(r,t)$ denote the infimum of the lengths paths from $\gamma(t-r)$ to $\gamma(t + r)$ which do not intersect the open ball of radius $r$ based at $\gamma(t)$. The \emph{lower divergence} of $\gamma$ is the function $ldiv_\gamma(r) = \inf_{t>r} \rho_\gamma(r, t)$.
\end{definition}

We show that geodesic rays that spend bounded time in wide subgroups must have quadratic lower divergence.
\begin{proposition} \label{prop:lower_div}
Let $W_\Gamma$ be an affine-free Coxeter group and $\gamma$ a geodesic ray in the Davis complex $\Sigma = \Sigma_\Gamma$ with the property that any subpath 
of $\gamma$ of length at least $K$ does not have its label contained in a wide subgraph. 
Then, there is a constant $A >0$ such that for all $r$ large enough, $ldiv_\gamma(r) > Ar^2$.
\end{proposition}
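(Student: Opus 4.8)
The plan is to use Proposition~\ref{prop:best_pencil} to extract, for any large radius $r$, a pencil of walls intersecting a bounded-length subsegment of $\gamma$ around $\gamma(t)$ whose consecutive members have bridges that are not large and whose endpoints are uniformly bounded distance apart. Concretely, fix $r$ large and $t > r$, and apply Proposition~\ref{prop:best_pencil} to the subsegment $\gamma' := \gamma|_{[t-r, t+r]}$, which has length $2r$. This produces, for a constant $C = C(\Gamma, K)$, a pencil $H_1, K_1, \dots, H_N, K_N$ with $N = \lfloor 2r/C \rfloor$, each bridge $B_i$ between $H_i$ and $K_i$ not large, and $d(H_i, K_i) \le C$. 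The key geometric input is then that a path $\sigma$ witnessing $\rho_\gamma(r,t)$ — running from $\gamma(t-r)$ to $\gamma(t+r)$ while avoiding the open $r$-ball at $\gamma(t)$ — must cross each of these walls, and crossing a wall $H_i$ far from its bridge $B_i$ is ``expensive.''

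The main steps, in order, are as follows. First, I would observe that since $B_i$ is not large, Lemma~\ref{lem:bridge_diam_bound} (applied with the constant $C$ above) forces $B_i$ to be either unbounded or of diameter less than some $D = D(C,\Gamma)$; in fact, I expect the relevant case to be that $B_i$ has bounded diameter, since an unbounded bridge $Y \times [0,I]$ with $d(H_i,K_i) \le C$ and not large would, by the definition of large bridge, fail to have $\Lambda_\Gamma$ separating walls — but $H_1, K_1, \dots$ itself provides many separating walls, so the bounded-diameter alternative is the generic one and the unbounded case can be excluded or handled by a separate (easier) argument. Second, for the walls $H_i$ positioned in the ``middle third'' of the pencil (say $N/3 \le i \le 2N/3$), the point $\gamma(t)$ lies close to the bridge region $B_i$, because $\gamma$ passes through all of $H_i, K_i$ and the pencil is ordered along $\gamma$; hence any point $x$ on the path $\sigma$ where $\sigma$ crosses $H_i$ satisfies $d(x, B_i) \ge r - O(C)$ (as $x$ must avoid the $r$-ball at $\gamma(t)$ and $\gamma(t)$ is within $O(C)$ of $B_i$). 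Third, apply Lemma~\ref{lem:wall_divergence_bound}: taking $y \in K_i$ the point where $\sigma$ (or a nearby path) crosses $K_i$, we get $d(x,y) \ge \epsilon \min\{d(x,B_i), d(y,B_i)\} \ge \epsilon(r - O(C))$, so the portion of $\sigma$ between consecutive wall-crossings of this middle band has length at least $\epsilon' r$. Summing over the $\sim N/3 = \Theta(r)$ middle walls gives total length at least $(\epsilon' r)(cr) = A r^2$, establishing $\mathrm{ldiv}_\gamma(r) \ge \rho_\gamma(r,t) > A r^2$ for all $t > r$, hence the claimed bound after taking the infimum over $t$.

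The hard part will be Step 2 and the bookkeeping around it: carefully showing that for the middle-band walls $H_i$, the basepoint $\gamma(t)$ is genuinely within $O(C)$ of the bridge $B_i$, and more importantly that the avoiding path $\sigma$ must cross $H_i$ at a point $x$ with $d(x, B_i)$ comparable to $r$ rather than crossing it near the bridge. This requires using convexity of walls and the product structure $B_i \cong Y_i \times [0,I_i]$ together with the fact that the two endpoints $\gamma(t-r), \gamma(t+r)$ of $\sigma$ lie on opposite sides of $H_i$ and are both at distance $\ge r$ from... actually at controlled distance, so that any crossing point of $\sigma$ with $H_i$ that is cheap (near $B_i$) would force $\sigma$ to re-enter the forbidden $r$-ball. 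One must also make sure the ``middle third'' really contains $\Theta(r)$ walls and that overlaps in the estimates (different walls $H_i$ might be crossed by $\sigma$ at the same or nearby points) do not cause a collapse — handled by noting the $H_i$ are pairwise disjoint and separated, so crossings are at genuinely distinct points spread along $\sigma$, and the interleaved $K_i$ give the needed lower bounds on inter-crossing arclength via Lemma~\ref{lem:wall_divergence_bound} rather than mere distinctness.
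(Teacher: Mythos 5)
Your overall strategy is the one the paper uses: extract a pencil $H_1,K_1,\dots,H_N,K_N$ via Proposition~\ref{prop:best_pencil}, bound the bridge diameters by Lemma~\ref{lem:bridge_diam_bound}, show the avoiding path must cross each wall far from its bridge, and convert that into length via Lemma~\ref{lem:wall_divergence_bound}, summing over $\Theta(r)$ walls. However, Step 2 as you state it would fail. The assertion that ``$\gamma(t)$ lies close to the bridge region $B_i$'' for the middle-third walls is not justified by the reason you give and is false in general. What the hypotheses actually give (via Lemma~\ref{lem:wall_divergence_bound} and $d(H_i,K_i)\le C$) is that the crossing point $x_i=\gamma\cap H_i$ is within $O(C)$ of $B_i$; to conclude that a point $x$ of the avoiding path on $H_i$ has $d(x,B_i)\ge r-O(C)$ you then need $d(\gamma(t),x_i)$ to be at most $(1-\delta)r$ for some fixed $\delta>0$. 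Your restriction to the middle third \emph{by index} does not provide this: Proposition~\ref{prop:best_pencil} gives no control on where along $\gamma|_{[t-r,t+r]}$ the pencil walls cross, so all $2N\approx 4r/C$ of them could cross within the initial segment of length $4r/C$ near $\gamma(t-r)$, in which case every ``middle-third'' wall has $d(\gamma(t),x_i)$ close to $r$ and the estimate $d(x,B_i)\ge r-d(\gamma(t),x_i)-O(C)$ degenerates to nothing. Restricting instead to the middle third by arc length does not work either, since the pencil might contribute no walls there.

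The fix is to apply Proposition~\ref{prop:best_pencil} not to $\gamma|_{[t-r,t+r]}$ but to a short segment anchored at $\gamma(t)$ --- the paper uses $\gamma|_{[t,t+r/2]}$ --- so that \emph{every} wall of the resulting pencil satisfies $d(\gamma(t),x_i)\le r/2$, and hence every crossing point $y_i$ of the avoiding path satisfies $d(y_i,B_i)\ge r-\tfrac{r}{2}-O(C)$; one then still gets $N=\lfloor r/(2C)\rfloor=\Theta(r)$ walls and the quadratic bound goes through for all of them, with no middle-band selection needed. Two smaller points: your worry about walls being crossed ``at the same or nearby points'' is handled exactly as you suggest (the pencil order forces the subpaths of $\sigma$ between $H_i$ and $K_i$ to be essentially disjoint, and Lemma~\ref{lem:wall_divergence_bound} applied to the pair $(H_i,K_i)$ gives the per-wall lower bound $\epsilon r/2-O(C)$); and your hesitation about the unbounded-bridge alternative in Lemma~\ref{lem:bridge_diam_bound} is a legitimate concern that the paper's own proof also passes over silently, so you are not behind the paper there.
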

\begin{proof}
Let  $\gamma(t): [0, \infty) \to \Sigma$ be a geodesic ray in $\Sigma$ with the property that any subpath of $\gamma$ larger than $K$ is not in a wide subgroup.
Fix integers $r > t > 0$. Without loss of generality, we assume that $r$ is even.
Consider the subpath $\gamma'$ of $\gamma$ based at $\gamma(t)$ and of length $\frac{r}{2}$.

We now define some relevant walls, bridges and constants that depend only on $K$ and $\Gamma$.
By Proposition~\ref{prop:best_pencil}, there is a constant $C = C(\Gamma, K)$ and a pencil $H_1, K_1, \dots, H_N, K_N$ of walls intersecting $\gamma'$ such that $N = \lfloor \frac{|\gamma'|}{C} \rfloor = \lfloor \frac{r}{2C} \rfloor $, the bridge between $H_i$ and $K_i$ is not large and $d(H_i, K_i) \le C$.
By Lemma \ref{lem:bridge_diam_bound}, there is a constant $D = D(\Gamma, C)$ such that that the bridge $B_i$ between $H_i$ and $K_i$ has diameter at most $D$.  
Finally, by Lemma \ref{lem:wall_divergence_bound}, there is a constant $\epsilon = \epsilon(\Gamma, C) > 0$ 
so that for any $1 \le i \le N$, $x \in H_i$ and $x' \in K_i$, we have that $d(x, x') \ge \epsilon \min \{d(x, B_i), d(x', B_i) \}$. 
Without loss of generality, we can assume that $0 < \epsilon < 1$.

Let $\alpha$ be a minimal length path from $\gamma(t - r)$ to $\gamma(t + r)$ that does not intersect the open ball of radius $r$ about $\gamma(t)$. 
We may assume such an $\alpha$ exists, otherwise $\gamma$ has infinite lower divergence and we are done.
For each $1 \le i \le N$, define $x_i := \gamma \cap H_i$, $x_i' = \gamma \cap K_i$ and choose $y_i \in \alpha \cap H_i$ and $y_i' \in \alpha \cap K_i$.

We bound the distance between $x_i$ (resp. $x_i'$) and $B_i$.
As $d(x_i, x_{i}') \le C$, we have that $\min \{d(x_i, B_i), d(x_i', B_i) \} \le \frac{C}{\epsilon}$. In particular, both $d(x_i, B_i)$ and $d(x_i', B_i)$ are no larger than $\frac{C}{\epsilon} + C \le \frac{2C}{\epsilon}$.

We now given an upper bound the distance between $y_i$ and $B_i$:
\begin{align*}
d(y_i, B_i) \ge d(x_i, y_i) - d(x_i, B_i) - D &\ge d(x_i, y_i) - \frac{2C}{\epsilon} -D \\
&\ge d(\gamma(t), y_i) - d(x_i, \gamma(t)) -  \frac{2C}{\epsilon} - D  \\
&\ge r - \frac{r}{2} -  \frac{2C}{\epsilon} - D \\
&\ge \frac{r}{2} - \frac{2C}{\epsilon}  - D
\end{align*}
Similarly, we obtain that $d(y_i', B_i) \ge \frac{r}{2} - \frac{2C}{\epsilon}  - D$.

By Lemma~\ref{lem:wall_divergence_bound} and the above inequality, we have 
\[d(y_i, y_i') \ge \epsilon \min \{ d(y_i, B_i), d(y_i', B_i) \} \ge \frac{\epsilon r}{2} - 2C - \epsilon D\]

Let $\alpha_i$ be the subpath of $\alpha$ from $y_i$ to $y_{i}'$.
We are now ready to give a lower bound on the length of $\alpha$:
\[|\alpha| \ge\sum_{i=1}^{N} |\alpha_i| \ge\sum_{i=1}^{N} d(y_i, y_{i+1}) \ge \frac{N\epsilon r}{2} - (2C - \epsilon D)N = 
\Bigl \lfloor \frac{r}{2C} \Bigr \rfloor \epsilon r \frac{1}{2} - (2C - \epsilon D) \Bigl \lfloor \frac{r}{2C} \Bigr \rfloor \]
Thus, we can choose some $A > 0$, depending only on $C$, $D$ and $\epsilon$ (which only depend on $K$ and $\Gamma$), so that $|\alpha| \ge Ar^2$ for all $r$ large enough. The claim now follows.
\end{proof}

We are now ready to prove the main theorem of this section.

\begin{proof}[Proof of Theorem~\ref{thm:morse_char}]
Let $\gamma$ be a geodesic ray. 
To see one direction, suppose that there exists a constant $K$ such that any subpath of $\gamma$ larger than $K$ does not have its label contained in a wide subgraph.
Then, $\gamma$ has quadratic lower divergence by Proposition~\ref{prop:lower_div}. In particular, it is Morse by \cite{Charney-Sultan}. %\cite[Theorem 1.5]{ACGH}. 

The other direction is easier. 
If $\gamma$ has arbitrarily long subpaths with labels contained in wide subgraph, then arbitrarily long subpaths of $\gamma$ are contained in convex, product subcomplexes with unbounded factors. In particular, $\gamma$ is not~Morse.    
\end{proof}

\section{Filter Construction} \label{sec:filters}

\subsection{Fans}
Let $\gamma$ be a path in a van-Kampen diagram, and let $\overline{\gamma} = s_0 \dots s_n$ be the
word corresponding to $\gamma$.
%edges of $\gamma$ indexed with respect to $\gamma$'s orientation. 
Let $j$ be the smallest integer such that $\text{Label}(s_j \dots s_n) \subset \Delta$ for some wide subgraph $\Delta \subset \Gamma$. 
If such a $j$ exists, we call the terminal subpath of $\gamma$ labeled by $s_j \dots s_n$ the \emph{wide tail of $\gamma$}. 
If no such $j$ exists, we define the wide tail of $\gamma$ to be the terminal vertex of $\gamma$ (a length $0$ path).

Recall that $M_\Gamma$ is smallest constant such that $W_{\text{Label}(w)}$ is infinite for any
geodesic word $w$ with $|w| > M_\Gamma$.

We now define a \emph{fan} below. Refer to Figure~\ref{fig_fan} for an example.

\begin{definition} \label{def:cox_fans}
    A \emph{fan} $F$ is a planar van-Kampen diagram consisting of a path $\gamma$, a set of at least $3$ edges $f_0, \dots, f_r$ adjacent to the endpoint of $\gamma$ and, for each $0 \le i < r$ a cell $c_i$ containing both $f_i$ and $f_{i+1}$. Moreover, the following must be satisfied:
	\begin{enumerate}
	    \item $\overline{\gamma f_i}$ is geodesic for each $0 \le i \le r$. 
		\item Let $\gamma'$ be the wide tail of $\gamma$. Either 
		\begin{enumerate}
		    \item $|\gamma'| \le M_\Gamma$, or
		    \item There is a wide subgraph $\Delta$ such that $\text{Label}(\gamma') \subset \Delta$
		    and the label of $f_i$ is not in $\Delta$ for each $0 < i < r$. 
		\end{enumerate}
	\end{enumerate}
	We call $\gamma$ the \emph{base path} of $F$ and $f_0, \dots, f_r$ the \emph{fan edges} of $F$.
	The edge $f_0$ (resp. $f_r$) is called the \emph{left fan edge} (resp. \emph{right fan edge}) of $F$. 
	The edges $f_1, \dots, f_{r-1}$ are called \emph{interior fan edges}. 
	
	Each $c_i$ is a $2m_{st}$-gon where where $s$ and $t$ are respectively the labels of $f_i$ and $f_{i+1}$ (and $m_{st}$ is the label of the edge $(s,t) \subset \Gamma$). 
	Let $v$ be the endpoint of $\gamma$.
	There is a vertex $v_i \in C_i$ that is furthest from $v$ in $c_i$, which we call the \emph{top vertex of $c_i$}.
	Let $\lambda_i$ and $\rho_i$ be the two simple paths in $c_i$ from $v$ to $v_i$. 
	As $F$ is planar, we think of $\lambda_i$ (resp. $\rho_i$) as lying to the left (resp. right) of $\gamma$ with respect to its orientation.
	We call the last edge $t_i$ of $\lambda_i$ the \emph{top left edge of $c_i$}.
	The \emph{top left edges of $F$} is the set $\{t_0, \dots, t_{r-1} \}$.
	
	The edges of a fan can be given a natural orientation. 
	Namely, we give edges in $\gamma$ the orientation inherited from $\gamma$, and for each $1 \le i < r$, we orient the edges of $c_i$ with respect to the orientations given by $\lambda_i$ and $\rho_i$.
	It readily follows that this gives a consistent orientation of all edges of $F$.
\end{definition}

\begin{figure} \label{fig_fan}
  \begin{overpic}[percent]{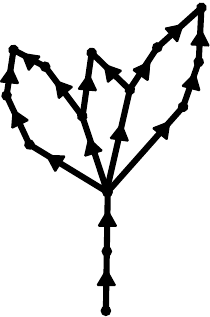}
  \put(29,35){$v$}
  \put(25, 10){$s_0$}
  \put(25, 30){$s_1$}
  \put(15, 40){$f_0$}
  \put(21, 52){$f_1$}
  \put(31, 60){$f_2$}
  \put(45, 62){$f_3$}
  \put(1,88){$v_0$}
  \put(25,88){$v_1$}
  \put(60,102){$v_2$} 
\end{overpic}
\caption{Illustration of a fan}    
\end{figure}

We will show how to construct fans in wide-spherical-avoidant graphs. However, we first need the following lemma regarding Coxeter groups:
\begin{lemma} \label{lem:forbidden_last_letter}
    Let $w = s_1 \dots s_m$ be a geodesic word in the Coxeter group $W_\Gamma$.
    Let $s, t \in V(\Gamma)$ be non-adjacent vertices such that $s_i = s$ for some $i$ and $s_j \neq t$ for all $j \ge i$.
    Then, no geodesic expression for $w$ ends with~$t$. 
\end{lemma}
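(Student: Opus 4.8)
The plan is to argue by contradiction using the deletion condition (exchange property) for Coxeter groups. Suppose some geodesic expression $w' = s'_1 \dots s'_m$ for $w$ ends with $t$, i.e., $s'_m = t$. Since $s$ and $t$ are non-adjacent, they generate an infinite dihedral group $\langle s, t\rangle$, and in particular $st$ has infinite order, so there is no braid-type relation between them; the only way to ``move'' an occurrence of $t$ past an occurrence of $s$ in a word is via a relation involving a third letter, or by cancellation.

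First I would set up the contradiction carefully. Consider the element $v := w t = s'_1 \dots s'_{m-1}$; this is a geodesic word of length $m-1$. On the other hand, take the given geodesic word $w = s_1 \dots s_m$ and look at $wt$. Since $w$ has a geodesic expression ending in $t$, we have $\ell(wt) = m-1 < m = \ell(w)$, so by the exchange condition applied to the generator $t$ and the geodesic word $s_1 \dots s_m$, there is an index $k$ with $s_1 \dots s_m = s_1 \dots s_{k-1} s_{k+1} \dots s_m \cdot t$, equivalently $w = s_1 \dots \widehat{s_k} \dots s_m t$ and $s_1 \dots s_k \cdots s_m$ agrees after deleting $s_k$; rephrasing, $wt = s_1 \dots \widehat{s_k} \dots s_m$. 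Hence the letter $s_k$ is ``the letter that gets exchanged,'' and the wall-theoretic meaning is that the wall crossed by the $k$-th edge of (the path labeled) $w$ is the same wall of type $t$ that $w$ would cross next.

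The heart of the argument is to pin down which letter $s_k$ is. The wall $H$ crossed last (the would-be wall of type $t$ at the end) satisfies: $H$ has type $t$. By the exchange condition, $s_k = s_1 \dots s_{k-1} t s_{k-1} \dots s_1$ as reflections, so the wall dual to the $k$-th edge equals $H$; in particular the $k$-th edge has label $t$, forcing $s_k = t$. But by hypothesis $s_j \neq t$ for all $j \ge i$, so $k < i$. Now I would derive a contradiction with the occurrence $s_i = s$. The point is that between position $k$ (where the word crosses the wall $H$ of type $t$) and the end, the path labeled by $w$ crosses the wall $H$ again at the ``virtual'' last step — but a geodesic crosses each wall at most once. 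More precisely: the subword $s_{k+1} \dots s_m$ is a geodesic word, and conjugating, $s_1 \dots s_{k-1} (s_{k+1} \dots s_m) t (s_1 \dots s_{k-1})^{-1}$ rearranges to show the path from position $k$ onward, together with a final $t$-edge, crosses two edges dual to the same wall $H$, contradicting geodesicity — unless $H$ is crossed only once, which is exactly what fails.

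The cleaner way to finish, which I would actually write up, is this: let $H$ be the wall of type $t$ that a geodesic for $w$ crosses as its final step (well-defined since some geodesic expression ends in $t$). Then $w$ crosses $H$, so \emph{every} geodesic word for $w$ crosses $H$ exactly once. In the given geodesic $s_1 \dots s_m$, let the $k$-th edge be the one dual to $H$; its type is $t$, so $s_k = t$, hence $k < i$ by hypothesis. Now apply Lemma~\ref{lem:intersecting_walls}/wall combinatorics: the edges $s_k, s_{k+1}, \dots, s_i$ trace a geodesic subpath whose first edge is dual to $H$ (type $t$) and which later passes through an edge of type $s$. Since $s,t$ non-adjacent, the walls of type $t$ through the start and the reflection $s$ generate an infinite dihedral group and do not commute, so the configuration forces the geodesic to recross $H$ — contradiction. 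The main obstacle I anticipate is making the ``forces the geodesic to recross $H$'' step rigorous without heavy machinery: I expect the honest proof to instead run a direct induction on $m$ using only the exchange/deletion condition, tracking that the exchanged letter cannot be any $s_j$ with $j \ge i$ (as those are $\ne t$ and $\ne$ the conjugate that equals $t$) and cannot be $s_i = s$ itself (since $s \ne t$ and $s$ doesn't commute past $t$), then reducing to the shorter geodesic $s_1 \dots \widehat{s_k} \dots s_i \dots s_m$ which still contains $s$ at a position with no later $t$, contradicting minimality of $m$. That inductive bookkeeping — showing the exchanged position $k$ must lie strictly before $i$ yet still yields a shorter valid counterexample — is where the real work lies.
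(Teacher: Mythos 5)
Your argument does not close. The exchange condition applied to $s_1\dots s_m$ and $t$ gives an index $k$ with $s_k = p\,t\,p^{-1}$ where $p = s_{k+1}\cdots s_m$, i.e.\ $s_k$ is \emph{conjugate} to $t$ (the further step $s_k=t$ uses well-definedness of wall types, which the paper also asserts but which genuinely fails when odd edge labels are present). Granting $s_k=t$ and hence $k<i$, you have not yet reached a contradiction: nothing in the hypotheses forbids a $t$ at a position $k<i$. The step that is supposed to finish the proof --- ``the walls of type $t$ and the reflection $s$ do not commute, so the configuration forces the geodesic to recross $H$'' --- is not a valid implication. A geodesic may cross a type-$t$ wall and then a non-commuting type-$s$ wall without recrossing the first: the geodesic $ts$ in the infinite dihedral group $\langle s,t\rangle$ does exactly this. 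Your fallback induction also does not close: deleting $s_k$ yields a reduced word for $wt$, not for $w$, so applying the inductive hypothesis to the shorter word only returns the statement $\ell(wt\cdot t)>\ell(wt)$, i.e.\ $\ell(w)>\ell(wt)$ --- which is precisely what you assumed, not a contradiction. The entire difficulty of the lemma is concentrated in ruling out the case $k<i$, and that is the step you have left open (as you yourself acknowledge at the end).

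The paper's proof avoids the exchange condition altogether and is essentially one line once the right tool is named: by Tits' solution to the word problem, any two reduced expressions for $w$ are connected by a sequence of braid moves $aba\cdots\leftrightarrow bab\cdots$ of length $m_{ab}$. One checks that the property ``there is no occurrence of $t$ after the last occurrence of $s$'' is preserved by every braid move except one involving both $s$ and $t$ simultaneously --- and no such move exists, since $s$ and $t$ are non-adjacent and hence $m_{st}=\infty$. Since the given expression has this property and an expression ending in $t$ does not, no reduced expression can end in $t$. If you want to salvage a proof along your lines, you would need either this braid-move invariance or a root-system computation showing $w(\alpha_t)>0$; the wall-crossing picture alone does not supply the contradiction.
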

\begin{proof}
    Suppose, for a contradiction, that $w'$ is a geodesic expression for $w$ that ends with $t$. By Tit's solution to the word problem \cite{Tits}, 
    there is a sequence of words
    \[w = w_1, w_2, \dots, w_k = w'\]
   where $w_{i+1}$ is obtained by the ``Tits move'' that replaces a subword $aba \dots$ (alternating in $a$ and $b$) of $w_i$ of length $m_{ab}$ with its expression $bab \dots$ of the same length, where $m_{ab}$ is the label of the edge $(a,b)$ in $\Gamma$.
   
    Note that each $w_i$ must have an occurrence of the letter $s$.
    Thus, for some $1 \le r < k$, the word $w_r$ has no occurrence of the letter $t$ after the last occurrence of the letter $s$, and the word $w_{r+1}$ has an occurrence of the letter $t$ after the last occurrence of the letter $s$.
    But this means that $w_{r+1}$ is obtained from $w_r$ by replacing a word either of the form $sts \dots$ or of the form $tst \dots$.  
    This is a contradiction as $s$ and $t$ are not adjacent in $\Gamma$ and so no such Tits move can be made.
\end{proof}

The following lemma summarizes some well known facts regarding Coxeter groups. For proofs, we refer the reader to Bjorner and Brenti's book \cite[Corollary~1.4.6]{Bjorner-Brenti} and Davis' book \cite[Lemma~4.7.2]{Davis}.

\begin{lemma}\label{lem:ending_letters}
    Let $W_\Gamma$ be a Coxeter group, and let $w$ be a geodesic word in $W_\Gamma$. Let $K \subset V(\Gamma)$ be the set of all letters $s$ such that some geodesic expression for $w$ ends with $s$. Then $W_K$ is a finite group. Moreover, $wt$ is geodesic for each $t \in V(\Gamma) \setminus K$.
\end{lemma}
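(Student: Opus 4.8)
The statement to prove is Lemma~\ref{lem:ending_letters}, which the paper explicitly attributes to standard references (Bj\"orner--Brenti and Davis), so a proof proposal here should reconstruct those classical arguments rather than invent something new.

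\medskip

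\textbf{Proof plan.} The plan is to work via the theory of \emph{descent sets} and the exchange condition. First I would recall that for a geodesic word $w$ representing an element $\bar w \in W_\Gamma$, a generator $s \in V(\Gamma)$ satisfies ``$\bar w$ has a geodesic expression ending in $s$'' if and only if $\ell(\bar w s) < \ell(\bar w)$ (the right descent condition); this is a direct consequence of the strong exchange condition (see \cite[Chapter~4]{Davis} or \cite[Ch.~1]{Bjorner-Brenti}). Thus the set $K$ in the statement is precisely the right descent set $D_R(\bar w) = \{ s \in V(\Gamma) : \ell(\bar w s) < \ell(\bar w)\}$. The second assertion of the lemma, that $wt$ is geodesic for each $t \in V(\Gamma) \setminus K$, is then immediate: $t \notin K$ means $\ell(\bar w t) > \ell(\bar w)$, i.e. $\ell(\bar w t) = \ell(\bar w) + 1$, which says exactly that the word $wt$ is geodesic.

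\medskip

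The substantive part is showing $W_K$ is finite. The key classical fact is: if $\Delta$ is a special (parabolic) subgraph, then every element $g \in W_\Gamma$ has a unique representative of minimal length in the coset $g W_\Delta$, and with respect to this decomposition $g = g' u$ with $g'$ minimal in $g'W_\Delta$ and $u \in W_\Delta$, one has $\ell(g) = \ell(g') + \ell(u)$ and moreover $D_R(g) \cap \Delta = D_R(u)$ within the Coxeter system $(W_\Delta, \Delta)$. Applying this is not quite enough by itself, though; the cleanest route is the following. Take $J = K$ as a candidate subgraph (the induced subgraph on the descent set). I claim $\bar w = g' u$ with $u \in W_K$ the long-ish part. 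More precisely, one shows directly: among all factorizations $\bar w = v z$ with $\ell(\bar w) = \ell(v) + \ell(z)$ and $\mathrm{Label}(z) \subseteq K$, there is a maximal one, say $\bar w = v_0 z_0$, and then $D_R(v_0) \cap K = \emptyset$ while $z_0 \in W_K$ has $D_R(z_0) = K$ \emph{as a Coxeter element of $W_K$}. An element of a Coxeter group whose right descent set is the entire generating set must be the longest element $w_K$ of that group, and the longest element exists if and only if the group is finite. Hence $W_K$ is finite and $z_0 = w_K$.

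\medskip

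The one step requiring care is establishing that the ``maximal right factor lying in $W_K$'' indeed has full descent set $K$ inside $W_K$, i.e. that every $s \in K$ can be absorbed into the right factor. For this I would argue as follows: given $s \in K$, by definition $\ell(\bar w s) < \ell(\bar w)$, so $\bar w$ has a reduced word ending in $s$; combined with the exchange condition and the existing factorization $\bar w = v_0 z_0$, one shows $s$ can be commuted/exchanged into $z_0$, contradicting maximality unless $s$ already lies in the right-descent set of $z_0$ within $W_K$. This is exactly the content of \cite[Lemma~4.7.2]{Davis}, and it is where the structure theory (the subword property / strong exchange) does the real work. So the main obstacle is purely this absorption argument; once it is in place, the finiteness of a Coxeter group with a longest element, together with the descent-set characterization of geodesic endings, finishes both claims. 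Since the paper cites this as known, in the write-up I would simply invoke \cite[Corollary~1.4.6]{Bjorner-Brenti} and \cite[Lemma~4.7.2]{Davis} for the finiteness of $W_K$ and give the one-line deduction that $wt$ is geodesic for $t \notin K$ from $\ell(\bar w t) = \ell(\bar w) + 1$.
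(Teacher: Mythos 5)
Your proposal is correct and takes essentially the same route as the paper, which offers no proof of this lemma and simply cites Bj\"orner--Brenti (Corollary~1.4.6) and Davis (Lemma~4.7.2): identifying $K$ with the right descent set $D_R(\bar w)$, deducing the second claim from $\ell(\bar w t)=\ell(\bar w)+1$, and getting finiteness of $W_K$ from the fact that the $W_K$-factor of $\bar w$ has full descent set in $W_K$ and hence is a longest element is precisely the standard argument behind those citations. Nothing further is needed.
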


\begin{lemma} \label{lem:cox_fan_existence}
	Let $W_\Gamma$ be a one-ended, wide-spherical-avoidant Coxeter group.
	Let $F'$ be a Van-Kampen diagram over $W_\Gamma$ consisting of a path $\gamma$ and two edges, $x$ and $y$, each adjacent to the endpoint of $\gamma$.
	Moreover, suppose that $\overline{\gamma x}$ and $\overline{\gamma y}$ are geodesic.
	Then there is a fan $F$, containing $F'$ as a subdiagram,  with fan edges $x = f_0, f_1, \dots, f_r = y$ and base path $\gamma$.
\end{lemma}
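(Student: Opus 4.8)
The plan is to build the fan $F$ one cell at a time, starting from the left fan edge $f_0 = x$ and successively producing $f_1, f_2, \dots$ until we reach $y$. Write $\overline\gamma = s_0 \dots s_n$ and let $\gamma'$ be the wide tail of $\gamma$ (labeled $s_j \dots s_n$, or a length-$0$ path if no such $j$ exists); set $\Delta$ to be a wide subgraph with $\mathrm{Label}(\gamma') \subset \Delta$ when $|\gamma'| > M_\Gamma$. The key invariant to maintain is: at each stage we have a geodesic word $\overline{\gamma f_i}$, and when $|\gamma'| > M_\Gamma$ the label of each interior edge $f_i$ lies outside $\Delta$. To pass from $f_i$ to $f_{i+1}$: let $s$ be the label of $f_i$; by Lemma~\ref{lem:ending_letters} applied to the geodesic word $\overline{\gamma f_i} = s_0 \dots s_n s$, the set $K_i$ of possible last letters of geodesic expressions for $\overline{\gamma f_i}$ generates a finite subgroup $W_{K_i}$, and $\overline{\gamma f_i} t$ is geodesic for every $t \notin K_i$. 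I want to choose $f_{i+1}$ to be an edge labeled by some vertex $t$ adjacent to $s$ in $\Gamma$ (so that a $2m_{st}$-gon cell $c_i$ exists containing $f_i$ and $f_{i+1}$) with $t \notin K_i$, making progress toward $y$.

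The mechanism for "making progress toward $y$" is where the wide-spherical-avoidant hypothesis enters. Let $s$ and $u$ be the labels of $x$ and $y$ respectively. If $s = u$ we are essentially done (the fan is degenerate or a single cell); otherwise I consider the triple $(P, Q, K)$ where, in the nontrivial case $|\gamma'| > M_\Gamma$, $P \sqcup Q$ is a wide decomposition of (the induced subgraph on) $\mathrm{Label}(\gamma')$ — so $P \cup Q \subseteq \Delta$ — and $K$ is chosen to be (a suitable finite special subgraph containing) $K_i$ whose vertices are joined by edges to all of $P$; one needs to check $(P, Q, K)$ is a special join, in particular that the finite set $K_i$ only involves vertices adjacent to everything in $P$, which should follow from the structure of how letters can end a geodesic word that has a long wide tail. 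Wide-spherical-avoidance then gives a path $\alpha$ in $\Gamma$ from $s$ to $u$ with $\alpha \cap (K \cup P \cup Q) \subseteq \{s, u\}$. The vertices along this path $\alpha$ are the labels of the successive interior fan edges $f_1, \dots, f_{r-1}$: each consecutive pair of vertices on $\alpha$ is adjacent in $\Gamma$ (giving the cell $c_i$), none of the interior vertices lies in $\Delta \supseteq P \cup Q$ (giving condition 2(b)), and none lies in $K \supseteq K_i$ (giving, via Lemma~\ref{lem:ending_letters}, that $\overline{\gamma f_i} t$ remains geodesic, i.e. condition (1)). In the easy case $|\gamma'| \le M_\Gamma$, condition 2(a) holds automatically and one only needs wide-avoidance-type control to keep the fan edges geodesic, which Lemma~\ref{lem:ending_letters} again supplies provided one can route around the finite sets $K_i$; here one-endedness of $W_\Gamma$ is used to rule out degenerate obstructions.

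There is a subtlety I have glossed over: after attaching the cell $c_i$, the diagram's boundary changes, and the geodesic word one must feed into Lemma~\ref{lem:ending_letters} at the next step is $\overline{\gamma f_{i+1}}$, not something involving the whole cell — so I need the top-left and top-right edges $\lambda_i, \rho_i$ of the cell to be consistent with the planar structure, which is automatic from the definition of the $2m_{st}$-gon cell in $\Sigma$. I also need to know that the process terminates, which it does because $\alpha$ is a fixed finite path in $\Gamma$, so $r = \mathrm{length}(\alpha)$ and there are finitely many cells. Finally, $F$ contains $F'$ as a subdiagram by construction, since $f_0 = x$, $f_r = y$, and $\gamma$ is unchanged. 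I expect the main obstacle to be the bookkeeping in the nontrivial case: verifying that the sets $K_i$ of possible terminal letters stay inside a single finite special subgraph $K$ forming a special join with $(P,Q)$, uniformly over all $i$ — in other words, that attaching cells does not enlarge the relevant "spherical part" beyond what wide-spherical-avoidance (Definition~\ref{def:wide-spherical-avoidant}) can handle. This is precisely the reason Definition~\ref{def:wide-spherical-avoidant} is phrased with the extra strength alluded to in the text, and pinning down exactly which finite subgraph to use for $K$ is the crux of the argument.
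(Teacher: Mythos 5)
Your overall strategy is the paper's: in the short-wide-tail case use one-endedness to route a path in $\Gamma$ from the label of $x$ to the label of $y$ around the finite set of possible ending letters, and in the long-wide-tail case use wide-spherical-avoidance against a special join built from the wide tail; the vertices of that path then label the fan edges. But two of your steps do not work as stated. First, the geodesity bookkeeping is wrong. Every fan edge $f_i$ is attached at the endpoint of the \emph{base path} $\gamma$, so condition (1) of Definition~\ref{def:cox_fans} asks that $\overline{\gamma}\,s_i$ be geodesic for each label $s_i$; by Lemma~\ref{lem:ending_letters} this means $s_i\notin K$, where $K$ is the set of possible last letters of $\overline{\gamma}$, computed \emph{once}. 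Your iterative condition $t\notin K_i$ (with $K_i$ the last letters of $\overline{\gamma f_i}$) instead guarantees that $\overline{\gamma f_i}\,t$ is geodesic, which neither implies nor is implied by $\overline{\gamma}\,t$ being geodesic: in the infinite dihedral group $\langle s,t\rangle$ the word $tst$ is reduced while $tt$ is not. Using the single fixed $K$ both gives the correct condition and dissolves the ``uniformity over $i$'' worry you raise at the end, which is an artifact of this misreading.

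Second, the special join you propose is not one, and the avoidance conclusion you draw from it is backwards. You take $(P,Q)$ to be a wide decomposition of $\mathrm{Label}(\gamma')$, but the subgraph induced by $\mathrm{Label}(\gamma')$ need not be wide; it is merely contained in a wide $\Delta=\Delta_1\star\Delta_2$. The repair is to set $C_j=\mathrm{Label}(\gamma')\cap\Delta_j$, observe that $W_{C_1}$ or $W_{C_2}$ is infinite because $|\gamma'|>M_\Gamma$, and take $P=C_1$ and $Q=\Delta_2$ (the \emph{full} second factor). Then wide-spherical-avoidance yields a path avoiding $P\cup Q\cup K$ only, not all of your $\Delta$; condition 2(b) is satisfied because the induced subgraph on $C_1\cup\Delta_2$ is itself a wide subgraph containing $\mathrm{Label}(\gamma')$, and 2(b) only requires \emph{some} such subgraph. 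Finally, the piece you explicitly leave open --- which finite subgraph to use for $K$ --- is the substantive step: one must take $K'=K\setminus\mathrm{Label}(\gamma')$ to get disjointness from $P$ and $Q$ (the discarded ending letters lie in $P\cup Q$ and are avoided anyway), and Lemma~\ref{lem:forbidden_last_letter} is what forces every vertex of $K'$ to be adjacent to every letter of the wide tail, hence to all of $P=C_1$, verifying condition (3) of a special join. Since you acknowledge you cannot pin this down, and since the $K_i$ error would propagate through both cases, the proposal as written has a genuine gap rather than a complete argument.
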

\begin{proof}
    Let $e_0 \dots e_n$ be the edges of $\gamma$, and let $t_i$ be the label of $e_i$.
    Let $s$ and $t$ be the labels of $x$ and $y$ respectively. 
	Let $V' \subset V(\Gamma)$ be the set of letters for which a geodesic expression for the label of $\gamma$ can end in, and let $K$ be the subgraph induced by $V'$. 
	$W_{K}$ is finite by Lemma~\ref{lem:ending_letters}.
	
	Suppose first that the wide tail of $\gamma$ has length less than $M_\Gamma$, where $M_\Gamma$ is as in Definition~\ref{def:constants}.
	As $\overline{\gamma x}$ and $\overline{\gamma y}$ are geodesic, $s$ and $t$ are not in  $K$.
	As $W_\Gamma$ is one-ended, $K$ does not separate $\Gamma$, and consequently there is a path $\alpha \subset \Gamma \setminus K$ from $s$ to $t$.
        We would like the length of $\alpha$ to be greater than $1$. If the length of the chosen $\alpha$ is $1$, then $x$ and $y$ are adjacent. In this case, we replace $\alpha$ with a new path traversing the vertices $x, y, x, y$. Let $x = s_0, \dots, s_r = y$ be the vertices traversed by $\alpha$.
	We construct a fan $F$ with base path $\gamma$, fan edges $x = f_0, \dots, f_r = y$ and cells $c_0, \dots, c_{r-1}$ such that each $f_i$ has label $s_i$ and
	$c_i$ is a $(2m_{s_i s_{i+1}})$-gon containing both $f_i$ and $f_{i+1}$.
	Note that, by Lemma~\ref{lem:ending_letters} and as $s_i \notin K$, $\overline{\gamma f_i}$ is geodesic for each $i$.
	The claim is thus proven for this case.
	
	Suppose, on the other hand, that $\gamma$ has a wide tail $\gamma' = e_l \dots e_n$ with $|\gamma'| \ge M_\Gamma$. Let $\Delta = \Delta_1 \star \Delta_2$ be a wide subgraph associated to the wide tail $\gamma'$.
	For $j \in \{1,2\}$, let $C_j = \{t_l, \dots, t_n\} \cap V(\Delta_j)$.
	As $\{t_l, \dots, t_n\}$ is not contained in a finite subgroup (by definition of $M_\Gamma$) and as every letter in $C_1$ commutes with every letter in $C_2$, either $W_{C_1}$ or $W_{C_2}$ is infinite.
	Up to relabeling, we assume that $W_{C_1}$ is infinite.
	
	Let $K'$ be the subgraph induced by $V(K) \setminus (V(K) \cap \{t_l, \dots, t_n\}$).
	By Lemma~\ref{lem:forbidden_last_letter}, every vertex in $K'$ is adjacent in $\Gamma$ to $t_i$ for all $i \ge l$.
	Thus, $(C_1, \Delta_2, K')$ is a special join.
	As $\Gamma$ is wide-spherical-avoidant, there is a path $\alpha \subset \Gamma$ from $s$ to $t$ such that $\alpha \cap (K' \cup C_1 \cup \Delta_2) \subset \{s, t\}$. 
	Let $s = s_0, \dots, s_r = t$ be the vertices traversed by $\alpha$ ordered with respect to $\alpha$'s orientation. 
	Without loss of generality, we can assume that 
	$s_i \notin \{s, t\}$ for all $1 < i < m$.
	Also note that, like in the previous case, we can choose $\alpha$ so that $r \ge 2$.
	As in the previous case, we construct a fan with fan edges $e = f_0, f_1, \dots, f_r =f$ where $f_i$ has label~$s_i$.

	It just remains to check that $t_0 \dots t_m s_i$ is geodesic for each $i$. 
	Note that $K \subset V(K') \cup V(C_1) \cup V(\Delta_2)$.
	In particular, $s_i \notin K$ for each $1 < i < m$.
	Thus, $t_0 \dots t_m s_i$ is indeed geodesic for each $i$ by Lemma~\ref{lem:ending_letters}. The claim now follows.
\end{proof}

The proof of the following lemma is outlined within the proof of \cite[Lemma~3.9]{Mihalik-Tschantz}. We give a proof here for completeness.

\begin{lemma} \label{lem:cox_fan_geodesics}
	Let $F$ be a fan with base path $\gamma$. Let $\gamma'$ be a directed path in $F$ starting at the endpoint of $\gamma$. Then $\gamma \gamma'$ is geodesic.
\end{lemma}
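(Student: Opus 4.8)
The plan is to induct on the length of the directed path $\gamma'$, using the combinatorial characterization that a path in $\Sigma$ is geodesic if and only if no wall (equivalently, no dual curve) crosses it twice. So I would fix a fan $F$ with base path $\gamma$ and fan edges $f_0, \dots, f_r$ and cells $c_0, \dots, c_{r-1}$, and argue that the concatenation $\gamma\gamma'$ never has a dual curve of $F$ meeting it in two edges. The base case $|\gamma'| = 0$ is the hypothesis that $\overline{\gamma}$ is geodesic. For the inductive step I would write $\gamma' = \gamma'' e$ where $e$ is the last (directed) edge, assume $\gamma\gamma''$ is geodesic, and show appending $e$ preserves this.

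The heart of the argument is to understand which cell $e$ belongs to and to track the dual curve $H$ through $e$. Since $\gamma'$ is a \emph{directed} path in $F$, the structure of fans forces $e$ to lie on one of the boundary paths $\lambda_i$ or $\rho_i$ of some cell $c_i$ (the orientation on a $2m$-gon $c_i$ runs from the bottom vertex $v$ up the two sides to the top vertex $v_i$). The dual curve through $e$ in the cell $c_i$ has its other end on an edge $e'$ of $c_i$ which — because $c_i$ is a $2m_{st}$-gon with the $m$ dual curves all crossing at the center — is the ``opposite'' edge; walking backwards along the boundary of $F$ one sees that this opposite edge is either the fan edge $f_i$ or $f_{i+1}$, or an edge on the other boundary path of $c_i$ closer to $v$. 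Tracing $H$ further back into the adjacent cell $c_{i-1}$ (or $c_{i+1}$), one inductively locates where $H$ exits the union $c_0 \cup \dots \cup c_{r-1}$: it must exit through one of the fan edges $f_j$ and then run along $\gamma$ (since at an interior fan edge the only cells are $c_{j-1}$ and $c_j$, and at $f_0$ or $f_r$ it enters $\gamma$). The condition $\overline{\gamma f_j}$ geodesic then says exactly that $H$ does not recross $\gamma f_j$, hence it does not recross $\gamma\gamma''$. One also has to rule out $H$ crossing $\gamma'$ itself a second time; here I would use that $\gamma'$ is directed together with the planarity of $F$, so a dual curve, being an embedded arc, cannot return to a directed path without violating the consistent orientation of the cell boundaries.

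The main obstacle I anticipate is the bookkeeping of \emph{which} boundary edge of each cell the dual curve through $e$ reaches, and making the ``trace the dual curve back to a fan edge'' argument precise — in particular handling the case where the dual curve bounces between the two sides $\lambda_i, \rho_i$ of a single large polygon $c_i$ before leaving it, and the case where it passes through several consecutive cells. A clean way to organize this is to prove the slightly stronger statement: every dual curve of $F$ that meets a directed path $\gamma'$ (starting at the top of $\gamma$) meets $\gamma\gamma'$ at most once, and moreover if it also meets $\gamma$ then it exits $F$ through $\gamma$ at one end and through the configuration of fan edges at the other, so that its trace on $\gamma\gamma f_i$-type subpaths is controlled by condition (1) of Definition~\ref{def:cox_fans}. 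With that reformulation the induction on the number of cells $\gamma'$ passes through becomes routine, modulo the planarity/orientation argument that a directed path cannot be recrossed, which is where the ``directed'' hypothesis is essential (an arbitrary path in $F$ need not yield a geodesic, and indeed the cells $c_i$ contain non-geodesic boundary subwords).
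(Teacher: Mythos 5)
The key step your plan relies on --- tracing the dual curve through the last edge of $\gamma'$ back to a fan edge $f_j$ and then invoking that $\overline{\gamma f_j}$ is geodesic --- does not go through, and it hides exactly the non-trivial content of the lemma. First, a structural point that removes most of your bookkeeping: a directed path in $F$ starting at the endpoint $v$ of $\gamma$ is forced to be an initial segment of $\lambda_i$ or $\rho_i$ for a \emph{single} polygon $c_i$ (after the first edge, each vertex has at most one outgoing edge per polygon, and consecutive polygons share only a fan edge), so the ``several consecutive cells'' case does not occur. But now take $\gamma'$ to be the initial segment of $\lambda_i$ of length $k$ with $2 \le k \le m_{st}-1$. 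In the $2m_{st}$-gon $c_i$, the wall segment through the $k$-th edge of $\lambda_i$ exits through the $(m_{st}-k+1)$-th edge of $\rho_i$, which is \emph{not} a fan edge; that edge lies on the boundary of the diagram $F$ and belongs to no other cell of $F$, so the dual curve terminates there and cannot be ``traced further back'' to $f_i$ or $f_{i+1}$. More fundamentally, the criterion ``no dual curve of the diagram $F$ crosses $\gamma\gamma'$ twice'' is not equivalent to geodesicity of the image: two edges of $\gamma\gamma'$ can be dual to the same wall of $\Sigma$ without being joined by any dual curve of $F$. Indeed, no $2$-cell of $F$ touches $\gamma$ away from its endpoint, so no dual curve of $F$ crosses $\gamma$ at all, and your criterion is vacuously satisfied for a wall meeting $\gamma$ once and $\gamma'$ once --- which is precisely the configuration that must be excluded.

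What is actually needed is an algebraic input. Write $w = \overline{\gamma}$ and let $s,t$ be the labels of the two fan edges bounding $c_i$. Condition (1) of Definition~\ref{def:cox_fans} gives that $ws$ and $wt$ are geodesic, so by Lemma~\ref{lem:ending_letters} no reduced expression for $w$ ends in $s$ or $t$; hence $w$ is the minimal-length representative of the coset $wW_{\{s,t\}}$, and by \cite[Proposition~2.4.4]{Bjorner-Brenti} the length of $wu$ equals the length of $w$ plus the length of $u$ for every $u\in W_{\{s,t\}}$. Since $\overline{\gamma'}$ is an alternating $s,t$-word of length at most $m_{st}$, it is geodesic in the dihedral group $W_{\{s,t\}}$, and the lemma follows. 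This is the route the paper takes (phrased via projection to the residue $wW_{\{s,t\}}$ in the Davis complex); some form of this exchange/coset-representative property is unavoidable and is the piece missing from your argument.
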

\begin{proof}
    By the structure of a fan, $\gamma'$ is contained in a polygon $c$. Let $s$ and $t$ be the two vertices in $\Gamma$ that appear as labels of edges in $c$. Let $P$ be the subgraph of $\Gamma$ consisting of $s$, $t$ and the edge between $s$ and $t$.
    
    Let $\Sigma$ be the Davis complex of $W_\Gamma$. Let $w$ be the label of $\gamma$, and let $\hat{\gamma}$ be the path in $\Sigma$ based at the identity vertex $b$ and with label the same as that of $\gamma$. 
    Let $q$ be the endpoint of $\hat{\gamma}$.
    Let $\hat{\gamma}'$ be the path in $\Sigma$ with starting at $q$ and with label the same as $\gamma'$.
    Note that $\hat{\gamma}'$ is contained in $wW_P$ (when thought of as a subset of $\Sigma$).

	Let $\alpha$ be a geodesic in $\Sigma$ from $b$ to $wW_P$. 
	Let $\alpha'$ be a geodesic from the endpoint of $\alpha$ to $q$. 
	As $W_P$ is convex in the Davis complex, $\alpha' \subset wW_P$.
	By \cite[Proposition~2.4.4]{Bjorner-Brenti} and \cite[Corollary~2.4.5]{Bjorner-Brenti}, $\alpha \alpha'$ is geodesic as $\overline{\alpha}$ is the unique minimal length coset representative of $wW_\Gamma$.
	Moreover, as $\alpha\alpha'$ and $\gamma$ share endpoints, the words $\overline{\alpha \alpha'}$ and $w$ are equal as elements of $W_\Gamma$. 
	Recall that $ws$ and $wt$ are both geodesic words by the construction of a fan.
	Consequently, no expression for $w$ can end in $s$ or $t$.
	On the other hand, if $\alpha'$ is not a length $0$ geodesic, then $\overline{\alpha \alpha'}$ must end with either an $s$ or a $t$ (as $\alpha'$ is contained in a coset of $W_P$).
	Thus, $\alpha'$ must indeed have length $0$, and we must have that $\alpha = \gamma$. So, $\gamma$ is a minimal length path from $b$ to $W_P$.
	Again by \cite[Proposition~2.4.4]{Bjorner-Brenti}, we can conclude that $\gamma \gamma'$ is geodesic.
\end{proof}

\subsection{Filter construction}

For this subsection,  fix $W_\Gamma$ a wide-spherical-avoidant Coxeter group and $\Sigma$ its Davis complex. 
Let $\alpha = \alpha_0 \alpha_1 \dots$ and $\beta = \beta_0 \beta_1 \dots$ be geodesic rays in $\Sigma$ with a common basepoint $b$.
Here, $\alpha_i$ and $\beta_i$ are edges.
We will construct a \emph{filter $\mathcal{F}$ spanning $\alpha$ and $\beta$}: where $\mathcal F$ is a certain unbounded van-Kampen diagram over $\Sigma$ with oriented edges and whose boundary has image $\alpha \cup  \beta$.
We define $\mathcal F$ by giving a filtration $\mathcal{F}_0 \subset \mathcal{F}_1 \subset \dots$. 
For each $i$, we also define a spanning tree $T_i \subset \mathcal F_i$.

We begin with a van-Kampen diagram $\mathcal{F}_0$ consisting of a single vertex $b$, and two rays based at $b$ whose images in $\Sigma$ are $\alpha$ and $\beta$ respectively. By abuse of notation, we denote the edge in $\mathcal{F}_0$ corresponding $\alpha_i$ (resp. $\beta_i$) also by $\alpha_i$ (resp. $\beta_i$). 
Note that $\mathcal{F}_0$ does not contain any $2$-cells.

Next, we define $\mathcal F_1$. By Lemma~\ref{lem:cox_fan_existence}, there is a fan with length $0$ base path $b$ and with $\alpha_0$ and $\beta_0$ as its left and right fan edges respectively. Fix such a fan $F$ and call it the \emph{level $1$ fan}.
Set 
\[\mathcal F_1 = F \bigsqcup_{\alpha_0 \cup \beta_0} \mathcal F_0\]
We orient the edges of $\alpha$ and $\beta$ by the orientation of these paths, and we orient the edges of $F$ by the orientation on $F$.
It readily follows that these choices give a consistent orientation on the edges of $\mathcal F_1$.
Note also that the endpoint of each fan edge of $F$ is adjacent to exactly two outgoing edges in $\mathcal F_1$.
We define the tree $T_1$ to be $\mathcal F_1 \setminus E$, where $E$ is the set of top-left edges of $F$. 
It readily follows that $T_1$ is a spanning tree in $\mathcal F_1$.

Suppose that we have defined the van-Kampen diagram $\mathcal F_{n-1}$, corresponding level $n-1$ fans and a spanning tree $T_{n-1}$.
Let $F_1, \dots, F_r$ be all level $n-1$ fans and let $\gamma_1, \dots, \gamma_r$ be their corresponding base paths. 
For each $1 \le i \le r$, we further assume that $\gamma_i$ is contained in $T_{n-1}$ and that the endpoint of each fan edge of $F_i$ is adjacent to exactly two outgoing edges in $\mathcal F_{n-1}$.
We now define $\mathcal F_n$.

Fix $1 \le l \le r$, and let $f_0, \dots, f_k$ be the fan edges of $F_l$. 
Let $v_i$ be the endpoint of $f_i$, and let $x_i$ and $y_i$ be the two outgoing edges adjacent to $v_i$. 
We assume that $x_i$ is left of $y_i$ with respect to the orientation of the path $\gamma_l f_i$.
Note that $\gamma_lf_ix_i$ (resp. $\gamma_lf_iy_i$) is geodesic as either it lies on $\alpha$, $\beta$ or, otherwise,  Lemma~\ref{lem:cox_fan_geodesics} applies.
By Lemma~\ref{lem:cox_fan_existence} there exists a fan $F_{l,i}$ with left fan edge $x_i$, right fan edge $y_i$ and base path $\gamma_l f_i$. 
We call each $F_{l,i}$ (for values of $i$ and $l$ where defined) a \emph{level $n$ fan}.
We define $\mathcal F_n$ to be the van-Kampen diagram that is $\mathcal F_{n-1}$ together with the \emph{level $n$ fans}. 
The orientation on edges of $\mathcal F_n$ is given by the orientation of $\mathcal F_{n-1}$ together with the orientations on the level $n$ fans.
The spanning tree $T_n$ is obtained from the $1$-skeleton of $\mathcal{F}_n$ by removing the top left edges of all fans of all levels.

Let $f$ be a fan edge of $F_{l,i}$. We claim that the endpoint $u$ of $f$ has exactly two outgoing edges in $\mathcal F_n$. 
If $f$ is an interior fan edge, then this immediately follows by the construction of a fan.
Suppose then that $f$ is a left fan edge.
If $f$ is contained in $\alpha$ then $u$ has an outgoing edge that is on $\alpha$ and an outgoing edge on $F_{l,i}$, and we are done in this case. (See left magenta fan in Figure \ref{fig_filter-induction}.)
A similar argument follows if $f$ is contained on $\beta$. (See right magenta fan in Figure \ref{fig_filter-induction}.)
On the other hand, if $f$ is not contained on $\alpha \cup \beta$, then it is contained in a polygon $c$ that is part of a fan of level less than $n$. 
There are two possibilities. If $u$ is not the top vertex of $c$, then $f$ has an outgoing edge on $c$ and an outgoing edge on $F_{l,i}$, and the claim follows. (See blue fan in Figure \ref{fig_filter-induction}.)
On the other hand, suppose that $u$ is the top vertex of $c$.
Let $f' \neq f$ be the other edge of $c$ that is adjacent to $u$. 
It follows that $f'$ is either the left or right fan edge of a level $n$ fan $F'$.
Thus, $u$ has an outgoing edge in $F$ and another in $F'$. (See left side of red fan in Figure \ref{fig_filter-induction}.)
\begin{figure} \label{fig_filter-induction}
    \begin{overpic}[percent]{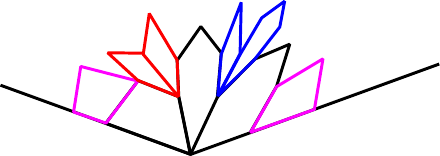}
     \put(0,10){$\alpha$}
     \put(100,15){$\beta$}
  \end{overpic}
  \caption{Filter induction}    
  \end{figure}

The filter $\mathcal F$ is defined to be the direct limit of $\mathcal F_0 \subset \mathcal F_1 \subset \dots$. The \emph{spanning tree of $\mathcal F$} is defined to be the direct limit of the trees $T_0 \subset T_1 \subset \dots$. A \emph{fan of $\mathcal F$} is one of the level $i$ fans for some $i$.

For the remainder of this section, we fix $\alpha = \alpha_0 \alpha_1 \dots$ and $\beta = \beta_0 \beta_1 \dots$ be geodesic rays in $\Sigma$ with a common basepoint $b$, and we fix a filter $\mathcal F$ spanning $\alpha$ and $\beta$. We also fix the spanning tree $T$ of $\mathcal F$.

\begin{lemma} \label{lem:cox_filter_geodesics}
	The image in $\Sigma$ of a directed path in $\mathcal F$ is geodesic. 
\end{lemma}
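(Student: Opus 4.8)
The plan is to induct on the level of the fans making up $\mathcal{F}$, using Lemma~\ref{lem:cox_fan_geodesics} as the engine for each inductive step and Lemma~\ref{lem:concatenation of morse is morse}-style concatenation reasoning — but here we only need geodesics, not Morse geodesics — to patch together a directed path that passes through several fans. First I would observe that any directed path $\delta$ in $\mathcal{F}$ decomposes, according to which fan each edge belongs to, as a concatenation $\delta = \delta_0 \delta_1 \cdots$ where $\delta_0$ is a directed subpath lying in a fan $F^{(0)}$, then $\delta_1$ lies in a fan $F^{(1)}$ whose base path contains the terminal vertex of $\delta_0$, and so on; this is exactly the structure built into the recursive definition of $\mathcal{F}_n$ from $\mathcal{F}_{n-1}$, since when we pass to level $n$ we attach fans $F_{l,i}$ whose base path is $\gamma_l f_i$, i.e. the old base path extended by a fan edge. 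The key claim to establish by induction on $n$ is: for every directed path $\delta$ whose edges all lie in fans of level $\le n$, the image $\overline{\delta}$ in $\Sigma$ is geodesic, and moreover $\overline{\delta}$ begins with (is an extension of) the base path of the level-$1$ fan it emanates from when $\delta$ starts at $b$ — or more usefully, for the inductive bookkeeping, that if $\gamma$ is the base path of the fan containing the first edge of $\delta$ then $\gamma \delta$ is geodesic.

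For the base case $n = 1$: a directed path using only the level-$1$ fan $F$ starts at $b$, and by Lemma~\ref{lem:cox_fan_geodesics} applied to $F$ (whose base path is the length-$0$ path at $b$) such a path is geodesic. For the inductive step, take a directed path $\delta$ using fans of level $\le n$ and write $\delta = \delta' \delta''$ where $\delta'$ is the maximal initial segment lying in fans of level $\le n-1$ and $\delta''$ is the remaining tail, which lies in a single level-$n$ fan $F_{l,i}$ with base path $\gamma_l f_i$. By the construction of $\mathcal{F}_n$, the endpoint of $\delta'$ is the endpoint of the fan edge $f_i$ of $F_l$, so $\gamma_l f_i$ is the base path of a fan containing $\delta''$, and $\gamma_l f_i \delta''$ is geodesic by Lemma~\ref{lem:cox_fan_geodesics}. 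On the other hand, by the inductive hypothesis, if $\gamma$ is the base path of the fan containing the first edge of $\delta'$ (the level-$1$ fan's base path $b$ if $\delta$ starts at $b$), then $\gamma \delta'$ is geodesic; note $\gamma \delta' = \gamma_l f_i$ up to the common prefix since $\delta'$ ends at the endpoint of $f_i$ and $\gamma_l f_i$ is itself a directed path from $b$. The point is that $\gamma_l f_i$ and $\gamma \delta'$ represent the \emph{same} directed path from $b$ to the endpoint of $f_i$ inside $\mathcal{F}$, because the base path $\gamma_l$ of $F_l$ was required (in the construction) to be contained in the spanning tree $T_{n-1}$ and hence is the unique directed path in that tree to its endpoint; so $\gamma \delta'$ literally equals $\gamma_l f_i$ as a path. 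Therefore $\gamma \delta = \gamma \delta' \delta'' = \gamma_l f_i \delta''$, which we just showed is geodesic. Taking the direct limit, any directed path in $\mathcal{F}$ has geodesic image in $\Sigma$.

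The main obstacle I anticipate is the bookkeeping that identifies the two descriptions of the "prefix path" — namely verifying that the base path $\gamma_l$ of a fan $F_l$ really is recoverable as a directed path inside $\mathcal{F}$ and coincides with the directed path traced by the initial segment $\delta'$. This is where the spanning-tree hypotheses carried along the construction (that each $\gamma_i \subset T_{n-1}$ and that fan-edge endpoints have exactly two outgoing edges) do the real work: they guarantee directed paths in $\mathcal{F}$ don't branch ambiguously, so "the base path of the fan containing a given directed subpath" is well-defined, and the recursion genuinely extends base paths by single fan edges. Once that is pinned down, Lemma~\ref{lem:cox_fan_geodesics} gives geodesicity within each fan and the concatenation is immediate because the combined path $\gamma_l f_i \delta''$ is handled by a \emph{single} application of that lemma rather than by gluing two geodesics (which would not in general be geodesic). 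I would state the induction in the stronger form "$\gamma\delta$ is geodesic, where $\gamma$ is the base path of the fan of the first edge of $\delta$" precisely so that a single invocation of Lemma~\ref{lem:cox_fan_geodesics} suffices at each step.
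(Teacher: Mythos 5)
Your overall strategy --- induct on the level of the fans and reduce each inductive step to a \emph{single} application of Lemma~\ref{lem:cox_fan_geodesics} to the base path $\gamma_l f_i$ of the relevant fan --- is exactly what the paper's one-sentence proof intends, and your observation that the tail $\delta''$ sits inside one polygon of one fan, so that $\gamma_l f_i \delta''$ is handled in one invocation rather than by gluing two geodesics, is the right key point.

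There is, however, one genuine gap in the step where you identify $\gamma\delta'$ with the base path $\gamma_l f_i$. You justify this by uniqueness of directed paths in the spanning tree $T_{n-1}$, but the lemma concerns arbitrary directed paths in $\mathcal F$, and such a path need not lie in $T$: it may traverse the top-left edge of a polygon $c$ (these are precisely the edges deleted to form $T$), reaching the top vertex of $c$ along its left path while the base path of the fan at that vertex arrives along the right path. In that situation $\delta'$ and $\gamma_l f_i$ are genuinely different paths and your ``literally equals'' claim fails. The fix is short: since $\mathcal F$ is a van-Kampen diagram, any two directed paths in $\mathcal F$ from $b$ to a common vertex have images in $\Sigma$ with the same endpoints, hence their labels represent the same element of $W_\Gamma$; and they have the same length, because the left and right paths of each $2m_{st}$-gon both have length $m_{st}$, so every edge of $\mathcal F$ raises a well-defined height by one. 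Therefore $\overline{\delta'\delta''}$ and $\overline{\gamma_l f_i\delta''}$ are words of equal length representing the same group element, and geodesicity of the latter (your single application of Lemma~\ref{lem:cox_fan_geodesics}) transfers to the former. With that substitution your induction goes through.
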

\begin{proof}
	The claim follows from induction, the definition of a fan, and Lemma \ref{lem:cox_fan_geodesics}.
\end{proof}

The next lemma easily follows from the construction of a filter. 
\begin{lemma} \label{lem:incoming_and_outgoing_edges}
	Let $v$ be a vertex of $\mathcal F$. Then 
	\begin{itemize}
	    \item The vertex $v$ has exactly two incoming edges if it is the top vertex of a polygon, and it has exactly one incoming edge otherwise.
	    \item There is a unique fan of $\mathcal F$ that contains all outgoing edges at $v$.
	\end{itemize}
\end{lemma}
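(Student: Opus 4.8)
The plan is to prove both bullets simultaneously by induction on the level $n$, tracking how the incoming and outgoing edges at each vertex change when $\mathcal F_n$ is formed from $\mathcal F_{n-1}$ by attaching the level $n$ fans. First I would record the local combinatorics inside a single fan $F$ whose base path ends at a vertex $v$, with fan edges $f_0,\dots,f_r$ and cells $c_0,\dots,c_{r-1}$: the edges issuing from $v$ are exactly $f_0,\dots,f_r$; the endpoint of $f_i$ is a ``bottom corner'' of $c_{i-1}$ and/or $c_i$ and so has the single incoming edge $f_i$ and one or two outgoing edges (the continuations of the arcs $\lambda_i,\rho_{i-1}$ issuing upward); an interior vertex of an arc $\lambda_i$ or $\rho_i$ has one incoming and one outgoing edge; and the top vertex of $c_i$ has exactly the two incoming edges given by the last edges of $\lambda_i$ and $\rho_i$ (one of them the top-left edge $t_i$) and no outgoing edge inside $F$. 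Planarity of $F$ also gives that these vertices are pairwise distinct, so inside $F$ no vertex is the top vertex of two cells.

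Next I would run the induction. The base case $\mathcal F_1$ is immediate from the previous paragraph once one notes that $\alpha_0,\beta_0$ are the extreme fan edges of the level $1$ fan, so the rays $\alpha$ and $\beta$ supply the ``missing'' outgoing edges at $b$ and at the extreme bottom corners. For the step, recall that $\mathcal F_n$ is obtained from $\mathcal F_{n-1}$ by attaching, for each level $n-1$ fan $F_l$ and each fan edge $f_i$ of $F_l$, a fan $F_{l,i}$ with base path $\gamma_l f_i$ whose left and right fan edges are the two outgoing edges $x_i,y_i$ at the endpoint $v_i$ of $f_i$ (there are exactly two by the inductive invariant). The key point is that this attachment is along the edges $x_i,y_i$, which are oriented \emph{away} from $v_i$; hence no vertex of $\mathcal F_{n-1}$ acquires a new incoming edge, the incoming count at every old vertex is unchanged, and no vertices are identified, so $\mathcal F_n$ remains a planar diagram glued from fans along connected boundary arcs. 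At $v_i$ the two outgoing edges $x_i,y_i$ become the extreme fan edges of $F_{l,i}$, so the outgoing edges at $v_i$ are now precisely the fan edges of $F_{l,i}$, all lying in a single fan; the genuinely new vertices are interiors of $F_{l,i}$, handled by the first paragraph, with brand-new (hence unique) top vertices getting exactly two incoming edges; and the invariant ``the endpoint of each fan edge of each level $n$ fan has exactly two outgoing edges in $\mathcal F_n$'' is re-established by the case analysis already performed in the construction of $\mathcal F_n$. Every remaining old vertex keeps both properties, since the status of none of its edges changed.

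Finally I would pass to the limit $\mathcal F=\varinjlim \mathcal F_n$: each vertex, edge and cell appears at some finite stage, the set of incoming edges at a vertex stabilizes the moment the vertex appears, and the set of outgoing edges stabilizes either immediately (for interior-of-arc and top vertices, at which no further fan is ever built) or as soon as a fan is built having that vertex as its base. Reading this off gives exactly the two bullets: $v$ has two incoming edges precisely when it is the top vertex of a (necessarily unique) cell and one otherwise, and all outgoing edges at $v$ lie in a single fan of $\mathcal F$. (The basepoint $b$, which has no incoming edge and is never a top vertex, is the lone formal exception and is tacitly excluded from the statement.) I expect the only real work to be the bookkeeping in the inductive step for the \emph{outgoing} edges at the endpoint of a fan edge of a new fan --- separating the cases where that endpoint lies on $\alpha$ or $\beta$, in the interior of an older cell, or at the top vertex of an older cell --- together with the fact that distinct cells never share a top vertex; both are already treated in the construction (the paragraph beginning ``Let $f$ be a fan edge of $F_{l,i}$''), so the proof mostly organizes that analysis into a clean induction, the only new observations being that attaching fans along outward-oriented boundary edges neither creates incoming edges at old vertices nor identifies vertices.
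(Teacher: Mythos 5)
The paper offers no written proof here (it simply asserts the lemma ``easily follows from the construction''), and your inductive bookkeeping is the natural way to supply one; the treatment of incoming edges and of the gluing step (new fans attach only along outgoing edges, so old vertices never gain incoming edges and new top vertices are genuinely new) is correct. However, your final limit-passage contains a concrete error: the parenthetical claim that interior-of-arc vertices and top vertices of cells are vertices ``at which no further fan is ever built.'' This is false, and if it were true the second bullet would actually \emph{fail} at top vertices. By the construction (formalized later in Lemmas~\ref{lem:polygon_edges} and~\ref{lem:top_fan}), every edge of every cell eventually becomes a fan edge of some fan: the $j$-th edge of the left path of a cell $c$ is the right fan edge of the fan built at the endpoint of the $(j-1)$-st edge, and the last edge of $c$'s left path is the first edge of the right path of a higher-level cell $d$. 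Consequently \emph{every} vertex of $\mathcal F$ (other than $b$, where the level-$1$ fan is built) is eventually the endpoint of a fan edge and so has a fan built at it. In particular, the top vertex $u$ of a cell $c$ acquires, at later stages, one outgoing edge in the cell $d$ whose right path continues past $u$ and another in the cell $d'$ whose left path continues past $u$; these lie in two \emph{different} fans until the fan based at $u$ is constructed, at which point they become its extreme fan edges. Under your picture ($u$ has no outgoing edges at all, or two outgoing edges in distinct fans with no fan ever built at $u$), the unique-fan statement breaks precisely at these vertices.

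The fix is small: replace the two-case stabilization claim by the single observation that for every vertex $v$ there is a finite stage at which the fan based at $v$ is constructed, that at that stage the outgoing edges at $v$ are exactly the fan edges of that fan (its extreme fan edges being the two previously existing outgoing edges, by the invariant you already maintain), and that no later stage adds edges at $v$ since subsequent fans are based at vertices strictly farther from $b$. With that correction, and keeping your (correct) remark that $b$ is a tacit exception to the first bullet, the argument goes through.
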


\begin{definition}
    Given a vertex $v \in \mathcal F$, the \emph{fan at $v$} is the unique fan of $\mathcal F$ that contains all outgoing edges at $v$. This fan exists and is well-defined by Lemma~\ref{lem:incoming_and_outgoing_edges}.
\end{definition}

\begin{definition}[Itineraries]
Let $e$ be a directed edge of $\mathcal F$, and let $v$ be its startpoint. Let $F$ be the fan at $v$. 
We say that $e$ is an $L$-edge (resp $R$-edge, $I$-edge) of $\mathcal F$ if $e$ is a left (resp. right, internal) fan edge of $F$.

Let $\gamma$ be a finite or infinite directed path in the filter $\mathcal F$ with edges 
 $e_0, e_1, \dots $.
 The \emph{itinerary} of $\gamma$ is the infinite sequence $(x_0, x_1, \dots)$, with each $x_j \in \{L, R, I\}$, such that $x_{i}$ is equal to $L$ (resp. $R$, $I$) if $e_{i}$ is an $L$-edge (resp. $R$-edge, $I$-edge). 
\end{definition}

For the next lemma, recall from Definition~\ref{def:constants} that the constant $V_\Gamma$ is the number of vertices of $\Gamma$ and that $M_\Gamma$ is the smallest constant such that $W_{\text{Label(w)}}$ is infinite for any geodesic word $w$ with $|w| > M_\Gamma$.

\begin{lemma} \label{lem:cox_internal_edge_bound}
	Let $\gamma$ be a directed path in $T$ whose itinerary contains at least $M_{\Gamma} + V_\Gamma+1$ $I$-edges. 
	Then $\text{Label}(\gamma)$
	is not contained in a wide subgraph of $\Gamma$.
\end{lemma}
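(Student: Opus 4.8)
The plan is to analyze a directed path $\gamma$ in the spanning tree $T$ whose itinerary contains at least $M_\Gamma + V_\Gamma + 1$ occurrences of $I$, and to show that $\text{Label}(\gamma)$ cannot sit inside any wide subgraph $\Delta$. The first move is to locate where the $I$-edges come from: each $I$-edge $e_i$ of $\gamma$ is an interior fan edge of the fan $F$ at its startpoint, so the base path of that fan $F$ has a wide tail $\gamma'$ to which the second clause of Definition~\ref{def:cox_fans} applies. I will separate into the two cases of clause (2): either $|\gamma'| \le M_\Gamma$, or there is a wide subgraph $\Delta_F$ with $\text{Label}(\gamma') \subset \Delta_F$ and the label of each interior fan edge of $F$ lying \emph{outside} $\Delta_F$.

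Assume for contradiction that $\text{Label}(\gamma) \subset \Delta$ for some wide subgraph $\Delta$; then in particular every edge of $\gamma$, and hence every initial subpath of $\gamma$, has label contained in $\Delta$, so by Lemma~\ref{lem:in_wide} (or directly, since a wide subgraph generates a wide subgroup which is infinite exactly when it has rank exceeding $M_\Gamma$-type bounds) the relevant initial segments are geodesics whose labels live in a fixed finite-or-infinite special subgroup. The key point is to track the base path of the fan at the startpoint of each successive $I$-edge: as we travel along $\gamma$, this base path grows (it is the previous base path extended by fan edges), and its wide tail is exactly the terminal segment whose label generates an infinite wide subgroup. Each time we pass through an $I$-edge $e_i$ whose fan $F$ falls in case (2)(b), the label of $e_i$ is \emph{not} in $\Delta_F$; but if $\text{Label}(\gamma)\subset\Delta$ then $\Delta$ must contain the label of $e_i$, forcing $\Delta \ne \Delta_F$ and, more importantly, forcing the wide tail of the base path of $F$ — whose label lies in $\Delta_F$ — to eventually fall outside $\Delta$ after finitely many more steps. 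Counting: the wide tail has bounded overlap with $\Delta$, and since $\text{Label}(\gamma)$ is contained in the finite subgraph $K$ of "ending letters" obstructions only up to length $M_\Gamma$, we can only encounter case (2)(a) fans for a subpath of length $\le M_\Gamma$ before a wide tail forms, and once a wide tail forms its label cannot stay in a \emph{single} wide subgraph indefinitely while also the $I$-edge labels avoid that subgraph. Carefully, among $M_\Gamma + V_\Gamma + 1$ $I$-edges, at most $M_\Gamma$ can belong to case (2)(a) fans (since those base paths have wide tail of length $\le M_\Gamma$, and the base path length is strictly increasing along $\gamma$), leaving at least $V_\Gamma + 1$ of them in case (2)(b); and among $V_\Gamma + 1$ such $I$-edges two must have fans $F, F'$ with $\Delta_F = \Delta_{F'}$ (or the labels of the $I$-edges exhaust more than $V_\Gamma$ vertex types) — in either event I derive that the wide tail label and an $I$-edge label are simultaneously forced in and out of $\Delta$, a contradiction.

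The cleanest way to run the count is probably this: the itinerary restricted to case-(2)(b) $I$-edges gives a sequence of wide subgraphs $\Delta_{F_1}, \Delta_{F_2}, \dots$ attached to nested base paths, and the label of the base path grows by exactly the fan edges, so $\text{Label}$ of the base path at step $j+1$ contains $\text{Label}$ of the wide tail at step $j$ together with the new $I$-edge label not in $\Delta_{F_j}$. If all of $\text{Label}(\gamma)$ were in a single $\Delta$, then pulling back, the wide tail at each stage would be contained in $\Delta \cap \Delta_{F_j}$; but the new $I$-edge label is in $\Delta \setminus \Delta_{F_j}$, so as we accumulate these letters the base path label acquires more and more vertices of $\Delta$ that are not in $\Delta_{F_j}$ while its wide tail (an infinite special subgroup) has label in $\Delta_{F_j}$ — after at most $V_\Gamma$ such accumulations together with the $\le M_\Gamma$ case-(2)(a) slack, the wide tail can no longer be re-established inside a subgraph of $\Delta$, contradicting that there was an $(M_\Gamma + V_\Gamma + 1)$-st $I$-edge on $\gamma$ at all. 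I expect the main obstacle to be bookkeeping the precise relationship between "the wide tail of the base path at step $j$" and "$\text{Label}(\gamma)$" — in particular making sure the monotonicity of base-path length is used correctly and that the pigeonhole on vertex types of $I$-edges is set up with the right count $M_\Gamma + V_\Gamma + 1$ rather than an off-by-one; Lemma~\ref{lem:forbidden_last_letter} and Lemma~\ref{lem:ending_letters} will be the technical inputs that guarantee the $I$-edge labels genuinely leave $\Delta_{F_j}$ and that the base-path concatenations remain geodesic.
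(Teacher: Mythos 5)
Your proposal is correct and follows essentially the same route as the paper: assuming $\text{Label}(\gamma)\subset\Delta$, the wide tail of the base path at each later $I$-edge contains the already-traversed portion $\gamma_j$ of $\gamma$, so that fan is in case (2)(b) of the fan definition, the label of the $I$-edge lies outside $\Delta_{F_j}\supset\text{Label}(\gamma_j)$ and is therefore a new vertex, and pigeonhole on the $V_\Gamma$ vertices of $\Gamma$ gives the contradiction. The paper streamlines the bookkeeping by taking the last $V_\Gamma+1$ $I$-edges (so that each $\gamma_i$ already has length exceeding $M_\Gamma$ and case (2)(b) applies automatically); note that your alternative disjunct ``two fans with $\Delta_F=\Delta_{F'}$'' does not by itself yield a contradiction and should be dropped in favor of the vertex-label pigeonhole, which you do correctly carry out in your second paragraph.
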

\begin{proof}
	Suppose for a contradiction that $\text{Label}(\gamma) \subset \Delta \subset \Gamma$ for some wide subgraph $\Delta \subset \Gamma$.
	Let $e_1, \dots, e_{V+1}$ be the last $V+1$ $I$-edges of $\gamma$, indexed with respect to the orientation of $\gamma$. 
	In other words, if $e$ is an $I$-edge of $\gamma$ then either $e = e_i$ for some $i$, or $e$ lies before $e_1$ along the orientation of $\gamma$. 
	Let $v_i$ be the startpoint of $e_i$, and let $F_i$ be the fan at $v_i$.
	Let $\gamma_i$ be the initial subpath of $\gamma$ up to $v_i$.
	
	Each path $\gamma_i$ has length larger than $M_\Gamma$ and is contained in some wide subgraph.
    Thus, by the definition of a fan and as $e_i$ is an $I$-edge, the label of $e_i$ is not in $\text{Label}(\gamma_i)$.
	However, this implies $\text{Label}(\gamma)$ contains at least $V_\Gamma+1$ distinct vertices of $\Gamma$. This is a contradiction as $|V(\Gamma)| = V_\Gamma$.
\end{proof}

\begin{lemma} \label{lem:cox_poly_base}
	Let $c$ be a polygon in $\mathcal F$. Let $\lambda$ and $\rho$ be respectively the left and right paths of $c$. 
	Let $l$ be the first edge of $\lambda$ and $r$ be the first edge of $\rho$.
	If $r$ is an $R$-edge, then $l$ is an $I$-edge. Similarly, if $l$ is an $L$-edge, then $r$ is an $I$-edge.
\end{lemma}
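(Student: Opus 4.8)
The plan is to unwind the construction of the filter: every polygon of $\mathcal F$ sits inside a unique fan, and the first edges of its left and right boundary paths are two consecutive fan edges of that fan, so their itinerary labels are determined by their positions among the fan edges.

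First I would observe that each $2$-cell of $\mathcal F$ lies in a unique fan $F$. Indeed $\mathcal F_0$ has no $2$-cells, and at each later stage the new $2$-cells are exactly the cells of the newly attached fans, which are glued to the previous diagram only along their left and right fan edges. Write the fan edges of $F$ as $f_0, \dots, f_k$ and let $v$ be the endpoint of the base path of $F$. By Definition~\ref{def:cox_fans} a fan has at least three fan edges, so $k \geq 2$, and $c$ is the cell lying between $f_i$ and $f_{i+1}$ for some $0 \leq i \leq k-1$. By the orientation conventions of a fan, the left path $\lambda$ of $c$ is the path $\lambda_i$, whose first edge is $f_i$, and the right path $\rho$ is $\rho_i$, whose first edge is $f_{i+1}$; both $f_i$ and $f_{i+1}$ are directed edges with startpoint $v$. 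Thus $l = f_i$ and $r = f_{i+1}$.

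Next I would identify the fan at $v$. By the inductive structure of the construction, the outgoing edges at $v$ are precisely the fan edges $f_0, \dots, f_k$ of $F$: when $F$ was attached (or, for the level-$1$ fan, when $\mathcal F_0$ was set up) the outgoing edges placed at $v$ were its left and right fan edges together with the interior fan edges added along with its cells, and no later stage attaches an edge at $v$ since subsequent fans are built at endpoints of fan edges, which are deeper vertices. Hence $F$ contains all outgoing edges at $v$, so by Lemma~\ref{lem:incoming_and_outgoing_edges} it is the fan at $v$. Consequently the itinerary label of $f_j$, as a directed edge with startpoint $v$, is $L$ if $j = 0$, $R$ if $j = k$, and $I$ if $0 < j < k$.

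The conclusion is then arithmetic. If $r = f_{i+1}$ is an $R$-edge, then $i+1 = k$, so $i = k-1$; since $k \geq 2$ we have $0 < k-1 < k$, whence $l = f_{k-1}$ is an $I$-edge. Symmetrically, if $l = f_i$ is an $L$-edge, then $i = 0$, so $r = f_1$, and since $k \geq 2$ we have $0 < 1 < k$, so $r$ is an $I$-edge. The one step deserving care — the only place the argument goes beyond bookkeeping — is matching the abstract ``fan at $v$'' of Lemma~\ref{lem:incoming_and_outgoing_edges} with the concrete fan $F$ that contains $c$, together with noting that the ``at least three fan edges'' clause in the definition of a fan is exactly what rules out the degenerate case $k=1$, in which $l$ would be the left fan edge $f_0$ (an $L$-edge) rather than an $I$-edge.
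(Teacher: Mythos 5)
Your proof is correct and follows essentially the same route as the paper's: both identify the fan containing $c$ with ``the fan at $v$'' (where $v$ is the common startpoint of $l$ and $r$), recognize $l$ and $r$ as consecutive fan edges of that fan, and conclude from the requirement that a fan have at least three fan edges. You merely run the identification in the opposite direction (from the unique fan containing $c$ to the fan at $v$, rather than from the fan at $v$ to $c$ via planarity), which amounts to the same argument with more of the bookkeeping written out.
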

\begin{proof}
Let $v$ be the unique vertex of $c$ that contains both $l$ and $r$.
Let $F$ be the fan at $v$, and note that $l, r \subset F$.
As $\mathcal F$ is planar, $c \subset F$.
The claims in the lemma now follow by recalling that, by the definition of a fan, $F$ contains at least $3$ fan edges.
\end{proof}

\begin{definition}
    An \emph{$LR$-path} is a directed length $2$ path in $\mathcal F$ whose first edge is an $L$-edge and whose second edge is an $R$-edge. An \emph{$LR$-subpath} of a path, is a subpath that is an $LR$-path.
\end{definition}

\begin{lemma} \label{lem:LR}
    Let $\gamma$ be an $LR$-path in $T$. 
    Let $F$ be the the fan at the startpoint of $\gamma$. Then, $\gamma$ is contained in a polygon in $F$.
\end{lemma}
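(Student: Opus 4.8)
## Proof Proposal for Lemma~\ref{lem:LR}

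\textbf{Setup and first reduction.} Let $\gamma = e_0 e_1$ be the given $LR$-path, where $e_0$ is an $L$-edge and $e_1$ is an $R$-edge. Let $v$ denote the startpoint of $\gamma$, let $w$ denote the common endpoint of $e_0$ and startpoint of $e_1$, and let $F$ be the fan at $v$. Since $e_0$ is the outgoing edge at $v$ determined by $F$, and $e_0$ is a \emph{left} fan edge of $F$, the whole fan $F$ lies (as a planar diagram) to one side of $e_0$; the plan is to show that the fan $F'$ at $w$ — which is the fan that contains $e_1$, by Lemma~\ref{lem:incoming_and_outgoing_edges} — must in fact coincide with a single polygon of $F$, forcing $\gamma$ into that polygon.

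\textbf{Key step: identify the polygon.} The main observation is that $w$, being the endpoint of a left fan edge of $F$, has a specific local picture in $\mathcal F$. I would split into the cases enumerated in the filter construction (the discussion following the definition of $\mathcal F_n$): either $e_0$ lies on $\alpha$, or $e_0$ lies on $\beta$, or $e_0$ lies on a polygon $c$ that is part of a fan of a previous level. In the first case, $w$ has exactly two outgoing edges — one continuing along $\alpha$ and one on the newly attached fan $F'$ — and by construction the outgoing edge on $\alpha$ is the \emph{left} fan edge of $F'$, so the $R$-edge $e_1$ cannot be that $\alpha$-edge; hence $e_1$ is an interior or right fan edge of $F'$. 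But then $e_1$ is one of the edges $f_1,\dots,f_k$ of $F'$ other than $f_0$, and I need to locate the polygon. Here I would invoke Lemma~\ref{lem:cox_poly_base}: applied to the polygon structure at $w$ inside $F$, since $e_0$ is an $L$-edge, the edge playing the role of $r$ must be an $I$-edge — this is precisely the contrapositive needed to rule out $e_1$ being the continuation edge, and to pin $e_1$ down as an edge of the polygon $c$ of $F$ that has $e_0$ as (part of) its left path.

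\textbf{Finishing.} Concretely: let $c$ be the (unique) polygon of $F$ whose left path $\lambda_c$ begins with $e_0$; this exists because $e_0 = f_0$ is the left fan edge of $F$ and $F$ has at least $3$ fan edges, so $f_0$ and $f_1$ span a cell $c_0$ of $F$, and $e_0$ is the first edge of the left path of $c_0$. The vertex $w$ is then a vertex of $c$, and the edges of $\mathcal F$ outgoing from $w$ are: the edge of $c$ continuing its left path (if $w$ is not the top vertex of $c$), or — if $w$ is the top vertex of $c$ — the edges of the level-$n$ fan attached at $w$. In the non-top case, that continuation edge is an edge of $c$, hence lies in $F$, and since the fan at $w$ contains all outgoing edges at $w$, the fan at $w$ together with $c$ lie in $F$; the $R$-edge $e_1$ is then an edge of $c$ by planarity, so $\gamma = e_0 e_1 \subset c$. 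In the top-vertex case, $w$ is the top vertex of $c$ and $F' = $ fan at $w$ is attached there; but then the \emph{left} fan edge of $F'$ is the edge $f'$ of $c$ adjacent to $w$ other than the last edge of $\lambda_c$, and $e_1$ being an $R$-edge means $e_1 \ne f'$; retracing, $e_1$ lies on $c$ as well, so again $\gamma \subset c$.

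\textbf{Expected main obstacle.} The delicate point is the bookkeeping of which outgoing edge at $w$ is designated "left" versus "right" in the attached fan $F'$, relative to the orientation inherited from $\gamma_l f_i$ in the construction — i.e.\ making sure the claim "$e_0$ an $L$-edge forces the sibling $R$-edge into the polygon" is consistent with the planar embedding conventions fixed in the construction of $\mathcal F_n$. I expect to spend most of the care there, essentially re-reading the four-bullet case analysis following the definition of $\mathcal F_n$ and Lemma~\ref{lem:cox_poly_base}, rather than on any substantive new argument. Once the orientation conventions are nailed down, Lemma~\ref{lem:cox_poly_base} does essentially all the work, and planarity of $\mathcal F$ does the rest.
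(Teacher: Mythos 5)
Your proposal is correct and follows essentially the same route as the paper's (three-sentence) proof: since $e_0$ is the left fan edge of $F$ it lies in a unique polygon $c$ of $F$, and the $R$-edge $e_1$ must be the right fan edge of the fan at $w$, which by the planar construction is the edge of $c$ adjacent to $w$. The detour through Lemma~\ref{lem:cox_poly_base} and the top-vertex case are unnecessary (as the first edge of a left path of a $2m$-gon with $m\ge 2$ never ends at the top vertex), and the phrase ``the fan at $w$ together with $c$ lie in $F$'' is not literally true since that fan has higher level, but neither point affects the argument.
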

\begin{proof}
Suppose that $F$ is a fan of level $n$.
Let $e$ and $f$ be respectively the first and second edges of $\gamma$.
As $e$ is a left fan edge of $F$, there is a unique polygon $c$ of $F$ that contains $e$. 
As $f$ is an $R$-edge, the fan of level $n+1$ at the endpoint of $e$ has $f$ as its right fan edge. 
Thus, $f$ must be contained in $c$.
\end{proof}

\begin{lemma} \label{lem:cox_RL_edge_bound}
	Let $\gamma$ be a directed path in $T$ containing at least $M_\Gamma + V_\Gamma + 1$ $LR$-subpaths.
	Then, $\text{Label}(\gamma)$ is not contained in a wide subgraph of~$\Gamma$.
\end{lemma}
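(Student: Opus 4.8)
The plan is to reduce the statement about $LR$-subpaths to the previously proved statement about $I$-edges, Lemma~\ref{lem:cox_internal_edge_bound}. The key observation is Lemma~\ref{lem:LR}: an $LR$-subpath of $\gamma$ is contained in a single polygon $c$ of some fan $F$, and its second (the $R$-) edge is then the \emph{right} path-edge of $c$. By Lemma~\ref{lem:cox_poly_base}, since the first edge of $c$ (namely the $L$-edge of our $LR$-path) is an $L$-edge, the first edge of the \emph{right} path $\rho$ of $c$ must be an $I$-edge. So each $LR$-subpath of $\gamma$ forces a nearby $I$-edge — but this $I$-edge lies on the \emph{right} side $\rho$ of $c$, which need not be a subpath of $\gamma$ itself. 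Thus the naive count does not immediately transfer, and this is where the main work lies.

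First I would make precise how to pass from $\gamma$ to a modified path. Suppose for contradiction that $\text{Label}(\gamma)\subset\Delta$ for a wide subgraph $\Delta$. Take an $LR$-subpath $e f$ of $\gamma$, contained in the polygon $c$ of its fan $F$; write $c$ as the union of its left path $\lambda=e\,\lambda'$ (so $e=l$ in the notation of Lemma~\ref{lem:cox_poly_base}, since $e$ is the $L$-edge and hence the first left-path edge) and its right path $\rho=\rho_1\rho'$. The boundary word of $c$ is alternating in the two labels $s,t$ of $c$, so $\text{Label}(\lambda)=\text{Label}(\rho)=\{s,t\}$ (or a singleton if $c$ is a bigon, but $c$ is a genuine $2m_{st}$-gon with $m_{st}\ge 2$). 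In particular the labels appearing on $\rho$ are exactly those appearing on $ef\subset\gamma$, hence lie in $\Delta$. Now replace, inside $\gamma$, the portion $ef\cdots$ — more carefully, I would like to reroute $\gamma$ so that instead of traversing $\lambda$ across $c$ it traverses $\rho$; since $c$ is a disk this is a homotopy of directed paths in $\mathcal F$ with the same endpoints and the same label-set, so the resulting path $\gamma^\sharp$ is still a directed path in $\mathcal F$ (though possibly no longer in $T$; but that is harmless, as we only need $\text{Label}(\gamma^\sharp)\subset\Delta$ and a count of $I$-edges along it).

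The heart of the argument is then a counting/bookkeeping step: I would show that rerouting across every $LR$-subpath of $\gamma$ simultaneously (the relevant polygons are disjoint, since distinct $LR$-subpairs of a single directed path use distinct edges and, being in distinct fans or distinct cells, distinct polygons — this needs a short verification using Lemma~\ref{lem:LR} and planarity) produces a directed path $\gamma^\sharp$ with $\text{Label}(\gamma^\sharp)\subset\Delta$ whose itinerary contains at least as many $I$-edges as $\gamma$ had $LR$-subpaths, namely at least $M_\Gamma+V_\Gamma+1$: each rerouting converts an $L$-edge/$R$-edge corner into a segment of $\rho$ whose initial edge is, by Lemma~\ref{lem:cox_poly_base}, an $I$-edge of $\mathcal F$. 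Applying Lemma~\ref{lem:cox_internal_edge_bound} to $\gamma^\sharp$ then yields that $\text{Label}(\gamma^\sharp)$ is not contained in any wide subgraph — contradicting $\text{Label}(\gamma^\sharp)\subset\Delta$ — and the lemma follows. The main obstacle I anticipate is verifying cleanly that the rerouted path $\gamma^\sharp$ is again a legitimate directed path to which Lemma~\ref{lem:cox_internal_edge_bound} applies (in particular that the relevant edges are genuinely $I$-edges with respect to \emph{their own} fans, not merely edges of $c$), and that the $LR$-polygons are pairwise disjoint so the rerouting count is additive; both should follow from Lemmas~\ref{lem:incoming_and_outgoing_edges}, \ref{lem:cox_poly_base} and \ref{lem:LR} together with planarity of $\mathcal F$, but it requires care with the fan-level indexing.
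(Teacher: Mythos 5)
Your reduction to Lemma~\ref{lem:cox_internal_edge_bound} founders on the rerouting step, and I don't see how to repair it in the form you propose. By Lemma~\ref{lem:LR} (and the filter construction), an $LR$-subpath $e_if_i$ of $\gamma$ consists of the \emph{first two} edges of the left path $\lambda$ of the polygon $c$; it is not a full traversal of $\lambda$. In fact $\gamma$ can never traverse all of $\lambda$ inside $T$, because the last edge of $\lambda$ is the top-left edge of $c$, which is deleted when forming the spanning tree; in particular $c$ cannot be a square (else $f_i$ itself would be the top-left edge and would not lie in $T$), so $|\lambda|\ge 3$ and $\gamma$ leaves $c$ strictly before the top vertex. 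Since $\lambda$ and $\rho$ meet only at the bottom and top vertices of $c$, there is no directed detour along $\rho$ that returns to the endpoint of $f_i$ where $\gamma$ resumes: going up $\rho$ and back down $\lambda$ violates the orientation. So the ``homotopy across the disk $c$'' does not produce a directed path $\gamma^\sharp$ with matching endpoints, and after the first rerouting you permanently lose the rest of $\gamma$ together with all its remaining $LR$-subpaths. The surgery therefore cannot accumulate the $M_\Gamma+V_\Gamma+1$ $I$-edges you need.

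The fix is to not modify $\gamma$ at all, only to \emph{read off label information} from the adjacent $I$-edge. Let $e$ be the first edge of the right path $\rho$ of $c$; by Lemma~\ref{lem:cox_poly_base} it is an $I$-edge, and since the boundary word of $c$ alternates in the two letters $s,t$, the edge $e$ carries the same label as $f_i$. For $i>M_\Gamma$ the initial segment $\gamma_i$ of $\gamma$ up to the startpoint of $e_i$ has length $>M_\Gamma$ and label in $\Delta$, so condition (2b) in the definition of a fan forces the label of the interior fan edge $e$ --- hence of $f_i$ --- to lie outside $\mathrm{Label}(\gamma_i)$. This makes the labels of $f_{M_\Gamma+1},\dots,f_l$ pairwise distinct, and with $l=M_\Gamma+V_\Gamma+1$ you get more than $V_\Gamma$ distinct generators, a contradiction. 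This is the same pigeonhole argument as in Lemma~\ref{lem:cox_internal_edge_bound}, rerun directly rather than invoked as a black box on a surgered path.
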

\begin{proof}
	Suppose for a contradiction that $\text{Label}(\gamma) \subset \Delta \subset \Gamma$ for some induced wide subgraph $\Delta \subset \Gamma$.
	Let $e_1f_1, \dots, e_{l}f_{l}$ be the last $l := M_\Gamma + V_\Gamma +1$ $LR$-subpaths of $\gamma$ with respect to $\gamma$'s orientation and ordered by $\gamma$'s orientation.
	Clearly these subpaths are disjoint by definition of $LR$-subpaths.
	Let $v_i$ be the startpoint of $e_i$, and let $F_i$ be the fan at $v_i$. 
	Let $\gamma_i$ be the initial subpath of $\gamma$ up to $v_i$.
	
	Fix $i > M_\Gamma$, and note that $|\gamma_i| > M_\Gamma$.
	Let $c$ be the polygon in $F_i$ containing both $e_i$ and $f_i$ which exists by Lemma~\ref{lem:LR}.
	Let $e$ be the edge of $c$ that is distinct from and incident to $e_i$. 
	By Lemma~\ref{lem:cox_poly_base}, $e$ is an $I$-edge.
	As $|\gamma_i| > M_\Gamma$ and as $\gamma_i \subset \Delta$, it follows from the definition of a fan that the label of $e$ is not contained in $\text{Label}(\gamma_i)$.
	Moreover, $f_i$ and $e$ have the same label, as they are contained in a common polygon and are separated by an edge in this polygon.
	From this we deduce that the labels of the edges $e_{M_\Gamma}, \dots, e_l$ are all distinct. However, this is a contradiction as there are at least $V_\Gamma+1$ such edges while there are at most $V_\Gamma$ possible labels.
\end{proof}

The next lemma easily follows from the construction of a filter.
\begin{lemma} \label{lem:polygon_edges}
Let $c$ be a polygon in $\mathcal F$, and let $\lambda$ and $\rho$ be its left and right paths respectively. 
Then every edge of $\lambda$ (resp. $\rho$), except its first edge, is an $R$-edge (resp. $L$-edge).
\end{lemma}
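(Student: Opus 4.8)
The plan is to reduce everything to a single statement about the left path $\lambda$ of $c$: every edge of $\lambda$ other than its first edge is the right fan edge of the fan at its startpoint, hence an $R$-edge. The statement for $\rho$ and $L$-edges then follows by the mirror-image argument, exchanging ``left'' and ``right'' throughout.

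First I would unwind the construction: by definition of a filter, $c$ is a cell $c_i$ of a (unique) fan $F$ of $\mathcal F$, the left path $\lambda$ of $c$ is the left path $\lambda_i$ of $c_i$, and its first edge $l_0$ is the fan edge $f_i$ of $F$ (symmetrically, the first edge of $\rho$ is the fan edge $f_{i+1}$). Write $\lambda = l_0 l_1 \dots l_{m-1}$ and let $q_0, \dots, q_m$ be the consecutive vertices of $\lambda$, so that $l_j$ runs from $q_j$ to $q_{j+1}$ and $q_m$ is the top vertex of $c$.

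Then I would induct on $j = 1, \dots, m-1$, proving that $l_j$ is the right fan edge of the fan at $q_j$. The base case uses that $q_1$ is the endpoint of the fan edge $f_i$ of $F$: by the invariant maintained throughout the filter construction, immediately before the fan at $q_1$ is attached the vertex $q_1$ has exactly two outgoing edges, and these become the left and right fan edges of that fan. One of them is $l_1$, an edge of the cell $c$; the other lies on the right boundary of the neighbouring cell $c_{i-1}$ of $F$ (or, when $i = 0$, on $\alpha$, on $\beta$, or on a lower-level polygon). Since $c$ abuts $l_1$ on its right and the other outgoing edge lies to the left of $l_1$, planarity forces $l_1$ to be the right fan edge. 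For the inductive step, once $l_{j-1}$ is known to be the right fan edge of the fan $G$ at $q_{j-1}$, the vertex $q_j$ is the endpoint of a fan edge of $G$, so again it has exactly two outgoing edges just before the fan at $q_j$ is attached --- namely $l_j$ and the second edge of the right boundary of the last cell of $G$ --- and the same planar comparison makes $l_j$ the right fan edge of the fan at $q_j$. Note the argument really does need $j \ge 1$: for $j = 0$ the edge $l_0 = f_i$ is in general an interior fan edge of $F$, since the fan edges $f_{i+1}, \dots$ of $F$ lie to its right around the base vertex --- this is exactly why the first edge of $\lambda$ must be excluded.

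The part I expect to require care is making the repeated ``to the left of / to the right of'' comparisons rigorous. One needs to pin down the planar embedding conventions --- within a fan, the cell $c_i$ sits to the right of its left boundary $\lambda_i$ and to the left of its right boundary $\rho_i$; an edge shared by two cells has one cell on each side; and a newly attached fan is inserted into the planar wedge between its prescribed left and right fan edges --- and then check that these conventions propagate consistently up the filter. A more economical packaging of the same point, which I would use if I wanted to shorten the proof: by Lemma~\ref{lem:incoming_and_outgoing_edges} the outgoing edges at $q_j$ are precisely the fan edges of the fan at $q_j$ in their left-to-right order; for $1 \le j \le m-1$ the vertex $q_j$ is not the top vertex of $c$, so $c$ occupies the planar wedge at $q_j$ directly to the right of $l_j$, whence no outgoing edge at $q_j$ can lie to the right of $l_j$, and $l_j$ is the rightmost one.
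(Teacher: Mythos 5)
Your proof is correct, and since the paper simply asserts that this lemma ``easily follows from the construction of a filter'' without supplying an argument, what you have written is precisely the unwinding of that construction the authors have in mind: each non-initial edge of $\lambda$ is outgoing at a non-top vertex of $c$, so by planarity (and Lemma~\ref{lem:incoming_and_outgoing_edges}) it must be the rightmost fan edge of the fan attached there, hence an $R$-edge, with the mirror statement for $\rho$. The induction along $\lambda$, the identification of the two pre-existing outgoing edges at each $q_j$, and the explicit exclusion of the first edge (which may be an $I$- or $L$-edge) are all accurate.
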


\begin{lemma} \label{lem:top_fan}
Let $c$ be a polygon in $\mathcal F$.
Then there is a polygon $d$ such that
the first edge of $d$'s right path is equal to the last edge of $c$'s left path.
\end{lemma}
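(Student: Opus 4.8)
The plan is to deduce the statement quickly from Lemma~\ref{lem:polygon_edges} together with the definition of an $R$-edge. Write $\lambda$ for the left path of the polygon $c$ and let $t$ be the \emph{last} edge of $\lambda$; this is exactly the top-left edge of $c$. Since $c$ is a $2m$-gon with $m\ge 2$, the path $\lambda$ consists of $m\ge 2$ edges, so $t$ is not the first edge of $\lambda$. Hence Lemma~\ref{lem:polygon_edges} applies and tells us that $t$ is an $R$-edge of $\mathcal F$.

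Next I would unpack this. Let $w$ be the startpoint of $t$ (the second-to-last vertex of $\lambda$). As $\lambda$ is a directed path whose final edge is $t$, the edge $t$ is outgoing at $w$, so by Lemma~\ref{lem:incoming_and_outgoing_edges} the fan $G$ at $w$ is well defined, and by the definition of an $R$-edge the edge $t$ is the right fan edge of $G$. Label the fan edges of $G$ as $g_0,\dots,g_k$ with $g_k=t$; since every fan has at least three fan edges we have $k\ge 2$, so $G$ contains the cell $d$ bounded by $g_{k-1}$ and $g_k=t$.

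It then remains to check that the first edge of $d$'s right path is $t$. This is immediate from the definition of a fan: for a fan with base vertex $v$, the cell lying between consecutive fan edges $g_j$ and $g_{j+1}$ has $g_j$ as the first edge of its left path and $g_{j+1}$ as the first edge of its right path, since $g_{j+1}$ lies on the right-hand side of that cell near $v$. Applying this with $j=k-1$ shows that the first edge of $d$'s right path equals $g_k=t$. Since $t$ is also the last edge of $c$'s left path $\lambda$, the cell $d$ is the polygon demanded by the lemma.

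I do not expect a genuine obstacle here; the only care needed is bookkeeping of the left/right conventions in the definition of a fan — in particular verifying that the right fan edge of $G$ is the first edge of the right path of $G$'s rightmost cell — and noticing that the hypothesis ``$t$ is not the first edge of $\lambda$'' (equivalently, that every cell of a filter has at least two edges on each of its two sides) is exactly what licenses the use of Lemma~\ref{lem:polygon_edges}. As a sanity check one could instead argue directly from the filter construction: following $\lambda$ out of $c$'s base vertex one edge at a time, at each intermediate vertex a fan is built whose right fan edge is the next edge of $\lambda$, so the fan at $w$ has $t$ as its right fan edge and $d$ is its rightmost cell; but the route through Lemma~\ref{lem:polygon_edges} is cleaner.
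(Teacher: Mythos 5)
Your proposal is correct and follows essentially the same route as the paper: both arguments use Lemma~\ref{lem:polygon_edges} to see that the last edge of $c$'s left path is an $R$-edge, and then take $d$ to be the cell of the corresponding fan whose right path begins with that edge (the paper merely tracks the fan levels $n+i-1$ along the way, which your one-step version correctly observes is not needed). Your checks that $m\ge 2$ (so Lemma~\ref{lem:polygon_edges} applies) and that every fan has at least three fan edges (so the cell $d$ exists) are exactly the right bookkeeping.
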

\begin{proof}
Suppose that $c$ is a $2m$-gon.
Let $\lambda_1, \dots, \lambda_m$ be the edges of the left path of $c$, ordered with respect to the orientation of these paths.
Suppose that the fan containing $c$ is of level $n$.
As $\lambda_1$ is a fan edge of a level $n$ fan, it follow from the definition of a filter and by Lemma~\ref{lem:polygon_edges} that for each $2 \le i \le m$ there is a fan $F_{i}$ of level $n + i -1$ containing $\lambda_i$ as its right fan edge.   
Let $d$ be the polygon in $F_m$ containing $\lambda_m$, and let $\gamma$ be its right path.
The first edge of $\gamma$ is an $R$-edge equal to $\lambda_m$.
\end{proof}

For the next lemma, recall from Definition~\ref{def:constants} that $R_\Gamma = \max\{m_{st} ~ | ~ (s, t) \in E(\Gamma)\}$.
\begin{lemma} \label{lem:cox_L_bound}
	Let $\gamma$ be a directed path in $T \setminus (\alpha \cup \beta)$ of length at least $R_\Gamma(M_\Gamma + V_\Gamma +2)$ that contains only $L$-edges.
	Then, $\text{Label}(\gamma)$ is not contained in a wide subgraph of~$\Gamma$.
\end{lemma}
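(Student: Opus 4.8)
The plan is to argue by contradiction, in the spirit of the proofs of Lemma~\ref{lem:cox_internal_edge_bound} and Lemma~\ref{lem:cox_RL_edge_bound}. So suppose $\text{Label}(\gamma)$ is contained in a wide subgraph $\Delta \subset \Gamma$. The first step is to locate $\gamma$ inside $\mathcal F$. Since every edge of $\gamma$ is an $L$-edge and $\gamma$ avoids $\alpha \cup \beta$, the construction of the filter (together with Lemma~\ref{lem:polygon_edges}) shows that each edge of $\gamma$ is a non-initial edge of the right path of some polygon of $\mathcal F$; moreover every vertex has exactly one outgoing $L$-edge, namely the left fan edge of the fan at that vertex, so once $\gamma$ enters the right path of a polygon it runs along that right path up to its top vertex. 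Hence $\gamma = \sigma_1 \sigma_2 \cdots \sigma_N$, where for $2 \le i \le N-1$ the subpath $\sigma_i$ consists of exactly the edges $2, \dots, m_i$ of the right path of a polygon $c^{(i)}$ (a $2m_i$-gon, so $m_i \le R_\Gamma$), and $\sigma_1,\sigma_N$ are possibly shorter segments of right paths. In particular $|\sigma_i| \le R_\Gamma - 1$ for all $i$, so $N(R_\Gamma - 1) \ge |\gamma| \ge R_\Gamma(M_\Gamma + V_\Gamma + 2)$ and therefore $N \ge M_\Gamma + V_\Gamma + 3$.

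Next, when $\gamma$ leaves $c^{(i)}$ through its top vertex and enters the right path of $c^{(i+1)}$, Lemma~\ref{lem:top_fan} identifies $c^{(i+1)}$: the first edge of its right path equals the last edge $t^{(i)}$ of the left path of $c^{(i)}$. As $t^{(i)}$ is an $R$-edge, $c^{(i+1)}$ is the rightmost polygon of the fan $F_{i+1}$ whose right fan edge is $t^{(i)}$; the fan edge of $F_{i+1}$ bounding $c^{(i+1)}$ on the other side is then an interior fan edge $g_{i+1}$ of $F_{i+1}$ (every fan has at least three fan edges), and by the alternating labelling of the polygon $c^{(i+1)}$ the label of $g_{i+1}$ is the label of the first edge of $\sigma_{i+1}$. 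On the other hand, reading off the construction of $\mathcal F$, the base path of $F_{i+1}$ is the base path of $F_i$ followed by the left path of $c^{(i)}$ with its top edge removed; this appended piece has length at least $1$ and its label lies in $\Delta$ (if $c^{(i)}$ is a square it is the single edge $g_i$, whose label is that of the first edge of $\sigma_i$; otherwise $\sigma_i$ already displays both labels of $c^{(i)}$). Iterating, the base path of $F_{i+1}$ has a suffix of length at least $i-1$ whose label lies in $\Delta$, so for $i \ge M_\Gamma + 2$ its wide tail has length greater than $M_\Gamma$; hence case~(2)(b) of the definition of a fan applies to $F_{i+1}$, producing a wide subgraph $\Delta'$ which contains the labels of the first edges of $\sigma_2, \dots, \sigma_i$ but not the label of any interior fan edge of $F_{i+1}$ --- in particular not the label of $g_{i+1}$, i.e.\ not the label of the first edge of $\sigma_{i+1}$.

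Consequently the labels of the first edges of $\sigma_{M_\Gamma + 3}, \dots, \sigma_N$ are pairwise distinct vertices, all of which lie in $\text{Label}(\gamma)$; there are at least $N - M_\Gamma - 2 \ge V_\Gamma + 1$ of them, contradicting $|V(\Gamma)| = V_\Gamma$. I expect the main obstacle to be the bookkeeping underlying the first two steps: checking carefully from the construction of the filter that a directed path of $L$-edges disjoint from $\alpha \cup \beta$ breaks into right-path runs linked by Lemma~\ref{lem:top_fan}, and pinning down exactly which fan's condition~(2)(b) yields the new label at each stage. Once this structural picture is in place, the label count at the end is routine.
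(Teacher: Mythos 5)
Your proof is correct, and its first half is exactly the paper's: both decompose $\gamma$ into runs along right paths of polygons $c^{(1)},\dots,c^{(N)}$ linked at top vertices via Lemma~\ref{lem:top_fan}, with the same count giving $N \ge M_\Gamma + V_\Gamma + 3$. Where you diverge is in extracting the contradiction. The paper notes that the first edges of the right paths of $c^{(1)},\dots,c^{(N)}$ are all crossed by a single dual curve whose carrier is $c^{(1)}\cup\dots\cup c^{(N)}$; the path $\gamma'$ on the opposite side of this carrier has the same label set as $\gamma$ and, by Lemma~\ref{lem:cox_poly_base}, contains $N-1$ $I$-edges (the first edges of the left paths), so Lemma~\ref{lem:cox_internal_edge_bound} applies to $\gamma'$ as a black box. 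You instead inline the counting: you identify the base path of $F_{i+1}$ as that of $F_i$ extended by the left path of $c^{(i)}$ minus its top edge, show the wide tail is long enough to force case (2)(b) of the fan definition, and conclude that the labels of the first edges of $\sigma_{M_\Gamma+3},\dots,\sigma_N$ --- which coincide with the labels of the interior fan edges $g_{i+1}$, i.e.\ precisely the $I$-edges the paper counts on $\gamma'$ --- are pairwise distinct. The two arguments count the same letters; yours trades the paper's dual-curve/opposite-side observation for explicit bookkeeping of base paths that the citation of Lemma~\ref{lem:cox_internal_edge_bound} hides. One small caution: your parenthetical justification that each appended piece has label in $\Delta$ relies on $\sigma_i$ being a full right path minus its first edge, which can fail for $i=1$ (where $\sigma_1$ may be a short terminal segment); since your suffix of length $i-1$ uses only the pieces coming from $c^{(2)},\dots,c^{(i)}$, this does not affect the argument, but it is worth flagging that the iteration must start at $i=2$.
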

\begin{proof}
    Let $c_1$ be the polygon whose right path contains the first edge of $\gamma$. Such a polygon exists as $\gamma \cap \alpha = \emptyset$. 
    Let $\rho_1$ be the right path of $c_1$.
    By Lemma~\ref{lem:polygon_edges}, an initial subpath $\rho_1'$ of $\gamma$ is a terminal subpath of $c_1$'s right path.
    Let $v$ be the endpoint of $\rho_1'$.
    By Lemma~\ref{lem:top_fan} and as $|\gamma| > 2R_\Gamma$, there is a polygon $c_2$ with right path $\rho_2$ such that the second vertex of $\rho_2$ is $v$ and all edges of $\rho_2$, except its first edge, are contained in $\gamma$.
    Proceeding in this way, we see that there are polygons $c_1, \dots, c_l$ with corresponding right paths $\rho_1, \dots, \rho_l$ such that $\gamma = \rho_1'  \dots \rho_{l}'$ and the following hold:
	\begin{itemize}
	    \item For each $1 < i < l$, $\rho_i'$ is equal to $\rho_i$ minus its first edge. 
	    \item $\rho_1'$ is a terminal subpath of $\rho_1$ and $\rho_l'$ is an initial subpath of $\rho_l$.
	    \item There is a dual curve $H$ in $\mathcal F$ that is dual to the first edge of $\rho_i$ for each $i \ge 1$ and whose carrier contains $\gamma$.
	\end{itemize}
	
	Let $\gamma'$ be the path opposite to $\gamma$ with respect $Q$.
	By Lemmas~\ref{lem:cox_poly_base},~\ref{lem:top_fan},~\ref{lem:polygon_edges} and by the first two items above, $\gamma'$ contains $l-1$ $I$-edges. 
	As $|\gamma| \ge R_\Gamma(M_\Gamma + V_\Gamma +2)$,  $l \ge M_\Gamma + V_\Gamma +2$.  
    Thus, by Lemma~\ref{lem:cox_internal_edge_bound}, $\text{Label}(\gamma')$ is not contained in an induced wide subgraph of $\Gamma$.
    As $\gamma'$ is opposite to $\gamma$ with respect to $Q$, they have the same label, and so $\text{Label}(\gamma)$ is also not contained in an induced wide subgraph of $\Gamma$.
\end{proof}

\begin{lemma} \label{lem:cox_R_bound}
	Let $\gamma$ be a directed path in $T \setminus (\alpha \cup \beta)$ containing only $R$-edges. Then $\gamma$ has length at most $R_\Gamma$.
\end{lemma}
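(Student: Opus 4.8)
The plan is to mirror the structure of the proof of Lemma~\ref{lem:cox_L_bound}, but in the ``opposite'' direction, using Lemma~\ref{lem:cox_poly_base} and Lemma~\ref{lem:polygon_edges} to force a configuration of polygons stacked along $\gamma$, and then to use the \emph{definition of a fan} — specifically the bound on the number of fan edges coming from $R_\Gamma = \max\{m_{st}\}$ — to cap the length. First I would observe that since $\gamma \subset T \setminus(\alpha\cup\beta)$ and every edge of $\gamma$ is an $R$-edge, the first edge $e_1$ of $\gamma$ is the first edge of the left path $\lambda$ of some polygon $c$ (it cannot be an interior edge of a left path, since by Lemma~\ref{lem:polygon_edges} those are $R$-edges only after the first edge... wait — re-examine: by Lemma~\ref{lem:polygon_edges} every edge of a left path \emph{except its first} is an $R$-edge, so in fact consecutive $R$-edges can sit on a single left path). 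So the cleaner observation is: a maximal run of $R$-edges that avoids $\alpha\cup\beta$ is constrained by how long a left path can be, i.e.\ by the size $2m_{st}$ of the polygon.

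The key steps, in order, would be: (1) Let $e_1$ be the first edge of $\gamma$; by Lemma~\ref{lem:polygon_edges} and Lemma~\ref{lem:cox_poly_base}, since $e_1$ is an $R$-edge it is \emph{not} the first edge of any polygon's right path, hence it lies on the left path $\lambda$ of a unique polygon $c$, and $e_1$ is not the first edge of $\lambda$ — so $e_1$ is preceded on $\lambda$ by some other edge. (2) Walk along $\gamma$: each successive edge is again an $R$-edge, so it continues along $\lambda$ (the only way an $R$-edge follows an $R$-edge in $T$, by Lemma~\ref{lem:polygon_edges} applied to the left path — an $I$-edge or $L$-edge would have to intervene to ``restart'' a new polygon's left path, but $\gamma$ has only $R$-edges). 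Hence all of $\gamma$ is a subpath of the single left path $\lambda$ of $c$. (3) If $c$ is a $2m$-gon then $\lambda$ has exactly $m$ edges, and $m \le R_\Gamma$ by the definition of $R_\Gamma$; therefore $|\gamma| \le m \le R_\Gamma$.

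The main obstacle I anticipate is step~(2): carefully justifying that two consecutive $R$-edges of a directed path in $T$ (with neither on $\alpha\cup\beta$) must lie on the \emph{same} polygon's left path, rather than one left path ending and a new polygon beginning whose left path starts with an $R$-edge. This is exactly where Lemma~\ref{lem:polygon_edges} (first edge of a left path is \emph{not} an $R$-edge) and Lemma~\ref{lem:cox_poly_base}/Lemma~\ref{lem:incoming_and_outgoing_edges} are needed: when the left path $\lambda$ of $c$ ends at the top vertex of $c$, the fan at that top vertex has its fan edges, and by Lemma~\ref{lem:cox_poly_base} the first edge of the left path of the \emph{next} polygon there is an $I$-edge whenever the right side is an $R$-edge — so the continuation cannot stay an $R$-edge. (One must also handle the degenerate possibility $|\gamma|\le 1$, which is trivially $\le R_\Gamma$ since $R_\Gamma \ge 2$.) Once step~(2) is nailed down, the length bound is immediate from the polygon size.
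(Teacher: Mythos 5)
Your overall strategy is the same as the paper's: place the first $R$-edge on the left path of a polygon coming from a lower-level fan, argue that the whole path must then run along that single left path, and bound its length by the size $2m_{st}$ of the polygon. Steps (2) and (3) are essentially the paper's argument; the paper phrases step (2) slightly differently (it notes that the spanning tree $T$ omits the top-left edge of every polygon, so a run of $R$-edges in $T$ cannot even reach the top vertex of $c$, rather than analyzing via Lemma~\ref{lem:cox_poly_base} what happens after the top vertex), but your version works too.

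The one genuine problem is the justification in your step (1). The claim that an $R$-edge ``is not the first edge of any polygon's right path'' is false: by definition an $R$-edge $e$ is the right fan edge $f_r$ of the fan $F$ at its startpoint, and $f_r$ is precisely the first edge of the right path of the cell $c_{r-1}$ of $F$. (Lemma~\ref{lem:cox_poly_base} does not forbid this; it only says that \emph{if} the first edge of a right path is an $R$-edge, then the first edge of the corresponding left path is an $I$-edge.) So your case analysis does not actually place $e_1$ on the left path of a polygon. The correct reason, and the one the paper uses, is that $e_1$ lies in a \emph{second} polygon $c$, one not contained in the fan $F$ at its startpoint; this polygon exists exactly because $e_1 \notin \beta$ (for an $R$-edge on $\beta$ there is no such polygon, which is why $\beta$ can be an arbitrarily long run of $R$-edges and must be excluded from the lemma). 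By the filter construction, $e_1$ sits on the left path of that $c$ as a non-initial edge, and from there your steps (2) and (3) go through. So the fix is short, but as written step (1) rests on a false statement and does not invoke the hypothesis $\gamma \cap \beta = \emptyset$ where it is actually needed.
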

\begin{proof}
    Let $e$ be the first edge of $\gamma$, and let $F$ be the fan at the startpoint of $e$. 
    Let $c$ be the polygon that contains $e$ and is not contained in $F$.
    Note that $c$ exists as $\gamma \cap \beta = \emptyset$.
    It follows that $e$ is contained on the left path of $c$. By Lemma~\ref{lem:polygon_edges} and as $T$ does not contain the top left edges of polygons, $\gamma$ is a subpath of the left path of $c$. As the left path of $c$ has length at most $R_\Gamma$, the lemma follows. 
\end{proof}

The next proposition, using the results from this section, allow us to conclude that large enough directed paths in $T \setminus (\alpha \cup \beta)$ avoid wide subgraphs. 
\begin{proposition} \label{prop:cox_not_in_product_regions}
	There is a constant $N$, depending only on $M_\Gamma$, $R_\Gamma$ and $V_\Gamma$, such that any directed path $\gamma$ in $T \setminus (\alpha \cup \beta)$ 
	of length larger than $N$ does not have $\text{Label}(\gamma)$ contained in a wide subgraph.
\end{proposition}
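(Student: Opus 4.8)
The plan is to show that a directed path $\gamma$ in $T \setminus (\alpha \cup \beta)$ whose label lies in a wide subgraph has length bounded by a constant depending only on $M_\Gamma$, $V_\Gamma$, $R_\Gamma$, and then take $N$ to be that constant. Write $m = M_\Gamma + V_\Gamma$ and assume, towards the bound, that $\text{Label}(\gamma)$ is contained in a wide subgraph $\Delta$; then $\text{Label}(\gamma')$ is contained in $\Delta$ for every subpath $\gamma'$ of $\gamma$, and every such $\gamma'$ is again a directed path in $T \setminus (\alpha \cup \beta)$. First I would record two global consequences of the preceding lemmas under this assumption: by (the contrapositive of) Lemma~\ref{lem:cox_internal_edge_bound}, the itinerary of $\gamma$ contains at most $m$ $I$-edges, hence at most $m$ maximal blocks of consecutive $I$-edges; and by Lemma~\ref{lem:cox_RL_edge_bound}, $\gamma$ contains at most $m$ $LR$-subpaths.

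Next I would decompose the itinerary of $\gamma$ into maximal blocks of a single letter from $\{L,R,I\}$ and bound the number of $L$-blocks and $R$-blocks. Every maximal $L$-block of $\gamma$, except possibly the block that terminates $\gamma$, is immediately followed by a block whose first edge is an $R$-edge or an $I$-edge; in the first case the junction edges form an $LR$-subpath, and distinct $L$-blocks give distinct such subpaths, so there are at most $m$ junctions of this kind; in the second case the following block is an $I$-block, of which there are at most $m$. Hence there are at most $2m+1$ maximal $L$-blocks. By the symmetric argument applied to preceding blocks, there are at most $2m+1$ maximal $R$-blocks.

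Then I would bound the length of each block and sum. A maximal $R$-block, being a directed path in $T \setminus(\alpha\cup\beta)$ containing only $R$-edges, has length at most $R_\Gamma$ by Lemma~\ref{lem:cox_R_bound}. A maximal $L$-block is a directed path in $T\setminus(\alpha\cup\beta)$ containing only $L$-edges whose label lies in the wide subgraph $\Delta$, so by the contrapositive of Lemma~\ref{lem:cox_L_bound} it has length less than $R_\Gamma(m+2)$. The total length of all $I$-blocks is at most $m$. Summing over all blocks,
\[
|\gamma| < (2m+1)R_\Gamma(m+2) + (2m+1)R_\Gamma + m,
\]
which depends only on $M_\Gamma$, $V_\Gamma$ and $R_\Gamma$. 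Taking $N$ to be the right-hand side gives the proposition: any directed path in $T\setminus(\alpha\cup\beta)$ of length exceeding $N$ cannot have its label contained in a wide subgraph. The proof is essentially bookkeeping once the four lemmas are in hand; the only step needing care is the block count, where one must make sure each $L$- and $R$-block is charged either to an $LR$-subpath or to an adjacent $I$-block, and all the geometric content already resides in Lemmas~\ref{lem:cox_internal_edge_bound}--\ref{lem:cox_R_bound}.
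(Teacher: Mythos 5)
Your proof is correct and rests on exactly the same four lemmas as the paper's proof (Lemmas~\ref{lem:cox_internal_edge_bound}, \ref{lem:cox_RL_edge_bound}, \ref{lem:cox_L_bound} and \ref{lem:cox_R_bound}); the only difference is the final bookkeeping, where you decompose the itinerary into maximal single-letter blocks, bound the number of $L$- and $R$-blocks by charging each to an $LR$-junction or an adjacent $I$-block, and sum, while the paper instead pigeonholes a long subpath free of $I$-edges and $LR$-subpaths and observes that it must consist almost entirely of $L$-edges. Both arguments yield a bound depending only on $M_\Gamma$, $V_\Gamma$ and $R_\Gamma$, so your argument is sound and essentially the paper's.
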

\begin{proof}
    Let $\gamma$ be a path in $T \setminus (\alpha \cup \beta)$ such that $\text{Label}(\gamma)$ is contained in a wide subgraph of $\Gamma$.
	By Lemmas \ref{lem:cox_internal_edge_bound}, \ref{lem:cox_RL_edge_bound}, \ref{lem:cox_L_bound} and \ref{lem:cox_R_bound}, we have that $\gamma$:
	\begin{enumerate}
		\item Contains at most $M_\Gamma + V_\Gamma +1$ $I$-edges
		\item Contains at most $M_\Gamma + V_\Gamma +1$ $LR$-subpaths.
		\item Contains no subpath with $R_\Gamma(M_\Gamma + V_\Gamma +2)$ consecutive $L$-edges.
		\item Contains no subpath with $R_\Gamma$ consecutive $R$-edges.
	\end{enumerate}
	Let $L$ be the length of $\gamma$.
	By (1) and (2), there is a subpath $\gamma'$ of $\gamma$ of length at least $\frac{L - 3(M_\Gamma + V_\Gamma +1)}{2(M_\Gamma + V_\Gamma +1)}$ that contains no $I$-edges and no $LR$-subpaths.
	By (4), all edges of $\gamma'$ must be $L$-edges, except maybe its first $R_\Gamma$ edges.
	Thus, $\gamma'$ contains a subpath $\gamma''$ with at least $\frac{L - 3(M_\Gamma + V_\Gamma +1)}{2(M_\Gamma + V_\Gamma +1)} - R_\Gamma$ consecutive $L$-edges.
	By (3) we now have that
	\[\frac{L - 3(M_\Gamma + V_\Gamma +1)}{2(M_\Gamma + V_\Gamma +1)} - R_\Gamma \le |\gamma''| \le  R_\Gamma(M_\Gamma + V_\Gamma +2)\]
    Solving for $L$, we get an upper bound on the length of $\gamma$ that depends only on $M_\Gamma$, $R_\Gamma$ and $V_\Gamma$. The proposition then follows.
\end{proof}

\section{Connectivity and local connectivity of Morse boundary} \label{sec:connected}

In this section, we show both connectivity and local connectivity of the Morse boundary of an affine-free, wide-spherical-avoidant 
Coxeter group. 

\subsection{Connectivity of the Morse boundary}
In this subsection, we prove connectivity of Morse boundaries.

By Lemma~\ref{lem:cox_filter_geodesics}, geodesics from the base point $o$ of a filter $\mathcal{F}$ determine geodesics in $\Sigma$ with the same edge labels. 
Consequently, there is a natural proper map $f \colon \mathcal{F} \to \Sigma^1$ where $\Sigma^1$ is the $1$-skeleton of the Davis complex. 
Recall that $\Sigma^1$ is the Cayley graph of $W_\Gamma$ with respect to the generating set $V(\Gamma)$. We show that geodesics in the spanning tree of a filter  
are uniformly Morse, given that the filter's boundary geodesics are Morse:

\begin{lemma} \label{lem:filter geodesics Morse}
Let $W_\Gamma$ be an affine-free, wide-spherical-avoidant Coxeter group. Let $\alpha$ and $\beta$ be $N$-Morse geodesic rays in $\Sigma$ with common basepoint $o$. 
Let $\mathcal{F}$ be a filter associated to $\alpha$ and $\beta$, and let $T \subset \mathcal{F}$ its spanning tree. 
Then the image in $\Sigma^1$ of every geodesic in $T \cup \alpha \cup \beta$ with basepoint $o$ is $M$-Morse for some $M$ 
depending on $N$ and $\Gamma$.
\end{lemma}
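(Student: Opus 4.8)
The strategy is to use the characterization of Morse geodesics from Theorem~\ref{thm:morse_char}: a geodesic ray in an affine-free Coxeter group is Morse if and only if there is a uniform bound on the length of subpaths whose label lies in a wide subgraph. So it suffices to show that there is a constant $K$, depending only on $N$ and $\Gamma$, such that every directed geodesic $\gamma$ in $T \cup \alpha \cup \beta$ based at $o$ has the property that any subpath of length larger than $K$ does not have its label contained in a wide subgraph; then $\gamma$ is $M$-Morse for an $M = M(K, \Gamma) = M(N, \Gamma)$.

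\textbf{Key steps.} First I would fix a directed geodesic $\gamma$ in $T \cup \alpha \cup \beta$ based at $o$; by Lemma~\ref{lem:cox_filter_geodesics} its image in $\Sigma^1$ is genuinely a geodesic, so this is a legitimate ray (or segment) to which Theorem~\ref{thm:morse_char} applies. Since $T$ is a tree containing $\alpha$ and $\beta$ as sub-rays, such a $\gamma$ decomposes, after possibly reversing an initial portion, into pieces lying in $\alpha$, in $\beta$, and in $T \setminus (\alpha \cup \beta)$; more carefully, any directed geodesic in $T$ from $o$ follows $\alpha$ for a while, or $\beta$ for a while, and then (once it leaves the boundary rays) stays in $T \setminus (\alpha \cup \beta)$, since the construction of the filter never re-enters $\alpha \cup \beta$ once a path has left it. Next, I would bound the length of any subpath $\gamma'$ of $\gamma$ whose label lies in a wide subgraph $\Delta$. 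Such a $\gamma'$ splits as a concatenation of at most three pieces: an initial piece on $\alpha$, a middle piece in $T \setminus (\alpha \cup \beta)$, and a terminal piece on $\beta$ (or some sub-collection of these, depending on where $\gamma'$ sits). The initial piece on $\alpha$ has length at most $K_\alpha$ for some $K_\alpha = K_\alpha(N)$, because $\alpha$ is $N$-Morse and hence, by Theorem~\ref{thm:morse_char}, spends uniformly bounded time in cosets of wide subgroups; likewise the terminal piece on $\beta$ has length at most $K_\beta = K_\beta(N)$. The middle piece lies in $T \setminus (\alpha \cup \beta)$ and has its label in the wide subgraph $\Delta$, so by Proposition~\ref{prop:cox_not_in_product_regions} its length is at most $N_0$, a constant depending only on $M_\Gamma$, $R_\Gamma$ and $V_\Gamma$. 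Therefore $|\gamma'| \le K_\alpha + N_0 + K_\beta =: K$, a constant depending only on $N$ and $\Gamma$.

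\textbf{Conclusion and main obstacle.} With this $K$ in hand, $\gamma$ satisfies the hypothesis of Theorem~\ref{thm:morse_char} with constant $K$, so $\gamma$ is Morse, and moreover — since the Morse gauge produced by the proof of Theorem~\ref{thm:morse_char} (via the quadratic lower divergence bound of Proposition~\ref{prop:lower_div} and the Charney--Sultan criterion) depends only on $K$ and $\Gamma$, hence only on $N$ and $\Gamma$ — every such $\gamma$ is $M$-Morse for a single uniform $M = M(N, \Gamma)$. The main subtlety to get right is the decomposition of a subpath $\gamma'$ into its $\alpha$-part, $\beta$-part and $T \setminus (\alpha \cup \beta)$-part: one must check that a directed geodesic in $T \cup \alpha \cup \beta$ cannot alternate between these regions more than a bounded number of times, which follows from the structure of the spanning tree $T$ and the fact that the filter is built outward fan-by-fan from $o$ (once a directed path leaves $\alpha$ it never returns to it, and similarly for $\beta$). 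A second point requiring care is that Proposition~\ref{prop:cox_not_in_product_regions} is stated for paths entirely inside $T \setminus (\alpha \cup \beta)$, so one must ensure the middle piece really avoids $\alpha \cup \beta$; this is automatic once the three-piece decomposition is established, since the middle piece is by definition the portion of $\gamma'$ in $T \setminus (\alpha \cup \beta)$.
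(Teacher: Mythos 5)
Your overall architecture is close to the paper's: both arguments reduce the interior part of the geodesic to Proposition~\ref{prop:cox_not_in_product_regions} and then invoke Theorem~\ref{thm:morse_char}, and both rely on the structural fact that a directed path in the filter never re-enters $\alpha\cup\beta$ once it leaves. (Your tentative three-piece decomposition with a terminal piece on $\beta$ cannot actually occur, as you yourself observe at the end; since this only makes your bound an overcount, it is harmless.) The real difference is in how the initial segment lying on $\alpha$ (or $\beta$) is handled, and this is where there is a genuine gap. You claim that because $\alpha$ is $N$-Morse, Theorem~\ref{thm:morse_char} yields a constant $K_\alpha$ \emph{depending only on $N$ and $\Gamma$} bounding the length of subpaths of $\alpha$ whose label lies in a wide subgraph. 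But Theorem~\ref{thm:morse_char} is not quantified in that direction: it asserts that for each individual Morse ray \emph{some} constant $K$ exists, and the paper's proof of that implication is a purely qualitative contradiction argument (``arbitrarily long subpaths \dots\ in particular $\gamma$ is not Morse''). Nothing established in the paper gives a function $N \mapsto K_\alpha(N)$ uniform over all $N$-Morse rays, and without that uniformity your final constant $K = K_\alpha + N_0 + K_\beta$, and hence $M$, is not a function of $N$ and $\Gamma$ alone, which is exactly what the lemma requires.

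The gap is fixable in two ways. One is to actually prove the uniform statement you need: an $N$-Morse geodesic segment contained in a coset of a wide special subgroup has length bounded by a function of $N$ and $\Gamma$ (such a coset is a join of unbounded convex sets or a thickened flat, so a segment of length $L$ inside it admits a detour of length linear in $L$ outside the $L/4$-ball about its midpoint, contradicting $N$-Morseness for $L$ large); this is true but is an extra lemma you would have to supply. The paper sidesteps the issue entirely: it regards the geodesic as a concatenation of an initial segment of $\alpha$ (which is $N$-Morse by hypothesis, used as a black box) with a directed geodesic in $T\setminus(\alpha\cup\beta)$ (which is $M'(\Gamma)$-Morse by Proposition~\ref{prop:cox_not_in_product_regions} together with Theorem~\ref{thm:morse_char}), and then applies Lemma~\ref{lem:concatenation of morse is morse}. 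Replacing your uniform-$K_\alpha$ step with that concatenation argument makes your proof complete.
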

\begin{proof}
By Proposition \ref{prop:cox_not_in_product_regions} and Theorem \ref{thm:morse_char}, every geodesic in $T\setminus \{\alpha, \beta\}$ is \
$M'$-Morse where $M'$ depends only on $\Gamma$. By setting $M'' = \max\{N, M'\}$, we have that every geodesic in $T$ is $M''$-morse, except possibly those that 
are a concatenation of a subset of $\alpha$ (resp. $\beta$) and a geodesic in $\mathcal F \setminus (\alpha \cup \beta)$. However, such geodesics are $M$-Morse for some
 $M$ depending only on $M''$ and $N$. 
\end{proof}

Recall that a subset of a metric space is quasi-convex if there exists a constant $K$ so that any geodesic between two points of the subset 
is in the $K$-neighborhood of of the subset. 
Moreover, the subset is \emph{stable} if it is quasi-convex and any two points of the subset can be connected by an $M$-Morse quasi-geodesic. 
We show that filters whose boundary geodesics are Morse are stable.

\begin{lemma} \label{lem:filter stable}
Let $\mathcal F$ be a filter in the Coxeter group $W_\Gamma$ whose boundary geodesics are $N$-Morse.
The image $f(\mathcal F)$ of $\mathcal{F}$ in $\Sigma^1$ is $M$-stable, where $M$ depends only on $N$ and $\Gamma$.
\end{lemma}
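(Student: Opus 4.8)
The plan is to establish the two defining properties of stability for $f(\mathcal{F})$ separately: quasi-convexity and the existence of uniformly Morse quasi-geodesics between points. For the latter, I would first observe that for any two vertices $p, q \in \mathcal{F}$, there is a directed path through $p$ and a directed path through $q$ in the spanning tree $T$, and these two directed paths, being directed paths from the basepoint $o$, are geodesics in $\mathcal{F}$ (by Lemma~\ref{lem:cox_filter_geodesics}) whose images in $\Sigma^1$ are $M'$-Morse by Lemma~\ref{lem:filter geodesics Morse}. Since $T$ is a tree, the geodesic in $T$ between $p$ and $q$ is a concatenation of an initial segment of one such directed geodesic (traversed backwards) with an initial segment of the other; this is the standard ``tripod'' picture in a tree. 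I would then use Lemma~\ref{lem:concatenation of morse is morse}, after checking that the relevant concatenation, when pushed to $\Sigma^1$, stays within bounded distance of a genuine geodesic between $f(p)$ and $f(q)$ — this requires controlling how far the image of the tree-geodesic can backtrack, which follows from the Morse property of the two directed geodesics it is built from (a Morse geodesic ray and its sub-ray fellow-travel, so the ``descending'' part is close to the ``ascending'' part near the branch point). The upshot is that any two points of $f(\mathcal{F})$ are joined by an $M$-Morse quasi-geodesic with $M = M(N, \Gamma)$.

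For quasi-convexity, I would argue that any geodesic $\sigma$ in $\Sigma^1$ between two points $f(p), f(q) \in f(\mathcal{F})$ stays uniformly close to $f(\mathcal{F})$. The cleanest route is to exploit that we have just produced an $M$-Morse quasi-geodesic $\tau$ in $f(\mathcal{F})$ between these two points: by the definition of an $M$-Morse quasi-geodesic, the genuine geodesic $\sigma$ lies in the $N''(1,0)$-neighborhood of $\tau$ for a constant $N''$ depending only on $M$, hence only on $N$ and $\Gamma$. Since $\tau \subset f(\mathcal{F})$, this gives quasi-convexity with constant depending only on $N$ and $\Gamma$. Combining the two parts, $f(\mathcal{F})$ is $M$-stable for $M = M(N, \Gamma)$, as claimed.

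The main obstacle I anticipate is the bookkeeping in the first part: verifying that the tree-geodesic between $p$ and $q$, once mapped to $\Sigma^1$, is actually a quasi-geodesic (the map $f$ is only known to preserve lengths of \emph{directed} paths, not arbitrary paths, and folding at the branch vertex could in principle cause cancellation in $\Sigma^1$). Handling this requires knowing that the two directed geodesics of $\mathcal{F}$ passing through $p$ and $q$ agree up to their common branch point and that past that point they diverge in a controlled, Morse-gauged way, so that the image of their union near the branch point is a quasi-geodesic with constants depending only on $N$ and $\Gamma$; this is where the Morse hypothesis on $\alpha, \beta$ (propagated via Lemma~\ref{lem:filter geodesics Morse}) does the real work, together with Lemma~\ref{lem:concatenation of morse is morse}.
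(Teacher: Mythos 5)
There is a genuine gap in your first step, and your second step inherits it. You want the connecting $M$-Morse quasi-geodesic to live \emph{inside} $f(\mathcal F)$, and you propose to take the tree-geodesic in $T$ from $p$ down to the branch vertex $v$ and back up to $q$. The image of this path in $\Sigma^1$ has length $d_T(p,v)+d_T(v,q)$, and for it to be a quasi-geodesic with constants depending only on $N$ and $\Gamma$ you would need $d_{\Sigma^1}(f(p),f(q))$ to be comparable to that sum. The Morse property of the two directed geodesics through $p$ and $q$ does not give this: two uniformly Morse geodesics issuing from the basepoint that branch combinatorially at $v$ in the tree $T$ can still fellow-travel in $\Sigma^1$ for an arbitrarily long time past $v$ (already in a hyperbolic group, where every geodesic is uniformly Morse, two geodesics from the identity with different first letters can stay at distance $1$ for as long as you like). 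In that situation the tripod path backtracks almost entirely and is not a uniform quasi-geodesic, so Lemma~\ref{lem:concatenation of morse is morse} does not apply (its hypothesis is that the concatenation is itself a geodesic). You correctly identify this as the main obstacle, but the resolution you sketch (``the descending part is close to the ascending part near the branch point'') is precisely the failure mode, not the fix. Since your quasi-convexity argument uses the Morse path $\tau\subset f(\mathcal F)$ produced in the first step, it falls with it.

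The repair is to not insist that the connecting quasi-geodesic lie in $f(\mathcal F)$ — the definition of stability used here does not require it. This is what the paper does: for $x,y\in f(\mathcal F)$ take the genuine geodesic $[x,y]$ in $\Sigma^1$ together with the two directed filter geodesics $[b,x]$ and $[b,y]$ from the basepoint, which are $M'$-Morse by Lemma~\ref{lem:filter geodesics Morse}. Then $[x,y]$, being the third side of a triangle with two uniformly Morse sides, is itself $M$-Morse (Lemma~2.3 of \cite{Cordes2017}), which gives the second half of stability directly. For quasi-convexity, one invokes slimness of triangles with Morse sides (Lemma~3.5 of \cite{CordesHume}): $[x,y]$ lies in a $K$-neighborhood of $[b,x]\cup[b,y]\subset f(\mathcal F)$, with $K$ depending only on the Morse gauges. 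Your quasi-convexity idea (an $M$-Morse path with the right endpoints forces any geodesic between them into its $M(1,0)$-neighborhood) is sound in itself, but it needs a Morse path inside $f(\mathcal F)$, which is exactly what you have not established.
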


\begin{proof}
Let $x, y \in f(\mathcal F)$, and let $[x,y]$ be a geodesic in $\Sigma^1$ joining $x$ and $y$. Let $b$ be the image of the basepoint of $\mathcal F$, and let
 $[b, x]$ (resp. $[b, y]$) be geodesics from $b$ to $x$ (resp. $y$). 
By Lemma~\ref{lem:filter geodesics Morse}, $[b,x]$ and $[b,y]$ are $M'$-Morse for some $M'$ depending only on $N$ and $\Gamma$. 
By Lemma 2.3 of \cite{Cordes2017}, the geodesic $[x,y]$ is $M$-Morse where $M$ only depends on $M'$.

We are left to show quasi-convexity by showing that $[x,y]$ remains in a bounded neighborhood of $f(\mathcal F)$. 
By Lemma 3.5 of \cite{CordesHume}, the geodesic triangle $[b,x] \cup [x,y] \cup [b, y]$ is $K$-slim, where $K$ depends only on the Morse constants 
of these geodesic segments. 
Thus, $f(\mathcal{F})$ is $K$-quasi-convex as $[b,x] \cup [b, y] \subset f(\mathcal F)$.
\end{proof}

We are now ready to give necessary conditions for the connectivity of the Morse boundary:

\begin{theorem} \label{thm:MB is connected}
    The Morse boundary of an affine-free, wide-spherical-avoidant Coxeter group is connected.
\end{theorem}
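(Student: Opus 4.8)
The plan is to show that the Morse boundary is path-connected, which in particular gives connectedness. Fix a basepoint $o$ in the Davis complex $\Sigma$ and let $[\alpha], [\beta]$ be two arbitrary points in $\partial_* W_\Gamma$, represented by $N$-Morse geodesic rays $\alpha, \beta$ based at $o$ (we may enlarge $N$ so that a single gauge works for both). Using Lemma~\ref{lem:cox_fan_existence}, which applies since $W_\Gamma$ is one-ended and wide-spherical-avoidant, I would build a filter $\mathcal F$ spanning $\alpha$ and $\beta$ as in Section~\ref{sec:filters}, together with its spanning tree $T$. By Lemma~\ref{lem:filter stable}, the image $f(\mathcal F) \subset \Sigma^1$ is $M$-stable, with $M$ depending only on $N$ and $\Gamma$.

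The core of the argument is then to produce, for each point of $f(\mathcal F)$ ``at infinity'', a continuous path in the boundary. Concretely, I would parametrize the frontier of the filter: for each $t$, let $\gamma_t$ be the geodesic ray in $T$ from $o$ that, roughly speaking, stays on the ``$t$-th'' side of the tree — at the first fan one chooses how far along the fan edges to turn, and this choice, made coherently at every level, sweeps out a family of rays interpolating between $\alpha$ (the left boundary, itinerary $LLL\ldots$ or similar) and $\beta$ (the right boundary). Each such $\gamma_t$ is a geodesic in $T$, hence by Lemma~\ref{lem:filter geodesics Morse} its image in $\Sigma^1$ is $M$-Morse for the uniform gauge $M$. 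Thus $t \mapsto [\gamma_t]$ is a map from an interval into $\partial_*^M W_\Gamma$, with $[\gamma_0] = [\alpha]$ and $[\gamma_1] = [\beta]$.

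It remains to check that this map is continuous into $\partial_*^M W_\Gamma$ (and then into $\partial_* W_\Gamma$, since $\partial_*^M W_\Gamma$ includes into $\partial_* W_\Gamma$ continuously by definition of the direct limit topology). Continuity is checked against the neighborhood basis $V_n^M$: two rays $\gamma_t$ and $\gamma_{t'}$ that agree on their first several fans fellow-travel for a long initial segment, because the combinatorial structure of the filter forces directed paths that branch only at a deep level to share a long common prefix, and the uniform Morse gauge bounds how far the $\mathrm{CAT}(0)$ images can drift. So for $t'$ close to $t$ the rays agree up to level $n$, hence $d(\gamma_t(s), \gamma_{t'}(s)) < \delta_M$ for all $s < n$, giving $[\gamma_{t'}] \in V_n^M(\gamma_t)$. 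This establishes path-connectedness of $\partial_* W_\Gamma$, hence connectedness.

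The main obstacle I anticipate is making the family $\{\gamma_t\}$ precise and verifying the fellow-traveling/continuity statement: one must organize the "turning" choices at every fan so that the resulting rays vary continuously and so that nearby parameters yield rays with long shared prefixes. This is essentially the content of the Mihalik--Ruane--Tschantz argument transplanted to filters, and the bookkeeping — indexing fans by level, tracking itineraries, and ensuring the interpolating rays never wander into a wide subgraph for too long (which is exactly Proposition~\ref{prop:cox_not_in_product_regions}) — is where the real work lies. A secondary point is handling the rays whose itinerary is eventually constant $L$ or $R$, i.e.\ those that merge with $\alpha$ or $\beta$; these require the uniform Morse bound from Lemma~\ref{lem:filter geodesics Morse} for concatenations, which is already recorded there.
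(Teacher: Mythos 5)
Your setup matches the paper's: you construct a filter $\mathcal F$ spanning $\alpha$ and $\beta$, invoke Lemma~\ref{lem:filter geodesics Morse} for the uniform Morse gauge on geodesics in the spanning tree, and Lemma~\ref{lem:filter stable} for stability of $f(\mathcal F)$. But the step you then rest the whole argument on --- organizing the rays of the spanning tree into a continuous family $t\mapsto[\gamma_t]$ interpolating from $[\alpha]$ to $[\beta]$ --- is exactly the point that is not justified, and it is not a bookkeeping issue. The directed geodesic rays in the spanning tree $T$ based at $o$ form the space of ends of a tree, which is totally disconnected; a continuous map from an interval onto such a family composed with the boundary map can only yield a connected image if, at every ``gap'' in the planar left-to-right order, the two adjacent extreme rays (the rightmost descendant of one fan edge and the leftmost descendant of the next) have asymptotic images in $\Sigma$. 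Nothing in the filter construction gives this: consecutive fan edges are joined by a single finite polygon, after which the two rays branch independently inside a hyperbolic (stable) subset, where distinct ends of the tree generically determine distinct, non-asymptotic boundary points. Your continuity check only covers pairs of rays sharing a long common prefix; it says nothing about these adjacent-but-early-branching pairs, which is where the map would fail to be continuous. Indeed, your conclusion (path-connectedness of $\partial_* W_\Gamma$) is strictly stronger than what the theorem asserts, and it is not established by the paper either.

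The paper avoids this entirely with a soft argument: since $f$ is proper and $\mathcal F$ is one-ended, $f(\mathcal F)$ is one-ended; since $f(\mathcal F)$ is stable it is hyperbolic as a subspace, and for a hyperbolic space there is a continuous map from its Gromov boundary to its space of ends whose fibers are the connected components of the boundary, so $\partial(f(\mathcal F))$ is connected. Stability then gives a topological embedding $\partial_* f(\mathcal F)\hookrightarrow \partial_* W_\Gamma$, so $[\alpha]$ and $[\beta]$ lie in a common connected subset of the Morse boundary. If you want to salvage your approach, you would need to prove the asymptoticity of adjacent rays (or otherwise show the identifications collapse the Cantor set of ends of $T$ onto a connected set), which is substantial extra work; alternatively, replace the path construction with the ends/hyperbolicity argument above.
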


\begin{proof}
Let $\Sigma^1$ be the $1$--skeleton of the Davis complex of an affine-free, wide-spherical-avoidant Coxeter group, and 
let $\alpha$ and $\beta$ be Morse geodesic rays in $\Sigma^1$ with common basepoint $o$. 
As the Coxeter group is wide-spherical-avoidant, there is a filter $\mathcal{F}$ with boundary geodesics $\alpha$ and $\beta$. 
Let $f: \mathcal{F} \to \Sigma^1$ be the natural map. 
Since $f$ is proper, by Proposition I.8.29 of \cite{BH}, $f$ induces a continuous map 
\[\mathrm{Ends}(\mathcal{F}) \to \mathrm{Ends}(f(\mathcal{F})) \subset \mathrm{Ends}(\Sigma^1)\]
Since $\mathcal{F}$ is one ended, we conclude that $f(\mathcal{F})$ is $1$-ended. 

By Lemma \ref{lem:filter stable}, $f(\mathcal{F})$ is stable and thus $f(\mathcal{F})$ is hyperbolic as a subset of $\Sigma^1$. 
Thus by Exercise III.H.3.8 of  \cite{BH} there exists as a continuous map $\partial(f(\mathcal{F})) \to \mathrm{Ends}(f(\mathcal{F}))$ 
where the fibers of this map are the connected components of the boundary. Thus  $\partial(f(\mathcal{F}))$ is a single connected component. 
Using again that $f(\mathcal{F})$ is stable, $\partial_* f(\mathcal{F})$ topologically embeds in $\partial_* W_\Gamma$, thus $\alpha$ and $\beta$ 
lie in the same connected component of the Morse boundary. As $\alpha$ and $\beta$ were arbitrary Morse geodesics, we conclude that 
the Morse boundary of $\Sigma^1$ is connected.
\end{proof}

\subsection{Local connectivity of Morse boundary}
We now focus on the local connectivity of Morse boundaries. We start with the following lemma that
 shows how to extend a geodesic segment to a Morse geodesic.
\begin{lemma} \label{lem:geo_extension}
Let $W_\Gamma$ be an affine-free, wide-spherical-avoidant Coxeter group, and let $\Sigma$ be its Davis complex.
Let $\gamma$ be a geodesic segment in $\Sigma^1$. Then, there exists a $M$--Morse geodesic $\alpha$ such that $\gamma \alpha$ is geodesic, 
where $M$ only depends on $\Gamma$.
\end{lemma}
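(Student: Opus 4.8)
The plan is to build the Morse extension $\alpha$ one fan at a time, using the filter machinery from Section~\ref{sec:filters}, exactly as one builds a filter spanning two rays — except here only one ray is given in advance, and the ``other side'' is generated freely. Concretely, I would start with the geodesic segment $\gamma$, terminating at a vertex $v$. Since $W_\Gamma$ is one-ended and wide-spherical-avoidant, I can extend $\gamma$ by a single edge $e_0$ with $\overline{\gamma e_0}$ geodesic (such an edge exists: pick any vertex $s\in V(\Gamma)$ not in the set $K$ of possible last letters of $\overline{\gamma}$, which is non-empty since $W_K$ is finite by Lemma~\ref{lem:ending_letters} and $W_\Gamma$ is infinite). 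Then, using Lemma~\ref{lem:cox_fan_existence} with the two edges both chosen to extend $\gamma$ geodesically, I obtain a fan $F_1$ with base path $\gamma$. Iterating — at each stage attaching a fan of the next level along each fan edge of the current fans, exactly as in the inductive construction of $\mathcal F_n$ from $\mathcal F_{n-1}$ in Section~\ref{sec:filters} — produces an unbounded ``half-filter'' $\mathcal F$ whose boundary on one side is $\gamma$ followed by a ray, and whose spanning tree $T$ behaves just like the spanning tree of an honest filter.

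Next I would extract $\alpha$ as a directed ray in $T$ starting at $v$ (the terminal vertex of $\gamma$), so that $\gamma\alpha$ is a directed path in $\mathcal F$ starting at the basepoint; by Lemma~\ref{lem:cox_filter_geodesics} its image in $\Sigma^1$ is geodesic, i.e. $\gamma\alpha$ is geodesic. To make $\alpha$ Morse, I need the ray $\alpha$ to avoid spending long stretches inside a wide subgraph. The key point is that $\alpha$ can be chosen so that, after its initial edge $e_0$, no edge lies on a ``boundary'' ray of the half-filter — i.e. $\alpha$ eventually lies in $T\setminus(\text{the two boundary rays})$ — so Proposition~\ref{prop:cox_not_in_product_regions} applies: any directed path in $T$ away from the boundary rays, of length larger than the constant $N$ there, does not have its label in a wide subgraph. (Either one arranges the free side of the half-filter so that $\alpha$ itself is never a boundary ray past $e_0$, or one observes that any boundary ray of the half-filter other than $\gamma$'s side is itself one we are free to construct, so we may simply not follow it.) Given this, Theorem~\ref{thm:morse_char} shows $\alpha$ has Morse constant $M'$ depending only on $\Gamma$; concatenating with any remaining finite bounded-in-wide initial portion changes the constant only by a bounded amount (Lemma~\ref{lem:concatenation of morse is morse}), giving an $M$-Morse geodesic ray $\alpha$ with $M=M(\Gamma)$, as required.

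The main obstacle is the bookkeeping needed to guarantee that the chosen ray $\alpha$ genuinely lands in $T\setminus(\alpha'\cup\beta')$ (the part of the half-filter's spanning tree away from its two boundary rays) rather than, say, running along the free boundary ray $\beta'$ indefinitely — in which case Proposition~\ref{prop:cox_not_in_product_regions} would not directly give control, and one would instead need the fan-existence hypotheses to ensure the free boundary ray is itself Morse. I would handle this by a careful choice of itinerary for $\alpha$: for instance, forcing $\alpha$ to use an $I$-edge (an interior fan edge) or an appropriate mix of $L$- and $R$-edges infinitely often but with bounded gaps, which by Lemmas~\ref{lem:cox_internal_edge_bound}, \ref{lem:cox_RL_edge_bound}, \ref{lem:cox_L_bound} and \ref{lem:cox_R_bound} keeps the labels of all long subpaths out of every wide subgraph, directly — bypassing the need to separate $\alpha$ from the boundary rays at all. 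This reduces the problem to checking that such an itinerary is always realizable in the half-filter, which follows from the fact that every fan has at least $3$ fan edges and hence an interior fan edge is always available to route through.
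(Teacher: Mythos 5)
Your plan is correct in substance, but it routes through machinery the statement does not need, and the paper's own proof is the ``stripped-down'' version of your final paragraph. The paper simply extends $\gamma$ one edge at a time: if $w$ is the current label, $K$ the (spherical, by Lemma~\ref{lem:ending_letters}) set of possible last letters and $\Delta$ a wide subgraph containing the label of the current wide tail, then wide-spherical-avoidance supplies a vertex $s \in V(\Gamma)\setminus(\Delta\cup K)$; appending $s$ keeps the path geodesic and prevents the wide tail from growing, and the counting argument of Lemma~\ref{lem:cox_internal_edge_bound} plus Theorem~\ref{thm:morse_char} then gives a Morse constant depending only on $\Gamma$. Your closing paragraph --- force the ray to use an interior fan edge at every step and invoke Lemma~\ref{lem:cox_internal_edge_bound} directly --- is exactly this argument, since by Definition~\ref{def:cox_fans}(2b) an interior fan edge is precisely an edge labelled outside $\Delta$ (and outside $K$, by Lemma~\ref{lem:ending_letters}); once you commit to always taking such an edge, the surrounding fan and half-filter are dead weight.

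Two cautions about the heavier route you sketch first. The ``half-filter'' is not an object defined in the paper: the filter construction of Section~\ref{sec:filters} starts from \emph{two given} geodesic rays, and Proposition~\ref{prop:cox_not_in_product_regions} is a statement about the spanning tree of such a filter relative to those two boundary rays. You cannot cite it verbatim for a one-sided construction whose second boundary ray is being generated as you go; you would have to re-verify Lemmas~\ref{lem:cox_L_bound} and~\ref{lem:cox_R_bound} (whose proofs use the positions of $\alpha$ and $\beta$) in the modified setting. Second, your existence argument for the very first extension only produces $s\notin K$, not $s\notin\Delta\cup K$; the latter is what you actually need at every step, and it is here --- not merely in one-endedness --- that the wide-spherical-avoidant hypothesis enters (via the special join built from the wide tail, as in the proof of Lemma~\ref{lem:cox_fan_existence}). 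Since your fallback itinerary argument sidesteps both issues, the proposal is salvageable as written, but the direct greedy extension is both shorter and what the paper does.
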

\begin{proof}
    Let $w$ be the label of $\gamma$, and let $K \subset V(\Gamma)$ be the set of letters that an expression for $w$ can end with. Recall that $W_K$ is finite and that $ws$ is geodesic for any $s \in \Gamma \setminus K$.
    Let $\gamma'$ be the wide tail of $\gamma$, and let $\Delta$ be a wide subgraph such that $\text{Label}(\gamma') \subset \Delta$. 
    
    As $\Gamma$ is wide-spherical-avoidant, there is a vertex $s \in \Gamma \setminus (\Delta \cup K)$.
    Let $e$ be the edge adjacent to $\gamma$ and labeled by $s$. 
    It follows that $\gamma \cup e$ is geodesic.
    By iteratively applying this argument, we can extend $\gamma$ to an infinite geodesic ray $\gamma \alpha$.
    By the same argument as that of Lemma~\ref{lem:cox_internal_edge_bound}, there is a constant $C$ depending only on $\Gamma$
     such that any subpath $\alpha'$ of $\alpha$ of length larger than $C$ does not have $\text{Label}(\alpha')$ contained in a wide subgraph. 
    By Theorem \ref{thm:morse_char}, $\alpha$ is Morse, where the constant only depends only $C$ and consequently only on $\Gamma$.
\end{proof}
In order to show local connectivity, we define the concept of a multi-tail filter, constructed by gluing 
together a finite number of filters.

\begin{definition}[Multi-tail filter] \label{def:MTF} 
Let $\alpha$ and $\beta$ be $M$-Morse geodesics in the Davis complex of a Coxeter group based at some vertex $b$. Let $n \in \mathbb{N}$, and let 
$\sigma$ be a geodesic from $\alpha(n)$ to $\beta(n)$. 
Let $\gamma_0, \gamma_1, \dots, \gamma_k$ be geodesic segments from $b$ to $\sigma$ which are tails to filters 
$\mathcal F_0, \dots, \mathcal F_k$. Moreover, suppose that $\mathcal F_i$ is a filter between $M$-Morse geodesics $\alpha_i$ and $\beta_i$. 
Additionally, 
suppose that $\alpha_0 = \alpha$, $\beta_k = \beta$ and, for each $i < k$, $\beta_i$ is equal to $\alpha_{i+1}$ except 
possibly outside a finite set. 
We call $\mathfrak F$ a \emph{multi-tailed filter of level $n$} between $\alpha$ and $\beta$. 
Moreover, we say that each $\gamma_i$ is a \emph{tail} of $\mathfrak F$.  
\end{definition}

\begin{figure} \label{mulitail}
    \begin{overpic}[scale=.6]{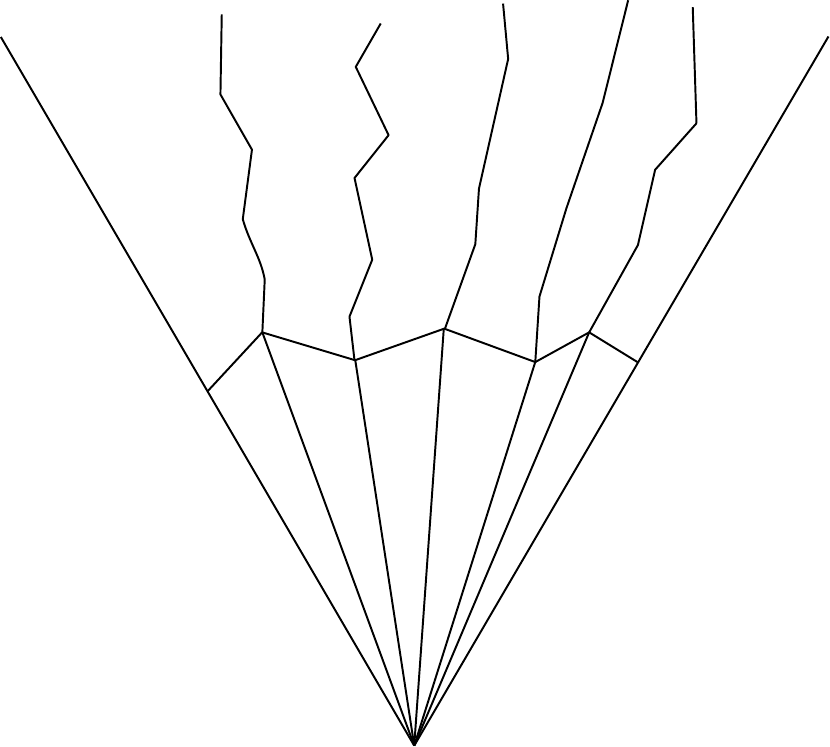}
            \put(47, 0){$b$}
            \put(19, 41){$\alpha(n)$}
            \put(78, 45){$\beta(n)$}
            \put(21, 55){\LARGE $\mathcal{F}_0$}
            \put(35, 60){\LARGE $\mathcal{F}_1$}
            \put(80, 62){\LARGE $\mathcal{F}_k$}
            \put(0,86){$\alpha_0=\alpha$}
            \put(99,86){$\beta_k=\beta$}
            \put(25,89){$\beta_0=\alpha_1$}      
  \end{overpic}
  \caption{Multi-tailed filter of level $n$}    
  \end{figure}

We show that multi-tail filters of level $n$ always exist between Morse geodesics in an affine-free, wide-spherical-avoidant Coxeter group.
To do so, we first prove the following technical lemma.

\begin{lemma} \label{lem:multi-filter}
    Suppose that $W_\Gamma$ is an affine-free, wide-spherical-avoidant Coxeter group.
    Let $\alpha$ and $\beta$ be Morse geodesic rays in the Davis complex of $W_\Gamma$ based at the same point $b$. 
    Fix $n \in \mathbb N$, and let $\sigma$ be a geodesic from $\alpha(n)$ to $\beta(n)$. 
    Also, let $d = d(\alpha(n), \beta(n))$, $\gamma_0 = \alpha([0,n])$, $\gamma_d = \beta([0, n])$ 
    and $\gamma_i$ be a geodesic from $b$ to $\sigma(i)$ for $0 < i < d$.
    Let $e_i$ be the edge of $\sigma$ between $\sigma(i-1)$ and $\sigma(i)$.
    Then, there exist $M$-Morse geodesics $\alpha_0, \dots, \alpha_d$ such that
    \begin{enumerate}
        \item $\gamma_i\alpha_i$ is geodesic for each $0 \le i \le d$
        \item For each $1 \le i \le d$, either 
            \begin{enumerate}
                \item $\alpha_i = e_i\alpha_{i-1}$ or
                \item $\gamma_{i-1}e_i\alpha_i$ is geodesic
            \end{enumerate}
    \end{enumerate}
    Moreover, $M$ only depends on $\Gamma$, $d(\alpha(n), \alpha(n))$ and the Morse constants of $\alpha$ and~$\beta$.
\end{lemma}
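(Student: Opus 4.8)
The plan is to build the geodesics $\alpha_i$ inductively along $\sigma$, starting from $\alpha_0 = \alpha$. Suppose $\alpha_{i-1}$ has been constructed so that $\gamma_{i-1}\alpha_{i-1}$ is geodesic. We want to produce $\alpha_i$ with $\gamma_i\alpha_i$ geodesic and satisfying (2). The natural first attempt is to set $\alpha_i = e_i\alpha_{i-1}$; this works whenever $\gamma_i e_i \alpha_{i-1}$ (equivalently $\gamma_{i-1}\alpha_{i-1}$ with the edge $e_i$ prepended appropriately) is still geodesic, which happens precisely when the wall dual to $e_i$ does not already cross $\gamma_{i-1}\alpha_{i-1}$. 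Since $\sigma$ is geodesic and $\gamma_{i-1}$ ends at $\sigma(i-1)$, the only way this can fail is if the wall $H_{e_i}$ separates the endpoint of $\gamma_{i-1}$ from $\sigma(i)$ but crosses $\alpha_{i-1}$; in CAT(0)/Coxeter terms this means $H_{e_i}$ crosses $\gamma_{i-1}$ — but $\gamma_{i-1}$ is a geodesic from $b$ to $\sigma(i-1)$ and $e_i$ leaves $\sigma(i-1)$, so actually the obstruction is whether $H_{e_i}$ crosses $\alpha_{i-1}$. When it does not, option (2a) holds and $\alpha_i := e_i \alpha_{i-1}$ is geodesic (and $M$-Morse with the same gauge, by Lemma~\ref{lem:concatenation of morse is morse}).

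When $H_{e_i}$ does cross $\alpha_{i-1}$, we cannot simply prepend $e_i$. In that case I would instead invoke Lemma~\ref{lem:geo_extension}: the path $\gamma_{i-1}e_i$ is geodesic (it is an initial subpath of $\gamma_{i-1}$ followed by an edge of $\sigma$, and one checks this is geodesic since $\sigma$ is geodesic and $\gamma_{i-1}$ reaches $\sigma(i-1)$ — more carefully, $\gamma_{i-1}e_i$ and $\gamma_i$ have the same endpoint $\sigma(i)$ and the same length, so if $\gamma_i$ is geodesic so is $\gamma_{i-1}e_i$... here one must be slightly careful and possibly redefine $\gamma_{i-1}$ or use that we may choose the $\gamma_i$'s to have this compatibility). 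Then Lemma~\ref{lem:geo_extension} supplies a $C(\Gamma)$-Morse geodesic $\alpha_i$ with $\gamma_{i-1}e_i\alpha_i$ geodesic, giving option (2b); and then $\gamma_i \alpha_i$ is geodesic because $\gamma_i$ and $\gamma_{i-1}e_i$ share the endpoint $\sigma(i)$ and $\alpha_i$ starts there, and the wall-crossing sets match up ($\gamma_i$ and $\gamma_{i-1}e_i$ cross the same walls, being geodesics with the same endpoints). This yields (1) and (2) at step $i$.

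The remaining point is the uniform Morse gauge. Along the induction we only ever do two things: prepend a single edge to an already-$M$-Morse geodesic (case 2a), or start fresh with a $C(\Gamma)$-Morse geodesic from Lemma~\ref{lem:geo_extension} attached after a geodesic segment (case 2b). In case 2a, $\gamma_i \alpha_i = \gamma_i e_i \alpha_{i-1}$ — but this is not literally $\gamma_{i-1}\alpha_{i-1}$ with the same gauge, since $\gamma_i \ne \gamma_{i-1}$. So here one argues: $\alpha_i = e_i \alpha_{i-1}$ is $M'$-Morse where $M'$ depends only on the gauge of $\alpha_{i-1}$ (adding one edge to a geodesic ray, via Lemma~\ref{lem:concatenation of morse is morse} applied to the single edge $e_i$ together with $\alpha_{i-1}$). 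Since case 2a increases the gauge by one "edge-step" and case 2b resets to $C(\Gamma)$, and the number of consecutive 2a-steps between two 2b-steps is bounded by $d = d(\alpha(n),\beta(n))$, the gauge of every $\alpha_i$ is bounded by a function of $C(\Gamma)$ (hence $\Gamma$), $d$, and the gauges of $\alpha=\alpha_0$ and $\beta=\alpha_d$. \textbf{The main obstacle} I anticipate is exactly this bookkeeping of Morse gauges through case 2a: prepending edges one at a time is fine for boundedly many steps, but one must be sure no $\alpha_i$ undergoes an unbounded run of edge-prepends, which is why the bound must be allowed to depend on $d(\alpha(n),\beta(n))$ (equivalently, the statement's "$d(\alpha(n),\alpha(n))$", presumably a typo for $d(\alpha(n),\beta(n))$) — and to make the whole scheme consistent one should first fix the auxiliary geodesics $\gamma_1,\dots,\gamma_{d-1}$ so that $\gamma_{i-1}e_i$ and $\gamma_i$ genuinely cross the same walls, which can be arranged since both are geodesics from $b$ to $\sigma(i)$.
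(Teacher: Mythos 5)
Your overall architecture is the same as the paper's (induct along $\sigma$, try $\alpha_i = e_i\alpha_{i-1}$, and otherwise restart with Lemma~\ref{lem:geo_extension}), but your criterion for deciding between the two cases is wrong, and this is exactly the crux of the proof. The correct dichotomy, which the paper establishes with a disk-diagram/parity argument, is that the wall $H$ dual to $e_i$ crosses \emph{exactly one} of $\gamma_{i-1}$ and $\gamma_i$. Option (2a) is available precisely when $H$ crosses $\gamma_{i-1}$ (equivalently, $d(b,\sigma(i-1)) = d(b,\sigma(i))+1$); in that case $H$ automatically misses $\alpha_{i-1}$, since otherwise it would cross the geodesic $\gamma_{i-1}\alpha_{i-1}$ twice. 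Your criterion --- that (2a) works iff $H$ does not cross $\alpha_{i-1}$ (or, in your first formulation, iff $H$ does not cross $\gamma_{i-1}\alpha_{i-1}$) --- fails in the non-vacuous sub-case where $H$ crosses $\gamma_i$ but not $\alpha_{i-1}$: there $e_i\alpha_{i-1}$ is a perfectly good geodesic ray, but $\gamma_i e_i \alpha_{i-1}$ crosses $H$ twice (once in $\gamma_i$ and once at $e_i$), so condition (1) fails for your choice of $\alpha_i$. In that sub-case one must use (2b), which is consistent with your own observation that (2b) requires $\gamma_{i-1}e_i$ to be geodesic --- and $\gamma_{i-1}e_i$ is geodesic exactly when $H$ does \emph{not} cross $\gamma_{i-1}$, i.e.\ exactly in the complementary case to (2a). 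So the case split must be driven by which of $\gamma_{i-1},\gamma_i$ the wall $H$ crosses, not by whether it crosses $\alpha_{i-1}$.

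Two smaller points. First, even with the correct criterion, in case (2a) you must still rule out some \emph{other} wall $K$ crossing $\gamma_i e_i \alpha_{i-1}$ twice; the wall-matching observation you invoke for (2b) handles this, but applied at $\sigma(i-1)$: when $H$ crosses $\gamma_{i-1}$, the path $\gamma_i e_i$ has length $|\gamma_{i-1}|$ and the same endpoints as $\gamma_{i-1}$, hence is a geodesic crossing the same walls, so geodesity of $\gamma_{i-1}\alpha_{i-1}$ transfers to $\gamma_i e_i\alpha_{i-1}$. (The paper argues this by chasing a hypothetical doubly-crossing wall through the disk diagram; either works.) Second, your bookkeeping of the Morse gauge --- at most $d=d(\alpha(n),\beta(n))$ edge-prepends before a reset to a gauge depending only on $\Gamma$ --- is correct and is the reason the statement allows $M$ to depend on $d$ (the ``$d(\alpha(n),\alpha(n))$'' in the statement is indeed a typo for $d(\alpha(n),\beta(n))$).
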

\begin{proof}
    First set $\alpha_0 = \alpha([s, \infty))$, and note that the claim follows for $i=0$.
    We now prove the claim for $i = k$ assuming that it is true for $i = k-1$. 
    
    Let $H$ be the wall dual to $e_k$. 
    We claim that either $H$ intersects $\gamma_{k-1}$ and does not intersect $\gamma_k$, or 
    $H$ intersects $\gamma_k$ and does not intersect $\gamma_{k-1}$.
    To see this, consider a disk diagram $D$ with boundary path $\gamma_{k-1}e_k\gamma_k^{-1}$. 
    As with any disk diagram over the Davis complex, $H$ must be dual to an even number of edges of the boundary $D$. 
    Note that $H$ cannot be dual to $4$ or more edges, as then $H$ would either be dual to two edges the geodesic 
    $\gamma_{k-1}$ or to two edges of the geodesic $\gamma_k$.
    As $H$ is dual to $e_k$, it must then be dual to exactly two edges of the boundary of $D$.
    The claim now follows. 
    
    The proof is split into two cases, depending whether $H$ intersects $\gamma_{k-1}$ or $\gamma_{k}$.
    Suppose first that $H$ intersects $\gamma_{k-1}$.
    In this case, we claim that $\gamma_k e_k \alpha_{k-1}$ is geodesic. 
    Suppose for a contradiction that it is not a geodesic. 
    Then some wall $K$ is dual to two edges of $\gamma_k e_k \alpha_{k-1}$. 
    Note that $K \neq H$, for otherwise we would either have that $H$ intersects $\alpha_{k-1}$ 
    and so intersects the geodesic $\gamma_{k-1}\alpha_{k-1}$ twice, or we would have that $H$ intersects $\gamma_k$ 
    which cannot happen by the previous paragraph.
    Thus, $K$ must intersect both $\alpha_{k-1}$ and $\gamma_k$.
    we consider $K \cap D$ and see that $K$ must intersect either $\gamma_{k-1}$ or $e_k$. 
    However, the former is impossible as $\gamma_{k-1} \alpha_{k-1}$ is geodesic and the latter is impossible as $K \neq H$.
    Thus, $\gamma_k e_k \alpha_{k-1}$ is indeed geodesic. We can now set $\alpha_k := e_k \alpha_{k-1}$. 
    With this choice, clearly (1) and (2a) hold for $i = k$.
    Moreover, as $\alpha_{k-1}$ is $M'$-Morse for some $M'$, $\alpha_k$ is $M$-Morse with $M$ only depending on $M'$.
    
    On the other hand, suppose that $H$ intersects $\gamma_k$. 
    By Lemma~\ref{lem:geo_extension}, there is a Morse geodesic $\alpha_k$ so that $\gamma_k\alpha_k$ is geodesic. 
    Moreover, $M$ only depends on $\Gamma$.
    By an argument similar to the one in the previous case, $\gamma_{k-1} e_k \alpha_k$ is geodesic.  
    Consequently, (1) and (2b) hold for $i = k$ in this case. 
\end{proof}

We are now ready to show that multi-tail filters exist.

\begin{lemma} \label{lem:MF existance}
    Let $\alpha$ and $\beta$ be Morse geodesics based at a common point in the Davis complex of an affine-free, wide-spherical-avoidant Coxeter 
    group. Then for every $n \in \mathbb{N}$, there exists a multi-tail filter of level $n$ between $\alpha$ and $\beta$.
\end{lemma}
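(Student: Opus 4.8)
The plan is to feed the output of Lemma~\ref{lem:multi-filter} into the filter construction of Section~\ref{sec:filters} and then check that the resulting pieces assemble into the data demanded by Definition~\ref{def:MTF}. First I would fix $n\in\mathbb N$ and a geodesic $\sigma$ from $\alpha(n)$ to $\beta(n)$, set $d=d(\alpha(n),\beta(n))$ with $\sigma(0)=\alpha(n)$, $\sigma(d)=\beta(n)$, and let $e_i$ be the edge of $\sigma$ from $\sigma(i-1)$ to $\sigma(i)$. Applying Lemma~\ref{lem:multi-filter} gives geodesic tails $\gamma_0=\alpha([0,n]),\gamma_1,\dots,\gamma_d=\beta([0,n])$, with $\gamma_i$ a geodesic from $b$ to $\sigma(i)$, and $M$-Morse geodesic rays $\alpha_0,\dots,\alpha_d$ based at $\sigma(0),\dots,\sigma(d)$ such that $\gamma_i\alpha_i$ is geodesic for every $i$ and, for each $1\le i\le d$, either (2a) $\alpha_i=e_i\alpha_{i-1}$ or (2b) $\gamma_{i-1}e_i\alpha_i$ is geodesic. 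The constant $M$ here depends only on $\Gamma$, the Morse constants of $\alpha$ and $\beta$, and $d(\alpha(n),\beta(n))$, so it serves as the uniform Morse constant required of every filter in a multi-tail filter; at the last index I would pick the branch of Lemma~\ref{lem:multi-filter} that makes $\alpha_d=\beta([n,\infty))$ (the geodesic hypotheses needed for this choice are precisely the ones recorded there), so that $\gamma_d\alpha_d=\beta$.

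Next, for each $1\le i\le d$ I would produce the filter $\mathcal F_{i-1}$ between $\alpha_{i-1}$ and $\beta_{i-1}:=\alpha_i$, together with the tail $\gamma_{i-1}$. In case (2b), $\gamma_{i-1}\alpha_{i-1}$ and $\gamma_{i-1}e_i\alpha_i$ are two geodesics out of $b$ that first diverge at $\sigma(i-1)$; viewing $\alpha_{i-1}$ and $e_i\alpha_i$ as geodesic rays based at $\sigma(i-1)$, the filter construction of Section~\ref{sec:filters} applies --- this is exactly the step that uses wide-spherical-avoidance, via Lemma~\ref{lem:cox_fan_existence} --- and prepending the tail $\gamma_{i-1}$ gives $\mathcal F_{i-1}$; since its two boundary geodesics are $M$-Morse it is a filter between $M$-Morse geodesics in the required sense. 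In case (2a), $\alpha_i$ is simply $\alpha_{i-1}$ with the single edge $e_i$ prepended at the other endpoint, so $\alpha_{i-1}$ and $\beta_{i-1}=\alpha_i$ coincide outside a finite set; here $\mathcal F_{i-1}$ degenerates (it is carried by $\gamma_{i-1}\alpha_{i-1}$ together with the edge $e_i$), and one may instead simply relabel so as to keep only the genuine filters. In all cases $\gamma_{i-1}$ is the tail, $\gamma_{i-1}\alpha_{i-1}$ and $\gamma_{i-1}\beta_{i-1}$ are geodesic, and $\beta_{i-1}=\alpha_i$ agrees with $\alpha_i$ off a finite set. Taking $\alpha_0=\alpha$ and $\beta_{d-1}=\alpha_d=\beta$, the union of the $\mathcal F_i$ with $\sigma$ and the tails $\gamma_i$ is then the desired multi-tail filter of level $n$, and Definition~\ref{def:MTF} is satisfied with constant $M$.

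The hard part is bookkeeping rather than geometry: one must verify that the rays $\alpha_{i-1}$ and $\beta_{i-1}=\alpha_i$ genuinely meet the hypotheses of the filter construction (a common basepoint after the shared tail is stripped, both geodesic beyond that point, both $M$-Morse), and that the van Kampen diagrams glue along the tails and along $\sigma$ with consistent edge orientations and no folding. This is where the two alternatives of Lemma~\ref{lem:multi-filter} do the work: (2a) isolates the degenerate case in which consecutive rays are separated only by one edge, and (2b) isolates the case where an honest filter is needed and wide-spherical-avoidance supplies one, so the remaining argument is essentially an unwinding of Definition~\ref{def:MTF}. I expect the only point needing real care is confirming that the uniform constant $M$ from Lemma~\ref{lem:multi-filter} bounds the Morse gauge of every boundary geodesic appearing in $\mathfrak F$, which is also what makes the later stability arguments (through Lemma~\ref{lem:filter geodesics Morse}) go through.
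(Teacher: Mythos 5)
Your proposal follows the paper's proof almost exactly: both apply Lemma~\ref{lem:multi-filter}, build a genuine filter with base path $\gamma_{i-1}$ between $\alpha_{i-1}$ and $e_i\alpha_i$ at each index where alternative (2b) holds (this is precisely where the geodesicity of $\gamma_{i-1}\alpha_{i-1}$ and of $\gamma_{i-1}e_i\alpha_i$ is used to feed Lemma~\ref{lem:cox_fan_existence}), and treat the (2a) indices as degenerate, so that consecutive boundary rays agree off a finite set as Definition~\ref{def:MTF} demands. The paper phrases this as selecting the subsequence of indices at which (2b) holds rather than building a filter at every index and then discarding the degenerate ones, but that is only a difference of presentation.

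The one step that does not work as written is your treatment of the terminal end. You propose to ``pick the branch of Lemma~\ref{lem:multi-filter} that makes $\alpha_d=\beta([n,\infty))$,'' but that lemma is an existence statement and offers no such choice of branch. Moreover, even if one arranged $\alpha_d=\beta([n,\infty))$, the last filter in your chain has tail $\gamma_{d-1}$ and right boundary ray $e_d\alpha_d$ based at $\sigma(d-1)$, and $\gamma_{d-1}e_d\alpha_d$ is only asymptotic to $\beta$, whereas Definition~\ref{def:MTF} requires $\beta_k=\beta$ on the nose. The fix is exactly what the paper does: append one final filter with tail $\gamma_d=\beta([0,n])$ between $\alpha_d$ and $\beta([n,\infty))$; both $\gamma_d\alpha_d$ (by item (1) of Lemma~\ref{lem:multi-filter}) and $\gamma_d\beta([n,\infty))=\beta$ are geodesic, so the filter construction applies, and $\alpha_d$ agrees with the preceding right boundary off a finite set. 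With that correction your argument coincides with the paper's.
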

\begin{proof}
    Using Lemma~\ref{lem:multi-filter}, we will construct a multi-tailed filter. 
    We fix the notation from that lemma.
    We will choose a subsequence $i_0, \dots, i_k$ of the subsequence of $0, \dots, d$ 
    so that $\gamma_{i_0}, \dots \gamma_{i_k}$ are tails of filters $\mathcal F_0, \dots, \mathcal F_k$ which make up 
    a multi-tail filter of level $n$ between $\alpha$ and $\beta$.

    Let $j$ be the smallest integer (if it exists) such that (2b) from Lemma~\ref{lem:multi-filter} holds for $i = k$. 
    There are two cases depending whether $j$ exists or not. Suppose first that $j$ exists.
    We set $i_0 = j - 1$ and construct a filter between $\alpha_{i_0} = e_{j-1} e_{j-1} \dots e_1 \alpha_0$ and $\beta_{i_0} = e_j \alpha_j$.
    Note that by the same lemma, both $\gamma_{i_0}\alpha_{i_0}$ and $\gamma_{i_0} \beta_{i_0}$ are geodesic. 

    If no such integer exists, we set $i_0 = d$ and construct a filter between $\alpha_{i_0} = e_d e_{d-1} \dots e_1 \alpha_0$ 
    and $\beta_d$ with tail $\gamma_{i_0}$. In this case, $\gamma_{i_0}\alpha_{i_0}$ is geodesic from (2a) of the previous lemma
     and $\gamma_{i_0} \beta_{i_0}$ is geodesic as it is equal to $\beta$. 
   
    It is clear that we are done in the second case. Moreover, in the first case, we can repeat the process 
    by starting from the smallest integer larger than $j$ (if it exists) such that (2b) holds.
    By iterating in this manner, we construct the mult-tail filter.
\end{proof}

\begin{lemma} \label{lem:gamma morse}
    Let $\mathfrak{F}$ be an $n$-level multi-tailed filter between $N$-Morse quasi-geodesic rays $\alpha$ and $\beta$, and 
    let $f \colon \mathfrak{F} \to \Sigma$ the natural map.
    Then the tails of $\mathfrak F$ are $M$-Morse where $M$ depends only on $N$ and $d(\alpha(n), \beta(n))$.
\end{lemma}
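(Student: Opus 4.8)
The plan is to reduce the Morse-ness of each tail $\gamma_i$ of $\mathfrak{F}$ to Lemma~\ref{lem:concatenation of morse is morse} together with an elementary argument bounding how far $\gamma_i$ strays from the two boundary rays $\alpha$ and $\beta$. Recall from Definition~\ref{def:MTF} that each tail $\gamma_i$ is a geodesic from the basepoint $b$ to a vertex $\sigma(i)$ on the geodesic $\sigma$ joining $\alpha(n)$ and $\beta(n)$, where $d(\alpha(n),\beta(n)) =: d$. So $\gamma_i$ is a geodesic of length $n$ ending within distance $d/2$ of, say, $\alpha(n)$ (taking whichever of the two endpoints $\alpha(n),\beta(n)$ is closer to $\sigma(i)$; the endpoint of $\sigma$ is at distance at most $d/2$ from one of the two ends). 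Denote that nearer boundary ray by $\zeta \in \{\alpha,\beta\}$, so $d(\gamma_i(n),\zeta(n)) \le d/2$ and $\zeta$ is $N$-Morse.

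First I would bound the length of the concatenation $\tau := \gamma_i \cup [\gamma_i(n),\zeta(n)]$, where $[\gamma_i(n),\zeta(n)]$ is a geodesic realizing the distance $\le d/2$. Since $\gamma_i$ and $\zeta([0,n])$ are both geodesics of length $n$ from $b$ with endpoints at distance $\le d/2$, the path $\gamma_i$ lies in the $(d/2)$-neighborhood of $\zeta([0,n])$ is \emph{not} automatic, but one can still argue as follows: consider the geodesic triangle with vertices $b$, $\gamma_i(n)$, $\zeta(n)$, with two sides $\gamma_i$ and $\zeta([0,n])$ being $n$-long and the third side of length $\le d/2$. Because $\zeta([0,n])$ is $N$-Morse (a subsegment of the $N$-Morse ray $\zeta$), any $(1,0)$-quasi-geodesic with the same endpoints — in particular $\gamma_i$ followed by the short connector, reparametrized — lies in the $N(1,d/2)$-neighborhood of $\zeta([0,n])$ appropriately. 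Concretely, apply the Morse property of $\zeta([0,n])$ to the geodesic $\gamma_i$ (which has one endpoint $b$ on $\zeta$ and the other within $d/2$ of $\zeta(n)$): up to adjusting endpoints by $d/2$, $\gamma_i$ is contained in a neighborhood of $\zeta$ whose radius depends only on $N$ and $d$. Hence $\gamma_i$ is Morse: more precisely, $\gamma_i$ is a geodesic staying within bounded (in terms of $N,d$) Hausdorff distance of the $N$-Morse segment $\zeta([0,n])$, and a geodesic at bounded Hausdorff distance from an $N$-Morse geodesic is $M$-Morse with $M = M(N,d)$ — this last fact is standard (it follows, e.g., from Lemma 2.2/2.3 of \cite{Cordes2017}, which is already invoked in Lemma~\ref{lem:concatenation of morse is morse} and Lemma~\ref{lem:filter stable}).

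The main obstacle I anticipate is making the "bounded Hausdorff distance from $\zeta$" claim precise without circularity, since $\gamma_i$ itself is only known to share an endpoint region with $\zeta$, not to be a genuine subsegment. The clean route is: the concatenation $\gamma_i \cup [\gamma_i(n),\zeta(n)]$ is a $(1, d)$-quasi-geodesic from $b$ to $\zeta(n)$ (it is a concatenation of a geodesic of length $n$ with a geodesic of length $\le d/2$, hence $(1,d)$-quasi-geodesic in the usual sense), so by the $N$-Morse property of the geodesic $\zeta([0,n])$ it lies in the $N(1,d)$-neighborhood of $\zeta([0,n])$; in particular $\gamma_i$ itself lies in the $(N(1,d)+d)$-neighborhood of $\zeta([0,n])$. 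Conversely, every point of $\zeta([0,n])$ lies within $N(1,d)+d$ of this quasi-geodesic and hence within $N(1,d)+2d$ of $\gamma_i$ (using that the short connector has length $\le d/2$). Thus the Hausdorff distance between $\gamma_i$ and the $N$-Morse segment $\zeta([0,n])$ is at most $C = C(N,d)$, and then $\gamma_i$ is $M$-Morse for $M = M(N,d)$ by the Morse-stability-under-bounded-perturbation lemma from \cite{Cordes2017}. This gives exactly the assertion of Lemma~\ref{lem:gamma morse}, with $M$ depending only on $N$ and $d(\alpha(n),\beta(n))$ as claimed.
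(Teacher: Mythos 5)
Your proposal is correct and follows essentially the same route as the paper: the paper likewise concatenates the tail $\gamma_i$ with the short subsegment $\sigma([0,i])$ of $\sigma$ to obtain a $(1,O(d))$-quasi-geodesic with endpoints on the $N$-Morse ray $\alpha$, deduces bounded Hausdorff distance from $\alpha$ (Lemma 2.1 of \cite{Cordes2017}), and then invokes stability of Morseness under bounded Hausdorff distance (Lemma 2.5 of \cite{Charney-Sultan}). Your choice of connecting to the nearer of $\alpha(n)$, $\beta(n)$ is an inessential variation.
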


\begin{proof}
Let $d=d(\alpha(n), \beta(n))$, and let $\sigma$ be the geodesic between $\alpha(n)$ and $\beta(n)$ associated with $\mathfrak F$. 
Let $\gamma_i$ denote the tail of $\mathfrak F$ from its basepoint to $\sigma(i)$.  
The concatenation $\gamma = \sigma([0,i]) \cup \gamma_i$ 
is a $(1,d)$-quasi-geodesic with endpoints on $\alpha$. 
Thus by Lemma 2.1 of \cite{Cordes2017} $\gamma'$ is bounded Hausdorff distance from $\alpha$, 
where the distance depends only on $N$ and $d$. 
By Lemma 2.5 of \cite{Charney-Sultan}, $\gamma$ is $M$-Morse where $M$ depends only on $N$ and $d$. 
\end{proof}

\begin{lemma} \label{lem:multi-tail filter stable}
Let $\mathfrak{F}$ be an $n$-level multi-tailed filter between $N$-Morse quasi-geodesic rays $\alpha$ and $\beta$, and 
let $f \colon \mathfrak{F} \to \Sigma$ the natural map.
Then $f(\mathfrak{F})$ is an $M$-stable subset where $M$ depends only on $N$ and $d(\alpha(n), \beta(n))$.
\end{lemma}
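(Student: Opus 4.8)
The plan is to run the same argument used in Lemma~\ref{lem:filter stable}, but now combining the finitely many filters $\mathcal F_0, \dots, \mathcal F_k$ that make up $\mathfrak F$ together with their tails $\gamma_0, \dots, \gamma_k$. First I would fix $x, y \in f(\mathfrak F)$ and a geodesic $[x,y]$ in $\Sigma^1$ joining them, and let $b$ be the image of the basepoint of $\mathfrak F$. The key preliminary observation is that every point of $f(\mathfrak F)$ lies on a geodesic from $b$ that is uniformly Morse: a point in the image of a single filter $\mathcal F_i$ is the endpoint of a geodesic of the form $\gamma_{i}\delta$ where $\delta$ is a geodesic in the spanning tree of $\mathcal F_i$, and $\gamma_i$ is itself a tail of $\mathfrak F$. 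By Lemma~\ref{lem:gamma morse} each tail $\gamma_i$ is $M_0$-Morse with $M_0$ depending only on $N$ and $d(\alpha(n),\beta(n))$, and by Lemma~\ref{lem:filter geodesics Morse} (applied with $\alpha_i, \beta_i$ in place of $\alpha,\beta$, noting these are $N$-Morse by hypothesis in Definition~\ref{def:MTF}) the portion inside $\mathcal F_i$ is $M_1$-Morse with $M_1$ depending only on $N$ and $\Gamma$. Concatenating $\gamma_i$ with a spanning-tree geodesic of $\mathcal F_i$ yields a geodesic (since $\gamma_i$ is a tail of the filter $\mathcal F_i$, this is built into the filter construction via Lemma~\ref{lem:cox_filter_geodesics}), and by Lemma~\ref{lem:concatenation of morse is morse} this concatenation is $M_2$-Morse for some $M_2$ depending only on $M_0$ and $M_1$, hence only on $N$, $\Gamma$ and $d(\alpha(n),\beta(n))$.

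Given this, I would take $[b,x]$ and $[b,y]$ to be such uniformly $M_2$-Morse geodesics inside $f(\mathfrak F)$. Then, exactly as in the proof of Lemma~\ref{lem:filter stable}, Lemma~2.3 of \cite{Cordes2017} shows $[x,y]$ is $M$-Morse with $M$ depending only on $M_2$, which gives the quasi-geodesic-connectivity half of stability (any two points joined by an $M$-Morse geodesic). For quasi-convexity, Lemma~3.5 of \cite{CordesHume} says the geodesic triangle $[b,x]\cup[x,y]\cup[b,y]$ is $K$-slim with $K$ depending only on the Morse gauges of the three sides, hence only on $M_2$; since $[b,x]\cup[b,y] \subset f(\mathfrak F)$, the geodesic $[x,y]$ lies in the $K$-neighborhood of $f(\mathfrak F)$, proving $f(\mathfrak F)$ is $K$-quasi-convex. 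Taking $M$ to be the maximum of the two constants produced gives the desired $M$-stability, with $M$ depending only on $N$ and $d(\alpha(n),\beta(n))$.

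The one point requiring a little care — and the main obstacle — is verifying that for every $x \in f(\mathfrak F)$ there genuinely is a geodesic from $b$ to $x$ lying entirely in $f(\mathfrak F)$ with a Morse gauge bounded independently of which filter $\mathcal F_i$ contains $x$. This is where the hypothesis in Definition~\ref{def:MTF} that all $\alpha_i,\beta_i$ are $N$-Morse (with the \emph{same} $N$) is essential: it ensures Lemma~\ref{lem:filter geodesics Morse} produces a single gauge $M_1$ valid for all of the constituent filters simultaneously. The subtlety that the $\beta_i = \alpha_{i+1}$ agree only outside a finite set does not affect the Morse gauge (two geodesic rays asymptotic to within a finite set have the same Morse property up to a bounded adjustment of the gauge), so the bound $M_2$ is uniform over the finitely many filters; one should remark on this. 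Everything else is a verbatim repetition of Lemma~\ref{lem:filter stable}'s argument, so I would write the proof by simply citing that lemma's structure and pointing to the three inputs (Lemma~\ref{lem:gamma morse}, Lemma~\ref{lem:filter geodesics Morse}, Lemma~\ref{lem:concatenation of morse is morse}) that upgrade it to the multi-tail setting.
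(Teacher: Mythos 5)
Your proposal is correct and follows essentially the same route as the paper: uniform Morse gauges for the tails via Lemma~\ref{lem:gamma morse}, uniform stability/Morseness of the constituent filters via Lemma~\ref{lem:filter geodesics Morse} (the paper cites Lemma~\ref{lem:multi-filter} together with Lemma~\ref{lem:filter stable} for the same purpose), concatenation via Lemma~\ref{lem:concatenation of morse is morse}, and then the slim-triangle argument of Lemma~\ref{lem:filter stable} verbatim. No substantive differences.
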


\begin{proof}
    By Lemma~\ref{lem:gamma morse} the image of the tails of $\mathfrak F$ are uniformly Morse, 
    with Morse constant only depending on $N$ and $d(\alpha(n), \beta(n))$.
    By Lemma ~\ref{lem:multi-filter} and Lemma ~\ref{lem:filter stable}, 
    the image of each of the filters making up $\mathcal{F}$ is stable, with 
    constant only depending on $N$ and $d(\alpha(n), \beta(n))$.
    Thus, by Lemma \ref{lem:concatenation of morse is morse}, all geodesics in the spanning tree of $\mathfrak{F}$ have $M$-Morse image 
    with $M$ only depending on $N$ and $d(\alpha(n), \beta(n))$.
    We can now follow the same proof as in Lemma \ref{lem:filter stable} and conclude that the image of $\mathfrak{F}$ is $M$-stable. 
\end{proof}

\begin{lemma} \label{lem:boundary of MTF is connected}
    Let $\alpha$ and $\beta$ be Morse geodesic rays, and let $\mathfrak{F}$ be a multi-tailed filter between $\alpha$ and $\beta$. 
    Then $\partial f(\mathfrak{F})$ is a connected subset of $\partial_* X$
    where $f \colon \mathfrak{F} \to \Sigma$ is the natural map. 
\end{lemma}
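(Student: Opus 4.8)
The plan is to bootstrap from the single-filter case treated in Theorem~\ref{thm:MB is connected}. By Lemma~\ref{lem:multi-tail filter stable} the image $f(\mathfrak{F})$ is an $M$-stable subset of $\Sigma^1$; in particular it is hyperbolic, and, exactly as in the proof of Theorem~\ref{thm:MB is connected}, stability implies that $\partial f(\mathfrak{F})$ (the Gromov boundary of $f(\mathfrak{F})$) topologically embeds into $\partial_* X$. So it is enough to show that $\partial f(\mathfrak{F})$ is connected, and for this, arguing as in the proof of Theorem~\ref{thm:MB is connected} (which uses Exercise III.H.3.8 of \cite{BH} to obtain a continuous surjection $\partial f(\mathfrak{F}) \to \mathrm{Ends}(f(\mathfrak{F}))$ whose fibers are the connected components of $\partial f(\mathfrak{F})$), it is enough to show that $f(\mathfrak{F})$ is one-ended.

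Write $\mathfrak{F} = \mathcal{F}_0 \cup \dots \cup \mathcal{F}_k$ as in Definition~\ref{def:MTF}, so that $f(\mathfrak{F}) = \bigcup_{i=0}^{k} f(\mathcal{F}_i)$. Each $\mathcal{F}_i$ is a filter (with the finite tail $\gamma_i$ attached), hence one-ended, and the natural map restricted to $\mathcal{F}_i$ is proper, so each $f(\mathcal{F}_i)$ is unbounded, connected and one-ended, just as in the proof of Theorem~\ref{thm:MB is connected}. For each $0 \le i < k$ the geodesic rays $\beta_i$ and $\alpha_{i+1}$ coincide outside a finite set, so $f(\mathcal{F}_i) \cap f(\mathcal{F}_{i+1})$ contains the image of $\beta_i$ with a finite set removed; in particular this intersection contains an unbounded connected subset.

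The proof is then completed by the elementary observation that if $A$ and $B$ are one-ended subspaces of a metric space whose intersection contains an unbounded connected set, then $A \cup B$ is one-ended: for a finite set $S$, the unique unbounded component $U_A$ of $A \setminus S$ and the unique unbounded component $U_B$ of $B \setminus S$ each contain an unbounded component of the intersection minus $S$, so $U_A \cap U_B \neq \emptyset$, and one checks that $U_A \cup U_B$ is the unique unbounded component of $(A \cup B) \setminus S$. Applying this inductively along the chain $f(\mathcal{F}_0), \dots, f(\mathcal{F}_k)$ gives that $f(\mathfrak{F})$ is one-ended, which finishes the proof. The only points requiring care are this elementary union statement and the verification that consecutive pieces genuinely overlap in an unbounded \emph{connected} set (so the tails $\gamma_i$ and the phrase ``agree outside a finite set'' must be handled correctly); everything else is a direct appeal to Lemma~\ref{lem:multi-tail filter stable} and to the proof of Theorem~\ref{thm:MB is connected}. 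Alternatively, one may avoid ends altogether: each $f(\mathcal{F}_i)$ is stable, hence quasi-convex in the hyperbolic space $f(\mathfrak{F})$, so $\partial f(\mathcal{F}_i)$ equals the limit set of $f(\mathcal{F}_i)$ inside $\partial f(\mathfrak{F})$, and since every geodesic ray of $f(\mathfrak{F})$ lies in some $f(\mathcal{F}_i)$ infinitely often one gets $\partial f(\mathfrak{F}) = \bigcup_i \partial f(\mathcal{F}_i)$; each $\partial f(\mathcal{F}_i)$ is connected by Theorem~\ref{thm:MB is connected}, and $\partial f(\mathcal{F}_i)$ and $\partial f(\mathcal{F}_{i+1})$ share the point $[\beta_i] = [\alpha_{i+1}]$, so this union is connected.
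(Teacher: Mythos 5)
Your proposal is correct, and your ``alternative'' sketched at the end is in fact essentially the paper's own proof: the paper argues that, by stability of $f(\mathfrak{F})$ (Lemma~\ref{lem:multi-tail filter stable}), every quasi-geodesic ray in $f(\mathfrak{F})$ must eventually stay close to one of the constituent filters, so that $\partial f(\mathfrak{F})$ is the union of the $\partial f(\mathcal{F}_i)$, each of which is connected by the proof of Theorem~\ref{thm:MB is connected}; the gluing observation that consecutive pieces share the point $[\beta_i]=[\alpha_{i+1}]$ is left implicit there, so you are right to make it explicit. Your primary argument is a genuinely different organization of the same ingredients: rather than decomposing the boundary, you glue the ends first --- each $f(\mathcal{F}_i)$ is one-ended, consecutive images overlap in a common sub-ray of $\beta_i=\alpha_{i+1}$, and a union of one-ended spaces along an unbounded connected overlap is one-ended --- and then run the Gromov-boundary-versus-ends argument of Theorem~\ref{thm:MB is connected} once on all of $f(\mathfrak{F})$. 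What this buys is that you avoid the one step in the paper's route that actually needs justification, namely that $\partial f(\mathfrak{F})=\bigcup_i\partial f(\mathcal{F}_i)$ (a ray meeting a piece in an unbounded set must be shown, via the Morse property of that stable piece, to eventually fellow-travel it --- ``lies in some $f(\mathcal{F}_i)$ infinitely often'' alone is not quite enough). What it costs is the elementary lemma on ends of unions; your handling of it is sound here because the overlap is a proper embedded sub-ray, so its complement of any bounded set still contains an unbounded connected sub-ray lying in the unique unbounded components of both pieces, and the induction along $\mathcal{F}_0,\dots,\mathcal{F}_k$ goes through. Either route is acceptable.
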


\begin{proof}
    From the proof of Theorem~\ref{thm:MB is connected}, 
    the boundary of the image of each filter making up $\mathfrak{F}$ is a connected subset of $\partial_* f(\mathfrak F)$. 
    As $f(\mathfrak F)$ is stable by the previous lemma, each quasi-geodesic in $f(\mathfrak F)$ must stay close to the image of one 
    of the filters in $\mathfrak F$. Thus, the Morse boundary of $\mathfrak F$ consists of the union of Morse boundaries of the filters 
    making up $\mathfrak F$. It follows that $\partial_* f(\mathfrak F)$ is connected.
\end{proof}

\begin{theorem} \label{thm:MB is locally connected}
The Morse boundary of an affine-free, one-ended, wide-spherical-avoidant Coxeter group is locally connected.
\end{theorem}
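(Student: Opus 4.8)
The plan is to prove local connectedness by producing, inside every neighborhood of a point $[\alpha]\in\partial_* W_\Gamma$, a connected set that still contains a small basic neighborhood; these connected sets will be unions of Morse boundaries of multi-tailed filters joining $\alpha$ to all nearby Morse geodesics at once. As in \cite{Charney-Cordes-Sisto, FKSZ}, it suffices (with some routine bookkeeping for the direct limit $\partial_* W_\Gamma = \varinjlim_{\mathcal M}\partial_*^N W_\Gamma$) to verify the following stratum-wise filling statement: there is a function $N\mapsto N'(N)$ with $N'\ge N$ such that for every $N$-Morse geodesic ray $\alpha$ based at the basepoint $o$ and every $n\in\mathbb N$ there are an integer $m\ge n$ and a connected subset $Z\subseteq\partial_* W_\Gamma$ with $V_m^N(\alpha)\subseteq Z\subseteq V_n^{N'}(\alpha)$.

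First I would fix the uniform gauge. By Lemma~\ref{lem:gamma morse} and Lemma~\ref{lem:multi-tail filter stable} there is a Morse gauge $N'=N'(N,\Gamma)$, depending only on $N$ and $\Gamma$, such that whenever $\gamma$ is an $N$-Morse ray based at $o$ with $d(\alpha(m),\gamma(m))<\delta_N$ for some $m$ and $\mathfrak F$ is a level-$m$ multi-tailed filter between $\alpha$ and $\gamma$, the image $f(\mathfrak F)\subseteq\Sigma^1$ is $N'$-stable. Hence $\partial f(\mathfrak F)$ topologically embeds in $\partial_*^{N'} W_\Gamma$ (exactly as in the proof of Theorem~\ref{thm:MB is connected}), it is connected by Lemma~\ref{lem:boundary of MTF is connected}, and it contains $[\alpha]$, because $\alpha$ is one of the two boundary geodesics defining $\mathfrak F$.

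The core step is to show that, for a suitable $m=m(n,N,\Gamma)\ge n$, every such $\partial f(\mathfrak F)$ is contained in $V_n^{N'}(\alpha)$. Every geodesic ray $\eta$ of $f(\mathfrak F)$ issuing from $o$ runs along one of the finitely many tails $\gamma_i$ of $\mathfrak F$ for its first $|\gamma_i|\ge m-\delta_N$ edges, where $\gamma_i$ is a geodesic from $o$ to a point of the geodesic $\sigma$ joining $\alpha(m)$ and $\gamma(m)$; since $d(o,\alpha(m))=m$ and $|\sigma|<\delta_N$, the concatenation of $\gamma_i$ with a geodesic from its endpoint back to $\alpha(m)$ is a $(1,2\delta_N)$-quasi-geodesic between two points of the $N$-Morse ray $\alpha$, hence lies in the $N(1,2\delta_N)$-neighborhood of $\alpha$; comparing distances to $o$ then yields a constant $C_1=C_1(N,\Gamma)$ with $d(\eta(t),\alpha(t))\le C_1$ for all $t\le m-C_1$. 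Taking $m\ge n+C_1$ gives $d(\eta(t),\alpha(t))\le C_1$ for all $t<n$, and since $\eta$ is $N'$-Morse this forces $[\eta]\in V_n^{N'}(\alpha)$ once the neighborhood constant $\delta_{N'}$ is chosen large enough in terms of $N'$ (which is permissible without changing the topology; see \cite{Cordes2017, cordessistozbinden}). As $\partial f(\mathfrak F)$ consists precisely of the classes of such rays $\eta$, the claim follows.

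Finally, with $m$ as in the core step, I would set $Z:=\bigcup\{\,\partial f(\mathfrak F_\gamma)\ :\ [\gamma]\in V_m^N(\alpha)\,\}$, where for each $[\gamma]\in V_m^N(\alpha)$ one fixes an $N$-Morse representative $\gamma$ with $d(\alpha(t),\gamma(t))<\delta_N$ for $t<m$ and, via Lemma~\ref{lem:MF existance}, a level-$m$ multi-tailed filter $\mathfrak F_\gamma$ between $\alpha$ and $\gamma$. Each $\partial f(\mathfrak F_\gamma)$ contains $[\alpha]$, so $Z$ is connected; it contains $[\gamma]$ for every $[\gamma]\in V_m^N(\alpha)$ (the other boundary geodesic of $\mathfrak F_\gamma$), so $V_m^N(\alpha)\subseteq Z$; and $Z\subseteq V_n^{N'}(\alpha)$ by the core step. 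This establishes the filling statement. The main obstacle is the core step: one must bound, uniformly in everything, how far a ray emerging from a multi-tailed filter can drift away from $\alpha$, and then trade this uniform bound for genuine membership in $V_n^{N'}(\alpha)$; the only remaining difficulty, the reduction of local connectedness of the direct limit to the stratum statement, is routine and follows \cite{Charney-Cordes-Sisto, FKSZ}.
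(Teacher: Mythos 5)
Your proposal is correct and follows essentially the same route as the paper: reduce local connectedness to a connected-im-kleinen/filling statement, join $\alpha$ to each nearby ray by a multi-tailed filter, and use stability (Lemma~\ref{lem:multi-tail filter stable}) and connectedness of its boundary (Lemma~\ref{lem:boundary of MTF is connected}) together with a drift estimate along the tails to keep that boundary inside the prescribed neighborhood. The only differences are cosmetic: the paper exhibits one connected set per $\beta$ rather than the union $Z$, and it handles your ``core step'' by building the filter at level $3n$ and invoking Corollary~2.5 of \cite{Cordes2017} to pull closeness at time $3n$ back to all $t\le n$ with the same constant $\delta_N$, rather than enlarging $\delta_{N'}$ to absorb the drift constant $C_1$.
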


\begin{proof}
Let $\Sigma$ be the Davis complex of an affine-free, wide-spherical-avoidant Coxeter group.

To prove local connectivity of $\partial_* \Sigma$, we will show that $\partial_* \Sigma$ is \emph{connected im kleinen} for all $[\alpha] \in \partial_* \Sigma$, i.e., 
for every open neighborhood $V \subset \partial_* \Sigma$ of $[\alpha]$ there exists an open set $U \subset V$ containing $\alpha$ such 
that for any $\beta \in U$, there exists a connected set $K \subset V$ containing both $\alpha$ and $\beta$.
(For more information see \cite[Definition 27.14, Theorem 27.16]{Willard}.)

Let $[\alpha] \in \partial_* \Sigma$ be a point in the Morse boundary, and assume $\alpha$ an $M$--Morse geodesic ray representing $[\alpha]$ with basepoint $b$.
Let $V \subset \partial_* \Sigma$ be an open set containing $\alpha$. 
By definition of the topology on the Morse boundary, there exists an $n \in \mathbb{N}$ and $N \geq M$ so that the open set $V_n^N(\alpha)$ 
(as defined in Section~\ref{sec:background:morse boundary}) is contained in $\partial_*^N \Sigma \cap V$. 
Moreover, by Lemma~\ref{lem:multi-tail filter stable} and compactness, we can pick $N$ large enough so 
that any $3n$-level multi-tail filter between $\alpha$ and another $M$-Morse quasi-geodesic has 
$N$-stable image in $\Sigma$.
Define $U := V_{3n}^M(\alpha)$, and note that $U \subset V_n^N(\alpha) \subset V$.

Let $\beta$ be a point in $U$ and, by abuse of notation, also denote by $\beta$ an $M$-Morse geodesic ray with basepoint $b$ representing it. 
Let $\sigma$ be a geodesic joining $\alpha(3n)$ to $\beta(3n)$. 
Construct a multi-tailed filter $\mathfrak{F}$ corresponding to $\alpha$, $\beta$ and $\sigma$.
By our choice of $N$, $f(\mathfrak{F})$ is $N$-stable with $f: \mathfrak{F} \to \Sigma$ being the natural map.

We claim that
$\partial f(\mathfrak{F}) \subset V$. 
To show this, let $\tau$ be a quasi-geodesic ray in $f(\mathfrak{F})$ based at $b$ representing a point in $\partial f(\mathfrak{F})$.
Since $\alpha,~\beta \in U$ and since $N > M$, $d(\alpha(3n), \beta(3n)) < \delta_N$ with $\delta_N$ 
the constant associated to $N$ as in Section~\ref{sec:background:morse boundary}. 
By construction of a multi-tail filter, we also have that $d(\alpha(3n), \tau(3n)) < \delta_N$.
By Corollary 2.5 of \cite{Cordes2017}, $d(\alpha(t), \tau(t))< \delta_N$ for all $t \in [0, n]$. 
Thus, $\partial \mathfrak{F} \subset V_{n}^N(\alpha) \subset V$. 
The claim now follows as, by Lemma~\ref{lem:multi-tail filter stable}, 
$\partial f(\mathfrak{F})$ is a connected subset of $\partial_* \Sigma$. 
\end{proof}

\bibliography{refs}
\bibliographystyle{amsalpha}

\end{document}